\newtheorem{theorem}{Theorem}[section]
\newtheorem{lemma}[theorem]{Lemma}
\newtheorem{proposition}[theorem]{Proposition}
\newtheorem{remark}[theorem]{Remark}
\newtheorem{definition}[theorem]{Definition}
\numberwithin{equation}{section}
\newcommand{\mres}{\mathbin{\vrule height 1.6ex depth 0pt width
		0.13ex\vrule height 0.13ex depth 0pt width 1.3ex}}
\newcommand{\Om}{\Omega}
\newcommand{\ep}{\varepsilon}
\newcommand{\lbd}{\lambda}
\newcommand{\R}{\mathbb{R}}
\newcommand{\N}{\mathbb{N}}
\newcommand{\Hs}{\mathcal{H}}
\newcommand{\B}{\mathcal{B}}
\newcommand{\C}{\mathcal{C}}
\newcommand{\K}{\mathcal{K}}
\newcommand{\xp}{\left( x \right)}
\newcommand{\sdist}{{\rm{sdist}}}
\newcommand{\topC}{\tau_\mathcal{C}}
\newcommand{\cngK}{\xrightarrow{\mathcal{K}}}
\newcommand{\Int}{{\rm{Int}}}
\newcommand{\EEE}{\color{black}}
\newcommand{\rk}{ \right\}}
\newcommand{\lk}{ \left\{}
\newcommand{\sym}{{\rm sym}}
\newcommand{\dist}{{\rm dist}}
\newcommand{\Ao}{A^{(1)}}
\newcommand{\Sz}{S^{(0)}_{h,K}}
\newcommand{\Ak}{{A_k}}
\newcommand{\Sk}{{S_k}}
\newcommand{\Ako}{A_k^{(1)}}
\newcommand{\Skz}{S_k^{(0)}}
\newcommand{\brbak}{\partial^* \sigma_{\rho_n} (A_\kn)}
\newcommand{\bbak}{\partial \sigma_{\rho_n} (A_\kn)}
\newcommand{\bbsk}{\partial \sigma_{\rho_n} (S_\kn)}
\newcommand{\boda}{ (\sigma_{\rho_n} (A_\kn))^{(1)}}
\newcommand{\bzda}{ (\sigma_{\rho_n} (A_\kn))^{(0)}}
\newcommand{\nubak}{ \nu_{\sigma_{\rho_n} (A_\kn)}}
\newcommand{\Uco}{\overline{Q_1}}
\newcommand{\kn}{{k_n}}
\newcommand{\AH}{ {\rm AH}(\Om)}
\newcommand{\AS}{ {\rm AS}(\Om)}
\newcommand{\wt}{\widetilde}
\newcommand{\wh}{\widehat}
\newcommand{\ov}{\overline}
\newcommand\restr[2]{{% we make the whole thing an ordinary symbol
  \left.\kern-\nulldelimiterspace % automatically resize the bar with \right
  #1 % the function
  \littletaller % pretend it's a little taller at normal size
  \right|_{#2} % this is the delimiter
  }}
\renewcommand{\d}{\mathrm{d}}
\newcommand{\res}{\mathbin{\vrule height 1.6ex depth 0pt width 0.13ex\vrule height 0.13ex depth 0pt width 1.3ex}}
\begin{document}
	\sloppy
	\title[Two-phase free-boundary problem]{Existence of minimizers for a two-phase free boundary problem  with coherent and incoherent interfaces %, and extension to multiple film layers
 }

	\author[R. Llerena]{Randy Llerena}
	\address[Randy Llerena]{Research Platform MMM ``Mathematics-Magnetism-Materials" - Fak. Mathematik Univ. Wien, A1090 Vienna}
	\email{randy.llerena@univie.ac.at}
	
	\author[P. Piovano]{Paolo Piovano}
	\address[Paolo Piovano]{Dipartimento di Matematica, Politecnico di Milano, P.zza Leonardo da Vinci 32, 20133 Milano, Italy\footnote{MUR Excellence Department 2023-2027}%\& WPI c/o Research Platform MMM ``Mathematics-Magnetism-Materials'', Fak.\ Mathematik Univ.\ Wien, A1090 Vienna
 }
	\email{paolo.piovano@polimi.it}

	\date{\today} 
	\begin{abstract}
 A variational model for describing the morphology of two-phase  continua by allowing for the interplay between coherent and
incoherent interfaces is introduced.  Coherent interfaces are characterized by the microscopical arrangement of atoms of the
two materials in a homogeneous lattice, with deformation being the solely stress relief mechanism, while at incoherent interfaces delamination between the two materials occurs. The model is  designed in the framework of the theory of Stress Driven Rearrangement Instabilities, which are  characterized by the competition between elastic and surface effects. The existence of energy minimizers is established in the plane by means of the direct method of the calculus of variations under a constraint on the  number of boundary connected components of the underlying phase, whose exterior boundary is prescribed to satify a  graph assumption, and of the two-phase composite region. Both the wetting and the dewetting regimes are  included in the analysis. 

 \EEE

	\end{abstract}
	
	\subjclass[2010]{49J10, 49Q15, 49Q20, 35R35 ,  	74A45, 74G65 }
	\keywords{Multiphase boundary, Free boundary problem, surface energy, elastic energy, coherent interface, incoherent interface, thin films, delamination}

	\maketitle
	
	\tableofcontents
	
	\section{Introduction}

  In this manuscript we address the problem of providing a mathematical variational framework for the description of the morphology and the elastic properties of two-phase continua based on Gibbs's notion of a sharp phase-interface dividing them \cite{cermelligurtin1,FG,Gurtin1}. In the presence of two interacting media large stresses due to the  different crystalline order of the two materials  originate and, besides bulk deformation, various types of  morphological destabilization may occur as a further strain relief mode. These are often referred to as the family 
of \emph{stress driven rearrangement instabilities} (SDRI) \cite{AT,D,Gr,KP,S}, which include  the roughness of the exposed crystalline boundaries, the formation of  cracks in the bulk materials, the nucleation of dislocations in the crystalline lattices, and the delamination (as opposed to the adhesion) at the contact regions with the other material. 

Literature provides extensive studies of these phenomena under the assumption that one phase is a rigid  fixed continuous medium  underlying, such as the substrates for epitaxially-strained thin films \cite{ChB,CF,FFLM2,KreutzP}, or constraining,   such as crystal cavities \cite{FFLM} or the containers in capillarity problems \cite{DphM}, the other phase, which is instead let free, or by modeling the interactions with other media simply by means of fixed boundary conditions. There are though  settings in which the hierarchy between the phases is not clear, or  a rigidity ranking between them is not easily identifiable, since the interplay among the deformation and the interface instabilities affecting all phases is crucial, such as,  in the shock-induced transformations and mechanical twinning \cite{Gurtin1} or in the deposition of film multilayers \cite{LlP1}.

As described in \cite{Gurtin1} the extension of classical theories of continuum mechanics to two-phase deformable media is though  ``not as straightforward as it might appear'', since combining   the accretion and deletion of material constituents responsible for the moving of the interface between the two phases and their boundaries, with the framework of elasticity related to bulk deformation and fractures \cite{CC,FrMa}, by quoting \cite{Gurtin1}, ``leads to conceptual difficulties''. A critical modeling issue related to the   interface  between the two phases is  the interplay between \emph{coherency}, that is here intended  as the microscopical arrangement of atoms of the two materials in a homogeneous lattice, with deformation being the solely stress relief mechanism \cite{Gurtin1}, and \emph{incoherency}, that instead refers  to the debonding occurring between the atoms of the two materials \cite{cermelligurtin1}, which results in the composite delamination at the two-phase interface \cite{Baldelli:2014}. 
This manuscript seems to be, to the best of our knowledge, the first attempt to provide a  mathematical framework able to simultaneously describe coherent and incoherent interfaces for a two-phase setting, which we carry out by also keeping in the picture the other features of SDRI, such as the dichotomy between the \emph{wetting regime},
that is the setting in which it is more
convenient for a phase to cover the  surface of the other phase with an infinitesimal layer of atoms,
and the \emph{dewetting regime}, in which it is preferable to let such surface exposed to the vapor.  

% fractures  \cite{CC,FrMa}, 

%occurring in the SDRI settings materials already described in the presence of a fixed phase, namely boundary corrugations and cracks, which though now can affect both phases without introducing any precedence among the phases, apart from one, as a first study, being subject to a graph constraint.

%The intent of this manuscript is to allow for the interplay between coherency, with which we here refer to all those 

%represents, to the best of our knowledge, the first attempt to provide a mathematical framework able to simultaneously describe coherent and incoherent interfaces in the two-phase setting, which consists, of those portions of the phase of 

%here is to allow for the interplay between coherency, with which we here intend the homogeneous microscopical arrangement of atoms with stress relief mechanism completely absorbed by deformation \cite{Gurtin1}, and the incoherency where at the microscopic level debonding between the two materials occurs  \cite{cermelligurtin1}, by also keep in the pictures the other features of the mass rearrangement mechanism of SDRI materials already described in the presence of a fixed phase, namely boundary corrugations and cracks, which though now can affect both phases without introducing any precedence among the phases, apart from one, as a first study, being subject to a graph constraint. The current manuscript seems to be to the best of our knowledge the first attempt to provide such mathematical framework able to simultaneously describe coherent and incoherent interfaces for a double-phase setting. 

The studies for the setting of  only coherent interfaces go  back to Almgren \cite{Al},   who was the first to formulate the problem in $\mathbb{R}^d$, $d>1$, in the context without elasticity for surface tensions proportional among the various  interfaces, by means of  \emph{integral currents} in geometric measure theory and by singling out a condition % referred as ``partitioning regularity'' for the proportional coefficients
ensuring the lower semicontinuity of the overall surface energy with respect to the $L^1$-convergence of the sets in the partition. Then, Ambrosio and Braides in \cite{AB,AB2} extended the setting to also non-proportional surface tensions by introducing a new integral condition referred to as $BV$-\emph{ellipticity}, which they show to be both sufficient and necessary for the lower semicontinuity with respect to the $L^1$-convergence. As such condition is the analogous, for the setting of \emph{Caccioppoli partitions}, of Morrey’s \emph{quasi-convexity}, it is difficult to check it in practice. In \cite{AB,AB2}  $BV$-ellipticity is proved to coincide with a simpler to check  
\emph{triangle inequality condition} among surface tensions for the case of partitions in 3 sets  (like the setting considered in this paper,  in which one element of the partition is always represented by the \emph{vapor} outside the two phases), which was then confirmed to be the only case by \cite{cara1}. Other conditions therefore have been introduced, such as $B$-\emph{convexity} and \emph{joint convexity}, % LSC1 and LSC3 (introduced in [19]), B2-convexity (introduced in [37]), and A-convexity, A2-convexity, and directional control 
with though $BV$-ellipticity remaining so far the only condition known to be both necessary and sufficient for lower semicontinuity apart from specific settings (see \cite{cara2, Morgan} for more details). Recently, the $BV$-ellipticity has been extended in the context of $BD$-\emph{spaces} in \cite{friesolo}. Finally, we refer to  \cite{BonCris} for a variant of the \emph{Ohta–Kawasaki model} considered to model thin films of diblock copolymers in the unconfined case, which represents a recent example in the literature of a two-phase model in the absence of elasticity and of incoherent and crack interfaces, under a graph constraint for union of the two phases. 

%The model is similar with the difference is that they consider A given by a PBV function: - A variant of the Ohta–Kawasaki model with application to "thin films of diblock copolymers in the unconfined case".  Block-copolimer are soft materials with, in contrast to crystalline materials, a fluidlike disorder at the molecular scale. Existence for any dimensions and partial regularity for 2d. no possibility of voids forming also because of the surface tension assumption (see remark 3.3). No elasticity and no crack present (see remark 3.4) 

Regarding incoherent interfaces the problem is intrinsically related to the renowned \emph{segmentation problem} in image reconstruction that was actually originally introduced  by Mumford and Shah in  \cite{MS} with a multiphase formulation, as a partition problem of an original image, with the connceted contours of the image areas characterized as discontinuity set of an auxiliary state function. 
%where discontinuity occur being the boundaries of those sets. N
%in which the jump set of a single state function represented the boundaries of the image different disconnected areas. 
Then, the approaches developed to  tackle the problem led to the study of a single phase setting with the jump set of the state function representing internal interfaces, proven to satisfy \emph{Ahlfors-type regularity} result  \cite{AFP,DMS}. Such single-phase framework has been then extended to the context of linear elasticity in fracture 
mechanics  with the state function being vectorial and representing the bulk displacement of a crystalline material and the energy replaced by the \emph{Griffith energy} \cite{CC,FrMa}. The attempt to recover the original setting of  \cite{MS} in a rigorous mathematical formulation (apart from some formulations with piecewise-constant state functions  or numerical investigations) has been then addressed by Bucur, Fragal\`a, and Giacomini in \cite{BFG} and \cite{BFG2} (see also \cite{CTV} for a related multiphase boundary problem in the context of reaction-diffusion systems). In \cite{BFG} Ahlfors-type regularity  is established for  \emph{ad hoc} notions of multiphase local \emph{almost-quasi minimizers} of an energy accounting for incoherent isotropic interfaces and disregarding the contribution of the coherent portions, while in \cite{BFG2} they introduce a  multiphase version of the {M}umford-{S}hah problem by treating all the reduced phase boundaries as incoherent interfaces (like the internal jump sets) and by adding an extra (statistical) term, which induces multi-phase minimizers.  

%In \cite{BFG} the Authors recover Ahlfors-type regularity results for  \emph{ad hoc}, nonstandard notion of multiphase local almost-quasi minimizers of an energy accounting for incoherent isotropic interfaces and disregarding the contribution of the coherent portions. %  leaving for free no-jump interfaces which may occur at the zero level of the corresponding state functions. Afterwards, in \cite{BFG2} the same Authors introduced what they refer to as the \emph{multiphase {M}umford-{S}hah problem}, that is characterized by the sum of possibly different Mumford-Shah-type energy contributions, each related to a different phase, to which an extra term (justified on statistical reasons) is added. Such extra term is needed as otherwise minimizing configurations would present a single phase. However, in \cite{BFG2} no coherent interphase appears in the energy as ``no-jump interfaces'' along the reduced phase boundary are weighted in each phase energy as the internal jump part. 

In order to finally include  in the model  both coherent and incoherent  portions (possibly also  on the same interface between the phases), we first restrict to the two-phase setting (for a multi-phase setting in the context of film multilayers we refer to the related paper \cite{LlP1} under finalization) and 
follow a different direction than the one of \cite{BFG,BFG2}, which works for $d=2$: We adopt the approach considered in \cite{KP,KP1}, that was relying on the strategy developed for the Mumford-Shah problem in   \cite{DMS}. Such approach consists in first imposing a fixed constraint on the number of connected components for the boundary of the free phases, in order then to employ adaptations of Golab’s Theorem \cite{G} for proving the compactness with respect to a proper selected topology, and then in studying the convergence of the  solutions of the different minimum problems related to different  constraints on the connected components, as such constraints  tend to infinity. This second step has been performed for the one-phase setting in  \cite{KP1} (and for higher dimension in \cite{KP2}) by means of density estimates.

Here, we performed the first step in this program, reaching an existence result analogous to the one in \cite{KP}. 
However,  the extension of \cite{KP} to the two-phase setting requires important modification in the model setting, since  characterizing the incoherent interface as the jump portion of the bulk displacement on the two-phase interface as in \cite{KP,KP1,KP2} appears to be not feasible, as in  our setting the two-phase interfaces need to be considered much less regular
than Lipschitz manifolds like in \cite{KP,KP1,KP2}. To solve this issue we  consider as set variables of the energy not the two phases, to which we refer to as the \emph{film} and the \emph{substrate phase}, but the substrate phase and the whole region occupied by the composite of both the two phases, and we characterize the incoherent interfaces as a portion of the boundary intersection of such variables. %Therefore, on the one hand, as 
As a byproduct of this strategy we do not need to impose a constraint on the number of boundary components of the film phases (but only with respect to the substrates and the composite regions), so that the physical relevant setting of countable separated isolated film islands forming on top of the substrate is included in our analysis, even though it was  prevented by the formulation in \cite{KP}. Moreover, we can  also extend \cite{KP} to the presence of adjacent materials for the Griffith-type model with mismatch strain and delamination \cite{CC,FrMa,KP2}. % We recall that the extension to a situation without constraint on the number of boundary components has been performed for the setting of  \cite{KP} in  \cite{KP1} (and to higher dimension in \cite{KP2}) by means of density estimates.

In agreement with the SDRI theory \cite{AT,D,Gr} the total energy $\mathcal{F}$ is given by the sum of two contributions, namely the elastic energy $\mathcal{W}$ and  the surface energy $\mathcal{S}$, and it is defined on triples $(A,S,u)\in\wt{\mathcal{C}}$, where $u$ represent the \emph{bulk displacement} of the composite material of the two phases, and $A$ and $S$ are sets whose closures represents the \emph{composite region}  and the \emph{substrate region}, respectively, while the \emph{film region} is given by $\overline{A}\setminus S^{(1)}$   (for $S^{(1)}$ denoting the points with  density 1 in $S$). More precisely, given $\Om:= (-l,l)\times_{\R^2}(-L,L) \subset \R^2$ as the region where the composite material is located, which is defined for the two parameters $l,L>0$ and to which we refer as the \emph{container} in analogy to the notation of capillarity problems, we introduce
 \begin{align*}
	\wt{\mathcal{C}}:=\Large\{ (A,S,u):   \quad& \text{$A$ and $S$ are  $\mathcal{L}^2$-measurable sets with $S\subset  \overline{A}    \subset \overline \Om$ such that}\\   &\text{$\partial A \cap \Int(S) = \emptyset$, $\partial A$ and $\partial S$  are $\Hs^1$-rectifiable, } \\ &\text{$\Hs^1 (\partial A)+\Hs^1 (\partial S) < \infty$, and 
     %\subset \overline \Om \times \mathrm{ AH} \times\mathrm{ AK} (\Omega) :  
			%S_{h,K}  \in \mathrm{ AS}(\Om) 
  $u \in  H^{1}_\mathrm{{loc}} ( \Int (A); \R^2)$}  \}. 
\end{align*}
We  define $\mathcal{F} :  \wt{\mathcal{C}}%(\Om) 
	\rightarrow\mathbb{R}$ as
% \begin{equation}
%		\label{eq:energyF}
$$
		\mathcal{F}(A,S%h,K
		,u) := \mathcal{S}(A,S%h,K
		) + \mathcal{W}(A,u)
	%\end{equation}
 $$
	for every $(A,S%h,K
	,u)\in\wt{\mathcal{C}}$. 
%In agreement with the SDRI theory \cite{AT,D,Gr} the total energy $\mathcal{F} :  \wt{\mathcal{C}}%(\Om) 	\rightarrow\mathbb{R}$ is defined on triples $(A,S,u)\in\wt{\mathcal{C}}$ of sets $A$ and $S$, whose closures $\overline{A}$ and $\overline{S}$ represent the \emph{composite region} of the two phases and the \emph{substrate region}, respectively, while the \emph{film region} is given by $\overline{A}\setminus S^{(0)}$ (for $S^{(0)}$ denoting the points with zero density  in $S$), and $u$ represents the \emph{bulk displacement of the composite material}, as the sum of both the elastic energy $\mathcal{W}$ and  the surface energy $\mathcal{S}$, namely 
%\begin{equation}
%		\label{eq:energyF}
%		\mathcal{F}(A,S%h,K
%		,u) := \mathcal{S}(A,S%h,K
%		) + \mathcal{W}(A,u)
%	\end{equation}
%	for any $(A,S%h,K
%	,u)\in\wt{\mathcal{C}}$. In particular, we notice that as a difference with respect to \cite{KP,KP1,KP2}, the surface energy is decoupled from the elasticity energy. where
The elastic energy $\mathcal{W}(A,u)$  %(by also taking into account that in our setting the film and substrate regions are given as subsets of the composite region) for  configurations $\left( A,S
is defined  %for every configurations $\left( A,S, u \right) \in { \wt{\mathcal{C}}}$   
  analogously to \cite{DP, KP, KP1, KP2} by
	\begin{equation*}
		\mathcal{W}(A,u):= \int_{A}{W \left( x, E \left( u \xp - E_0 \xp \right) \right)dx},
	\end{equation*}
	where the elastic density $W$ is determined by the quadratic form
	\begin{equation*}
		W \left( x, M \right) := \mathbb{C}\xp M : M,
	\end{equation*}
	for a fourth-order tensor $\mathbb{C}: \Om \to \mathbb{M}^2_\sym$, $E$ denotes the symmetric gradient, i.e., $E(v) := \frac{\nabla v + \nabla^T v}{2}$ for any $v \in H^1_\mathrm{{loc}}(\Int (A); \R^2)$, representing the \emph{strain}, and $E_0$ is
	the \emph{mismatch strain} $x \in \Om \mapsto E_0 \xp \in \mathbb{M}^{2}_\mathrm{{sym}} $ defined as
	\begin{equation*}
		E_0 := \begin{cases}
			E(u_0) & \text{in } \Om \setminus S, \\ 0 & \text{in } S,
		\end{cases} 
	\end{equation*}
	for a fixed $u_0 \in H^1(\Om; \R^2)$. The mismatch strain is included in the SDRI theory to represent the fact that the two phases are given by possibly different crystalline materials whose free-standing equilibrium lattice could present a lattice mismatch. In this context notice that $\mathbb{C}$ is allowed to present discontinuities at the interface between the two materials   (see hypothesis (H3) in Section \ref{sec:mainresults}). 
	
The surface energy $\mathcal{S}(A,S)$ is given %for every configuration  $\left( A,S, u \right) \in { \wt{\mathcal{C}}}$
by
$$
\mathcal{S} (A,S):= \int_{\Om\cap(\partial A \cup\partial S)} {\psi (x, \nu(x)) \, d\Hs^{1}(x)},$$
where, by denoting with $\nu_U(z)$ the normal unit vector  pointing outward to a set  $U\subset\mathbb{R}^2$ with $\Hs^1$-rectifiable boundary at a point $x\in\partial U$,
$$\nu(x):=\begin{cases}
\nu_A(x) &\text{if  $z\in\partial A\setminus \partial S$},\\
\nu_S(x) &\text{if  $z\in\partial S$,}
    \end{cases}
$$
and $\psi: \overline{\Om} \times \R^2\to [0,\infty]$ represents the \emph{surface tension of the composite} of the two phases, which we allow to be anisotropic.  
 
 In order to properly define $\psi$ we need to consider the three surface tensions $\varphi_{\mathrm{F}},\, \varphi_{\mathrm{S}},\,  \varphi_{\mathrm{ FS}}: \overline{\Om} \times \R^2\to [0,\infty]$ characterizing the three possible interfaces for the two-phase setting, i.e., the interface between the film phase and the vapor, the interface between the substrate phase and the vapor, and the interface between the film and the substrate phases. 
 Furthermore, to simultaneously treat both the wetting and the dewetting regime, we introduce two auxiliary surface tensions, to which we refer as the \emph{regime surface tensions}, that we defined as:
 %Furthermore, to simultaneously treat both the \emph{wetting regime}, that is the regime in which it is more convenient to cover the substrate surface with an infinitesimal layer of film atoms, instead of letting it exposed to the vapor as it happens in the   \emph{dewetting regime}, we introduce two auxiliary surface tensions, to which we refer as the \emph{regime surface tensions}, that we defined as:
$$
\varphi := \min\{\varphi_{\mathrm{S}}, \varphi_{\mathrm{F}} + \varphi_{\mathrm{FS}} \}\qquad\text{and}\qquad\varphi' : = \min\{\varphi_{\mathrm{S}},\varphi_{\mathrm{F}}\},
$$
since $\varphi_{\mathrm{S}}$ is the surface tension associated to the dewetting regime, as the substrate surface remains exposed to the vapor, while $\varphi_{\mathrm{F}} + \varphi_{\mathrm{FS}}$ and $\varphi_{\mathrm{F}}$ are both  associated to the wetting regime, respectively, to the situation of  an infinitesimal layer of film atoms covering the substrate surface (by being bonded to the substrate atoms), which is referred to as the \emph{wetting layer}, or of simply a detached  \emph{film filament}.
%since $\varphi_{\mathrm{S}}$ is the surface tension associated to the dewetting regime, as the substrate surface remains exposed to the vapor, while $\varphi_{\mathrm{F}} + \varphi_{\mathrm{FS}}$ and $\varphi_{\mathrm{F}}$ are associated to the wetting regime, as the substrate surface is covered by an infinitesimal layer of film atoms depending if the film infinitesimal layer is attached to the substrate surface forming a \emph{wetting layer} or it is simply a \emph{film filament}. 
We define 
\begin{equation}
		\label{psi}
		\psi(x, \nu(x)) :=  \begin{cases} 
  \varphi_{\mathrm{ F}}(x, \nu \xp) & x \in \Om \cap (\partial^* A \setminus \partial^*{S}),  \\
  \varphi (x, \nu \xp) & x \in \Om \cap   \partial ^*{S} \cap \partial^* A, \\ 
  \varphi_{\mathrm{ FS}} (x, \nu \xp ) & x \in\Om \cap (\partial ^*{S} \setminus \partial A),\\
  (\varphi_{\mathrm{ F}} + \varphi )(x, \nu \xp ) & x \in\Om \cap  \partial ^*{S} \cap \partial A  \cap A^{(1)},\\
   2\varphi_{\mathrm{ F}}(x, \nu \xp )%\varphi_{\mathrm{ F}}(x, \nu \xp ) +  \varphi_{\mathrm{ F}}(x, -\nu \xp ) 
   & x \in \Om \cap \partial A 
   \cap  A^{(1)} \cap  S^{(0)},  \\
   2 \varphi' (x, \nu \xp )% \varphi' (x, \nu \xp ) +   \varphi' (x, -\nu \xp )
   & x \in  \Om \cap \partial A 
   \cap A^{(0)},  \\
   2\varphi_{\mathrm{ FS}} (x, \nu \xp)% \varphi_{\mathrm{ FS}} (x, \nu \xp)  +   \varphi_{\mathrm{ FS}} (x, -\nu \xp) 
    & x \in  \Om \cap (\partial {S} \setminus \partial A) \cap \left(S^{(1)} \cup S^{(0)}\right)  \cap A^{(1)},  \\
     2\varphi (x, \nu \xp )% \varphi (x, \nu \xp ) +   \varphi (x, -\nu \xp ) 
     & x \in  \Om \cap \partial {S} \cap \partial A \cap S^{(1)},  
 \end{cases} 
	\end{equation}
where $\partial^*U$ and $U^{(\alpha)}$ denote, when well defined, the \emph{reduced boundary} and the set of points of density $\alpha\in[0,1]$ for a set $U\subset\mathcal{R}^2$. We notice that the 8 subregions of the domain $\Om\cap(\partial A \cup\partial S)$ in which the definition of $\psi$ is distinguished are the counterpart of the 5 terms appearing in the surface energy of 
\cite{KP} for the two-phase setting (see  Remark \ref{rem:previous_surface_tensions} for more details). Each subregion appearing in \eqref{psi} represents, by moving line by line, the \emph{film free boundary}, the \emph{substrate free boundary}, the \emph{film-substrate coherent interface}, the \emph{film-substrate incoherent interface}, the \emph{film cracks}, the \emph{exposed filaments}, 
	 the \emph{substrate filaments and cracks} in the film-substrate coherent interface, and  the \emph{substrate cracks} in the film-substrate incoherent interface, respectively.
  
  We observe that the surface tensions associated to the  film free boundary, the coherent substrate-film interface, and the substrate free boundary, are simply $\varphi_{\mathrm{F}}$,  $\varphi_{\mathrm{FS}}$, and (to accommodate both wetting and dewetting regimes) $\varphi$ respectively, while the surface tension associated to the incoherent film-substrate interface is chosen to be $\varphi_{\mathrm{F}}+\varphi$ in analogy with the film-substrate delamination or delaminated region  in \cite{KP,KP1, KP2}, since the incoherent interface coincides with the portion of the film-substrate interface in which there is no bonding between the film and the substrate surfaces. All remaining 4 terms are weighted double (in analogy to the lower-semicontinuity results previously obtained in \cite{ChB,DP,FFLM2,KP, KP1, KP2} for the one-phase setting) as they refer to either material filaments in the void  or cracks in the composite  bulk.   In particular, we notice that  in the substrate bulk region represented by  $S^{(1)}$ we distinguish between substrate cracks in the coherent and in the incoherent film-substrate interface, that are counted with weight $2\varphi_{\mathrm{FS}}$ and $2\varphi$, respectively, while in the film bulk region $A^{(1)}\cap S^{(0)}$ we distinguish between substrate filaments that are not film cracks counted with  $2\varphi_{\mathrm{FS}}$ and film cracks counted  $2\varphi_{\mathrm{F}}$ (see Figure \ref{figure1}). % We also notice that as an important  difference from \cite{KP, KP1, KP2} we choose to characterize the incoherent portion of the film-substrate interface  not as a jump portion of the bulk displacement, that in fact are here considered in $u\in H_\mathrm{ loc}^1(\textrm{Int}(A);\mathbb{R}^2)$, but as the set $\Om \cap  \partial ^*{S} \cap \partial A  \cap A^{(1)}$, allowing to decouple the surface energy $\mathcal{S}$ from  the bulk-displacement variable. This choice is made to overcome the extra difficulty given by the two-phase setting in which the substrate-film interfaces are in general non-regular and not  Lipschitz manifolds like in \cite{KP,KP1,KP2} on which we can easily consider the bulk-displacement jump set. %, and so not easily on which carrying the displacement jump set might be complicated, as on which and not 

\begin{figure}[ht]
  \centering
\includegraphics{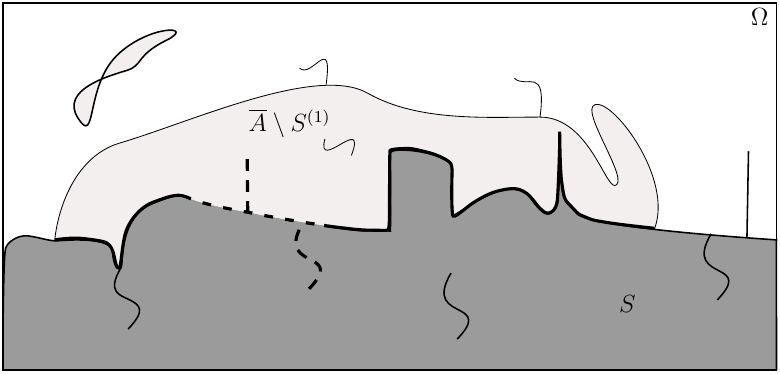}
\caption{ \small The admissible regions for an admissible configuration $(A,S,u)\in\C_\textbf{m}$   (see Definition \ref{def:admissibleconf}) are represented by indicating the substrate region $\overline{S}$ and the film region $\overline{A}\setminus S^{(1)}$ 
with a darker and a lighter gray, respectively. In particular, the film and the substrate free boundaries (with the film and substrate filaments) are indicated with a thinner line, while the film-substrate interface   is depicted with a thicker line that is continuous or dashed to distinguish between its incoherent portions and its coherent portions (inclusive of substrate cracks and filaments that are not film cracks), respectively.  
}
			\label{figure1}
		\end{figure}

The main result of the paper consists in finding a physically relevant family of admissible configurations in $\wt{\mathcal{C}}$, which is denoted by $\mathcal{C}_\textbf{m}$ for $\textbf{m}:=(m_0,m_1)\in\mathbb{N}^2$, %$\mathcal{C}\subset \wt{\mathcal{C}}$ 
in which we can prove that, under a \emph{two-phase volume constraint}, $\mathcal{F}$ admits a minimizer. We find such a family $\mathcal{C}_\textbf{m}\subset \wt{\mathcal{C}}$  by considering as admissible configurations $(A,S,u)\in\wt{\mathcal{C}}$ the ones for which (see  Definition \ref{def:admissibleconf} for more details):
\begin{itemize}
    \item[-]  the number of boundary connected components of  $S$ and  $A$ are fixed to be at most $m_0$ and $m_1$, respectively,
  \item[-]  the  substrate regions $S$ satisfy an \emph{exterior graph constraint} consisting in requiring  that $\partial^*S\cup (\partial S\cap S^{(0)})$ %$\partial^*S_{h,K}\cup (\partial S_{h,K}\cap S_{h,K}^{(0)})$ 
 is the graph of an upper semicontinuous function with pointwise bounded variation (while  internal, also non-graph-like, substrate cracks are allowed),
\end{itemize} 
as shown in Figure \ref{figure1}. We notice that such an exterior graph constraint allows to have  a more involved description of the substrate regions than the previously considered  graph constraint in the literature for the one-phase setting \cite{ChB,CF,DP,FFLM2},  which is indeed needed to achieve the compactness result contained in Theorem \ref{thm:compactness}.

 Therefore, for any  two volume parameters $\mathbbm{v}_0, \mathbbm{v}_1 \in [\mathcal{L}^2(\Om)/2, \mathcal{L}^2(\Om)]$  such that $\mathbbm{v}_0\leq \mathbbm{v}_1$, we consider the problem:
\begin{equation}\label{eq:introduction1}
  \inf_{%\substack{
  \begin{subarray}{l}
  \qquad(A,S,u)\in\mathcal{C}_{\textbf{m}}
  \\ 
  \mathcal{L}^2(S) = \mathbbm{v}_0,\, \mathcal{L}^2({A}) = \mathbbm{v}_1
 %}
  \end{subarray}
  }{\mathcal{F}(A,S,u)}, 
\end{equation}
which we tackle by employing the \emph{direct method} of the calculus of variations,  namely by equipping $\wt{\mathcal{C}}$ with a  properly chosen topology $\tau_{\mathcal{C}}$ sufficiently weak to establish a compactness property for  energy-equibounded sequences in $\mathcal{C}_{\textbf{m}}$ and strong enough to prove the  lower semicontinuity of  $\mathcal{F}$ in $\mathcal{C}_{\textbf{m}}$.
%The main result of the paper consists in proving that $\mathcal{F}$ admits a minimizer in the family of admissible configurations $(A,S,u)\in\mathcal{C}_\textbf{m}$ subject to the \emph{two-phase volume constraint} depending on any two parameters $\mathbbm{v}_0, \mathbbm{v}_1 \in [\mathcal{L}^2(\Om)/2, \mathcal{L}^2(\Om)]$  such that $\mathbbm{v}_0\leq \mathbbm{v}_1$, i.e., that $\mathcal{L}^2({S}) = \mathbbm{v}_0$ and $\mathcal{L}^2({A}) = \mathbbm{v}_1$. 
%We also notice that this result extends the existence result of \cite{KP} not only to the two-phase setting, but also to the physical relevant settings with a countable separated isolated islands of a phase forming on top of a substrate (which then was included in the analysis in \cite{KP1} with a different method), and of  $\mathbbm{v}_1:=\mathcal{L}^2(\Om)$, which was also not possible in \cite{KP1} (see Remark \ref{rem:previous_surface_tensions}) and provides the basis for the study of the multiphase setting of film multilayer, which is addressed in \cite{LlP1} (see Remark \ref{rem:multilayers}). 
%We solve the minimum problem \eqref{eq:introduction1} by employing the \emph{direct method} of the calculus of variations, namely by equipping  $\wt{\mathcal{C}}$ with a  properly chosen topology $\tau_{\mathcal{C}}$ sufficiently weak to establish a compactness property for  energy-equibounded sequences and strong enough to prove the  lower semicontinuity of  $\mathcal{F}$. 
The topology $\tau_{\mathcal{C}}$ is characterized by the convergence: 
$$
(A_k,S_k,u_k) %\overset{\tau_{\mathcal{C}}}{\longrightarrow} 
 \xrightarrow[k\to\infty]{\topC}
(A,S,u) \, \Longleftrightarrow\,
\begin{cases}
\text{$\sup_{k\in\mathbb{N}}{ \Hs^1 \left( \partial A_k \right)} < \infty$, $\sup_{k\in\mathbb{N}}{ \Hs^1 \left( \partial S_k \right)} < \infty$,}\\
\text{$\mathrm{ sdist} \left( \cdot, \partial A_k \right) \xrightarrow[k\to\infty]{} \mathrm{ sdist} \left( \cdot, \partial A \right)$ locally uniformly in $\R^2$},\\
\text{$\mathrm{ sdist} \left( \cdot, \partial S_k \right) \xrightarrow[k\to\infty]{} \mathrm{ sdist} \left( \cdot, \partial S \right)$ locally uniformly in $\R^2$,}\\
u_k \xrightarrow[k\to\infty]{} u \quad\text{a.e.\ in $\mathrm{{Int} \left(A\right)}$,}
\end{cases}
$$
 where the \emph{signed distance function} is defined for any $E \subset \R^2$ as follows
	$$
	    \sdist(x, \partial E) := \begin{cases}
	        \dist(x, E) & \text{if } x \in \R^2 \setminus E, \\
	        -\dist(x, E) & \text{if } x \in  E.
	    \end{cases}
	$$

The compactness property shared by energy-equibounded sequences $(A_k,S_k,u_k)\in\C_\mathbf{m}$ that we establish in Theorem \ref{thm:compactness}  consists in the existence, up to a subsequence, of a possibly different sequence  $(\wt{A}_k,\wt{S_k},\wt{u_k})\in\C_\mathbf{m}$ compact in $\C_\mathbf{m}$  with respect to $\tau_\C$ such that 
$$
\liminf_{n \rightarrow \infty}{\mathcal{F}(A_k,S_k,u_k) } = \liminf_{n \rightarrow \infty}{\mathcal{F}\left(\wt{A}_k,\wt{S_k},\wt{u_k}\right)}.
$$
 This is achieved by   both extending to the two-phase setting and to the situation with the exterior graph constraint the strategy used in \cite[Theorem 2.7]{KP}. For the latter, we rely on the arguments already used in \cite{ChB,FFLM2}, while for the former we used the  Blaschke-type selection principle proved in \cite[Proposition 3.1]{KP} together with the Golab's Theorem \cite[Theorem 2.1]{G} and we implement to the two-phase setting the construction of  \cite[Proposition 3.6]{KP}. Such construction is needed to take care of  those connected components of $A_k$ that separate in the limit in multiple connected components, e.g., in the case of
neckpinches, in order to properly apply \emph{Korn's inequality} just after having introduced extra boundary to create different components also at the level  $A_k$ (by passing to the sequence with composite regions $\wt{A}_k$). We notice though that the characterization of the delamination region introduced in this manuscript  allows for a simplification in the arguments used \cite[Theorem 2.7]{KP}, as the surface energy does not involve the bulk displacements, also yielding an extension of the result by  including the situation with $S\neq\emptyset$ and $\mathbbm{v}_1:=\mathcal{L}^2(\Om)$.

The crucial point in proving the $\tau_\C$-lower semicontinuity of $\mathcal{F}$ is the $\tau_\C$-lower semicontinuity of $\mathcal{S}$, as 
the $\tau_\C$-lower semicontinuity of $\mathcal{W}$ directly follows by convexity similarly to \cite{FFLM2,KP}. In order to establish the $\tau_\C$-lower semicontinuity of $\mathcal{S}$ in Proposition \ref{thm:lowersemicontinuityS} we fix $(A_k,S_k,u_k)\in\C_\mathbf{m}$ and $(A,S,u)\in\C_\mathbf{m}$ such that 
$(A_k,S_k,u_k)\xrightarrow{\tau_\C}(A,S,u)$, and we associate the positive Radon measures  $\mu_k$  and  $\mu$  in $\R^2$  to the localized energy versions of $\mathcal{S}(A_k,S_k,u_k)$ and $\mathcal{S}(A,S,u)$, respectively. We have that 
\begin{equation}\label{lsc_F_saa}
\liminf\limits_{k\to+\infty} \mathcal{S}(A_k,S_k,u_k) \ge \mathcal{S}(A,S,u)\qquad\Longleftrightarrow\qquad \liminf\limits_{k\to+\infty} \mu_k(\R^2) \ge \mu(\R^2),
\end{equation}
and since, up to a subsequence, $\mu_k$ weakly* converges to some positive Radon measure $\mu_0$, and  $\mu$ is absolutely continuous with respect to $\mathcal{H}^1\res((\partial A\cup\partial S)\cap\Omega)$, by  proving   the  following   estimate involving  \emph{Radon-Nikodym derivatives}:
\begin{equation}\label{RN_estimates}
\frac{\d\mu_0}{\d\mathcal{H}^1\res(\Om\cap(\partial A\cup\partial S))} \ge \frac{\d\mu}{\d\mathcal{H}^1\res(\Om\cap(\partial A\cup\partial S))} \quad  \text{$\mathcal{H}^1$ -a.e. on $\Om\cap(\partial A\cup\partial S),$}
\end{equation}
which implies that
$
\lim \mu_k(\R^2) = \mu_0(\R^2) \ge \mu(\R^2),
$
in view of \eqref{lsc_F_saa}, the  $\tau_\C$-lower semicontinuity of $\mathcal{S}$ follows. 

The proof of \eqref{RN_estimates} is very involved and it is performed  by separating $\Om\cap(\partial A\cup\partial S)$ in 12 portions on  which we apply a \emph{blow-up technique} (see, e.g., \cite{FM} %\cite{ADT,BFM}
) together with \emph{ad hoc} (apart from the 2 portions in which it turns out that we can use \cite[Theorem 20.1]{M}) results, i.e., Lemmas
%\ref{lema:creationOneBdr}, \ref{lema:crackssubstrate}, \ref{lema:filamentssubstrate}, \ref{lem:delaminationofS}, \ref{lema:creationdelamin} and \ref{lema:creationcrackdelamination}
\ref{lema:creationOneBdr}-\ref{lema:creationcrackdelamination},  which can be seen as the counterpart in the two-phase setting of \cite[Lemmas 4.4 and 4.5]{KP} (see Table \ref{tab:my_label} for more details on  the 12 blow-ups). 
In order to prove  Lemmas \ref{lema:creationOneBdr}-\ref{lema:creationcrackdelamination},  firstly we formalize the notions of \emph{film islands}, \emph{composite voids}, and \emph{substrate grains} (see Definition \ref{def:notationislands2}), secondly we prove in Lemma \ref{lem:finiteness} that the  coherent interface associated to  any configuration $(A,S, u%h,K
) \in  \C_\textbf{m}$ can be regarded, up to an error and a modification of $(A,S, u%h,K
)$ by passing to the family $\wt \C$, as given by a finite number (depending on the initial configuration $(A,S, u%h,K
)$) of connected  components, and finally we design induction arguments (with respect to the number of such components) in which  we are able to use the induction hypothesis by  ``shrinking" islands,  ``filling"  voids,  and ``modifying grains in new voids'' as depicted in Figures \ref{figure:oneisland}, \ref{figure:replacementofvoids}, and \ref{figure:replacementofgrains}, respectively, by means of employing the \emph{anisotropic minimality of segments}  \cite[Remark 20.3]{M}.

 \EEE

	The manuscript is organized as follows: in Section \ref{sec:notation} we specify the notation used throughout this paper, in Section \ref{sec:model} we introduce the  model under consideration, present some preliminary results  and state the main results, i.e., the existence of a solution to \eqref{eq:introduction1} in Theorem \ref{thm:existence} together with the compactness result of Theorem \ref{thm:compactness} and the lower semicontinuity result of Theorem \ref{thm:lowersemicontinuity}, in Section \ref{sec:compactness}, we prove  Theorem  \ref{thm:compactness}, in Section \ref{sec:semicontinuity} we prove Theorem \ref{thm:lowersemicontinuity}, and finally in Section \ref{sec:existence} we prove Theorem \ref{thm:existence}. %with a result of pointwise variation functions and about a relation between the signed distance convergence and the Kuratowski convergence.

\EEE

	%%%%%%%%%%%%%%%%%%%%%%%%%%%%%%%%%%%%%%%%%%%%%%%%%%%%%%%%%%%%%%%%%%
	
	\section{Notation}\label{sec:notation}
	 In this section we collect the relevant notation used throughout the paper by separating it for different mathematical areas. 
	
	\subsection*{Linear algebra}
 We consider the orthonormal basis $\left\{ \mathbf{e}_\mathbf{1}, \mathbf{e}_\mathbf{2} \right\} =  \left\{ (1,0), (0,1) \right\}$ in $\R^2$ and indicate the coordinates of points $x$ in $\R^2$ by $(x_1,x_2)$. We indicate by $a \cdot b:=\sum_{i=1}^2 a_i b_i$ the Euclidean scalar product between points $a$ and $b$ in $\R^2$, and we denote the corresponding norm by $|a|:=\sqrt{a \cdot a}$. 
	
	Let $\mathbb M^2$ be the set of $(2 \times 2)$-matrices and by $\mathbb M^{2}_\mathrm{ sym}$ the space of symmetric $(2 \times 2)$-matrices. The space $\mathbb M^2$ is endowed with Frobenius  inner product $E:F:= \sum_{i,j =1}^{2} E_{ij} F_{ij}$ and, with a slight abuse of notation, we denote the corresponding norm by $|E|:=\sqrt{E:E}$. 
	
	\subsection*{Topology}
	Since the model considered in this manuscript is two-dimensional, if not otherwise stated, all the sets are contained in $\R^2$.  We denote the cartesian product in $\R^2$ of two sets $A \subset \R$ and $B\subset \R$ by $A\times_{\R^2}B:= \{ (a,b): \, a \in E \text{ and } b \in F \}$. %The orthonormal basis of $\R^2$ is $\left\{ \mathbf{e}_\mathbf{1}, \mathbf{e}_\mathbf{2} \right\} =  \left\{ (1,0), (0,1) \right\}$. 
    For any set $E \subset \R^2$, we denote %by $E^c$ the complement of $E$ and 
%	For any set $E \subset \R^2$ we denote 
by $\Int(E)$, $\overline{E}$ and $\partial E$  interior, the closure and the topological boundary of $E$, respectively.  Furthermore, we denote by $Cl(F)$ the closure of a set $F$ in $ E$ with respect to the relative topology in $E$. % as the interior of $E$, $\overline E$ the closure of $E$ and $\partial E$ as the topological boundary of $E$. 
	For any $r \in \R$ we define $rE := \lk rx: x \in E  \rk$.
	
	Given $\nu \in \mathbb{S}^1$ and $x \in \R^2$, $ Q_{\rho, \nu} \xp$ is the open square of sidelength $2 \rho > 0$ centered at $x$ and whose sides are either perpendicular or parallel to $\nu$. Note that if $\nu = \textbf{e}_ \textbf{1}=  \bm{e}_{\bm{2}}$ and $x= 0$, $Q_\rho := Q_{\rho, \nu} (0)  = \left( -\rho, \rho \right)\times \left( -\rho, \rho \right)$. %We denote by $Q^{-,\mathbf{e}_\mathbf{i}} _\rho$ and $Q^{+,\mathbf{e}_\mathbf{i}} _\rho$ the half negative part and half positive part of $Q_\rho$ with respect to the directions given by the vector $\mathbf{e}_\mathbf{i}$, respectively, for $i = \textbf{1},  \textbf{2}$ i.e., $ Q^{-,\mathbf{e}_\mathbf{i}} _\rho :=\{ x \in Q_1 : x \cdot \textbf{e}_i \le 0 \} $ and $ Q^{+,\mathbf{e}_\mathbf{i}} _\rho :=\{ x \in Q_1 : x \cdot \textbf{e}_i \ge 0 \}$ for $i = \textbf{1},  \textbf{2}$. 
	We define by $I_\rho$ the symmetric segment $I_\rho := \left[ -\rho, \rho \right]\subset\mathbb{R} $. Furthermore, a \emph{(parametrized) curve} in $\mathbb{R}^2$ is a continuous  function $r:[a,b]\to\mathbb{R}^2$ injective in $(a,b)$ for $a,b\in\mathbb{R}$ with $a<b$ and its image is referred to as the \emph{support of the curve}. Moreover, since any continuous bijection between compact sets has a continuous inverse,  the support of any curve is homeomorphic to a closed interval in $\mathbb{R}$ (see \cite[Section 3.2]{F}).

	Given $y_0 \in \R^2$ and $\rho >0$, we define the \emph{blow-up map} centered in $y_0$ with radius $\rho$ as the function $\sigma_{\rho, y_0}: \R^2 \rightarrow \R^2$ by
	\begin{equation*}
		\sigma_{\rho, y_0} \left(y \right) := \frac{y-y_0}{\rho}
	\end{equation*}
	for all $y \in \R^2$. Observe that if we apply $\sigma_{\rho, x }$ to $Q_{\rho, \nu}\xp$ and $\overline{ Q_{\rho, \nu}\xp }$, we obtain that $\sigma_{\rho, x } \left( Q_{\rho, \nu}\xp \right) = Q_{1, \nu}(0)$ and $\sigma_{\rho, x } ( \overline{Q_{\rho, \nu}\xp} ) = \overline{Q_{1, \nu}(0)}$. When $y_0 = 0$, we write $\sigma_\rho$ instead of $\sigma_{\rho,0}$. We denote by $\pi_i$ the projections onto $x_i$-axis for $i=1,2$, i.e., the maps $\pi_1 : \R^2 \to  \R  $ and $\pi_2 : \R^2 \to \R $ are such that $\pi_1 (x_1, x_2) = x_1$ and $\pi_2 (x_1, x_2) = x_2$ for every $x = (x_1,x_2)\in \R^2$.
	
	Finally, let %We denote 
	$\dist(\cdot, E)$ and $\sdist(\cdot, \partial E)$ be the \emph{distance} function from $E$ and the \emph{signed distance} from $\partial E$ respectively, where we recall that $\sdist(\cdot, \partial E)$ is defined by % of $E \subset \R^2$. Let $x \in \R^2$ and $A \subset \R^2$, we define the signed distance from $x$ to $\partial A$ as follows
	\begin{equation*}
		\sdist(x, \partial E):= \begin{cases}
			\dist (x, E) & \textrm{if } x \in \R^2 \setminus E, \\
			-\dist (x, E) & \textrm{if } x \in E
		\end{cases}
	\end{equation*}
	for every $x \in \R^2$.
	\subsection*{Geometric measure theory}
	We denote by $\mathcal{L}^2{(B)}$ the $2$-dimensional Lebesgue measure of any Lebesgue measurable set $B \subset \R^2$ and by $\mathbbm{1}_B$ the characteristic function of $B$. For $\alpha \in \left[0, 1 \right]$ we denote by $B^{(\alpha)}$ the set of points of density $\alpha$ in $\R^2$, i.e.,  
	$$ B^{(\alpha)} := \lk x \in \R^2: \lim_{r \to 0}{ \frac{\mathcal{L}^2({B \cap B_r \xp})}{\mathcal{L}^2({B_r \xp})} } = \alpha \rk. $$ 
	
	We denote the distributional derivative of a function $f \in L^1_{\mathrm{ loc}}(\R^2)$ by $Df$ and define it as the operator $D: C^\infty_c (\R^2) \to \R$ such that
	$$\int_{\R^2}{Df \cdot \varphi} =- \int_{\R^2}{f \cdot  \grad \varphi \, dx}, $$
	for any $\varphi \in C^\infty_0 (\R^2)$.
	%{ Let $\{E_k\}_k $ be a sequence of Lebesgue measurable sets and $E$ a Lesbesgue set, we say that $E_k$ converges locally to $E$ and denoted by $E_k \xrightarrow{loc} E$, if and only if, for any $K \subset \R^2$ compact,
	%	$$\lim_{k \to +\infty}{ \abs{K \cap E \triangle E_k} }=0.$$}
	
	We denote with $\Hs^1$ the 1-dimensional Hausdorff measure. %Let $K \subset \R^2$, $K$ is $\Hs^1$-measurable if $\Hs^1 (K) < \infty$.
	We say that $K \subset \R^2$ is $\Hs^1$-rectifiable if $0 <\Hs^1 (K) < +\infty$ and $\theta_* (K,x) = \theta^* (K,x)=1 $ for $\Hs^1$-a.e. $x\in K$, where
	$$ \theta_* (K,x) :=  \liminf_{r \to 0^+}{ \frac{\Hs^1 (K \cap B_r \xp) }{2r} } \quad \textrm{and} \quad \theta^* (K,x):=\limsup_{r \to 0^+}{ \frac{\Hs^1 (K \cap B_r \xp) }{2r} }. $$ 
	We define sets of finite perimeter as in \cite[Definion 3.35]{AFP} and the reduced boundary $\partial ^* E$ of a set $E$ of finite perimeter by
	\begin{equation}
		\label{reducedboundary}
		\partial^* E := \lk  x \in \R^2: \exists \nu_E \xp := - \lim_{r \to 0} {\frac{D \mathbbm{1}_{E}(B_r \xp)}{\abs{D \mathbbm{1}_ {E} }(B_r \xp)}  }, \abs{\nu_E \xp } =1 \rk,
	\end{equation}
	where we refer to $\nu_E \xp$ as the measure-theoretical unit normal at $x \in \partial E$.
	
	Let $x \in \R^2$, if $\nu_E \xp$ exists $T_{x,\nu_E \xp} := \lk y \in \R^2: y \cdot \nu_E \xp = 0 \rk $ stands for the approximate tangent line and $H_{x,\nu_E \xp} := \lk y \in \R^2: y \cdot \nu_E \xp \le 0 \rk$ is its corresponding half space at $x$. 
	%Note that if $\nu_E = \mathbf{e}_\mathbf{1}$ or $\nu_E = \mathbf{e}_\mathbf{2}$, $Q_1^{-,\mathbf{e}_\mathbf{1}} =H_{\mathbf{e}_\mathbf{1}}\cap Q_1 $ and $Q_1^{-,\mathbf{e}_\mathbf{2}}= H_{\mathbf{e}_\mathbf{2}} \cap Q_1$.
	
	For any set $E \subset \R^2$ of finite perimeter, by \cite[Corollary 15.8 and Theorem 16.2]{M} it yields that
	\begin{align}
		\Hs^{1}(E^{(1/2)} \setminus \partial^* E) = 0 \quad \text{and} \quad \partial ^* E \subset E^{(1/2)}. \label{eq:differenceonE_halfreducedboudnary} 
	\end{align}
	Moreover, for any set $E \subset \R^2$ of finite perimeter, we have 
		\begin{equation}
			\label{eq:decompositionE} 
			\partial E = N \cup \partial^* E \cup (E^{(1)} \cup E^{(0)}) \cap \partial E, 
	\end{equation}
	where $N$ is a $\Hs^1$-negligible (see \cite[Section 16.1]{M}).

We denote by $U_{k,l}$ the (half-open) dyadic square, i.e., % $U_{k,l}$ is a (half-open) dyadic square  if 
$$U_{k,l} := [0, 2^{-k}) \times_{\R^2} [0, 2^{-k}) + 2^{-k}l$$
for any $k \in \N$ and $l \in \mathbb{Z}\times \mathbb{Z}$. We denote by $\mathcal{Q}$ the family of dyadic squares and by $\mathcal N^1$ {\itshape the net measure}. More precisely, for any Borel set $E$,
	\begin{equation}
		\label{eq:netmeasure1}
		\mathcal N^1 (E) := \liminf_{\delta \to 0}{\mathcal{N}^1_\delta}(E),
	\end{equation}
	is the $\mathcal N^1$-measure of $E$, where
	\begin{multline}
	\label{eq:netmeasure2}
	    \mathcal{N}^1_\delta (E) := \inf \{ \sum_{i \in I}\mathrm{ diam}(U_i): \lk U_i \rk_{i\in I } \subset \mathcal{Q} \text{ is a countable disjoint covering of } E \\  \text{ and } \mathrm{ diam}(U_i) \le \delta     \} .
	\end{multline}
	%\begin{equation}
	%	\label{eq:netmeasure2}
	%	\mathcal{N}^1_\delta (A) := \inf{\lk \sum_{i \in I}{\mathrm{ diam}(U_i): \lk U_i \rk_{i\in I } \subset \mathcal{Q} \text{ is a countable disjoint covering of } A \text{ and } \mathrm{ diam}(U_i) \le \delta     } \rk }.
 	%\end{equation}
	Note that this measure does not coincide with the Hausdorff measure, however we have the following equivalence (see \cite[Chapter 5]{Mp})
	\begin{equation}
		\label{eq:equivalencE_hausdorffnetmeasure}
		\Hs^1 (E) \leq \mathcal{N}^1 (E) \leq 2^\frac{5}{2} \Hs^1 (E),
	\end{equation}
	for any Borel set $E\subset \R^2$.

	\subsection*{Functions of bounded pointwise variation}
	Given a function $h:[a,b] \to \R$ we denote the pointwise variation of $h$ by
	$$ \textrm{Var}\, h := \sup{ \left\{ \sum_i^n \abs{ h(x_i) - h(x_{i-1})}: P:= \{ x_0, \ldots , x_n\} \ \textrm{is a partition of $[a,b]$}  \right\} } . $$
	
	We say that $h:(a,b) \to \R$ has finite pointwise variation if $\textrm{Var}\, h < \infty.$ We recall that for any function $h$ such that $\textrm{Var}\, h < \infty$, $h$ has at most countable discontinuities and there exists $h(x^\pm):= \lim_{z \to x ^\pm}h(z)$. In the following given a function $h: [a,b] \to \R$ with finite pointwise variation, we define%We note that if $h: [a,b] \to \R$ is such that $\textrm{Var}\,h$ is finite, then for every $x \in [a,b]$ we define
	\begin{equation*}
		h^{-} (x):= \min \{ h (x^{+} ), h(x^{-} )\} = \liminf_{z \to x} h(z) % \quad \text{and} \quad h^{+} (x):= \max \{ h (x^{+} ), h(x^{-} )\} = \limsup_{z \to x} h(z).
	\end{equation*}
and 
\begin{equation*}
	h^{+} (x):= \max \{ h (x^{+} ), h(x^{-} )\} = \limsup_{z \to x} h(z).
\end{equation*}
In view of \cite[Corollary 2.23]{L} and with slightly abuse of notation, the limits
\begin{equation}
    \label{eq:limitsh}
    h^+(a):= \lim_{x \to a^-} h(x) \quad \text{and} \quad h^-(b):=\lim_{x \to b^-} h(x)
\end{equation}
are finite.  
	%%%%%%%%%%%%%%%%%%%%%%%%%%%%%%%%%%%%%%%%%%%%%%%%%%%%%%%%%%%%%%%%%%%%%%%%%%%%	
	\section{Mathematical setting and main results} \label{sec:model}

In this section we present  the model introduced in this manuscript with some preliminaries, and then we state the main results of the paper outlining the consequences for the related  one-phase setting of \cite{KP,KP1,KP2} and the multiple-phase setting of film multilayers considered in \cite{LlP1}. 
 
	\subsection{The two-phase model}
	Let $\Om:= (-l,l)\times_{\R^2}(-L,L) \subset \R^2$ for positive parameters $l,L\in \R$. We begin by introducing the family $\mathcal{C}_\mathbf{m}$ of admissible configurations and, in particular, the admissible substrate regions. 
 
 Roughly speaking, an  admissible substrate region  $S\subset\Om$ is characterized as the subgraph of a upper semicontinuous height function $h$ with finite pointwise variation to which we subtract a closed $\mathcal{H}^1$-rectifiable set $K$ such that $\mathcal{H}^1(K)<\infty$, which represents the substrate internal cracks. %(which so do not have to satisfy a graph constraint).  
 More precisely, we consider the \emph{family of admissible (substrate) heights} $\mathrm{ AH}(\Om)$ % and the set of admissible substrates $\mathrm{ AS}(\Om)$
	defined by 
	\begin{align}
		\mathrm{ AH}(\Om) := \left\{  h : [-l,l] \to [0,L]:  h \text{ is upper semicontinous and } \mathrm{ Var }\, h < \infty
		\right\} \label{eq:definitionAH}
	\end{align}
	and let $S_h$ denote the closed subgraph  with height $h  \in \mathrm{ AH}(\Om)$, i.e., 
	\begin{equation}
		\label{eq:uppergraphS}
		S_h:= \{ (x,y): -l<x<l, y\le h(x) \}.
	\end{equation}
	%Note that by definition %$S_h^{\ell}$ and 
	%$S_h$ %are open and 
	%is a closed set in $\Om$. %, respectively. 
	We then define the \emph{family of admissible  (substrate) cracks} $\text{AK}(\Om)$  by
	\begin{equation}
		\label{eq:definitioncracksS}
		\mathrm{ AK} (\Omega) := \{ K \subset \Om : \text{ $K$ is a closed set in $\R^2$, 
			$\Hs^1$-rectifiable and } \Hs^1(K) < \infty \}
	\end{equation}
	and the \emph{family of pairs of admissible heights and  cracks} $\text{AHK}(\Om)$ by
	\begin{equation}
		\label{eq:definitionAHK}
		\mathrm{ AHK} (\Omega) := \{ (h,K)\in \mathrm{ AH}(\Om)\times\mathrm{ AK}(\Om): \,  K \subset { \overline{\Int(S_h)} } \}.
	\end{equation}
	Finally, given $(h,K)\in \text{AHK}(\Om)$ we refer to the set  
	\begin{equation}
		\label{eq:substrateheight}
		S_{h,K}  :={ ({S_h} \setminus K)} % \left(\partial S_h \cup ({S_h} \setminus K) %\left(  \Int(S_h) \setminus K \right) 
		% \right)
		\cap \Omega,
	\end{equation}
	as the \emph{substrate with height $h$ and cracks $K$},  
	and  we define the \emph{family of admissible substrates} as %The set of admissible substrates $\mathrm{ AS}(\Om)$ is defined by 
	\begin{equation}
		\label{eq:definitionAS}
		\mathrm{ AS}(\Om):= \{S \subset \Om\,:\,  \text{$S=S_{h,K}$ for a pair $(h,K) \in \mathrm{ AHK} (\Omega)$}\}.
	\end{equation}

	We observe that  
	\begin{equation}
		\label{eq:uniongraphs}
		\partial {S_{h,K}} =\partial {S_h} \cup K %\cap S_h  \quad \text{and} \quad  {\Sigma_{h,K}} = (\partial {S_h} \cup (K\cap S_h )) \cap \Om,
	\end{equation}
	for every $S_{h,K} \in \mathrm{ AS}(\Om)$, 
	so that %$\Int(S_h) = S_h^{\ell}$ and 
	$\overline{S_{h,K}} = S_h$ and $\Int(S_{h,K}) = \Int(S_h) \setminus K$. We denote the  \emph{jumps points} and the \emph{vertical filament points} of the graph of $h \in  \AH$ by 
\begin{equation}
	\label{eq:filamentjumph}
		J(h) := \{ x \in (-l,l):h^- (x) \neq h^+ (x) \} \quad \text{and}  \quad 
		F(h):= \{ x \in (-l,l)  : h^+(x) < h (x) \},
	\end{equation}
	respectively.
By \cite[Corollary 2.23]{L} and thanks to the fact that $h \in \AH$, $J(h)$ and $F(h)$ are countable.
	Moreover, it follows that %$\partial {S_h^{\ell}}$, 
	$\partial {S_h}$ %and $\partial {S_h}$ are 
	is connected and, $\partial S_h$ and $\partial S_{h,K}$ have finite $\Hs^1$-measure. By \cite[Lemma 3.12 and Lemma 3.13]{F}, for any $h \in \mathrm{ AH}$, $\partial S_h$ is rectifiable and applying the Besicovitch-Marstrand-Mattila Theorem (see \cite[Theorem 2.63]{AFP}), $\partial {S_h}$ is $\Hs^1$-rectifiable, and hence, $\partial S_{h,K}$ is $\Hs^1$-rectifiable. Furthermore, applying \cite[Proposition A.1]{KP1} $S_h$ and $S_{h,K}$ are sets of finite perimeter. 
	
	The following result allows to interchangeably bound the pointwise  variation of a function $h\in\AH$ from the $\Hs^1$-measure of $\partial S_h$, and vice versa. 
	
	\begin{lemma}
		\label{lem:appendix1}
		Let $h \in \AH$. Then
		\begin{equation}
			\label{eq:equivalenceVar}
			%\frac 12 
			\mathrm{ Var}\, h \le \Hs^1 (\partial S_h) \le 2 l + 2\mathrm{ Var}\, h,
		\end{equation}
		where $S_h$ is defined as in \eqref{eq:uppergraphS}.
	\end{lemma}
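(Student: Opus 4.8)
The plan is to prove the two inequalities in \eqref{eq:equivalenceVar} separately, exploiting the structure of $\partial S_h$ as the union of the graph of $h$ (including vertical segments at jump and filament points) with the two vertical sides of the container and the bottom side. First I would fix a partition $P=\{x_0=-l,\dots,x_n=l\}$ of $[-l,l]$ and observe that for each $i$ the boundary $\partial S_h$ contains a connected subarc joining a point at height $h(x_{i-1})$ above $x_{i-1}$ to a point at height $h(x_i)$ above $x_i$; since $\Hs^1$ of a connected set is bounded below by the diameter, and the diameter of such an arc is at least $|h(x_i)-h(x_{i-1})|$, summing over $i$ and using additivity of $\Hs^1$ on the essentially disjoint subarcs gives $\sum_i |h(x_i)-h(x_{i-1})|\le \Hs^1(\partial S_h)$. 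Taking the supremum over partitions yields $\mathrm{Var}\,h\le \Hs^1(\partial S_h)$. (One must be slightly careful that the relevant subarcs of $\partial S_h$ over consecutive intervals $[x_{i-1},x_i]$ overlap only in $\Hs^1$-null sets; this follows because the ``graph part'' of $\partial S_h$ projects injectively, up to countably many vertical segments, onto $[-l,l]$.)

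For the upper bound I would decompose $\partial S_h$ into its lateral and bottom pieces, of total length $2L+2l$ — wait, more carefully: the bottom side has length $2l$ and the two lateral sides contribute at most $h(-l)+h(l)\le 2L$, but I want the bound $2l+2\mathrm{Var}\,h$, so I should instead absorb the vertical pieces into the variation. The graph of an upper semicontinuous $h$ of finite pointwise variation can be written, via the decomposition of $h$ into the difference of two monotone functions, as a curve of length at most $2l+2\mathrm{Var}\,h$: horizontal displacement totals $2l$, vertical displacement (including jumps, which are realized as vertical segments in $\partial S_h$, and filament points) totals $\mathrm{Var}\,h$ for the ``graph'' traversal, and one picks up the lateral sides from height $0$ up to $h(\pm l)$, each bounded by $\mathrm{Var}\,h$ as well since $h(\pm l)\le \mathrm{Var}\,h + \inf h \le \cdots$. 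I would make this precise by exhibiting an explicit rectifiable parametrization of $\partial S_h$: go along the bottom from $(-l,0)$ to $(l,0)$ (length $2l$), up the right side to $(l,h(l))$, back along the graph of $h$ to $(-l, h(-l))$, down the left side to $(-l,0)$; the total length of the non-horizontal portions is controlled by $\mathrm{Var}\,h$ plus the contributions $h(\pm l)$, and a short argument bounds $h(-l)+h(l)$ by $\mathrm{Var}\,h$ (using that $h\ge 0$, so e.g. $h(l)\le h(l)+h(-l)$, hmm, this needs $h$ attaining a small value somewhere — alternatively just note $h(\pm l)\le \mathrm{Var}\,h$ is false in general, so one should rather keep $2L$... ) — the cleanest route is to observe $\Hs^1(\partial S_h)\le 2l + 2\,\Hs^1(\text{graph of }h)$ and then cite that the graph of a function of finite pointwise variation on $[-l,l]$ has $\Hs^1$-measure at most $\mathrm{Var}\,h$ when we already know (from the theory of rectifiable curves / \cite[Section 3.2]{F}) that $\partial S_h$ is a curve, giving the factor-$2$ slack for the two vertical sides counted into $\mathrm{Var}\,h$ via $h(\pm l)$.

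The main obstacle I anticipate is the bookkeeping at the countably many jump points in $J(h)$ and filament points in $F(h)$: at each such point $\partial S_h$ acquires a genuine vertical segment whose length must be counted exactly once and must be shown to be accounted for by a single term of the partition sum (for the lower bound) and to sum to at most the jump part of $\mathrm{Var}\,h$ (for the upper bound). Since by \cite[Corollary 2.23]{L} these sets are countable and the jumps are summable (as $\mathrm{Var}\,h<\infty$), the total vertical length from jumps is $\sum_{x\in J(h)}|h^+(x)-h^-(x)| \le \mathrm{Var}\,h$, and filament segments $h(x)-h^+(x)$ add in similarly; I would handle this by refining any given partition to include finitely many of the largest jumps and passing to the limit, a standard approximation argument. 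The symmetric-difference/connectedness facts about $\partial S_h$ (it is connected, $\Hs^1$-rectifiable, and $S_h$ has finite perimeter) have already been recorded in the text just above the lemma, so I would invoke those freely.
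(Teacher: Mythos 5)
Your lower bound is essentially the paper's argument: over each subinterval of a partition, a connected subarc of $\partial S_h$ joins a point at height $h(x_{i-1})$ to a point at height $h(x_i)$, and comparing $\Hs^1$ to the diameter (the paper compares to the chord $L_i$, which is the same estimate) gives $\sum_i |h(x_i)-h(x_{i-1})|\le\Hs^1(\partial S_h)$ after noting the subarcs live over disjoint strips. That direction is fine.

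The upper bound, however, has a genuine gap, and you have already half-diagnosed it yourself. You are including a bottom side of length $2l$ and lateral sides of length $h(\pm l)$ in $\partial S_h$. If those were really there, the bound would have to read $2l+2\mathrm{Var}\,h + (\text{something involving }L)$, and, as you observe, $h(\pm l)\le\mathrm{Var}\,h$ is false in general (take $h\equiv L$). The resolution is not to ``keep $2L$'' nor to bound the graph by $\mathrm{Var}\,h$ (your last suggestion is wrong: the graph of a BV function on $[-l,l]$ has $\Hs^1$-length up to $2l+\mathrm{Var}\,h$, not $\mathrm{Var}\,h$ — the horizontal displacement contributes). The relevant decomposition, recorded in \eqref{eq:decompositionE}, is
\begin{equation*}
\partial S_h = \partial\Int(S_h)\,\cup\,(\partial S_h\cap S_h^{(0)})\,\cup\,N
\end{equation*}
with $N$ $\Hs^1$-negligible. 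Here $\Int(S_h)=\{(x,y): -l<x<l,\, y<h^-(x)\}$, so $\partial\Int(S_h)$ is the curve $x\mapsto(x,h^-(x))$ — no bottom, no lateral sides — and its $\Hs^1$-length equals $\sup_P\sum_i|r(x_{i+1})-r(x_i)|\le 2l+\mathrm{Var}\,h^-\le 2l+\mathrm{Var}\,h$ by the standard length formula for rectifiable curves (\cite[Definition 4.6, Remark 4.20]{L}). The second piece, $\partial S_h\cap S_h^{(0)}$, is exactly the union of the vertical filament segments at points of $F(h)$, and their total length $\sum_{x\in F(h)}(h(x)-h^+(x))$ is bounded by $\mathrm{Var}\,h$. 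Adding the two gives $2l+2\mathrm{Var}\,h$. Your parametrization ``bottom + right side + graph + left side'' cannot reproduce the stated bound precisely because those extra sides are not part of the set the lemma measures; once you drop them and split into graph-of-$h^-$ plus filaments, the computation closes cleanly.
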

	\begin{proof}
		The proof is divided in two steps.
		
		{\itshape Step 1.} We prove the left inequality of \eqref{eq:equivalenceVar}. We proceed as in \cite[Section A.2.3]{ChB}. Let $m \in \N$ and let $\{ l_i: i \in \{0 , \ldots , m\}, l_0 = -l, l_m = l \text{ and } l_i < l_{i+1} \}$ be a partition of $[-l,l]$. Take $i \in \{0, \ldots, m-1 \}$ and define by $L_i$ the segment connecting $(l_i, h(l_i))$ with $(l_{i+1}, h(l_{i+1}))$. By \cite[Lemma 3.12]{F}, there exists a parametrization $r_i : [0,1] \to \R^2$ of $\partial \Int(S_h) \cap ((l_{i},l_{i+1}) \times_{\R^2} [0,L))$, whose support $\gamma_i$ joins the points $(l_i, h(l_i))$ and $(l_{i+1}, h(l_{i+1}))$, furthermore, it follows that
		$$
		\abs{ h(l_{i+1}) -  h(l_{i})} \le \sqrt{ \abs{l_{i+1} - l_i}^2 + \abs{ h(l_{i+1}) -  h(l_{i})}^2  } = \Hs^1(L_i) \le \Hs^1(\gamma_i).
		$$
		Moreover, repeating the same argument for any $i \in \{0 , \ldots, m-1 \}$ we have that
		$$
		\sum_{i=0}^{m-1} \abs{ h(l_{i+1}) -  h(l_{i})} \le \sum_{i=0}^{m-1} \Hs^1(\gamma_i) \le \Hs^1(\partial S_h),
		$$
		where in the last inequality we have used that 	    \begin{equation}\label{eq_boundary}
			\partial S_h = \partial \Int(S_h) \cup (\partial S_h \cap S_h^{(0)}) \cup N,
		\end{equation}
		where $N$ is a $\Hs^1$-negligible set. Taking the supremum aver all partitions of  we obtain the left inequality of \eqref{eq:equivalenceVar}.
		
		{\itshape Step 2.} In this step, we prove the right inequality of \eqref{eq:equivalenceVar}. We observe that
		$$
		\Int(S_h) = \{ (x,y): -l<x<l, y<h^-(x) \}
		$$
		%Applying \cite[Lemma 3.12]{F}, there exists a parametrization
		and so, 
		$$
		\begin{matrix}
			r: & [-l,l] & \to & \partial \Int(S_h)\\
			&x & \mapsto & (x, h^-(x))
		\end{matrix}
		$$
		is a parametrization of $\partial \Int(S_h)$, whose support we denote by  $\gamma$. Therefore, from \cite[Definition 4.6 and Remark 4.20]{L} it follows that
		\begin{equation}
			\label{eq:appendix1}
			\Hs^1 (\partial \Int(S_h)) = \sup\left\{ \sum_{i=0}^{m-1} \abs{r(x_{i+1}) - r(x_i)}    \right\},
		\end{equation}
		where the supremum is taken over all partitions of $[-l,l]$. By definition of $r$ and thanks to \eqref{eq:appendix1}, we see that
		$$
		\Hs^1 (\partial \Int(S_h)) \le \sup \left\{ \sum_{i=0}^{m-1} \left( \abs{x_{i+1} - x_i} + \abs{h^-(x_{i+1}) - h^-(x_i)} \right) \right\} \le 2l + \mathrm{ Var} \,h,
		$$
		where we used the fact that $\mathrm{ Var} \,h^- \le \mathrm{ Var} \,h$. Finally, by \eqref{eq_boundary} we have that $\Hs^1 (\partial S_h) = \Hs^1(\partial \Int (S_h)) + \Hs^1 (\partial S_h \cap S_h^{(0)}) $ and since $\Hs^1 (\partial S_h \cap S_h^{(0)}) \le \mathrm{ Var}\, h$, by the fact that $\partial S_h \cap S_h^{(0)}$ is the union of vertical segments, we can deduce the right inequality of \eqref{eq:equivalenceVar}.
	\end{proof}

	We now introduce the family of admissible region pairs and configurations. 
 
	\begin{definition}[Admissible regions and configurations] \normalfont
		\label{def:admissibleconf}
		%Let $\Om := (-l,l) \times_{\R^2} (c,d) \subset \R^2$. 
		We define the families of admissible pairs $\B (\Om)$ and of admissible configurations $\mathcal{C}$ by
		\begin{multline*}
			\B (\Om):= \{ (A,S):   A\text{ is } \mathcal{L}^2\text{-measurable,} \, \partial A \textrm{ is $\Hs^1$-rectifiable, } \Hs^1 (\partial A) < \infty,\, \\
 {\text{there exists }} (h,K) \in \mathrm{ AHK} (\Omega), \, S=  S_{h, K} \in \mathrm{AS}(\Om),  \, \\
  S_{h,K} \subset  \overline{A}    \subset \overline \Om  \text{ and }  \partial A \cap \Int(S_{h,K}) = \emptyset \}, 
		\end{multline*}		and  
		\begin{equation*}
			{{ \mathcal{C}}} % (\Omega)
			:= \left\{ \left( A,S, u \right) { \in \wt  \C}: { (A,S) \in \B}%, u \in { H^{1}_\mathrm{{loc}} \left( \Int (A); \R^2 \right)  }
			 \right\},
		\end{equation*}
		respectively.
	\end{definition}
In the following we also refer to the sets $\overline{A}$, $\overline{S}$, and $\overline{A}\setminus S^{(1)}$ with respect to  an admissible pair $(A,S)\in\B  $ as the \emph{composite region}, the \emph{substrate region}, and the \emph{film region} of the admissible pair.  Moreover, we refer to $S^{(1)}$ and $A^{(1)}\cap S^{0}$ as the \emph{substrate} and the \emph{film bulk regions}, respectively. 

In Theorem \ref{thm:lowersemicontinuity} we will need to consider a natural extension of the families $\mathcal{B}$ and $\mathcal{C}$, which we denote as $\wt{\mathcal{B}}$ and $\wt{\mathcal{C}}$, respectively.

\begin{definition}\normalfont
\label{def:generaladmissibleconf}
	We define the families of admissible pairs $\wt \B(\Om)$ and of admissible configurations $\wt {\mathcal{C}}$ by
	\begin{multline*}
			\wt \B(\Om) := \{ (A,S):   A,S\text{ are } \mathcal{L}^2\text{-measurable,} \, \partial A , \partial S \textrm{ are $\Hs^1$-rectifiable, } \\ \Hs^1 (\partial A),\Hs^1 (\partial S) < \infty, 
    \, %\subset \overline \Om \times \mathrm{ AH} \times\mathrm{ AK} (\Omega) :  
			%S_{h,K}  \in \mathrm{ AS}(\Om) 
  S\subset  \overline{A}    \subset \overline \Om  \text{ and }  \partial A \cap \Int(S) = \emptyset \}, 
		\end{multline*}
and  
		\begin{equation*}
			\wt{ \mathcal{C}} % (\Omega)
			:= \left\{ \left( A,S, u \right): (A,S) \in \wt \B , u \in { H^{1}_\mathrm{{loc}} \left( \Int (A); \R^2 \right)  } \right\},
		\end{equation*}
		respectively.
\end{definition}
We observe that $\B\subset \wt \B$, since for any $(A,S)\in \B$ there exists $(h,K) \in \text{AHK}(\Om)$ such that $S= S_{h,K} \in \text{AS}(\Om)$ and $\partial S_{h,K}$ is $\Hs^1$-rectifiable, and thus, $(A,S)\in\wt \B(\Om)$.

Notice that, for simplicity, in the absence of ambiguity we omit the dependence on the set $\Om$ in the notation ${ \wt \B (\Omega)\,}$ and $\B(\Om)$ by writing in the following only ${ \wt \B\,}$ and $\B$, respectively.
	
	\begin{remark} \normalfont
		\label{rem:finiterperimeter}
		We observe that any bounded $\mathcal{L}^2$-measurable set $A \subset \R^2$ such that $\Hs^1(\partial A) < \infty$ is a set of finite perimeter in $\R^2$ by \cite[Proposition A.1]{KP1}.
	\end{remark}

 We now equip the family $\wt\B$ with a topology. 
	
	\begin{definition}[${\tau_{\B}}$-convergence] \normalfont
		\label{def:taubconvergence}
		A sequence $\{(A_k, S_k) \} \subset \wt \B$ %$ \{ (A_k,h_k,K_k) \} \subset\B$
${\tau_\B}$-converges to $(A,S)\in \wt \B$,  
%$(A,h,K) \in\B$, and we write that  $(A_k,h_k,K_k) \xrightarrow{\tau_\B} (A,h,K)$, 
if
		\begin{itemize}
			\item[-] $\sup_{k\in\mathbb{N}}{ \Hs^1 \left( \partial A_k \right)} < \infty$,$\sup_{k\in\mathbb{N}}{ \Hs^1 \left( \partial S_k \right)} < \infty$, %$\sup_{k\in\mathbb{N}}%\mathrm{ Var}\, h_k < 
			%\Hs^1 (\partial S_{h_k, K_k})<\infty$, % and $\sup_{k\in\mathbb{N}}\mathrm{ Var}\, h_k < \infty$, where $h_k := (h^\ell_k, h_k) \in \mathrm{ AH}$ for every $k \in \N$,
			\item[-] $\mathrm{ sdist} \left( \cdot, \partial A_k \right) \rightarrow \mathrm{ sdist} \left( \cdot, \partial A \right)$ locally uniformly in $\R^2$ as $k \rightarrow \infty$,
			\item[-] $\mathrm{ sdist} \left( \cdot, \partial S_k \right) \rightarrow \mathrm{ sdist} \left( \cdot, \partial S \right)$ locally uniformly in $\R^2$ as $k \rightarrow \infty$.
		\end{itemize}
	\end{definition}
	
	It will follow from  Lemma \ref{lem:1} 
	 below that the  $\tau_B$-convergence is closed in the subfamily of admissible triples $\B_{\mathbf{m}}\subset\B$ whose definition depending on the vector ${\mathbf m}=(m_0,m_1) \in \N\times\N$ we now provide.  
	
	\begin{definition} \normalfont
		For any $\mathbf{m} := {(m_0,m_1)} \in \N \times \N$  the family $\B_{\mathbf{m}}$  is given by all  pairs $(A,S) \in \B$
		%triples $(A,h,K) \in \B(\Om)$ 
	such that  $\partial A$ and $\partial S$
	%$\partial S_{h,K}$ 
have at most $m_1$ and $m_0$-connected components, respectively. Let us also define   %. Consequently, we define the set
		\begin{equation}
			\label{eq:initialproblem}
			\mathcal{C}_\mathbf{m} %(\Om)
			:= \left\{ \left( A, { S,}
			%h,K, 
			u \right) \in \mathcal{C}%(\Om)
			: (A,S
%h,K
) \in \B_{\mathbf{m}} \right\} \subset \mathcal{C}%(\Om)
			.
		\end{equation}
	\end{definition} 

 We denote the topology with which we equip the family $\wt\C$ by $\topC$. 
 
	\begin{definition}[$\topC$-Convergence] \normalfont
		A sequence $\{ \left(A_k,{ S_k}%h_k, K_k
		, u_k \right) \}_{k\in \N} \subset \C$ is said to $\topC$-convergence to $\left( A,S%h,K
		, u \right) \in \C$, denoted as $\left(A_k,{S_k}%h_k, K_k
		, u_k \right) \xrightarrow{\topC} \left( A,S%h,K
		, u \right)$, if
		\begin{itemize}
			\item [-] $(A_k,{S_k}%h_k,K_k
			) \xrightarrow{\tau_\B} (A,S%h,K
			)$,
			\item[-] $u_k \rightarrow u$ a.e. in $\mathrm{{Int} \left(A\right)}$.
		\end{itemize}
	\end{definition}
	We now state some properties of the topology $\topC$. 
 
	\begin{remark}
		\label{remark:1}
		\normalfont
		We notice that:
		\begin{itemize}
			\item[(i)] 
			The following assertions are equivalent
			\begin{itemize}
				\item[(i.$1$)] $\sdist (\cdot, \partial E_k) \rightarrow \sdist (\cdot, \partial E)$ locally uniformly in $\R^2$.
				\item[(i.$2$)] $E_k \xrightarrow{\mathcal{K}} \overline{E}$ and $\R^2 \setminus E_k \xrightarrow{\mathcal{K}} \R^2 \setminus \Int(E)$.
			\end{itemize}
			Moreover, these imply that $\partial E_k \xrightarrow{\K} \partial E$.
			\item[(ii)]If there exist $(h_k,K_k) \in \mathrm{ AHK} (\Omega)$ and $(h,K)  \in \mathrm{ AHK} (\Omega)$ such that $E_k = E_{h_k,K_k} \in \AS$ and $E = E_{h,K} \in AS$, for every $k \in \N$, we observe that Item (i.$1$) above is equivalent to
			$$ E_k = E_{h_k,K_k} \xrightarrow{\mathcal{K}} E_{h} \quad  \text{and} \quad \R^2 \setminus E_k = \R^2 \setminus E_{h_k,K_k}\xrightarrow{\mathcal{K}} (\R^2 \setminus \Int( E_{h})) \cup K,$$
			where $E_{h_k,K_k}$, $E_{h,K}$ are defined as in \eqref{eq:substrateheight} and $E_{h}$ is defined as in \eqref{eq:uppergraphS}.
			\item[(iii)] Let $\{(E_k,F_k)\}\subset \R^2 \times \R^2$ be a sequence of bounded sets and let $E,F \subset \R^2 $ be two bounded sets such that $\partial E_k \cngK \partial E$ and  $\partial F_k \cngK \partial F$. % and, $\partial E \setminus \partial F $ and $\partial F \setminus \partial E$ are non-empty sets. 
			In view of the Kuratowski convergence (see  \cite[Section 6.1]{AFP}, \cite[Chapter 4]{Dm} or \cite[Appendix A.1]{KP}), we observe that for every $x \in \partial E \setminus \partial F $ there exist $r := r(x) >0$ and $k_{r,x} \in \N$ such that  $B(x,r) \cap \partial F_k = \emptyset$ for any $k \ge  k_{r,x}$. Similarly, for every $x \in \partial F \setminus \partial E $ there exists $r':= r'(x)>0$ and $k_{r',x} \in \N$ such that  $B(x,r)\cap \partial E_k = \emptyset$ for any $k \ge k_{r',x}$. %  deduce the same property for points in this set.
		\end{itemize} 
	\end{remark}
	
	%In the next result, we prove that the convergence in $\tau_\B$ is closed under the constraint of finiteness of the number of connected components of the boundary of admissible regions.

From the next result the closedness and the compactness (see Theorem \ref{thm:compactnessB})
of the family $\B_{\textbf{m}}$  with respect to the topology  $\tau_{\B}$  follows for every $\textbf{m}:=(m_0,m_1) \in \N \times \N$. 
 
	\begin{lemma}
		\label{lem:1}
		%Let $A\subset \R^2$ and let ${h: (-l,l)} \to \R $ be a measurable function. 
		Let $\{E_k\}$ be a sequence of $\mathcal{L}^2$-measurable subsets of $\overline \Om$ having $\Hs^1$-rectifiable boundaries $\partial E_k$ with at most $m$-connected components such that
		\begin{itemize}
			\item[-] $\sup_{k} \Hs^1 (\partial E_k)< \infty$,
			\item[-] $\sdist(\cdot, \partial E_k) \to \sdist(\cdot, \partial E)$ locally uniformly in $\R^2$ as $k \to \infty$ for a set $E \subset \R^2$.
		\end{itemize} 
		Then,  $\partial E$ is $\Hs^1$-finite, $\Hs^1$-rectifiable, and with at most $m$-connected components, and $E \subset \overline \Om$ is $\mathcal{L}^2$-measurable.  Furthermore, if %if $m = 1$ and %there exist functions $H_k \in \mathrm{ AH}$ such that 
		$E_k = E_{h_k,K_k}\in \mathrm{ AS}(\Om)$ for every $k \in \N$ and for some $(h_k,K_k)   \in \mathrm{ AHK} (\Omega)$, then  
		\begin{equation}
			\label{eq:varfinite}
			\sup_{k} (\Hs^1 (K_k) + \mathrm{ Var}\, h_k)<\infty % \quad \text{and} \quad \sup_{k} \mathrm{ Var}\,h_k <\infty
		\end{equation}
		and there exists $(h,K) \in \mathrm{ AHK} (\Omega)$ such that $E = E_{h,K} \in \mathrm{ AS}(\Om)$.
	\end{lemma}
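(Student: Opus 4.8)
The plan is to run a Blaschke-type selection combined with Golab's Theorem at the level of the boundaries, and, for the substrate part, to read off the limiting height and cracks directly from $\overline E$ and $\partial E$. First I would reformulate the hypothesis through Remark \ref{remark:1}(i): the local uniform convergence of the signed distances is equivalent to the Kuratowski convergences $E_k\cngK\overline E$ and $\R^2\setminus E_k\cngK\R^2\setminus\Int(E)$, and hence to $\partial E_k\cngK\partial E$. Since every $E_k\subset\overline\Om$ and the signed distance is strictly positive outside $\overline\Om$, passing to the limit gives $\overline E\subset\overline\Om$; in particular $E\subset\overline\Om$ and each $\partial E_k$, as well as $\partial E$, is a compact subset of the fixed compact set $\overline\Om$, on which Kuratowski convergence coincides with Hausdorff convergence. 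Decomposing each $\partial E_k$ into its at most $m$ connected components and applying the Blaschke-type selection principle \cite[Proposition 3.1]{KP} componentwise, along a subsequence (not relabelled) each component converges in the Hausdorff distance to a compact connected set, so $\partial E$ is a union of at most $m$ compact connected sets; hence $\partial E$ has at most $m$ connected components, and Golab's Theorem \cite[Theorem 2.1]{G} yields $\Hs^1(\partial E)\le\liminf_k\Hs^1(\partial E_k)<\infty$. As the whole sequence converges, the limit is forced and these extractions entail no loss of generality.

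Each connected component of $\partial E$ is then a compact connected set of finite $\Hs^1$-measure, hence $\Hs^1$-rectifiable by \cite[Lemma 3.12]{F} together with the Besicovitch–Marstrand–Mattila Theorem \cite[Theorem 2.63]{AFP}; being a finite union of $\Hs^1$-rectifiable sets, $\partial E$ is $\Hs^1$-rectifiable. Moreover $\Hs^1(\partial E)<\infty$ forces $\mathcal{L}^2(\partial E)=0$, and since $E\setminus\Int(E)\subset\partial E$ the set $E$ differs from the open set $\Int(E)$ by a Lebesgue-null set, so $E$ is $\mathcal{L}^2$-measurable.

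For the substrate case, let $E_k=E_{h_k,K_k}$ with $(h_k,K_k)\in\mathrm{AHK}(\Om)$. By \eqref{eq:uniongraphs}, $\partial E_k=\partial S_{h_k}\cup K_k$, so $\Hs^1(K_k)\le\Hs^1(\partial E_k)$ and, by Lemma \ref{lem:appendix1}, $\mathrm{Var}\,h_k\le\Hs^1(\partial S_{h_k})\le\Hs^1(\partial E_k)$; taking suprema gives \eqref{eq:varfinite}. Since $\overline{E_k}=S_{h_k}$, the first step yields $S_{h_k}\cngK\overline E$; as vertical monotonicity of the subgraphs passes to Kuratowski limits, $\overline E$ is again a closed subgraph, concretely $\overline E=S_h$ with $h(x):=\sup\{y:(x,y)\in\overline E\}$, where $h$ is upper semicontinuous because $\overline E$ is closed, and $\mathrm{Var}\,h\le\Hs^1(\partial S_h)=\Hs^1(\partial\overline E)\le\Hs^1(\partial E)<\infty$ by Lemma \ref{lem:appendix1} (using also the one-sided limit conventions \eqref{eq:limitsh} at $x=\pm l$), so that $h\in\AH$. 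Finally I would set $K:=\partial E\cap\overline{\Int(S_h)}$, which by the second step is closed, $\Hs^1$-rectifiable, with $\Hs^1(K)<\infty$ and contained in $\overline{\Int(S_h)}$, so that $(h,K)\in\mathrm{AHK}(\Om)$; combining $\overline E=S_h$ with the identity $\Int(E)=\Int(\overline E)\setminus\partial E=\Int(S_h)\setminus K$ and with \eqref{eq:substrateheight}, one concludes $E=E_{h,K}\in\AS$.

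The step I expect to be the main obstacle is the construction of $(h,K)$: verifying that the Kuratowski limit $\overline E$ of the subgraphs $S_{h_k}$ is \emph{exactly} the closed subgraph $S_h$ of the height $h$ read off from it, and that $E$ itself — and not merely a set with the same interior and closure — equals $E_{h,K}$, rests on a careful analysis of the behaviour of the subgraphs at the jump and vertical-filament points of $h$ and of the way $\partial S_h$ and the limiting cracks meet there and along $\partial\Om$.
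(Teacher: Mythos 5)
Your proposal is correct and follows essentially the same route as the paper: the first part (finiteness, rectifiability, bound on the number of components, measurability) is in the paper delegated to \cite[Lemma 3.2]{KP}, which you simply unfold via Blaschke selection on components, Golab's Theorem and the rectifiability of compact connecta of finite length; and in the substrate part you reconstruct $(h,K)$ exactly as the paper does, setting $K:=\partial E\cap\overline{\Int(S_h)}$ and showing $\overline E=S_h$ and $\Int(E)=\Int(S_h)\setminus K$. The only cosmetic difference is that you define $h$ directly as $h(x)=\sup\{y:(x,y)\in\overline E\}$ and argue that downward-closedness passes to Kuratowski limits, whereas the paper defines $h$ as $\sup\{\limsup_k h_k(x_1^k):x_1^k\to x_1\}$ and proves $\overline E=E_h$ by a two-sided inclusion; the two definitions coincide once $\overline E=S_h$ is established, and your version is a touch more compact. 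You also correctly flag the genuinely delicate point at the end: from the signed-distance convergence one only determines $\overline E$ and $\Int(E)$, so the conclusion $E=E_{h,K}$ has to be read as an equality of boundaries (equivalently of sdist functions), which is precisely what the paper invokes via uniqueness of Kuratowski limits and Remark \ref{remark:1}-(i).
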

	\begin{proof}
		The fact that $\partial E$ is $\Hs^1$-finite, $\Hs^1$-rectifiable, and with at most $m$-connected components, is a direct consequence of \cite[Lemma 3.2]{KP}. Since $\Hs^1 (\partial E) < \infty$, it follows that $ \mathcal{L}^2 (\overline{E} \setminus \Int(E)) = \mathcal{L}^2 (\partial E) =0$, by applying \cite[Theorem 14.5]{bartle} to $E \setminus \Int(E) \subset \overline{E} \setminus \Int(E)$ we infer that $E \setminus \Int(E)$ is $\mathcal{L}^2$-measurable and so,  $\mathcal{L}^2 (E \setminus \Int(E)) = 0$. Therefore, $E = E \setminus \Int(E) \cup \Int(E)$ is  $\mathcal{L}^2$-measurable.
				
		It remains to prove the last assertion of the statement. Let $(h_k,K_k) 
		\in \mathrm{ AHK} (\Omega)$ such that $E_k = E_{h_k,K_k} \in \mathrm{ AS}(\Om)$ for every $k$. We begin by observing that \eqref{eq:varfinite} is a direct consequence of \eqref{eq:uniongraphs}, by applying \eqref{eq:equivalenceVar} to $h_k$. To conclude the proof we proceed in 2 steps.	
				
		{\itshape Step 1.} We claim that  
		$\overline{E}  = E_{h}$, where $h  $ is the upper semicontinous function defined by
		$$h(x_1):= \sup \{ \limsup_{k \to \infty} h_k (x_1^k) : x^k_1 \to x_1 \}.$$
		Let $x=(x_1,x_2) \in \overline{E}$, by Remark \ref{remark:1}-(i) we observe that there exists $x_k = (x_1^k,x_2^k) \in E_k$ such that $x_k \to x$.  We deduce that
		$$x_2 = \lim_{k \to \infty} x^k_2 \le \limsup_{k \to \infty} h_k(x_1^k) \le h(x_1),$$
		and %also 
		by \eqref{eq:uppergraphS} we deduce that $\overline{E} \subset E_{h} $. Now let $x= (x_1,x_2) \in E_{h}$, by definition we observe that
		$$x_2 \le h (x_1) := \sup \{ \limsup_{k \to \infty} h_k (x^k_1) : x^k_1 \to x_1 \}.$$
		Let $x_k = (x^k_1, x^k_2) \in \Om$ such that $x^k_1 \to x_1$, $h_k(x^k_1) \to h(x_1)$ and define $x_2^k := \min \{ x_2, h_k(x^k_1) \}$. It follows that $x_k \in \overline{E_k}$ and $x_k \to x$, by Kuratowski convergence we have that $x \in \overline{E}$, therefore, $E_{h} \subset \overline{E}$.
		
		{\itshape Step 2.} We claim that $\Int(E) = \Int ({E_{h,K}})$, where $K:= \partial E \cap { \overline{\Int(E_h)}} $. 
		Notice that $K$ is a closed set in $\R^2$ and since $\partial E$ is $\Hs^1$-rectifiable, we deduce that $K$ is also $\Hs^1$-rectifiable. 
			On one hand, we see that
			\begin{equation*}
			\Int(E) = \Int(E) \setminus (\partial E \cap  { \overline{\Int(E_h)}}) \subset \Int(E_h) \setminus (\partial E \cap { \overline{\Int(E_h)}}) =: \Int(E_h) \setminus K= \Int ({E_{h,K}}),
		\end{equation*}
		where in the first equality we used the fact that $\Int(E) \cap \partial E = \emptyset$ and in the inclusion we used Step 1 and the fact that $E \subset \overline{E} = E_h$. On the other hand, let $x \in \Int(E_{h,K}) = \Int(E_h) \setminus (\partial E \cap { \overline{\Int(E_h)}})$ and assume by contradiction that $x \notin \Int(E)$. This assumption implies that either $x \in \partial E$ or $x \in \Om \setminus \overline{E} $, which is a contradiction by the facts that $\partial E \subset \overline{E} = E_h$ and 
		$$
		x \in \Int(E_h) \setminus(\partial E \cap { \overline{\Int(E_h)}}) =\Int(E_h) \setminus \partial E   \subset E_h = \overline{E} .
		$$
		Finally, observe that $(h,K) \in \mathrm{ AHK} (\Omega)$. Thanks to the uniqueness of Kuratowski convergence, the facts that $\overline{E_{h,K}} = E_h $ and $\Int(E_{h,K}) = \Int(E_h) \setminus (\partial E \cap { \overline{\Int(E_h)}})$, and in view of Remark \ref{remark:1}-(i) we conclude from the previous two steps that $E = E_{h,K}$. 
	\end{proof}

%%%%%%%%%%%%%%%%%%%%%%%%%%%%%%%%%%%%%%%%%%%%%%%%%%%%%%%%%%%%%%%%%%%%%
	The total energy  $\mathcal{F} :  \wt{\mathcal{C}}%(\Om) 
	\rightarrow \left[ 0, +\infty \right]$ of admissible configurations is given as the sum of two contributions, namely the surface energy $\mathcal{S}$ and the elastic energy $\mathcal{W}$, i.e., 
 $$
	%\begin{equation}
	%	\label{eq:energyF}
		\mathcal{F}(A,S%h,K
		,u) := \mathcal{S}(A,S%h,K
		) + \mathcal{W}(A,u)
  $$
	%\end{equation}
	for any $(A,S%h,K
	,u)\in\wt\C$, where we observe that the surface energy does not depend on the displacements (as a difference from \cite{KP,KP1}). The surface energy $\mathcal{S}$ is defined for any $(A,S %h,K
	) \in \wt\B$ by
	\begin{equation}
	\label{eq:definitionenergyS}
		\begin{split}
			{\mathcal{S}} (A ,{ S} ) 
			&:= \int_{ \Om \cap (\partial^* A \setminus \partial^*{S})}{\varphi_{\mathrm{ F}}(x, \nu_A \xp)\, d\Hs^1} + \int_{\Om \cap   \partial ^*{S} \cap \partial^* A }{\varphi (x, \nu_A \xp)\, d\Hs^1} \\
   & \quad + \int_{\Om \cap (\partial ^*{S} \setminus \partial A)\cap A^{(1)}}{ \varphi_{\mathrm{ FS}} (x, \nu_{S} \xp )\, d\Hs^1}  %\\
 %  & \quad 
 + \int_{\Om \cap  \partial ^*{S} \cap \partial A  \cap A^{(1)}}{ (\varphi_{\mathrm{ F}} + \varphi )(x, \nu_A \xp ) \, d\Hs^1}    \\
   & \quad + \int_{ \Om \cap \partial A 
   \cap  A^{(1)} \cap  S^{(0)}}{2 \varphi_{\mathrm{ F}}(x, \nu_A \xp )\, d\Hs^1}   + \int_{ \Om \cap \partial A 
   \cap A^{(0)} }{2 \varphi' (x, \nu_A \xp )\, d\Hs^1}  \\
			& \quad + \int_{\Om \cap (\partial {S} \setminus \partial A) \cap \left(S^{(1)} \cup S^{(0)}\right)  \cap A^{(1)} }{ 2 \varphi_{\mathrm{ FS}} (x, \nu_{S} \xp) \, d\Hs^1}\\
   & \quad + \int_{\Om \cap \partial {S} \cap \partial A \cap S^{(1)}}{2 \varphi (x, \nu_A \xp )\, d\Hs^1  }      ,
		\end{split}
	\end{equation}
	where $\varphi_{\mathrm{F}},\, \varphi_{\mathrm{ FS}}: \overline{\Om} \times \R^2\to [0,\infty]$ and, given also the function $\varphi_{\mathrm{S}}: \overline{\Om} \times \R^2\to [0,\infty]$,   we define the functions $\varphi$ and $\varphi'$ in in 
 $C ( \overline{\Om} \times \R^2;[0,\infty])$ by 
 $$\varphi := \min\{\varphi_{\mathrm{S}}, \varphi_{\mathrm{F}} + \varphi_{\mathrm{FS}} \}\qquad\text{and}\qquad \varphi' : = \min\{\varphi_{\mathrm{F}}, \varphi_{\mathrm{S}}\}.$$
 Notice that  $\varphi_{\mathrm{ F}}, \varphi_{\mathrm{S}}, \varphi_{\mathrm{ FS}}$ represent  the anisotropic surface tensions of the film/vapor, the substrate/vapor and the substrate/film interfaces, respectively, while $\varphi$ and $\varphi'$ are referred to as the anisotropic \emph{regime surface tensions} and are introduced to include into the analysis the wetting and dewetting regimes. We refer the Reader to the Introduction for related explanation and for the motivation for the integral densities choice in \eqref{eq:definitionenergyS}.

	Similarly to \cite{DP, KP, KP1, KP2}, by also taking into account that in our setting the film and substrate regions are given as subsets of the composite regions,  the elastic energy is defined for  configurations $\left( A,S
	, u \right) \in { \wt{\mathcal{C}}}$   by
	\begin{equation*}
		\mathcal{W}(A,u):= \int_{A}{W \left( x, E \left( u \xp - E_0 \xp \right) \right)dx},
	\end{equation*}
	where the elastic density $W$ is determined by the quadratic form
	\begin{equation*}
		W \left( x, M \right) := \mathbb{C}\xp M : M,
	\end{equation*}
	for a fourth-order tensor $\mathbb{C}: \Om \to \mathbb{M}^2_\sym$, $E$ denotes the symmetric gradient, i.e., $E(v) := \frac{\grad v + \grad^T v}{2}$ for any $v \in H^1_\mathrm{{loc}} (\Om)$ and $E_0$ is
	the mismatch strain $x \in \Om \mapsto E_0 \xp \in \mathbb{M}^{2}_\mathrm{{sym}} $ defined as
	\begin{equation*}
		E_0 := \begin{cases}
			E(u_0) & \text{in } \Om \setminus S, \\ 0 & \text{in } S,
		\end{cases} 
	\end{equation*}
	for a fixed $u_0 \in H^1(\Om)$.

	\subsection{Main results}\label{sec:mainresults}
	We state here the main results  of the paper and the connection to the one-phase and multiple-phase settings. 
 
 Fix $l, L> 0$ and consider $\Om := (-l,l) \times_{\R^2} (-L,L)$.  Let $\varphi := \min\{\varphi_{\mathrm{S}}, \varphi_{\mathrm{F}} + \varphi_{\mathrm{FS}} \}, \, \varphi' := \min\{\varphi_{\mathrm{F}}, \varphi_{\mathrm{S}}\} $ for three functions $\varphi_{\mathrm{ F}}, \varphi_{\mathrm{ S}} , \varphi_{\mathrm{ FS}}: \overline{\Om} \times \R^2\to [0,\infty]$. 
		We assume throughout the paper that: 
	\begin{itemize}
		\item[(H1)] $\varphi_{\mathrm{ F}} , \varphi_{\mathrm{ FS}}, \varphi,   \varphi'   \in C ( \overline{\Om} \times \R^2;[0,\infty]) $ are Finsler norms such that 
		\begin{equation}
			\label{eq:H1}
			c_1 \abs{\xi} \le \varphi_{\mathrm{ F}}(x, \xi), { \varphi} (x, \xi), \varphi_{\mathrm{ FS}} (x, \xi) \le c_2 \abs{\xi} .
		\end{equation}
for every $x \in \overline{\Om}$ and $\xi \in \R^2$ and for two constants $0<c_1\leq c_2$.
		\item[(H2)] We have
		\begin{equation}
				\label{eq:H2}
    \varphi(x, \xi)\geq |\varphi_\textrm{FS} (x, \xi)- \varphi_{\mathrm{ F}}(x, \xi)|
			\end{equation}
	for every $x \in \overline{\Om}$ and $\xi \in \R^2$.
	\item[(H3)] $\mathbb{C} \in L^\infty (\Om; \mathbb{M}^2_\sym)$  and there exists $c_3 > 0$ such that
	\begin{equation}
		\label{eq:H3}				\mathbb{C} \xp M:M \ge 2c_3 M:M
	\end{equation}
	for every $M \in \mathbb{M}_\mathrm{{sym}}^{2 \times 2}.$
\end{itemize}
We notice that under assumptions (H1)-(H3), the energy $\mathcal{F}(A,S,u) \in [0, \infty]$ for every $(A,S,u) \in \wt{\mathcal{C}}$.

The main result of the paper is the following existence result.

\begin{theorem}[Existence of minimizers]
\label{thm:existence}
Assume (H1)-(H3) and let $\mathbbm{v}_0, \mathbbm{v}_1 \in [\mathcal{L}^2({\Om}/2), \mathcal{L}^2({\Om}))$ such that $\mathbbm{v}_0 \le \mathbbm{v}_1$. Then for every $\mathbf{m}= (m_0,m_1) \in \N\times \N$ the volume constrained minimum problem 
\begin{equation}
	\label{eq:const}
	\inf_{(A,S,u) \in \mathcal{C}_\mathbf{m},\, \mathcal{L}^2({A})= \mathbbm{v}_1,\, \mathcal{L}^2({S_{h,K}})= \mathbbm{v}_0}{\mathcal{F}(A,S,u)}
\end{equation}
and the unconstrained minimum problem
\begin{equation}
	\label{eq:uncost}
	\inf_{(A,S,u) \in \mathcal{C}_\mathbf{m}}{\mathcal{F}^{\lambda} (A,S,u)}
\end{equation}
have solution, where $\mathcal{F}^{\lambda} : \mathcal{C}_\mathbf{m} \rightarrow \R$ is defined as
\begin{equation*}
	\mathcal{F}^{\lambda} (A,S,u) := \mathcal{F} (A,S,u) + \lambda_1 \abs{ \mathcal{L}^2({A})- \mathbbm{v}_1 }+ \lambda_0 \abs{ \mathcal{L}^2({S})- \mathbbm{v}_0 },
\end{equation*}
for any ${\lambda} = (\lbd_0, \lbd_1),$ with $\lbd_0$, $\lbd_1>0$.
\end{theorem}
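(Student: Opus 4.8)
The plan is to run the direct method of the calculus of variations on $\C_{\mathbf m}$, using as black boxes the compactness property of Theorem~\ref{thm:compactness} and the $\topC$-lower semicontinuity of $\mathcal F$ from Theorem~\ref{thm:lowersemicontinuity}; I would treat the constrained problem \eqref{eq:const} and the penalized problem \eqref{eq:uncost} along the same lines, the only difference being the bookkeeping of the volumes. As a preliminary step I would check that for \eqref{eq:const} the admissible set is nonempty and both infima finite, which holds precisely because $\mathbbm v_0\in[\mathcal L^2(\Om)/2,\mathcal L^2(\Om))$ and $\mathbbm v_0\le\mathbbm v_1<\mathcal L^2(\Om)$: one can take as competitor a flat substrate $S$ of area $\mathbbm v_0$ (an admissible subgraph without cracks) sitting below a rectangular composite region $A\subset\overline\Om$ of area $\mathbbm v_1$, together with $u:=u_0$, for which $(A,S)\in\B_{\mathbf m}$ (both boundaries being connected) and $\mathcal S(A,S),\mathcal W(A,u_0)<\infty$.

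Next I would take a minimizing sequence $(A_k,S_k,u_k)\in\C_{\mathbf m}$ for the functional at hand, with the energies converging to the corresponding infimum. Since $\mathcal W\ge0$ by (H3) and the integrands in \eqref{eq:definitionenergyS} are all bounded below by $c_1|\cdot|$ by (H1), while the regions over which \eqref{eq:definitionenergyS} integrates cover $\Om\cap(\partial A_k\cup\partial S_k)$ up to an $\mathcal H^1$-negligible set (as $\psi$ in \eqref{psi} is defined on all of that set), one gets $\mathcal H^1(\Om\cap\partial A_k)+\mathcal H^1(\Om\cap\partial S_k)\le 2c_1^{-1}\sup_k\mathcal S(A_k,S_k)<\infty$, and adding $\mathcal H^1(\partial\Om)=4(l+L)$ produces an equibound on $\mathcal H^1(\partial A_k)$ and $\mathcal H^1(\partial S_k)$. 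Then I would apply Theorem~\ref{thm:compactness}: up to a subsequence there exists $(\wt A_k,\wt S_k,\wt u_k)\in\C_{\mathbf m}$, $\topC$-precompact, with $\liminf_k\mathcal F(\wt A_k,\wt S_k,\wt u_k)=\liminf_k\mathcal F(A_k,S_k,u_k)$; since that construction alters $\wt A_k,\wt S_k$ only by adjoining boundary and removing slivers of vanishing area, one also has $|\mathcal L^2(\wt A_k)-\mathcal L^2(A_k)|+|\mathcal L^2(\wt S_k)-\mathcal L^2(S_k)|\to0$. After a further extraction I may assume $(\wt A_k,\wt S_k,\wt u_k)\xrightarrow{\topC}(A,S,u)\in\C_{\mathbf m}$ while $\mathcal F(\wt A_k,\wt S_k,\wt u_k)\to\liminf_k\mathcal F(A_k,S_k,u_k)$, i.e.\ to the infimum.

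The continuity input I need on top of this is that the two volume functionals are $\topC$-continuous along the sequence. Indeed $\tau_\B$-convergence implies, by Remark~\ref{remark:1}(i), the Kuratowski convergences $\wt A_k\cngK\overline A$, $\R^2\setminus\wt A_k\cngK\R^2\setminus\Int A$ and the analogous ones for $\wt S_k$; since all these sets lie in the bounded $\overline\Om$ and $\mathcal L^2(\partial A)=\mathcal L^2(\partial S)=0$ (their boundaries being $\mathcal H^1$-finite), these force $\mathbbm 1_{\wt A_k}\to\mathbbm 1_A$ and $\mathbbm 1_{\wt S_k}\to\mathbbm 1_S$ in $L^1(\R^2)$, hence $\mathcal L^2(\wt A_k)\to\mathcal L^2(A)$ and $\mathcal L^2(\wt S_k)\to\mathcal L^2(S)$; combined with the previous step this gives $\mathcal L^2(A_k)\to\mathcal L^2(A)$ and $\mathcal L^2(S_k)\to\mathcal L^2(S)$ along the subsequence.

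Finally I would conclude by lower semicontinuity. For \eqref{eq:const}, the above limits give $\mathcal L^2(A)=\mathbbm v_1$ and $\mathcal L^2(S)=\mathbbm v_0$, so $(A,S,u)$ is admissible, and Theorem~\ref{thm:lowersemicontinuity} yields $\mathcal F(A,S,u)\le\liminf_k\mathcal F(\wt A_k,\wt S_k,\wt u_k)$, which equals the infimum in \eqref{eq:const}; hence $(A,S,u)$ is a minimizer. For \eqref{eq:uncost}, the convergence of the volumes makes the two penalization terms pass to the limit, so $\liminf_k\mathcal F^{\lambda}(\wt A_k,\wt S_k,\wt u_k)\ge\mathcal F^{\lambda}(A,S,u)$, and the same argument shows $(A,S,u)$ minimizes $\mathcal F^{\lambda}$ over $\C_{\mathbf m}$. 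Within this scheme the only point requiring genuine care is precisely the survival of the prescribed volumes under both the modification of Theorem~\ref{thm:compactness} and the passage to the $\topC$-limit; all the actual difficulty of the existence result is already absorbed into Theorems~\ref{thm:compactness} and~\ref{thm:lowersemicontinuity}, and ultimately into the Radon–Nikodym estimate \eqref{RN_estimates} behind the lower semicontinuity of $\mathcal S$.
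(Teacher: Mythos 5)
Your proposal is correct and follows essentially the same route as the paper: direct method, invoking Theorem~\ref{thm:compactness} for compactness and Theorem~\ref{thm:lowersemicontinuity} for lower semicontinuity, with the only substantive point being the survival of the volume constraints. One small remark: you argue the volume preservation under the modified sequence qualitatively ("slivers of vanishing area"), but Theorem~\ref{thm:compactness} already asserts the exact equalities $\mathcal L^2(A_{k_n})=\mathcal L^2(\wt A_n)$ and $\mathcal L^2(S_{h_{k_n},K_{k_n}})=\mathcal L^2(\wt S_n)$, which is what the paper uses directly, so that step can be shortened and made sharper by citing the theorem's conclusion rather than re-examining the construction.
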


To prove Theorem \ref{thm:existence} %and Corollary \ref{cor:maincorollary}, 
we  apply the direct method of calculus of variations. %and in order to use this method
On the one hand, in Section \ref{sec:compactness}, %thanks to \eqref{eq:filmmodel1} 
we show that any energy equi-bounded sequence $\{(A_k,S_k, u_k)\} \subset \mathcal{C}_\mathbf{m}$ satisfy the following compactness property. 

\begin{theorem}[Compactness in  $\mathcal{C}_\mathbf{m}$]
\label{thm:compactness}
Assume (H1) and (H3). Let $\{(A_k,{ S_{h_k,K_k}}% h_k,K_k
, u_k)\}_{k \in \N} \subset \mathcal{C}_\mathbf{m}$ be such that
\begin{equation}
\label{eqthm:compactness}
\sup_{k\in\mathbb{N}}{ \mathcal{F}(A_k,{ S_{h_k,K_k}}% h_k,K_k
, u_k) } < \infty.
\end{equation}
Then, there exist an admissible configuration $(A,S%h,K
,u) \in \mathcal{C}_\mathbf{m}$ of finite energy, a subsequence $\{\left(A_{k_n},S_{h_{k_n},K_{k_n}}
, u_{k_n}\right)\}_{n \in \N}$, a sequence $\left\{\left(\widetilde{A}_n,{\wt S_n}% {h}_{k_n},\widetilde{K}_n
, u_{k_n}\right)\right\}_{n \in \N} \subset \mathcal{C}_\mathbf{m}$ and a sequence $\{b_n\}_{n\in \N}$ of piecewise rigid displacements associated to $\widetilde{A}_n$ such that $\left(\widetilde{A}_n,{ \wt S_n}% {h}_{k_n},\widetilde{K}_n
,  u_{k_n} + b_n\right) \xrightarrow{\tau_\mathcal{C}} (A,S%h,K
,u)$, $\mathcal{L}^2 (A_{k_n}) = \mathcal{L}^2 (\wt A_{n})  $, $\mathcal{L}^2 (S_{h_{k_n}, K_{k_n}}) = \mathcal{L}^2 (\wt S_{n})  $ for all $n \in \N$ and 
\begin{equation}\label{liminfequalliminf}
\liminf_{n \rightarrow \infty}{\mathcal{F}(A_\kn,S_{h_{k_n},K_{k_n}}% h_k,K_k
, u_\kn) } = \liminf_{n \rightarrow \infty}{\mathcal{F}\left(\widetilde{A}_n,{\wt S_n}% {h}_{k_n},\widetilde{K}_n
,  u_{k_n} + b_n\right)}.
\end{equation}
\end{theorem}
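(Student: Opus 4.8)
The plan is to run the direct method in three stages: first extract a limiting candidate using only the a~priori bounds, then repair the candidate so that it lies in $\mathcal{C}_\mathbf{m}$ and carries a well-behaved displacement, and finally check that the energy is not lost in the replacement. I begin by noting that \eqref{eqthm:compactness} together with (H1) and (H3) gives uniform bounds on $\Hs^1(\partial A_k)$, $\Hs^1(\partial S_{h_k,K_k})$ (hence, by Lemma~\ref{lem:appendix1} and \eqref{eq:uniongraphs}, on $\mathrm{Var}\,h_k$ and $\Hs^1(K_k)$) and on $\int_{A_k} |E(u_k-E_0)|^2$. Applying the Blaschke-type selection principle of \cite[Proposition~3.1]{KP} to the sequences $\sdist(\cdot,\partial A_k)$ and $\sdist(\cdot,\partial S_{h_k,K_k})$, I pass to a subsequence (not relabelled, or indexed by $k_n$) along which both signed-distance functions converge locally uniformly to $\sdist(\cdot,\partial A)$ and $\sdist(\cdot,\partial S)$ for some closed sets; Lemma~\ref{lem:1} then identifies the limits: $\partial A$ is $\Hs^1$-rectifiable with $\le m_1$ components, $\partial S$ with $\le m_0$ components, and crucially $S=S_{h,K}$ for a pair $(h,K)\in\mathrm{AHK}(\Om)$, so the exterior graph constraint is preserved in the limit and $(A,S)\in\B_\mathbf{m}$.

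The delicate point — and the one I expect to be the main obstacle — is the displacement. A connected component of $A_k$ may pinch and split into several components in the limit $A$, and Korn's inequality must be applied componentwise; without care the rigid motions needed on the different limiting pieces are incompatible, and $u_k$ need not converge. I would resolve this by implementing the construction of \cite[Proposition~3.6]{KP} in the two-phase setting: for each $k_n$ add a controlled amount of extra boundary to $A_{k_n}$ near the incipient neckpinches, producing $\wt A_n$ whose components already match the decomposition of $A$, with $\Hs^1(\partial\wt A_n)$ still uniformly bounded and $\wt A_n\xrightarrow{\tau_\B}A$; then choose $\wt S_n$ so that $(\wt A_n,\wt S_n)\in\B_\mathbf{m}$ (this is where the simplification mentioned in the introduction helps — since $\mathcal{S}$ no longer depends on $u$, the substrate need only be adjusted for volume, not to track delamination). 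Here I also fix the volumes: since $\mathcal{L}^2(\partial A)=\mathcal{L}^2(\partial S)=0$, a further small Lipschitz perturbation of $\wt A_n$, $\wt S_n$ arranges $\mathcal{L}^2(\wt A_n)=\mathcal{L}^2(A_{k_n})$ and $\mathcal{L}^2(\wt S_n)=\mathcal{L}^2(S_{h_{k_n},K_{k_n}})$ at vanishing surface-energy cost, exactly as in \cite{ChB,FFLM2}. On each component of $\wt A_n$ I apply Korn's inequality to $u_{k_n}$ to produce a piecewise rigid displacement $b_n$, and extract a further subsequence so that $u_{k_n}+b_n$ converges a.e.\ in $\Int(A)$ to some $u\in H^1_{\mathrm{loc}}(\Int(A);\R^2)$; then $(\wt A_n,\wt S_n,u_{k_n}+b_n)\xrightarrow{\tau_\C}(A,S,u)$.

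It remains to verify \eqref{liminfequalliminf}. For the surface part, the extra boundary introduced has $\Hs^1$-measure tending to zero along the construction of \cite[Proposition~3.6]{KP}, and the volume-fixing perturbation likewise costs $o(1)$ in $\mathcal{S}$, so $\mathcal{S}(\wt A_n,\wt S_n)=\mathcal{S}(A_{k_n},S_{h_{k_n},K_{k_n}})+o(1)$; since $\mathcal{S}$ is frame-indifferent (it does not see $u$ at all), adding $b_n$ changes nothing. For the elastic part, rigid displacements leave $E(u_{k_n}+b_n-E_0)=E(u_{k_n}-E_0)$ unchanged on the common portion, and the modifications of $A_{k_n}$ only remove a set of vanishing Lebesgue measure (the new boundary) or add none, so $\mathcal{W}(\wt A_n,u_{k_n}+b_n)=\mathcal{W}(A_{k_n},u_{k_n})+o(1)$. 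Taking $\liminf$ in $n$ and recalling that along the chosen subsequence $\liminf_n\mathcal{F}(A_{k_n},\dots)$ equals the original $\liminf$ gives \eqref{liminfequalliminf}. Finiteness of $\mathcal{F}(A,S,u)$ then follows from the lower semicontinuity argument (or directly from Fatou for $\mathcal{W}$ and the $\tau_\C$-lower semicontinuity of $\mathcal{S}$ proved later), completing the proof.
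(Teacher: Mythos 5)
Your high-level plan matches the paper's: extract the limit via the compactness of $\B_\mathbf{m}$ (Lemma~\ref{lem:1} plus the Blaschke principle), apply \cite[Proposition~3.6]{KP} to $A_{k_n}$ to pre-split the incipient neckpinches, and then apply Korn componentwise via \cite[Corollary~3.8]{KP}. However, there are two concrete problems with how you handle the substrate and the volumes.

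First, you misidentify why $\wt S_n$ must be modified at all. You write that ``the substrate need only be adjusted for volume,'' but volume is not the issue: the new internal boundary $\partial\wt A_n\setminus\partial A_{k_n}$ produced by \cite[Proposition~3.6]{KP} may cut through $\Int(S_{h_{k_n},K_{k_n}})$, which would violate the admissibility condition $\partial A\cap\Int(S)=\emptyset$. The paper's fix keeps the height $h_{k_n}$ and enlarges only the crack set, setting $\widetilde K_n:=K_{k_n}\cup\bigl((\partial\widetilde A_n\setminus\partial A_{k_n})\cap\overline{\Int(S_{h_{k_n}})}\bigr)$, and $\wt S_n:=S_{h_{k_n},\widetilde K_n}$. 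One then still has to check (i)~that $\partial\wt S_n$ has at most $m_0$ connected components — which needs the observation that each new arc $\Gamma_j$ entering $S_{h_{k_n}}$ is already connected to $\partial S_{h_{k_n},K_{k_n}}$ — and (ii)~that $\sdist(\cdot,\partial\wt S_n)\to\sdist(\cdot,\partial S_{h,K})$, which the paper proves by a short contradiction argument using that the enlarged cracks live inside $\partial\widetilde A_n\setminus\partial A_{k_n}$. Your proposal skips both checks.

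Second, the extra Lipschitz perturbation you invoke to fix the volumes is unnecessary and risky. Since $\widetilde A_n=A_{k_n}\setminus(\partial\widetilde A_n\setminus\partial A_{k_n})$ and $\wt S_n=S_{h_{k_n},K_{k_n}}\setminus\bigl((\partial\widetilde A_n\setminus\partial A_{k_n})\cap\overline{\Int(S_{h_{k_n}})}\bigr)$, the modifications remove only $\Hs^1$-rectifiable (hence $\mathcal L^2$-null) sets, so $\mathcal L^2(\wt A_n)=\mathcal L^2(A_{k_n})$ and $\mathcal L^2(\wt S_n)=\mathcal L^2(S_{h_{k_n},K_{k_n}})$ hold \emph{identically}, not merely up to $o(1)$, which is exactly what the statement requires. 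An additional Lipschitz perturbation could moreover take $S$ out of the admissible class $\mathrm{AS}(\Om)$ (it must remain of the form $S_{h,K}$) or increase the number of boundary components, so introducing it creates new obligations you would have to discharge.
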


On the other hand, in Section \ref{sec:semicontinuity} we show that $\mathcal{F}$ is lower semicontinuous in $\mathcal{C}_\mathbf{m}$ with respect to the topology $\tau_\C$.

	\begin{theorem}[Lower semicontinuity of $\mathcal{F}$]
		\label{thm:lowersemicontinuity}
		Assume (H1)-(H3). Let $\{ \left(A_k, S_{h_k,K_k}, u_k \right) \}_{k \in \N} \subset \mathcal{C}_\mathbf{m}$ and $\left(A,S_{h,K},u \right) \in \mathcal{C}_\mathbf{m}$ be such that $\left( A_k,S_{h_k,K_k}, u_k \right) \xrightarrow{\tau_\mathcal{C}} \left(A,S_{h,K},u \right)$. Then
		\begin{equation}
  \label{eq:lowersemicontinuityF}
			\mathcal{F} \left( A,S_{h,K}, u \right)\le \liminf_{k \rightarrow \infty}{ \mathcal{F} \left( A_k ,S_{h_k,K_k}, u_k \right) }.
		\end{equation}
	\end{theorem}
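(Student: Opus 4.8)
The plan is to prove the lower semicontinuity of $\mathcal{F}$ by splitting it into the surface and elastic contributions, since these behave rather differently. For the elastic part $\mathcal{W}(A,u)=\int_A W(x,E(u-E_0))\,dx$, the argument is standard and mirrors \cite{FFLM2,KP}: given $\tau_\mathcal{C}$-convergence, the sets $\partial A_k$ converge to $\partial A$ in the Kuratowski sense, so $\Int(A_k)$ exhausts $\Int(A)$ in the sense that every compact $\Omega'\Subset\Int(A)$ is contained in $\Int(A_k)$ for $k$ large; on such $\Omega'$ the bounds in (H3) together with Korn's inequality give that $u_k$ is bounded in $H^1(\Omega';\R^2)$, hence (using $u_k\to u$ a.e.) $u_k\rightharpoonup u$ weakly in $H^1(\Omega';\R^2)$, and then convexity of $M\mapsto W(x,M)$ yields $\liminf_k\int_{\Omega'}W(x,E(u_k-E_0))\,dx\ge\int_{\Omega'}W(x,E(u-E_0))\,dx$. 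Letting $\Omega'\uparrow\Int(A)$ gives $\liminf_k\mathcal{W}(A_k,u_k)\ge\mathcal{W}(A,u)$.

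The heart of the matter is the lower semicontinuity of the surface energy $\mathcal{S}$, and here I would follow exactly the blow-up program outlined in the introduction. First, associate to each admissible pair the localized Radon measure $\mu_k$ whose total mass is $\mathcal{S}(A_k,S_k)$ (so $\mu_k$ is $\psi(x,\nu(x))\Hs^1$ restricted to the various pieces of $\Om\cap(\partial A_k\cup\partial S_k)$), and similarly $\mu$ for the limit pair. By the uniform bound $\sup_k\Hs^1(\partial A_k)+\Hs^1(\partial S_k)<\infty$ and (H1), the masses $\mu_k(\R^2)$ are bounded, so up to a subsequence $\mu_k\stackrel{*}{\rightharpoonup}\mu_0$ for some positive Radon measure $\mu_0$; it then suffices to prove the Radon--Nikodym inequality \eqref{RN_estimates}, i.e.\ $\frac{d\mu_0}{d\Hs^1\res(\Om\cap(\partial A\cup\partial S))}\ge\frac{d\mu}{d\Hs^1\res(\Om\cap(\partial A\cup\partial S))}$ at $\Hs^1$-a.e.\ point of $\Om\cap(\partial A\cup\partial S)$, since $\mu$ is absolutely continuous with respect to that measure. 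Since the density on the right is $\psi(x,\nu(x))$ $\Hs^1$-a.e., one picks a $\mu$-a.e.\ point $x_0$, passes to blow-ups $\sigma_{\rho,x_0}$, and has to show that along an infinitesimal sequence of radii the rescaled measures produce in the limit at least the density dictated by $\psi$ on the appropriate one of the 12 pieces of $\Om\cap(\partial A\cup\partial S)$ into which $x_0$ can fall. For the 2 pieces that are ordinary reduced-boundary points of a single set this is \cite[Theorem 20.1]{M}; for the remaining 10 one invokes the \emph{ad hoc} Lemmas~\ref{lema:creationOneBdr}--\ref{lema:creationcrackdelamination}, whose proofs in turn rest on Lemma~\ref{lem:finiteness} (reducing the coherent interface to finitely many connected components up to a small error, at the cost of passing to $\wt\C$) and on induction over that number of components, using the shrinking/filling/grain-modification moves of Figures~\ref{figure:oneisland}--\ref{figure:replacementofgrains} together with the anisotropic minimality of segments \cite[Remark 20.3]{M}.

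Finally, combining \eqref{RN_estimates} with \eqref{lsc_F_saa} gives $\lim_k\mu_k(\R^2)=\mu_0(\R^2)\ge\mu(\R^2)$, hence $\liminf_k\mathcal{S}(A_k,S_k)\ge\mathcal{S}(A,S)$; adding the already-established elastic estimate yields $\mathcal{F}(A,S,u)\le\liminf_k\mathcal{F}(A_k,S_k,u_k)$, which is \eqref{eq:lowersemicontinuityF}. One point to handle with some care is that the limit pair $(A,S_{h,K})$ really lies in $\C_\mathbf{m}$ rather than merely in $\wt\C$: the $\tau_\B$-closedness from Lemma~\ref{lem:1} ensures $S=S_{h,K}\in\AS$ with $\mathrm{Var}\,h<\infty$ and the correct bound on connected components, and the exterior graph constraint $\partial^*S\cup(\partial S\cap S^{(0)})$ being a subgraph of an upper semicontinuous function of bounded pointwise variation is preserved in the limit; this is needed because the decomposition of $\Om\cap(\partial A\cup\partial S)$ into its 12 blow-up pieces, and the identification of the density of $\mu$ with $\psi$, only make sense for genuinely admissible configurations.

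I expect the main obstacle to be the surface-energy lower semicontinuity, and specifically the blow-up analysis on the pieces where two boundaries (composite and substrate) interact and where cracks/filaments can be created in the limit — these are precisely the cases covered by Lemmas~\ref{lema:creationOneBdr}--\ref{lema:creationcrackdelamination}. The difficulty is that, unlike in the one-phase setting of \cite{KP}, the two-phase interfaces are far less regular than Lipschitz graphs, so one cannot argue pointwise along a nice curve; instead one must control how connected components of $\partial A_k$ and $\partial S_k$ can split or merge under the blow-up, which is why the finiteness reduction of Lemma~\ref{lem:finiteness} and the inductive shrink/fill/modify constructions are essential, and why the weights $2\varphi_\mathrm{F}$, $2\varphi_\mathrm{FS}$, $2\varphi$, $2\varphi'$ on the "doubled" pieces of $\psi$ in \eqref{psi} are exactly what is needed for the blow-up limits to close.
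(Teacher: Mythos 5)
Your proposal reproduces the paper's proof essentially step for step: the paper splits $\mathcal{F}=\mathcal{S}+\mathcal{W}$, invokes the separately proven lower semicontinuity of $\mathcal{S}$ (Proposition~\ref{thm:lowersemicontinuityS}, which follows exactly the Radon-measure, weak* compactness, Radon--Nikodym/blow-up scheme you describe, including Lemma~\ref{lem:finiteness} and the shrink/fill/grain-modification inductions in Lemmas~\ref{lema:creationOneBdr}--\ref{lema:creationcrackdelamination}), and then handles $\mathcal{W}$ by compact exhaustion $D\subset\subset\Int(A)$, Kuratowski convergence of $\Int(A_k)$, a.e.\ convergence and $L^2$-boundedness of $e(u_k)$ giving $e(u_k)\rightharpoonup e(u)$ in $L^2(D)$, convexity, and finally $D\nearrow\Int(A)$. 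One small inaccuracy: of the ten blow-up cases beyond the two treated via \cite[Theorem~20.1]{M}, only six use the new two-phase Lemmas~\ref{lema:creationOneBdr}--\ref{lema:creationcrackdelamination}; the other four (filament/crack cases involving a single region) reduce directly to the one-phase results \cite[Lemmas~4.4 and~4.5]{KP}, as recorded in Table~\ref{tab:my_label}.
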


 We now describe  the consequences for the one-phase setting of the results obtained in this manuscript for the two-phase setting.   

\begin{remark}[Relation to literature models with fixed substrate] \label{rem:previous_surface_tensions}\normalfont
The energy considered in this paper can be seen as an extension of the  energies previously considered in the literature, e.g.,  in \cite{KP, FFLM2,FFLM}, by ``fixing the substrate regions''. 
More precisely, if we consider the subfamily $\B' \subset \B$ where 
$$
\B':=\{(A,S %h,K
)\in \B: \,  \overline{\partial S\cap \Om} \text{ is a Lipschitz 1-manifold}\}$$ 
and $\C' :=\{(A,S%h,K
,u) \in \C: \, (A,S%h,K
)\in \B'\}$, then the energy $\mathcal{F}'$ defined for every $(A,S%h,K
)\in\B'$ by 
\begin{align*}
{\mathcal{F}'} \left(A ,S%h,K
,u \right) &:= \mathcal{F} \left(A ,S%h,K
,u \right) -   \int_{ \Om \cap  \partial^*{S} %\setminus S^{(1)}_{h,K}
}{\varphi_{\mathrm{ FS}}\left( z, \nu_{S}(z) \right) \, d\Hs^1}- \int_{ \Om \cap (\partial A \setminus \partial {S})\cap A^{(0)} }{2 { \varphi'%\varphi_{\mathrm{ F}}
}(z, \nu_A (z)  )d \Hs^1} \\ 
& =   \int_{ \Om \cap \partial^* A \setminus  { \partial {S}}}{\varphi_{\mathrm{ F}}(z, \nu_A (z))d\Hs^1} + \int_{ \Om \cap (\partial A \setminus \partial {S})\cap  \cup A^{(1)} }{2 \varphi_{\mathrm{ F}}(z, \nu_A (z)  )d \Hs^1}  \\
& \quad  + \int_{\Om \cap  \partial ^*{S} \cap \partial A  \cap A^{(1)}}{ (\varphi_{\mathrm{ F}} - \beta )(x, \nu_A \xp ) \, d\Hs^1}- \int_{\Om \cap  \partial ^*{S} \cap \partial^* A }{ \beta(z, \nu_A (z))d\Hs^1}, 
\end{align*}
where %$S_h\in \AS$ and 
$\beta :=   \varphi_{\textrm{FS}} - \varphi$, is analogous to  
 the energy $\mathcal{F}'$ of  \cite[Theorem 2.9]{KP} (where the notation $\varphi$ was referring to $\varphi_{\mathrm{ F}}$), which is an extension of the energies of \cite{FFLM2,FFLM} as described in \cite[Remark 2.10]{KP}. We notice though that, even in the situation of a fixed regular substrate, i.e., by considering  the family $\C'' :=\{(A,S%h,K
,u) \in \C: \, S=S_0\}\subset\C'$
% $
%\B'':=\{(A,S %h,K
%)\in \B': \, S=S_0  \\}$$ 
for a fixed admissible region  $S_0$ such that $\overline{\partial S_0\cap \Om}$ is a Lipschitz 1-manifold,  
 the setting considered in this manuscript allows to include into the analysis the possibility of an uncountable number of film islands (or film voids) on top of the substrate (which was instead  precluded in \cite{KP}), because of the crucial difference introduced in the setting of this manuscript consisting of always including  the substrate regions inside the admissible region $A$ (with the film region then being represented by $\overline{A}\setminus S^{(1)}$). We also notice that the hypotheses  on the surface tensions in this manuscript coincide with the ones in \cite{KP} up to the observation  that on the right hand-side of \eqref{eq:H2} one can disregard the absolute value for the setting with a fixed substrate).   
 \end{remark}

We conclude the section by outlining the consequences of the results obtained in this manuscript for the two-phase setting in the multiple-phase setting of film multilayers that is the object of investigation in  \cite{LlP1}. 

\begin{remark}[Relation to the setting of film multilayers] \label{rem:multilayers}\normalfont
In the parallel paper \cite{LlP1} the authors consider the setting in which also the film region is subject to a graph constraint, by introducing a family of admissible regions for the film and the substrate of the form $\B^1:= \{ (S_{h^1,K^1}, S_{h^0,K^0}) \in \mathrm{AS}(\Om) \times \mathrm{AS}(\Om):\, h^0 \le h^1 \text{ and } \partial S_{h^1,K^1} \cap \Int(S_{h^0,K^0}) = \emptyset  \} \subset \B$ and the related family $\C^1:= \{ (S_{h^1,K^1}, S_{h^0,K^0},u) :\,(S_{h^1,K^1}, S_{h^0,K^0})\in \B^1 \text{ and } u \in H^1_{\mathrm{loc}}(S_{h^1,K^1}; \R^2) \} \subset \mathcal{C}$ of admissible configurations. By implementing in the compactness for both the substrate and the film the arguments employed in this manuscript for the substrate, a similar result to Theorem \ref{thm:existence} is established, thus providing an existence result for the problem of films resting on deformable substrates in the presence of delaminations, which can be seen  as an extension of \cite{ DP, DP2,FFLM2}. Furthermore, in \cite{LlP1} by then  performing also an iteration procedure an existence result is  provided also for the setting  of finitely-many  film multilayers.
\end{remark}

%%%%%%%%%%%%%%%%%%%%%%%%%%%%%%%%%%%%%%%%%%%%%%%%%%%%%%%%%%%%%%%%%%%%%%%%%%%%%%%%%%%%%%%%%%%%%%%%%%%%%%%%%%%%%

\section{Compactness}\label{sec:compactness}

In this section, we fix $\mathbf{m} :=(m_0, m_1)\in\N \times \N$ and we prove that the families $\B_\mathbf{m}$ and $\mathcal{C}_\mathbf{m}$ are compact with respect to $\tau_\B$ and $\tau_{\mathcal{C}}$ topologies, respectively. % and for the set of admissible configurations $\mathcal{C}_{m }$, for any $m \in \N$. 

\begin{proposition}
\label{prop:blaschke}
The following assertions hold:
\begin{enumerate}
\item[(i)] For every sequence of closed sets $\{E_k\}$, there exists $E \subset \R^2$ such that $E_k \xrightarrow{\K}E$.
\item[(ii)] For every sequence $\{E_k\}_{k \in \N}$ of subsets of $\R ^2 $ there exists a subsequence $ \{E_{k_l } \}_{l\in \N}$ and $E \subset  \R^2$ such that $\mathrm{{sdist}} (\cdot, \partial E_{k_l} ) \rightarrow \mathrm{{sdist}} (\cdot, \partial E) $ locally uniformly in $\R^2$.
\end{enumerate}
\end{proposition}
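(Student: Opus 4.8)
The plan is to prove the two assertions of Proposition~\ref{prop:blaschke} by reducing them to classical compactness results for the Kuratowski convergence of closed sets, namely the Blaschke selection-type theorem. The key observation is that signed-distance convergence is, by Remark~\ref{remark:1}-(i), equivalent to the simultaneous Kuratowski convergence of $\overline{E_k}$ to $\overline E$ and of $\R^2\setminus E_k$ to $\R^2\setminus\Int(E)$, so both statements are really statements about Kuratowski convergence of closed sets.

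\textbf{Assertion (i).} For a sequence of closed sets $\{E_k\}$ I would simply invoke the standard fact that the Kuratowski lower and upper limits
$$
\liminf_{k\to\infty} E_k := \{x\in\R^2:\ \exists\, x_k\in E_k,\ x_k\to x\},\qquad
\limsup_{k\to\infty} E_k := \{x\in\R^2:\ \exists\, k_j\uparrow\infty,\ x_{k_j}\in E_{k_j},\ x_{k_j}\to x\}
$$
always exist as closed sets, and that one can extract a subsequence along which they coincide; this is exactly the content of \cite[Section~6.1]{AFP} or \cite[Chapter~4]{Dm}. Actually for assertion (i) as literally stated one does not even need a subsequence if one is content with Kuratowski convergence in the sense that allows passing to a subsequence --- but since the statement asks for $E_k\xrightarrow{\K}E$ for the full sequence, I would note that compactness of $\R^2$ (or rather of the one-point compactification, to handle unboundedness) together with a diagonal argument over a countable dense set gives a subsequential limit, and then re-index; alternatively this is precisely \cite[Proposition~3.1]{KP}, which I would cite directly. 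The set $E$ produced is automatically closed.

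\textbf{Assertion (ii).} Here I would argue as follows. Given $\{E_k\}$, apply assertion (i) (in its subsequential form) twice: first to the closed sets $\overline{E_k}$ to extract a subsequence along which $\overline{E_k}\xrightarrow{\K}C$ for some closed $C\subset\R^2$, then to the closed sets $\R^2\setminus E_k$ (passing to a further subsequence) so that $\R^2\setminus E_k\xrightarrow{\K}D$ for some closed $D$. One checks $C\cup D=\R^2$ and $\Int(C)\subset\R^2\setminus D$; setting $E:=\R^2\setminus D$ (so that $\overline E\subset C$ and $\R^2\setminus\Int(E)\supset D$) one needs a small argument --- essentially that $C=\overline E$ and $D=\R^2\setminus\Int(E)$ hold, or at least that the relevant signed-distance functions converge. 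The cleanest route is to recall the classical equivalence (see \cite[Appendix~A.1]{KP} or \cite[Chapter~4]{Dm}): for bounded sets, $\sdist(\cdot,\partial E_{k_l})\to\sdist(\cdot,\partial E)$ locally uniformly iff $E_{k_l}\xrightarrow{\K}\overline E$ and $\R^2\setminus E_{k_l}\xrightarrow{\K}\R^2\setminus\Int(E)$; one then defines $E$ directly from $C$ and $D$ and verifies the two Kuratowski convergences by construction. Since the signed-distance functions are $1$-Lipschitz, local uniform convergence is equivalent to pointwise convergence by Arzel\`a--Ascoli, which makes the diagonal extraction transparent.

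\textbf{Main obstacle.} The only genuinely delicate point is the handling of unboundedness of the $E_k$ (assertion (i) is stated for arbitrary closed sets in $\R^2$) and the passage between Kuratowski convergence of a set together with its complement and convergence of the signed distance: one must be careful that the limit set $E$ reconstructed from the limits of $E_k$ and $\R^2\setminus E_k$ genuinely satisfies $\overline E=\lim\overline{E_k}$ and $\R^2\setminus\Int(E)=\lim(\R^2\setminus E_k)$, since in general the Kuratowski limit of complements need not be the complement of the interior of the Kuratowski limit. This is exactly why the statement only asserts existence of \emph{some} $E$ (rather than identifying it), and the resolution is precisely the Blaschke-type selection principle of \cite[Proposition~3.1]{KP} together with the elementary fact that $1$-Lipschitz functions on $\R^2$ bounded at one point form a locally precompact set in the topology of local uniform convergence. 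I would therefore structure the proof to lean on \cite[Proposition~3.1]{KP} for (ii) and on the standard existence of Kuratowski limits for (i), keeping the self-contained verifications to the minimum.
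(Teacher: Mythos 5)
Your proposal takes essentially the same approach as the paper, which proves this proposition in a single line by citing \cite[Theorem 6.1]{AFP} for assertion (i) and \cite[Theorem 6.1]{KP} for assertion (ii) (elsewhere the paper refers to the latter as the ``Blaschke-type selection principle proved in \cite[Proposition 3.1]{KP}'', so your citation agrees with the paper's own usage). Your reconstruction of what lies behind those references --- Kuratowski upper/lower limits with a diagonal extraction for (i), and for (ii) either the double application of (i) to $\overline{E_k}$ and $\R^2\setminus E_k$ or the more direct Arzel\`a--Ascoli argument on the $1$-Lipschitz functions $\mathrm{sdist}(\cdot,\partial E_k)$ --- is sound, and your ``main obstacle'' paragraph correctly identifies the genuine subtlety, namely that the Kuratowski limit of $\R^2\setminus E_k$ need not coincide with $\R^2\setminus\Int(E)$ for a naively reconstructed $E$, which is why one must either carefully identify $E$ from the two limits or lean on the precompactness of the $1$-Lipschitz family directly. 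One small correction: your suggested fix of extracting a subsequential limit ``and then re-indexing'' in order to obtain full-sequence Kuratowski convergence in (i) does not work --- re-indexing a convergent subsequence gives you a subsequence of the original, not the original, and Kuratowski convergence of the full sequence genuinely fails in general (for instance $E_k=\{((-1)^k,0)\}$). Assertion (i) should simply be read as a subsequential statement, matching both the cited result \cite[Theorem 6.1]{AFP} and the explicit ``there exists a subsequence'' wording of assertion (ii); this is clearly the intended reading and is how the proposition is used in the proof of Theorem \ref{thm:compactnessB}.
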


The proof of Proposition \ref{prop:blaschke}-(i)  and -(ii) can be found in  \cite[Theorem 6.1]{AFP} and \cite[Theorem 6.1]{KP}, respectively (see also \cite[Theorem 6.1]{AFP} for the version of Item (ii) with the signed-distance convergence replaced by the \emph{Hausdorff-metric convergence}).

\begin{theorem}[Compactness of $\mathcal{B}_\mathbf{m}$]
\label{thm:compactnessB}
Let $\{(A_k,S_{h_k,K_k}%h_k,K_k
)\} \subset \mathcal{B} _\mathbf{m}$ such that 
\begin{equation*}
\sup_{k \in \N}{\mathcal{S}(A_k,S_{h_k,K_k}%h_k,K_k
)} < \infty 
\end{equation*}
for $(h_k,K_k) \in \mathrm{AHK}(\Om)$. Then, there exist a not relabeled subsequence $\{(A_{k},S_{h_k,K_k})\} \subset \B_\mathbf{m}$ and $(A,S_{h,K}
) \in \mathcal{B}_\mathbf{m}$ such that $(A_k, S_{h_k,K_k}
) \xrightarrow{\tau_\B} (A,S_{h,K}
)$.
\end{theorem}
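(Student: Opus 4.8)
\emph{Proof strategy.} The plan is to run the direct method on the surface energy alone: extract a convergent subsequence via Proposition~\ref{prop:blaschke}-(ii), identify the limit and its structure via Lemma~\ref{lem:1}, and finally check that the two constraints defining $\B$ are stable under $\tau_\B$-convergence.

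\emph{Geometric bounds and selection.} First I would turn the hypothesis $\sup_k\mathcal{S}(A_k,S_{h_k,K_k})<\infty$ into bounds on the boundaries. By (H1) the integrand of each of the eight terms of \eqref{eq:definitionenergyS} is bounded below by $c_1$ $\Hs^1$-a.e., and the eight integration domains cover $\Om\cap(\partial A_k\cup\partial S_{h_k,K_k})$ up to $\Hs^1$-negligible sets, so $\mathcal{S}(A_k,S_{h_k,K_k})\ge c_1\,\Hs^1(\Om\cap(\partial A_k\cup\partial S_{h_k,K_k}))$; since $A_k,S_{h_k,K_k}\subset\overline\Om$ forces $\partial A_k,\partial S_{h_k,K_k}\subset\overline\Om$ and $\Hs^1(\partial\Om)=4(l+L)<\infty$, this yields $\sup_k\Hs^1(\partial A_k)<\infty$ and $\sup_k\Hs^1(\partial S_{h_k,K_k})<\infty$. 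Then I would apply Proposition~\ref{prop:blaschke}-(ii) to $\{A_k\}$ to obtain a not relabeled subsequence and a set $A\subset\R^2$ with $\sdist(\cdot,\partial A_k)\to\sdist(\cdot,\partial A)$ locally uniformly, and, along it, apply Proposition~\ref{prop:blaschke}-(ii) once more to $\{S_{h_k,K_k}\}$ to obtain a further not relabeled subsequence and a set $S$ with $\sdist(\cdot,\partial S_{h_k,K_k})\to\sdist(\cdot,\partial S)$ locally uniformly.

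\emph{Structure of the limit.} Lemma~\ref{lem:1} now does most of the work: applied to $\{A_k\}$ with $m=m_1$ it gives that $\partial A$ is $\Hs^1$-finite and $\Hs^1$-rectifiable with at most $m_1$ connected components and that $A\subset\overline\Om$ is $\mathcal{L}^2$-measurable, while applied to $\{S_{h_k,K_k}\}$ with $m=m_0$, and using $S_{h_k,K_k}\in\AS$, it gives in addition $\sup_k(\Hs^1(K_k)+\mathrm{Var}\,h_k)<\infty$ together with a pair $(h,K)\in\mathrm{AHK}(\Om)$ such that $S=S_{h,K}\in\AS$, where $\partial S$ is $\Hs^1$-finite and $\Hs^1$-rectifiable with at most $m_0$ connected components. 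It then remains to verify $(A,S_{h,K})\in\B$, i.e.\ $S_{h,K}\subset\overline A\subset\overline\Om$ and $\partial A\cap\Int(S_{h,K})=\emptyset$. The inclusion $\overline A\subset\overline\Om$ is immediate from $A\subset\overline\Om$. For $S_{h,K}\subset\overline A$: by Remark~\ref{remark:1}-(i) we have $S_{h_k,K_k}\xrightarrow{\K}\overline S=S_h$ and $A_k\xrightarrow{\K}\overline A$, so any $x\in\overline S$ is the limit of some $x_k\in S_{h_k,K_k}\subset\overline{A_k}$, hence of some $z_k\in A_k$, so $x\in\overline A$, and thus $S_{h,K}\subset S_h=\overline S\subset\overline A$. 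For the disjointness: fix $x\in\Int(S_{h,K})$ and pick $r>0$ with $\overline{B(x,2r)}\subset\Int(S_{h,K})$; from the convergence $\R^2\setminus S_{h_k,K_k}\xrightarrow{\K}\R^2\setminus\Int(S_{h,K})$ of Remark~\ref{remark:1}-(ii) together with $\R^2\setminus\Int(E)=\overline{\R^2\setminus E}$, a contradiction argument (a point of $B(x,r)$ lying outside $\Int(S_{h_k,K_k})$ would, along a subsequence, accumulate to a point of $\overline{B(x,r)}$ belonging to $\R^2\setminus\Int(S_{h,K})$, which is disjoint from $\overline{B(x,2r)}$) shows that $B(x,r)\subset\Int(S_{h_k,K_k})$ for all large $k$; since $\partial A_k\xrightarrow{\K}\partial A$, were $x\in\partial A$ we could pick $x_k\in\partial A_k$ with $x_k\in B(x,r)\subset\Int(S_{h_k,K_k})$ for large $k$, contradicting $(A_k,S_{h_k,K_k})\in\B$. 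Hence $\partial A\cap\Int(S_{h,K})=\emptyset$ and $(A,S_{h,K})\in\B_\mathbf{m}$; combining this with the two signed-distance convergences and the uniform $\Hs^1$-bounds gives $(A_k,S_{h_k,K_k})\xrightarrow{\tau_\B}(A,S_{h,K})$.

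\emph{Main obstacle.} I expect the delicate point to be exactly the stability of the non-intersection constraint $\partial A\cap\Int(S)=\emptyset$ under $\tau_\B$-convergence, since the interiors $\Int(S_{h_k,K_k})$ may degenerate in the limit (cracks opening up, vertical filaments forming), so one cannot argue by naive set inclusion but must pass through the Kuratowski convergence of the complements supplied by Remark~\ref{remark:1}-(ii). Everything else is a routine assembly of the Blaschke-type selection principle and Lemma~\ref{lem:1}.
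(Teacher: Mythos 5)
Your proposal is correct and follows essentially the same route as the paper's proof: Blaschke-type selection (Proposition~\ref{prop:blaschke}), Lemma~\ref{lem:1} to identify the structure of the limit and to preserve the bounds on the number of boundary components, and a final stability check of the two constraints defining $\B$. The only places where you diverge in detail are minor: you derive the uniform $\Hs^1$-bounds by directly asserting that the eight integration domains in \eqref{eq:definitionenergyS} cover $\Om\cap(\partial A_k\cup\partial S_{h_k,K_k})$ up to a null set with integrands bounded below by $c_1$ — this is true, but it deserves the same kind of justification the paper gives via the decompositions in its equations preceding \eqref{eq:compactness.1} (in particular one needs $\partial^* S_k\setminus\partial A_k\subset A_k^{(1)}$, $\partial A_k\cap A_k^{(1)}\cap S_k^{(1)}\subset\partial S_k$, and $\varphi_{\mathrm{S}}\ge c_1$, which follows from $\varphi\ge c_1$) — and you prove the disjointness $\partial A\cap\Int(S_{h,K})=\emptyset$ via Kuratowski convergence of the complements rather than, as the paper does, by evaluating the signed-distance convergence at the putative bad point $x$; both arguments are correct and of comparable length.
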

\begin{proof}
For simplicity we denote $S_k := S_{h_k,K_k}$. 
We begin by observing that by %\eqref{eq:equivalencegraphvariation} and 
 Proposition \ref{prop:blaschke}-(ii) there exist $A \subset \R^2$ and $S \subset \R^2$ such that $\mathrm{{sdist}}( \cdot, \partial A_{k_l} ) \rightarrow \mathrm{{sdist}}( \cdot, \partial A )$ and $\mathrm{{sdist}}\left( \cdot, \partial {S_{{k_l}}} \right) \rightarrow \mathrm{{sdist}}( \cdot, \partial {S} )$ locally uniformly in $ \R^2$.	 
Let $R:= \sup_{k \in \N}{\mathcal{S}(A_k, S_{h_k,K_k}
)} $. 
In view of Remark \ref{rem:finiterperimeter} and by \eqref{eq:decompositionE} we have the following decomposition of $\partial A_k$,
\begin{align*}
\partial A_k &= \partial^* A_k \cup (\partial A_k \cap (A_k^{(0)} \cup A_k^{(1)})) \cup N_k,
\end{align*}
where $N_k$ is a $\Hs^1$-negligible set for every $k\in \N$. Thus, for every $k\in \N$ we observe that
\begin{align}
\partial A_k \setminus \partial {S_k} =  \partial^* A_k \setminus \partial {S_k} \cup \left( (\partial A_k \setminus \partial {S_k} ) \cap \left(A_k^{(0)} \cup A_k^{(1)}\right) \right)  \cup N'_k, \label{cmpct1}   
\end{align}
where $N'_k:= N_k \setminus \partial {S_k}$ is a $\Hs^1$-negligible set. Since for any $k \in \N$, $S_k$ is a set of finite perimeter, by \eqref{eq:decompositionE} we have that
\begin{align*}
\partial S_k &= \partial^* S_k \cup (\partial S_k \cap (S_k^{(0)} \cup S_k^{(1)})) \cup \widetilde{N}_k,
\end{align*}
where $ \widetilde{N}_k$ is a $\Hs^1$-negligible set for every $k\in \N$. Reasoning similarly to  \eqref{cmpct1} we have that
%  Similarly, by \eqref{eq:decompositionE} it yields that %from \cite[Lemma 2.1]{FFLM2}, the fact that $\Hs^1 (\partial {S_h} \cap S^{(0)}_h) = 0$ and \eqref{eq:decompositionE} that
%\begin{align}
%	\partial {S_h} &= \partial ^*{S_h} \cup (\partial {S_h} \cap  S^{(1)}_h) \cup N,
%\end{align}
%where $N$ is a set $\Hs^1$-negligible. From the expression above, it yields
\begin{align}
\partial S_k \setminus \partial {A_k} &=  \partial^* S_k \setminus \partial {A_k} \cup (\partial S_k \setminus \partial {A_k} ) \cap \left(S_k^{(0)} \cup S_k^{(1)}\right)  \cup \widetilde{N}'_k, \label{cmpct11}   \\
(	\partial {S_k} \setminus \partial A_k) \cap \Ako &= \left( \partial ^*{S_k} \setminus \partial A_k \cup \left(\partial {S_k} \setminus \partial A_k  \cap  \left(S^{(1)}_k \cup \Skz\right) \right)\right) \cap \Ako \cup \widetilde{N}''_k \label{cmpct2}   
\end{align}
and
\begin{align}
\partial {S_k} \cap \partial \Ak \cap \Ako = \left( (\partial ^*{S_k} \cap \partial \Ak) \cup\left( (\partial {S_k} \cap \partial \Ak ) \cap  (S^{(1)}_k \cup \Skz) \right) \right) \cap \Ako  \cup \widetilde{N}'''_k, \label{cmpct3}   
\end{align}
where $\widetilde{N}'_k$, $\widetilde{N}''_k$ and $\widetilde{N}'''_k$ are $\Hs^1$-negligible sets for every $k \in \N$. Furthermore, we can deduce that
\begin{equation}
\label{eq:compactintersectionlast}
\begin{split}
	\partial S_k \cap \partial A_k &= \left( \partial^* S_k \cup (\partial S_k \cap (S_k^{(0)} \cup S_k^{(1)})) \right) \cap \left(  \partial^* A_k \cup (\partial A_k \cap (A_k^{(0)} \cup A_k^{(1)})) \right)  \cup \widehat{N}_k \\
	& = \left( \partial S_k^* \cap \partial^* A_k \right) \cup \left( \partial S_k\cap \partial ^* A_k \cap (S_k^{(0)} \cup S_k^{(1)} ) \right) \cup \left( \partial^*S_k \cap \partial A_k \cap (A_k^{(0)} \cup A_k^{(1)} )  \right) \\
	& \quad \left( \partial S_k \cap \partial A_k \cap ( (A_k^{(0)} \cup A_k^{(1)}) \cup (S_k^{(0)} \cup S_k^{(1)}) ) \right) \cup \widehat{N}_k,
\end{split}
\end{equation}
where $\widehat{N}_k$ is a $\Hs^1$-negligible set, for every $k \in \N$. By (H1),(H3), \eqref{cmpct1}-\eqref{eq:compactintersectionlast} and thanks to the fact that for every $k\in\mathbb{N}$, $S_k \subset \ov{A_k}$ we have that
\begin{equation}
\label{eq:compactness.1}
\begin{split}
	& c_1 \left( \Hs^1 ( \partial A_k  ) +\Hs^1 ( \partial {S_{k}} \setminus \partial A_k) \right)\\ 
	&\quad \le c_1 \left( \int_{ \partial^* {A_k} \setminus  \partial {S_k}}{d\Hs^1} + \int_{ \partial ^*{S_k} \cap \partial^* {A_k} }{d\Hs^1} \right.  + \int_{(\partial {A_k} \setminus \partial {S_k})\cap ({A_k}^{(0)} \cup {A_k}^{(1)})}{d \Hs^1}  \\
 & \qquad + \int_{(\partial^* {S_k} \setminus \partial {A_k})\cap {A_k}^{(1)}}{ d \Hs^1}   + \int_{\partial {S_k} \cap \partial {A_k} \cap {S_k}^{(1)} }{d \Hs^1}  + \int_{\partial {S_k} \cap \partial ^* {A_k} \cap {S_k} ^{(0)}}{d \Hs^1} \\ 
	&\qquad
 + \int_{(\partial {S_k} \setminus \partial A_k) \cap ({S_k}^{(1)} \cup {S_k}^{(0)}) \cap {A_k}^{(1)} }{d \Hs^1} + \int_{ \partial ^*{S_k} \cap \partial A_k  \cap {A_k}^{(1)}}{d \Hs^1}   \\
	&\qquad  + \left.  \int_{( \partial {S_k}\cap \partial A_k) \cap {S_k}^{(0)} \cap {A_k}^{(1)} }{d \Hs^1} \right)  \\
	& \quad  
 \le 2 ({\mathcal{S}} (A_k,{ S_k}) ) \leq 2R,
\end{split}
\end{equation}
for every $k \in \N$, where in the first inequality we used Lemma \ref{lem:appendix1} and in the second inequality we used \eqref{eq:H1}. It follows  that
\begin{equation}
\label{compactness4}
\Hs^1 (\partial A_k) \le \frac{2R}{c_1}
\end{equation}
and
\begin{equation}
\label{compactness5}
\Hs^1 (\partial {S_k}) = \Hs^1 ( \partial {S_k}\cap \partial A_k) + \Hs^1 (\partial {S_k} \setminus \partial A_k)  \le \frac{2R}{c_1},
\end{equation}
for any $k\in\mathbb{N}$.

In view of \eqref{compactness4} and \eqref{compactness5} we conclude by Lemma \ref{lem:1} that $A\subset \overline \Om$ is $\mathcal{L}^2$-measurable, $\partial A$ is $\Hs^1$-finite, $\Hs^1$-rectifiable, and with at most $m_1$-connected components, and that  there exists $(h,K) \in \mathrm{ AHK} (\Omega)$ such that $S = S_{h,K} \in \AS$ and $\partial S_{h,K}$ has at most $m_0$-connected components. Furthermore, in view of Remark \ref{remark:1}-(i), since $\overline{S_k} \subset \overline{A_k} $ for any $k \in \N$, we have that 
$S_{h,K}\subset \overline {A}$. 

In order to prove that $(A,S_{h,K}) \in \B_\mathbf{m}$ it remains to check that $\partial A \cap \Int(S_{h,k}) = \emptyset$, to which the rest of the proof is devoted. Assume by contradiction that 
			\begin{equation}
				\label{eq:closedness1}
				\partial A \cap \Int(S_{h,k}) \neq \emptyset.
			\end{equation}	
			Then, there exists $x \in \partial A \cap \Int(S_{h,k})$. By Remark \ref{remark:1}-(i), there exists $x_k \in \partial A_k$ such that $x_k \to x$ and hence,  by $\tau_{\B}$-convergence we observe that 
			\begin{equation}
				\label{eq:closedness2}
				\sdist(x , \partial S_{h_k, K_k}) \to \sdist(x, \partial S_{h,K}) \quad \text{as $k  \to \infty.$}
			\end{equation}
			Since by \eqref{eq:closedness1} there exists $\ep >0$ such that $
			\sdist(x, \partial S_{h,K}) = - \ep, 
			$
			we can find  $k_0 := k_0(x)$ for which $\sdist(x, \partial S_{h_{k_0},K_{k_0}})$ is negative. Then,
			$
			x \in \Int(S_{h_{k_0},K_{k_0}})
			$
			and so, there exists $\delta \le \ep/2$ such that 
			$$
			x_{k_0} \in B_\delta (x) \subset \Int(S_{h_{k_0},K_{k_0}}),
			$$
			which is an absurd since $\partial A_k \cap \Int(S_{h_k,K_k}) = \emptyset$ for every $k \in \N$.
\end{proof}

We are now in the position to prove Theorem \ref{thm:compactness}. To this end, we implement the arguments used in \cite[Theorem 2.7]{KP} to the situation with free-boundary substrates and in particular the ones contained in Step 1 of the proof of \cite[Theorem 2.7]{KP}. In fact, the original setting introduced in this paper with respect to \cite{KP} to model the delaminated interface regions allows to avoid the further modification of the film admissible regions that was performed in Steps 2 and 3 of \cite[Theorem 2.7]{KP}.

\begin{proof}[Proof of Theorem \ref{thm:compactness}]
Denote $R:= \sup_{k \in \N}{\mathcal{F}(A_k,S_{h_{k},K_{k}}% h_k,K_k
, u_k)}$. 
Without loss of generality (by passing, if necessary, to a not relabeled subsequence), we assume that
\begin{equation}\label{lim_bound}
\liminf_{k \rightarrow \infty}{\mathcal{F}(A_k,S_{h_{k},K_{k}}% h_k,K_k
, u_k)} = \lim_{k \rightarrow \infty}{\mathcal{F}(A_k,S_{h_{k},K_{k}}% h_k,K_k
, u_k) }\leq R.
\end{equation}
Since  $\mathcal{W}$ is non-negative, by  Theorem \ref{thm:compactnessB} there exist a  subsequence $\{(A_{k_n},S_{h_{k_n},K_{k_n}}
%h_{k_n},K_{k_n}
)\} \subset \B_\mathbf{m}$ and $(A,S%h,K
) \in \mathcal{B}_\mathbf{m}$ such that $(A_{k_n},S_{h_{k_n},K_{k_n}}
%h_{k_n},K_{k_n}
) \xrightarrow{\tau_\B}(A,S%h,K
)$. As a consequence of Theorem \ref{thm:compactnessB}, there exists $(h,K)\in \text{AHK}$ such that $S= S_{h,K}$.

The rest of the proof is devoted to the construction of a sequence $(\widetilde{A}_n, { \wt S_n} % h_{k_n}, \widetilde{K}_n
) \subset \B_\mathbf{m}$ to which we can apply \cite[Corollary 3.8]{KP} (with $P=\Int{(A)}$ and $P_n=\Int{(\widetilde{A}_{n})}$, respectively)  in order to obtain $u\in H^1_{\mathrm{ loc}}(\Int{(A)};\R^2)$ such that $(A,S%h,K
,u) \in \mathcal{C}_\mathbf{m}$ has finite energy, and a sequence $\{b_n\}_{n\in \N}$ of piecewise rigid displacements such that $\left(\widetilde{A}_n, { \wt S_n}% {h}_{k_n}, \widetilde{K}_n
, u_{k_n} + b_n\right) \xrightarrow{\tau_\mathcal{C}} (A,S%h,K
,u)$. Furthermore, we observe that also Equation \eqref{liminfequalliminf} will be a consequence of such construction and hence, the assertion will directly follow.

By \cite[Proposition 3.6]{KP} applied to $A_{k_n}$ and $A$ there exist a  not relabeled subsequence $\{A_{k_n}\}$ and a sequence $\{\widetilde{A}_n\}$ with $\Hs^1$-rectifiable boundary $\partial \widetilde{A}_n$ of at most $m_1$-connected components such that  
\begin{equation}
\label{eq:boundpartialtildeA}
\sup_{n\in \N} \Hs^1 (\partial \widetilde{A}_n) < \infty,
\end{equation} that satisfy the following  properties:
\begin{itemize}
\item[(a1)] $\partial A_{k_n} \subset \partial \widetilde{A}_n$ and $\displaystyle \lim_{n \to \infty} \Hs^1 (\partial \widetilde{A}_n \setminus \partial A_{k_n}) = 0$,
\item[(a2)]  $\sdist(\cdot, \partial \widetilde{A}_n) \to \sdist(\cdot, \partial A)$ locally uniformly in $\R^2$ as $n \to \infty$,
\item[(a3)] if $\{E_i\}_{i \in I}$ is the family of all connected components of $\mathrm{{Int}}(A),$ there exist connected components of $\mathrm{{Int} }(\widetilde{A}_n)$, which we enumerate as $\{E_i^n\}_{i \in I}$, such that for every $i$ and $G \subset \subset E_i$ one has that $G \subset \subset E_i^n$ for all $n$ large (depending only on $i$ and $G$),
\item[(a4)] $\mathcal{L}^2 (\widetilde{A}_n) = \mathcal{L}^2(A_{k_n})$.
\end{itemize}
Furthermore, from the construction of $\widetilde{A}_n$ (namely from the fact that $\widetilde{A}_n$ is constructed by adding extra ``internal'' topological boundary to the selected subsequence $A_{k_n}$,  see \cite[Propositions 3.4 and 3.6]{KP}) it follows  that 
\begin{equation}\label{fromproof3.6}
\widetilde{A}_n = A_{k_n} \setminus (\partial \widetilde{A}_n \setminus \partial A_{k_n})
\end{equation}
with $\partial \widetilde{A}_n \setminus \partial A_{k_n}$ given by a finite union of closed %connected 
$\Hs^1$-rectifiable sets  %such that $\partial \widetilde{A}_n \setminus \partial A_{k_n}$ is
connected to $\partial A_{k_n}$. More precisely, there exist a finite index set $J$ and a family $\{\Gamma_j\}_{j\in J}$ of closed $\Hs^1$-rectifiable sets of $\Om$ connected to $\partial A_\kn$ such that
$$
\partial \widetilde{A}_n \setminus \partial A_{k_n} = \bigcup_{j \in J} \Gamma_j. 
$$ 
We define 
$$
\widetilde{K}_n := K_{k_n} \cup ((\partial \widetilde{A}_n \setminus \partial A_{k_n}) \cap {\overline{\Int(S_{h_\kn})}}) \subset {\overline{\Int(S_{h_\kn})}}
$$
and we observe that $\widetilde{K}_n$ is  closed and $\Hs^1$-rectifiable in view of the fact that  $\partial \widetilde{A}_n \setminus \partial A_{k_n}$ is a closed set in $\Omega$ and is $\Hs^1$-rectifiable, since  $\partial \widetilde{A}_n$ is $\Hs^1$-rectifiable. Therefore, $(h_\kn, \widetilde{K}_n) \in \text{AHK}(\Om)$ and $\wt S_n :=S_{h_{k_n}, \widetilde{K}_n}\subset \overline{\widetilde{A}_n}$. We claim that $\partial { \wt S_n } %S_{h_{k_n}, \widetilde{K}_n}
$ has at most $m_0$-connected 
components, so that $(\widetilde{A}_n, { \wt S_n }% h_\kn, \widetilde{K}_n
) \in \B_\textbf{m}$. Indeed, if for every $j \in J$, $S_{h_\kn,K_\kn} \cap \Gamma_j$ is empty there is nothing to prove, so we assume that there exists $j \in J$ such that $S_{h_\kn,K_\kn} \cap \Gamma_j \neq \emptyset$. On one hand if $\Gamma_j \subset S_{h_\kn,K_\kn}$, thanks to the facts that $\Gamma_j$ is connected to $\partial A_\kn$ and $S_{h_\kn,K_\kn} \subset \overline{A_\kn}$, we deduce that $\Gamma_j$ needs to be connected to $\partial S_{h_\kn,K_\kn}$. On the other hand, if $\Gamma_j \cap (A_\kn \setminus S_{h_\kn}) \neq \emptyset$, then 
we can find $x_1 \in \Gamma_j \cap S_{h_\kn,K_\kn} $ and $x_2  \in \Gamma_j \cap (A_\kn \setminus S_{h_\kn})$. Since $\Gamma_j$ is closed and connected, by \cite[Lemma 3.12]{F} there exists a parametrization $r:[0,1]\to\mathbb{R}^2$ whose support $\gamma \subset \Gamma_j$ joins the point $x_1$ with $x_2$. Thus, $\gamma$ crosses $\partial S_{h_\kn,K_\kn}$ and we conclude that $\Gamma_j$ is connected to $\partial S_{h_\kn,K_\kn}$.

We claim that $(\widetilde{A}_n, {\wt S_n}%h_{k_n}, \widetilde{K}_n
) \xrightarrow{\tau_\B} (A,S_{h,K})$ as $n \to \infty$. In view of \eqref{eq:boundpartialtildeA}, (a2) and the fact that by \eqref{eq:uniongraphs} and the previous construction of $\widetilde{K}_n$, 
$$
{ \sup_{n \in \N} \Hs^1(\partial \wt S_n) = }\sup_{n \in \N} \Hs^1(\partial S_{h_\kn, \widetilde{K}_n}) < \infty,
$$
it remains to prove that 
\begin{equation}\label{claim_convergence}
\sdist(\cdot, \partial S_{h_\kn, \widetilde{K}_n}) \to \sdist(\cdot, \partial S_{h,K})
\end{equation} 
locally uniformly in $\R^2$ as $n \to \infty$. Indeed, by Remark \ref{remark:1}-(i), it suffices to prove that $S_{h_\kn, \widetilde{K}_n} \cngK S_h$ and that $\Om \setminus S_{h_\kn, \widetilde{K}_n} \cngK \Om \setminus \Int(S_{h,K})$. On one hand, by the $\tau_\B$-convergence of $\{ (A_\kn, { S_{\kn}}%h_\kn, K_\kn
) \}$, the fact that ${\ov{\wt S_n} := } \overline{S_{h_\kn, \widetilde{K}_n}} = S_{h_\kn}$, and the properties of Kuratowski convergence, it follows that ${\wt S_n:=}S_{h_\kn, \widetilde{K}_n} \cngK S_h$. On the other hand, let $x \in \Om \setminus \Int(S_{h,K})$, since 
$$
\Int(S_{h_\kn, \widetilde{K}_n} ) = \Int(S_{h_\kn}) \setminus\widetilde{K}_n \subset \Int(S_{h_\kn}) \setminus K_\kn = \Int(S_{h_\kn, K_\kn} )
$$
and by the fact that $\Om\setminus \Int(S_{h_\kn, K_\kn}) \cngK \Om \setminus \Int(S_{h,K})$, there exists 
$$x_n \in \Om \setminus \Int(S_{h_\kn,K_\kn}) \subset \Om \setminus \Int(S_{h_\kn,\widetilde{K}_n}) $$
such that $x_n \to x$. Now, we consider a sequence  $x_n \in \Om \setminus \Int(S_{h_\kn,\widetilde{K}_n})$ converging to a point $x\in\Omega$. We proceed by contradiction, namely we assume that $x \in \Int(S_{h,K})$. Therefore, there exists $\epsilon >0$ such that $\sdist(x, \partial S_{h,K}) = - \epsilon$, which implies that $\sdist(x, \partial S_{h_\kn,K_\kn}) \to -\epsilon$ as $n \to \infty$. Thus, there exists $n_\epsilon \in \N$, such that $x_n \in B_{\epsilon/2}(x) \subset \Int(S_{h_\kn,K_\kn})$, for every $n \ge n_\epsilon$. However, notice that
\begin{equation}
\label{eq:compactnessinteriorS}
\begin{split}
x_n \in \Om \setminus \Int\left(S_{h_\kn, \widetilde{K}_n}\right) &= \Om \setminus \left( \Int({S_{h_\kn}}) \setminus \widetilde{K}_n \right) \\
& = \left( \Om \setminus \Int\left(S_{h_\kn,K_\kn}\right)\right) \cup  \left( \left( \partial \widetilde{A}_n \setminus \partial A_\kn \right) \cap{\overline{\Int(S_{h_\kn})}} \right),
\end{split}
\end{equation}
where in the last equality we used the definition of $\widetilde{K}_n := K_\kn \cup ((\partial \widetilde{A}_n \setminus \partial A_\kn) \cap { \overline{\Int(S_{h_\kn})}})$ and the fact that $\Int(S_{h_\kn,K_\kn}) = \Int(S_{h_\kn}) \setminus K_\kn$. Therefore, by \eqref{eq:compactnessinteriorS} we deduce that $x_n \in \partial \widetilde{A}_n \setminus \partial A_\kn$ for every $n \ge n_\epsilon$ and hence, $x \in \partial A$ by (a2) and Remark \ref{remark:1}-(i). We reached an absurd as it follows that $x\in \Int (S_{h,K})\cap\partial A=\emptyset$. This conclude the proof of \eqref{claim_convergence} and hence, of the claim.

By \eqref{eq:H1} and by conditions (a1), (a4) and \eqref{fromproof3.6}, we observe that
\begin{equation}
\label{eq:compactnessIntS1}
\lim_{n \to \infty} {\abs{\mathcal{S}(A_\kn, S_\kn) - \mathcal{S}(\widetilde{A}_n,\wt S_n) }} = \lim_{n \to \infty} {\abs{\mathcal{S}(A_\kn, S_{h_\kn,K_\kn}) - \mathcal{S}(\widetilde{A}_n,S_{h_\kn,\widetilde{K}_n}) }} = 0,
\end{equation}
and
\begin{equation}
\label{eq:compactnessIntW}
\mathcal{W}(A_\kn, u_\kn) = \mathcal{W}(\widetilde{A}_n,u_\kn). %,
\end{equation}

By \eqref{eq:H3}, \eqref{lim_bound},  \eqref{fromproof3.6}, \eqref{eq:compactnessIntW}, (a3) and thanks to the fact that $\mathcal{S}$ is non-negative, we obtain that
\begin{equation*}		\int_{E_i^n}{\abs{e(u_\kn)}^2 dx} \le \int_{\widetilde{A}_n}{\abs{e(u_\kn)}^2 dx} \le \frac{R}{2c_3},
\end{equation*}
for every $i\in I$ and for $n$ large enough. Therefore, by a diagonal argument and by \cite[Corollary 3.8]{KP} (applied to, with the notation of \cite{KP}, $ P = E_i$ and $P_n = E_i^n$) up to extracting not relabelled subsequences both for  $\{u_{\kn}\} \subset H^1_{\mathrm{ loc}}(\Om; \R^2)$ and $\{E_i^{n}\}_n $ there exist $w_i \in H^1_\mathrm{{loc}} (E_i, \R^2)$,  and a sequence of rigid displacements $\{b_n^i\}$ such that $(u_\kn + b_n^i) \mathbbm{1}_{E^{n}_i} \rightarrow w_i$ a.e. in $E_i$. Let $\{D^n_i\}_{i \in \widetilde{I}}$ for an index set $\widetilde{I}$ be the family of open and connected components of $\widetilde{A}_n \setminus \bigcup_{i\in I}{E_i^n}$ such that by (a3) $\Int(D^n_i)$ converges to the empty set for every $i \in \widetilde{I}$. In $D_i^n$ we consider the null rigid displacement, and we define
\begin{comment}

as follows
\begin{itemize}
\item 
\item If $\Hs^1 \left(\partial D_i^k \cap \partial {S_{\widetilde{h}_k}}\right) > 0$, since $S_{\widetilde{h}_k} \subset \overline{ B_k}$ there exists $E_i^k$ such that $D_i^k \cap \partial {S_{\widetilde{h}_k}} \subset E_i^k$, and thus we define the rigid displacement of $D_i^k$ as the same defined in $E_i^k$. 
\end{itemize}
\end{comment}
%We denote as $\widetilde I$ the family of indexes such that for any $i \in \widetilde I$, $\Hs^1 (\partial D_i^k \cap \partial{S_{\widetilde{h}_k}}) > 0$ and $D_i^k \subset B_k \setminus \bigcup_{i}{E_i^k}$, notice that the previous definition of rigid displacements avoids the creation of jumps of $u$.
\begin{equation*}
{b}_n := \sum_{i \in I}{b_n^i} \mathbbm{1}_{E_i^n}  \quad \text{and} \quad u := \sum_{i \in I}{w_i \mathbbm{1}_{E_i}}.
\end{equation*}
We have that $u \in H^1_\mathrm{{loc}}(\mathrm{{Int}} (A); \R^2 )$, ${b}_n$ is a rigid displacement associated to $\widetilde{A}_n$,  $u_\kn + {b}_n \rightarrow u$ a.e. in $\mathrm{{Int}}(A)$ and hence,  $ { (A,S, u) = }(A,S_{h,K}, u) \in \mathcal{C}_\mathbf{m}$ and ${(\wt A_n, \wt S_n,u_\kn + {b}_n ) := } (\widetilde{A}_n,S_{{h}_\kn,\widetilde{K}_n},  u_\kn + {b}_n) \xrightarrow{\tau_{\mathcal{C}}} (A,S_{h,K}, u)$.
Furthermore, as $e(u_\kn + b_n) = e(u_\kn)$, from % From the convergence of $u_\kn + {b}_k$, by
\eqref{eq:compactnessIntS1} and \eqref{eq:compactnessIntW} it follows that
\begin{equation*}
\label{eq:compactnessIntS3}
\begin{split}
&{ \lim_{n \to \infty}  {\abs{\mathcal{F}(A_\kn,S_{h_{k_n},K_{k_n}}
,u_\kn) - \mathcal{F}(\widetilde{A}_n,\wt S_n, u_\kn + b_n) }}  } \\
&=\lim_{n \to \infty} {\abs{\mathcal{F}(A_\kn,S_{h_\kn,K_\kn},u_\kn) - \mathcal{F}(\widetilde{A}_n,S_{h_\kn,\widetilde{K}_n}, u_\kn + b_n) }} = 0, 
\end{split}
\end{equation*}
which implies \eqref{liminfequalliminf} and completes
the proof.
\end{proof}

%%%%%%%%%%%%%%%%%%%%%%%%%%%%%%%%%%%%%%%%%%%%%%%%%%%%%%%%%%%%%%%%%%%%%%%%%%%%%%%%%%%%%%%%%%%%%%%%%%%%%%%%%%%%%%%%%%%%%%%%%%%%%%%%%%%

\section{Lower semicontinuity}
\label{sec:semicontinuity}

In this section we prove that for any fixed $\mathbf{m} :=(m_0, m_1)\in\N \times \N$ the energy $\mathcal{F}$ is lower semicontinuous in the family of configurations $\mathcal{C}_\mathbf{m}$  with respect to the topology $\tau_{\mathcal{C}}$.  Since $\mathcal{F}$ is given as the sum of the surface energy $\mathcal{S}$ and the elastic energy $\mathcal{W}$, we proceed by proving that both $\mathcal{S}$ and $\mathcal{W}$ are independently lower semicontinuous with respect to $\tau_{\mathcal{C}}$.

We begin with $\mathcal{S}$ and we adopt   {\itshape Fonseca-M\"uller blow-up technique}  \cite{FM}, for which we make use of a localized version ${\mathcal{S}}_L$ of the surface energy, which we can consider with surface tensions constant with respect to the variable in $\overline{\Omega}$. 

\begin{definition}
	Let $\phi_{\text{F}}, \phi_{\text{S}}, \phi_{\text{FS}}$ be three functions and let $\phi := \min\{\phi_{\mathrm{S}}, \phi_{\mathrm{F}} + \phi_{\mathrm{FS}} \},\,   \phi' := \min\{\phi_{\mathrm{F}}, \phi_{\mathrm{S}}\} $ be such that $\phi_{\text{F}},\, \phi_{\text{FS}},\,  \phi\,,   \phi' \in C(\R^2; [0, \infty])$ are Finsler norms and the hypotheses (H1) and (H2) are satisfied by the functions $\varphi_{\alpha}{, \varphi} {, \varphi'}\in  C (\overline{\Omega}\times\R^2)$  given for  $\alpha=\text{S},\text{FS},\text{F}$ by $\varphi_{\alpha}(x,\cdot):=\phi_{\alpha}(\cdot)$, % and 
 $\varphi(x,\cdot):=\phi(\cdot)$ and ${ \varphi'(x, \cdot) = \phi'(\cdot)} $ for every $x\in\overline{\Omega}$. 
We define the localized surface energy ${\mathcal{S}}_L : { \B_L} \to [0,+\infty]$  by 
\begin{equation}
\label{eq:definitionlocenergyS}
\begin{split}
{\mathcal{S}}_L \left(A ,S%h,K
, O \right) 
&:= \int_{ O \cap (\partial^* A \setminus \partial{S})}{\phi_{\mathrm{ F}}(\nu_A)\, d\Hs^1}  + \int_{O \cap   \partial ^*{S} \cap \partial^* A }{\phi (\nu_A )\, d\Hs^1}  \\
& \quad + \int_{O \cap (\partial ^*{S} \setminus \partial A)\cap A^{(1)}}{ \phi_{\mathrm{ FS}} ( \nu_{S}   )\, d\Hs^1} + \int_{O \cap  \partial ^*{S_{h,K}} \cap \partial A  \cap A^{(1)}}{ (\phi_{\mathrm{ F}} + \phi )( \nu_A   ) \, d\Hs^1} \\
& \quad + \int_{O \cap \partial A  \cap  A^{(1)} \cap S^{(0)}}{2 \phi_{\mathrm{ F}}(\nu_A )\, d\Hs^1} { + \int_{O \cap \partial A \cap A^{(0)}}{2 \phi'(\nu_A )\, d\Hs^1} }\\
& \quad    + \int_{O \cap (\partial {S} \setminus \partial A) \cap \left(S^{(1)} \cup S^{(0)}\right)  \cap A^{(1)} }{ 2 \phi_{\mathrm{ FS}} ( \nu_{S}  ) \, d\Hs^1}   + \int_{O \cap \partial {S} \cap \partial A \cap S^{(1)}  }{  2{ \phi }  %({\phi_{\mathrm{ F}} } + {\phi } + {\phi_{\mathrm{ FS}} })
( \nu_{S}   ) \, d\Hs^1}
\end{split}
\end{equation} 
for every $(A,S,O) \in \B_L:=\{(A,S,O)\,:\, (A,S) \in \B, \text{ $O$ open and contained in $\Om$}\}$.
\end{definition}
We start with  some technical results needed in the blow-up argument used in Theorem \ref{thm:lowersemicontinuityS}.

	\begin{lemma}
	\label{lem:A2}
	Let $Q$ be any open square, 
 $K \subset \overline{Q}$ be a nonempty closed set and $E_k \subset \overline{Q}$ be such that $\sdist(\cdot, \partial E_k) \to \dist(\cdot, K)$ uniformly in $\overline{Q}$ as $k \to \infty$. Then $E_k \cngK K$ as $k \to \infty$. Analogously, if $\sdist(\cdot, \partial E_k)\to -\dist(\cdot, K)$ uniformly in $\overline{Q}$ as $k \to \infty$, then $\overline{Q} \setminus E_k \cngK K$ as $k \to \infty$.
	\end{lemma}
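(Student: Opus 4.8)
The plan is to read off both statements directly from the characterization of Kuratowski convergence via its lower and upper limits. Recall that for a sequence $\{G_k\}$ the Kuratowski lower limit $\liminf_k G_k$ and upper limit $\limsup_k G_k$ are closed, that $\liminf_k G_k\subseteq\limsup_k G_k$ always, that $G_k\cngK G$ (with $G$ closed) is equivalent to $\limsup_k G_k\subseteq G\subseteq\liminf_k G_k$, and that $\liminf_k G_k$ is unchanged when each $G_k$ is replaced by $\overline{G_k}$, or more generally by a set within vanishing distance of $G_k$. I will also use the elementary facts, immediate from the definition of $\sdist$: for every $E\subseteq\R^2$ the map $\sdist(\cdot,\partial E)$ is $1$-Lipschitz, vanishes on $\partial E$, is $\le 0$ on $\overline E$ and $\ge 0$ on $\R^2\setminus\Int E$, and $|\sdist(\cdot,\partial E)|=\dist(\cdot,\partial E)$. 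Since $E_k\subseteq\overline Q$ forces $\Int E_k\subseteq Q$, in our setting $\sdist(\cdot,\partial E_k)\le 0$ on $E_k$ and $\ge 0$ on $\R^2\setminus E_k$.

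For the first assertion set $\ep_k:=\|\sdist(\cdot,\partial E_k)-\dist(\cdot,K)\|_{L^\infty(\overline Q)}\to 0$. To prove $\limsup_k E_k\subseteq K$, let $x=\lim_j x_{k_j}$ with $x_{k_j}\in E_{k_j}$; then $x\in\overline Q$, $\sdist(x_{k_j},\partial E_{k_j})\le 0$, and from $|\sdist(x_{k_j},\partial E_{k_j})-\dist(x_{k_j},K)|\le\ep_{k_j}$ together with the continuity of $\dist(\cdot,K)$ one gets $\sdist(x_{k_j},\partial E_{k_j})\to\dist(x,K)$, hence $\dist(x,K)\le 0$ and $x\in K$. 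To prove $K\subseteq\liminf_k E_k$, let $x\in K$; then $\sdist(x,\partial E_k)\to\dist(x,K)=0$, so $\dist(x,\partial E_k)=|\sdist(x,\partial E_k)|\to 0$ (in particular $\partial E_k\neq\emptyset$ for $k$ large, otherwise $\sdist(\cdot,\partial E_k)\equiv+\infty$), and a point $y_k\in\partial E_k$ realizing $\dist(x,\partial E_k)$ satisfies $y_k\to x$ and $y_k\in\overline{E_k}$, whence $x\in\liminf_k\overline{E_k}=\liminf_k E_k$. This yields $E_k\cngK K$.

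The second assertion is proved along the same lines with $F_k:=\overline Q\setminus E_k$ in place of $E_k$ and the hypothesis now $\sdist(\cdot,\partial E_k)\to-\dist(\cdot,K)$: if $x=\lim_j x_{k_j}$ with $x_{k_j}\in F_{k_j}$, then $x_{k_j}\notin E_{k_j}$ so $\sdist(x_{k_j},\partial E_{k_j})\ge 0$, and in the limit $-\dist(x,K)\ge 0$, i.e.\ $x\in K$, giving $\limsup_k F_k\subseteq K$; and if $x\in K$ then again $\sdist(x,\partial E_k)\to 0$, so $\dist(x,\partial E_k)\to 0$, and choosing $y_k\in\partial E_k\subseteq\overline{\R^2\setminus E_k}$ with $y_k\to x$ and then $z_k\in\R^2\setminus E_k$ with $|z_k-y_k|<1/k$ gives $z_k\to x$, $z_k\notin E_k$. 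The only step where the domain boundary enters and a little care is needed is ensuring $z_k\in\overline Q$: for $x\in K\cap Q$ this is automatic for $k$ large since $Q$ is open, and for $x\in K\cap\partial Q$ one uses that $E_k\subseteq\overline Q$ (so $\Int E_k\subseteq Q$ and $\sdist(\cdot,\partial E_k)\ge 0$ on $\partial Q$) to approximate the relevant boundary points of $E_k$ from within $\overline Q$. I expect this $\partial Q$ interaction to be the one delicate point; everything else is a routine dictionary between the signed-distance convergence and the two Kuratowski limits.
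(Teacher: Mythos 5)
Your argument for the first assertion is correct and self-contained, which is appropriate here since the paper gives no proof of this lemma but only cites \cite{KP}; the passage from the two sign conditions on $\sdist$ to the two Kuratowski inclusions, together with the identification $\liminf_k\overline{E_k}=\liminf_k E_k$ and the observation that $\partial E_k\neq\emptyset$ for $k$ large, is exactly the right dictionary.

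The second assertion, however, has a genuine gap at precisely the point you flag as delicate, and the hand-waving there cannot be upgraded to a proof of the statement as written. For $x\in K\cap\partial Q$ you say one ``uses that $E_k\subseteq\overline Q$ \dots\ to approximate the relevant boundary points of $E_k$ from within $\overline{Q}$,'' but no such approximation is available in general. Take $E_k\equiv\overline Q$ for all $k$ and $K:=\partial Q$: then $\sdist(\cdot,\partial E_k)=-\dist(\cdot,\partial Q)=-\dist(\cdot,K)$ on all of $\overline Q$, so the hypothesis of the second assertion holds identically, yet $\overline Q\setminus E_k=\emptyset$ for every $k$ and cannot Kuratowski-converge to the nonempty closed set $K$. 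This is not an isolated pathology but a structural feature: as you yourself note, $\Int(E_k)\subseteq Q$ forces $\sdist(\cdot,\partial E_k)\ge 0$ on $\partial Q$, and passing to the limit in the hypothesis then yields $-\dist(\cdot,K)\ge 0$ on $\partial Q$, i.e.\ $\partial Q\subseteq K$ always; your inclusion $K\subseteq\liminf_k(\overline Q\setminus E_k)$ would therefore have to produce points of $\overline Q\setminus E_k$ accumulating on every point of $\partial Q$, which the example shows is simply false. The correct scope of the second inclusion is $x\in K\cap Q$ only; the boundary $\partial Q$ must be excluded, and you should say so explicitly rather than treat it as a routine technicality. (This restriction is harmless for the way Lemma~\ref{lem:A2} is used in Proposition~\ref{prop:aux1}, where the Kuratowski convergence only enters through a.e.\ convergence of indicator functions and $\partial Q$ is Lebesgue negligible.)
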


 The proof of the previous lemma follows from the same arguments of \cite[Lemma 4.2]{KP}. 

%In the following $\mathbf{m} :=(m_0, m_1)\in \left(\mathbb{N}\cup\{0\}\right) \times \N$  is fixed. 
	
	\begin{proposition}
		\label{prop:aux1}
Let $m\in\N$ and let $E \subset \R^2$ be a set such that $\partial E$ has at most $m$-connected components, is $\Hs^1$-rectifiable, and satisfies $\Hs^1 (\partial E) < \infty$. Let $x \in \partial E$ be such that the measure-theoretic unit normal $\nu_E \xp$ of $\partial E$ at $x$ and there exists $R>0$ such that $\overline{Q_{R, \nu_E \xp} \xp} \cap  \partial {\sigma}_{\rho, x} ( E ) \cngK \overline{Q_{R, \nu_E \xp} \xp} \cap T_x$ %any $R>0$ as 
  as $\rho \to 0^+$, where $T_x:= T_{x,\nu_E(x)}$. Then, the following assertions hold true:
		\begin{enumerate}
			\item[(a)] If $x \in E^{(1)} \cap \partial E$, then $\sdist (\cdot, \partial {\sigma}_{\rho, x} (   E)) \to - \dist(\cdot, T_x)$ uniformly in $\overline{Q_{R, \nu_E \xp}}$ as $\rho \to 0^+$;
			\item[(b)] If $x \in E^{(0)} \cap \partial E$, then $\sdist (\cdot,  \partial {\sigma}_{\rho, x} (  E)) \to  \dist(\cdot, T_x)$ uniformly in $\overline{Q_{R, \nu_E \xp}}$ as $\rho \to 0^+$;
			\item[(c)] If $x \in \partial ^* E$ then $\sdist(\cdot , \partial {\sigma}_{\rho, x} (  E )) \rightarrow \sdist( \cdot, \partial H_x)$  uniformly in $\overline{Q_{R, \nu_E \xp }}$ as $\rho \to 0^+$.
		\end{enumerate}
	\end{proposition}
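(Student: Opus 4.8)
The plan is to reduce all three cases to the single hypothesis that $\overline{Q_{R,\nu}} \cap \partial\sigma_{\rho,x}(E) \cngK \overline{Q_{R,\nu}} \cap T_x$ as $\rho\to 0^+$ (writing $\nu:=\nu_E(x)$, $Q_R:=Q_{R,\nu}(x)$ for brevity), together with the density information at $x$, and then invoke Lemma \ref{lem:A2} in the reverse direction after first upgrading Kuratowski convergence of the \emph{boundaries} to Kuratowski convergence of the \emph{sets} (or their complements). The natural first step is therefore: show that under the density hypotheses, $\sigma_{\rho,x}(E)\cap\overline{Q_R}$ (in case (a)), or $\overline{Q_R}\setminus\sigma_{\rho,x}(E)$ (in case (b)), Kuratowski-converges to $T_x\cap\overline{Q_R}$, and that in case (c) both $\sigma_{\rho,x}(E)$ and its complement converge to the two closed half-squares determined by $H_x$. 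Once this is established, Remark \ref{remark:1}-(i) (the equivalence between locally uniform signed-distance convergence and the pair of Kuratowski convergences of $E_k$ and $\R^2\setminus E_k$) gives precisely the claimed uniform convergence of $\sdist(\cdot,\partial\sigma_{\rho,x}(E))$ to $-\dist(\cdot,T_x)$, $+\dist(\cdot,T_x)$, or $\sdist(\cdot,\partial H_x)$ respectively; Lemma \ref{lem:A2} supplies the converse half of that equivalence in the degenerate (measure-zero limit) cases (a) and (b).

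For the first step, I would argue as follows. Rescaled densities are scale-invariant: if $x\in E^{(1)}$ then $\mathcal{L}^2(\sigma_{\rho,x}(E)\cap Q_{r,\nu})\to \mathcal{L}^2(Q_{r,\nu})$ for each fixed small $r$, i.e. $\mathbbm{1}_{\sigma_{\rho,x}(E)}\to 1$ in $L^1_{\mathrm{loc}}$; similarly $\mathbbm{1}_{\sigma_{\rho,x}(E)}\to 0$ in case (b) and $\mathbbm{1}_{\sigma_{\rho,x}(E)}\to\mathbbm{1}_{H_x}$ in case (c) (this last being exactly the definition of the reduced boundary via blow-up, together with $\partial^* E\subset E^{(1/2)}$). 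The hypothesis on $\partial\sigma_{\rho,x}(E)$ controls where the sets can ``jump'': any point $y\in\overline{Q_R}$ with $y\notin T_x$ has a neighborhood eventually disjoint from $\partial\sigma_{\rho,x}(E)$, hence lies eventually entirely inside $\sigma_{\rho,x}(E)$ or entirely in its complement, and the $L^1$-convergence then forces the correct alternative (inside, in case (a); outside, in case (b); on the correct side of $T_x$, in case (c), where one also uses that $T_x$ separates $Q_R$ into two connected pieces). Conversely, every point of $T_x\cap Q_R$ is a limit of points of $\partial\sigma_{\rho,x}(E)$ by the ``$\liminf$'' half of Kuratowski convergence. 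Combining the two inclusions yields the desired Kuratowski convergence of the sets.

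The main obstacle is the passage from ``the limit of $\sigma_{\rho,x}(E)$ does not meet a given point off $T_x$'' to ``an entire neighborhood is eventually on one fixed side'': a priori the sets could oscillate between the two sides of $T_x$ along different subsequences of $\rho$. The hypothesis that $\partial E$ has only finitely many connected components is what rules this out — it is inherited by $\partial\sigma_{\rho,x}(E)$, and in the Kuratowski limit (via Go{\l}\k{a}b-type lower semicontinuity of connectedness, as already used in Lemma \ref{lem:1}) it prevents $T_x\cap Q_R$ from being approximated by boundary components that ``fold over,'' so that the two complementary open regions of $Q_R\setminus T_x$ are each eventually covered coherently by one phase. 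I would make this precise by a connectedness/continuity argument on the restriction of $\mathbbm{1}_{\sigma_{\rho,x}(E)}$ to line segments parallel to $\nu$, using that on such a segment the function has at most boundedly many jumps. Everything else — the scale-invariance of densities, the elementary topology of $Q_R\setminus T_x$, and the final invocation of Remark \ref{remark:1}-(i) and Lemma \ref{lem:A2} — is routine, and I expect the write-up to follow the corresponding argument in \cite[Section 4]{KP} closely, case (c) being essentially \cite[Lemma 4.3]{KP} and cases (a)–(b) its one-sided variants.
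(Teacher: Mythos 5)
Your plan reverses the logical direction of Lemma \ref{lem:A2} without justification: that lemma is stated only as an implication from signed-distance convergence to Kuratowski convergence, and the paper never proves (or needs) the converse. Your scheme requires deducing $\sdist(\cdot,\partial\sigma_{\rho,x}(E))\to\pm\dist(\cdot,T_x)$ from the Kuratowski convergence of $\sigma_{\rho,x}(E)$ or of its complement, and you acknowledge this by saying you ``invoke Lemma \ref{lem:A2} in the reverse direction'', but that converse is not available. Remark \ref{remark:1}-(i) does give a two-way equivalence, but only for a putative limit function of the form $\sdist(\cdot,\partial E_\infty)$ for a genuine set $E_\infty$; it does not directly handle the degenerate targets $\pm\dist(\cdot,T_x)$ of cases (a)--(b). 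In addition you have the set and its complement interchanged: in case (a), where $x\in E^{(1)}$, it is $\overline{Q_R}\setminus\sigma_{\rho,x}(E)$, not $\sigma_{\rho,x}(E)$ itself, that should Kuratowski-converge to $T_x\cap\overline{Q_R}$, and symmetrically in case (b).

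There is also a mismatch with the paper's actual argument. Cases (a) and (b) are not reproved at all --- they are cited to \cite[Proposition A.5]{KP}. For case (c), instead of first upgrading boundary convergence to set convergence, the paper extracts a uniform subsequential limit $f$ of the $1$-Lipschitz functions $\sdist(\cdot,\sigma_{\rho_k}(\partial E))$ via Ascoli--Arzel\`a, observes from \cite[Proposition A.1]{KP} that $|f|=\dist(\cdot,T_0)$, notes that $f$ must then be one of four specific functions (the sign is locally constant on the two components of $Q_R\setminus T_0$), and rules out the three wrong candidates by combining Lemma \ref{lem:A2} in its stated, forward direction with De Giorgi's structure theorem. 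Finally, your ``main obstacle'' paragraph is addressing a non-problem: once a neighbourhood of a point off $T_x$ is eventually disjoint from $\partial\sigma_{\rho,x}(E)$, the $L^1_{\mathrm{loc}}$-convergence of $\mathbbm{1}_{\sigma_{\rho,x}(E)}$ already forces the correct alternative for \emph{all} small $\rho$, not merely along a subsequence, since $\mathcal{L}^2(\sigma_{\rho,x}(E)\cap B_r(y))$ can only take the two values $0$ and $|B_r(y)|$. The bound on the number of connected components of $\partial E$ plays no role in this proposition, and the connectedness/continuity argument you sketch is a detour.
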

	
	\begin{proof}
		The cases (a) and (b) follow directly from \cite[Proposition A.5]{KP}. It remains to prove the case (c) to which the remaining of the proof is devoted.  Let $x \in \partial^* E$ and $R> 0$ be such that 
		%\begin{enumerate}
		%\item[(c.1)] $\nu_E \xp$ exists,
		%	\item[(c.1)]  $\overline{Q_{R, \nu_E \xp} \xp} \cap \partial {\sigma}_{\rho, x} ( E ) \cngK \overline{Q_{R, \nu_E \xp} \xp} \cap T_x$ for any $R> 0$ as $\rho \rightarrow 0^+$.
		%	\end{enumerate}	
	\begin{equation}
		\label{eq:c1}
		\overline{Q_{R, \nu_E \xp} \xp} \cap \partial {\sigma}_{\rho, x} ( E ) \cngK \overline{Q_{R, \nu_E \xp} \xp} \cap T_x  \text{ as } \rho \rightarrow 0^+.
	\end{equation}
	%	In view of the bound on the number of components of $\partial E$  we infer by \cite[Proposition A.4]{KP} that the set of points $x\in \partial ^* E$ that do not satisfy \eqref{eq:c1} is $\Hs^1$-negligible.  
	Without loss of generality, we assume that $x=0, \nu_E (0) = \mathbf{e}_\mathbf{2}$, $H_x = H_0$ and $T_x =T_0=\partial H_0$. 
 
 Let $\{\rho_k\} \subset (0,1)$ be such that $\rho_k \to 0$ and let $f_k := \sdist(\cdot, \sigma_{\rho_k} (\partial E ) ) \big|_{Q_R} \in W^{1, \infty} (Q_R)$. We see that for any $k>0$, $f_k$ is $1$-Lipschitz continuous, moreover by the fact that $f_k (0)= \sdist(0,  \sigma_{\rho_k} (\partial E )) = 0$, we deduce that $\{f_k\}$ is uniformly bounded. By applying Ascoli-Arzelà Theorem, there exists $f \in W^{1, \infty} (Q_R)$ and a non-relabeled subsequence $\lk f_k \rk$ such that $f_k \to f$ uniformly in $Q_R$. % As $(\sigma_{\rho_k}(0))^{-1} \in \partial^* A $, 
	In view of \cite[Proposition A.1]{KP}, by \eqref{eq:c1} we obtain that  $\abs{f_k} = \dist(\cdot ,  \sigma_{\rho_k} (\partial E )) \to \dist(\cdot, T_0)$ uniformly in $Q_R$ and thus, $\abs{f(x)} = \dist(x, \partial H_0)$ for any $x \in Q_R$.
	
	It remains to prove that $f(\cdot) = \sdist(\cdot, \partial H_0)$ in $Q_R$. We proceed by absurd. Assume by contradiction that $f \neq \sdist( \cdot, \partial H_0)$ then, either $f \equiv \sdist(\cdot, \partial (Q_R \setminus H_0))$ or $f \equiv \dist(x, T_0) $ or $f \equiv- \dist(x, T_0)$. Let us first consider the case in which $f \equiv \sdist(\cdot, \partial (Q_R \setminus H_0))$. In view of Remark \ref{remark:1}-(i), it follows that $\sigma_{\rho_k}(E) \cngK Q_R \setminus \Int(H_0)$ and so, as a consequence we have that $\mathcal{L}^2 \left(\sigma_{\rho_k}(E) \triangle (Q_R \setminus  \Int(H_0)) \right) \to 0$ as $k \to \infty$, which is in contradiction with the De Giorgi's structure theorem of sets of finite perimeter (see \cite[Theorem 5.13]{EG} or \cite[Theorem 15.5]{M}).
	
 In the case $f(\cdot) \equiv \dist(\cdot, T_0)$, thanks to the fact that $f_k \to f$ uniformly in $\overline{Q_R}$ and by Lemma \ref{lem:A2},  %\cite[Lemma 4.2]{KP}, 
 we have that $\sigma_{\rho_k} (E)\cap \overline{Q_R} \cngK T_0\cap \overline{Q_R}$, and thus, $\mathbbm{1}_{\sigma_{\rho_k}(E)} \to \mathbbm{1}_{T_0}$ in $L^1(\overline{Q_R})$, which is a contradiction with the fact that  $\mathbbm{1}_{\sigma_{\rho_k}(E)} \to \mathbbm{1}_{H_0}$ in $L^1_\mathrm{ loc}(\R^2)$ by \cite[Theorem 5.13]{EG}. 
	
	In the last case in which $f(\cdot) \equiv -\dist(\cdot, T_0)$, we proceed analogously, and by Lemma \ref{lem:A2} % \cite[Lemma 4.2]{KP} 
	we obtain  that $\overline{Q_R} \setminus \sigma_{\rho_k}(E) \cngK T_0 $ and hence, $\mathbbm{1}_{(\overline{Q_1} \setminus \sigma_{\rho_k}(E))} \to \mathbbm{1}_{T_0}$ in $ L^1(\overline{Q_R})$. Therefore, we reach a contradiction again by applying \cite[Theorem 5.13]{EG} since $x\in\partial^*(\R^2\setminus E)$ and so, $\mathbbm{1}_{(\overline{Q_R} \setminus\sigma_{\rho_k}(E))} \to \mathbbm{1}_{{\R^2\setminus \Int(H_0)}}$ in $L^1_\mathrm{ loc}(\R^2)$.
\end{proof}

We now introduce the notions  of \emph{film free boundary}, \emph{substrate free boundary},  and \emph{film-substrate adhesion interface} for triples $(A,h,K)\in\mathcal{B}$, and of \emph{triple junctions} at the points where they ``meet''.

  \begin{definition} \normalfont
      \label{def:notationislands}
    For any admissible pair $(A,S%h,K
    ) \in{ \wt \B}$ we denote:
      \begin{itemize}
          \item the \emph{film free boundary}, the \emph{substrate free} boundary and the \emph{film-substrate adhesion interface} by
          \begin{align*}
              \Gamma_\mathrm{ F}(A,S%h,K
              )&:=\left((\partial A\setminus \partial S) \cup (\partial S \cap \partial A  \cap A^{(1)}) {  \cup (\partial {S} \cap \partial ^* A \cap S^{(0)})} 
                           \right) \cap { \Om}, \\
              \Gamma_\mathrm{ S}(A,S%h,K
              )& :=\left( (\partial S \cap \partial A) \setminus { (\partial {S} \cap \partial ^* A \cap S^{(0)})}  \right) \cap { \Om},\\
              \Gamma^{\text{A}}_\mathrm{ FS}(A,S%h,K
              )&:= \left( \left(( \partial S \setminus \partial A) \cap A^{(1)} \right) %{  \cup (\partial \ov{S} \cap \partial ^* A \cap S^{(0)})} 
 \right) \cap { \Om},
          \end{align*}
          respectively. Notice that the \emph{film-substrate delamination interface},  that we define by 
          $$
            \Gamma^{\text{D}}_\mathrm{ FS}(A,S%h,K
            ) := \left((\partial S \cap \partial A ) \cap A^{(1)}\right) \cap { \Om},
          $$
          is contained both in $\Gamma_\mathrm{ F}(A,S%h,K
              )$ and in $\Gamma_\mathrm{ S}(A,S%h,K
              )$.
    \item \emph{triple junction} (by including for simplicity also the ``double'' junctions at the boundary) any point  
    \begin{align*}
         p\in &\left(Cl(\Gamma_\mathrm{ F}(A,S%h,K
              ))\cap Cl(\Gamma_\mathrm{ S}(A,S%h,K
              ))\cap Cl(\Gamma^{\text{A}}_\mathrm{ FS}(A,S%h,K
              ))\cap {\Om}\right)\\  
         &\cup\left(Cl(\Gamma_\mathrm{ F}(A,S%h,K
              ))\cap Cl(\Gamma_\mathrm{ S}(A,S%h,K
              ))\cap \partial \Int(\ov{S})\cap \partial { \Om})\right)\\
         &\cup\left(Cl(\Gamma_\mathrm{ F}(A,S%h,K
              ))\cap Cl(\Gamma_\mathrm{ FS}^A(A,S%h,K
              ))\cap \partial { \Om})\right),
\end{align*}
        where the closures are considered with respect to the relative topology of $\partial A\cup \partial S$. %film-void, substrate-void, film-substrate interfaces, or for simplicity \emph{triple junction}.
      \end{itemize}
  \end{definition}

The next result allows us to assume that the adhesion interface of any admissible pair $(A,S%h,K
) \in  \B_\textbf{m}$ (without the substrate internal cracks) can be considered, up to an error and up to passing to the family $\wt \B$, to be given by a finite number (depending on the initial pair $(A,S)$) of connected  components.  

	\begin{lemma}		
 \label{lem:finiteness}
  Let $R$ be an open rectangle with two sides, that are denoted by $T_1$ and $T_2$, perpendicular to $\textbf{e}_\textbf{1}$. Let $(A,S_{h,K}
  ) \in  \B_\textbf{m}$ for  $(h,K) \in \mathrm{AHK}$ be such that %$S= S_{h,K}$ and 
  ${\mathcal{S}}_L(A,S_{h,K}
  ,R) < \infty$, where ${\mathcal{S}}_L$ is the localized surface energy defined in 
  \eqref{eq:definitionlocenergyS}. If $\Hs^1\left( \left( \Gamma^\mathrm{{A}}_\mathrm{ FS}\left(A,S_{h,K}\right) \setminus \Int(S_h)\right)  \cap R \right) > 0$, for every $\eta \in(0,1)$ small enough there exist $M:= M(A,S_{h,K},\eta) \in \N \cup \{0\}$  and $(\widetilde{A} , \widetilde{S}) \in \wt \B $ such 
that $(\Gamma^\mathrm{{A}}_\mathrm{ FS}(\widetilde{A}, \widetilde{S}  ) \setminus \Int(\overline{\wt{S}}))\cap R$ has at most $M$ connected components and 
		\begin{equation}
			\label{eqlem:finiteness1}
			{\mathcal{S}}_L(A,S_{h,K},R) \ge {\mathcal{S}}_L (\widetilde{A}, \widetilde{S},R ) - \eta.
		\end{equation} 
	Furthermore, $(\widetilde{A} , \widetilde{S})$ satisfies the following properties:
 \begin{itemize}
     \item[(i)] If  $\overline{\partial S_{h}\cap R}\cap T_\ell\neq\emptyset$ for $\ell=1,2$, then also
$\overline{\partial \widetilde S\cap R}\cap T_\ell\neq\emptyset$ for $\ell=1,2$;
  \item[(ii)] If there exists a closed connected set $\Lambda\subset\overline{\partial A\cap R}$ such that $\Lambda\cap T_\ell\neq\emptyset$ for $\ell=1,2$,  then there exists a curve with support $\wt\Lambda\subset\overline{\partial \wt{A} \cap R}$ such that $\wt\Lambda\cap T_\ell\neq\emptyset$ for $\ell=1,2$.
 \end{itemize}
  	\end{lemma}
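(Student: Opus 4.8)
The goal is to reduce the adhesion interface $\Gamma^{\mathrm A}_{\mathrm{FS}}(A,S_{h,K})\setminus\Int(S_h)$, intersected with $R$, to finitely many connected components while losing only an arbitrarily small amount $\eta$ of surface energy and staying inside $\wt\B$. The natural strategy is to exploit that $\Gamma^{\mathrm A}_{\mathrm{FS}}(A,S_{h,K})\cap R\subset(\partial S_h\cup K)\cap R$, hence is contained in a set of finite $\Hs^1$-measure, and that the portion of it that matters, i.e.\ lying in $\partial S_h$ away from $\Int(S_h)$, can be described via the graph of $h$: namely it corresponds to the part of $\{(x,h^-(x))\,:\,x\in(-l,l)\}$ that is not covered by $A^{(1)}$ from above, with the delaminated portion $\Gamma^{\mathrm D}_{\mathrm{FS}}$ being where $\partial A$ also passes. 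First I would set up, using the graph structure of $S_h$ together with $\mathrm{Var}\,h<\infty$ and \eqref{eq:definitionAH}, a decomposition of $\overline{\partial S_h\cap R}$ into finitely many maximal ``vertical-free'' arcs plus the countably many vertical filament/jump segments from $J(h)$ and $F(h)$; the adhesion interface is then a relatively open subset of this.

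**The modification argument.** Since $\Hs^1(\Gamma^{\mathrm A}_{\mathrm{FS}}(A,S_{h,K})\cap R)\le\Hs^1(\partial S_{h,K})<\infty$, for every $\eta>0$ there are at most finitely many connected components of it whose $\Hs^1$-measure exceeds a threshold $\delta=\delta(\eta)$; call their union $G_{\mathrm{big}}$, and let $G_{\mathrm{small}}$ be the (possibly infinite) union of the remaining components with $\Hs^1(G_{\mathrm{small}})$ still possibly large but each component small. The plan is to modify $(A,S_{h,K})$ only near $G_{\mathrm{small}}$: for each small component, which is a small arc $\gamma$ of $\partial S_h$ not meeting $\Int(S_h)$, one either (i) ``fills'' the thin region just below $\gamma$ — i.e.\ replaces $S_h$ locally by pushing its graph up to meet $\partial A$ so that $\gamma$ becomes part of the delamination interface $\Gamma^{\mathrm D}_{\mathrm{FS}}$ or disappears into $\Int(\ov{\wt S})$ — or (ii) removes the film above it by shrinking $A$ locally so that $\gamma$ becomes a substrate free boundary $\Gamma_{\mathrm S}$ piece. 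In either case the local surface-energy change is controlled by the length of $\gamma$ times a bounded factor ($\le 2c_2$ by \eqref{eq:H1}), plus the $\Hs^1$-measure of the short ``vertical'' connectors one adds to reconnect the graph; by summing over the countably many small components and choosing $\delta$ small, the total change is $\le\eta$. Because the arcs are short and the construction is local, the new $\wt S$ is still the subgraph of an upper semicontinuous $\wt h$ with $\mathrm{Var}\,\wt h<\infty$ (one should check the modifications do not create infinitely many new jumps — here it helps to only modify components near a finite $\delta$-net of ``problematic'' $x$-values), hence $(\wt A,\wt S)\in\wt\B$, and $(\Gamma^{\mathrm A}_{\mathrm{FS}}(\wt A,\wt S)\setminus\Int(\ov{\wt S}))\cap R$ now has at most $M$ components, where $M$ counts $G_{\mathrm{big}}$'s components (bounded by $\Hs^1(\partial S_{h,K})/\delta$).

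**Preserving connectivity (properties (i)–(ii)).** The two extra properties are essentially bookkeeping: since all modifications are local and confined to small neighbourhoods of short arcs in the interior of $R$ away from $T_1\cup T_2$ (one should arrange $\delta$ small enough that no small component touches $T_1$ or $T_2$, since a component touching a side of the rectangle where the graph reaches it is a single large object once we note $\ov{\partial S_h\cap R}\cap T_\ell$ is connected-ish via the graph), the intersections with $T_1,T_2$ are untouched, giving (i). For (ii), a connected set $\Lambda\subset\ov{\partial A\cap R}$ joining $T_1$ to $T_2$ is, by \cite[Lemma 3.12]{F}, the support of (or contains) a curve; our local modifications of $A$ only add ``internal'' boundary or remove thin film pockets near short arcs, so a path in $\partial A$ from $T_1$ to $T_2$ persists — possibly rerouted along the short added connectors — yielding a curve with support $\wt\Lambda\subset\ov{\partial\wt A\cap R}$ still joining the two sides.

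**Main obstacle.** The delicate point is ensuring the ``filling''/``shrinking'' operations, performed on \emph{countably many} small components simultaneously, still produce an admissible configuration in $\wt\B$ — in particular that $\wt h$ retains finite pointwise variation and upper semicontinuity and that $\partial\wt A$ stays $\Hs^1$-rectifiable with finite measure and controlled number of components. This forces one to be careful: rather than modifying every small component, one should first discard from $G_{\mathrm{small}}$ the (at most countably many, total length $<\eta/(4c_2)$) components that are genuinely pathological and absorb their energy into the $\eta$-error directly, and only perform explicit geometric surgery on a \emph{finite} sub-collection; the truly infinite tail is handled by the crude length bound, not by surgery. Reconciling ``finitely many components after modification'' with ``only finitely many surgeries'' is exactly the $M$-counting, and getting the $\mathrm{Var}\,\wt h<\infty$ bound uniform is where most of the technical work lies. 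I would expect to lean on Lemma \ref{lem:appendix1} throughout to translate between $\Hs^1(\partial\wt S_{\wt h})$ and $\mathrm{Var}\,\wt h$.
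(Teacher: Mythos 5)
Your decomposition into $G_{\mathrm{big}}$ (finitely many components of $\Hs^1$-measure $\ge\delta$) and $G_{\mathrm{small}}$ (the rest, each of measure $<\delta$) has a fatal blind spot that the paper flags explicitly at the start of its proof: the set $(\Gamma^{\mathrm A}_{\mathrm{FS}}(A,S_{h,K})\setminus\Int(S_h))\cap R$ can be a \emph{totally disconnected} set of positive $\Hs^1$-measure, e.g.\ a Smith--Volterra--Cantor set sitting in $\partial S_h$. In that case \emph{every} connected component is a singleton of $\Hs^1$-measure zero, so $G_{\mathrm{big}}=\emptyset$ for any $\delta>0$ and $G_{\mathrm{small}}$ is the whole thing, with $\Hs^1(G_{\mathrm{small}})>0$ fixed and not reducible. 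Your proposed fix — "discard the genuinely pathological components from $G_{\mathrm{small}}$ and absorb their total length $<\eta/(4c_2)$ into the error" — cannot work here because the pathological components are all of them and their union carries the full positive measure, which cannot be made $<\eta/(4c_2)$. Equally, performing "surgery" on each singleton component individually is not a geometric modification you can carry out; there is no arc $\gamma$ to fill or shrink. So the threshold-on-component-size strategy does not get off the ground precisely in the case that makes this lemma nontrivial.

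The paper's route is genuinely different and designed around this obstruction. Rather than classifying components by size, it applies outer regularity of $\Hs^1\res\partial S_{h,K}$ to produce an open set $O\supset\Gamma^{\mathrm A}$ whose complementary part $\wh\Lambda:=(O\cap\partial\Int(\ov{S_{h,K}}))\setminus\Gamma^{\mathrm A}$ (the "gaps" between Cantor pieces) has arbitrarily small $\Hs^1$-measure, hence small net measure $\mathcal N^1(\wh\Lambda)$ by \eqref{eq:equivalencE_hausdorffnetmeasure}. It then covers $\wh\Lambda$ with a disjoint family of dyadic squares $\{U_n\}$ of small total diameter, and replaces $\partial S$ inside each $U_n$ with pieces of $\partial U_n$ (see \eqref{eq:A21}--\eqref{eq:S21}), at energy cost $O(\sum_n\mathrm{diam}\,U_n)$, controlled by \eqref{eq:H1}. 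This "welds" the uncountably many Cantor pieces to the finitely-many-per-scale square boundaries, yielding a \emph{countable} union of connected components; a finite number is then reached in Step 3 by discarding a tail of small total length. The preservation of properties (i)--(ii) is also handled non-locally: the paper tracks a curve $\Lambda$ through all the modified squares and explicitly reroutes it along segments $\Lambda^1_n$ inside each $U_n$ (Cases 1--3 of Step 2), rather than relying on the modifications being "confined to small neighbourhoods away from $T_1\cup T_2$", which is another assumption your sketch makes that need not hold in the Cantor setting. In short, the key missing idea in your proposal is the dyadic-square / net-measure covering of the complement $\wh\Lambda$, which is the only mechanism offered for converting an uncountable totally disconnected set into a countable family of components; the preliminary Step 1 (eliminating all but finitely many vertical filaments $L_j(h)$) is a further ingredient you did not isolate but is also needed to set up the strip decomposition used in Step 2.
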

	
	\begin{proof}
 Notice that we cannot a priori exclude that  $(\Gamma^{\text{A}}_\mathrm{ FS}(A,S_{h,K})\setminus \Int(S_h)) \cap  R$ is a totally disconnected set with positive $\Hs^1$-measure (see, for instance, the {\itshape Smith-Volterra-Cantor set} in \cite[Chapter 3]{R}). %For any $h \in \text{AH}(Q_1)$, w
  We denote by $L(h)$ the set of substrate filaments of the substrate $ S_{h,K}$, namely, 
  \begin{equation}
\label{eq:definitionLh}
    L(h):= \{(x_1,x_2) \in \overline{\Om}: x_1 \in  (-l,l) \text{ and } h^+(x_1)<x_2 \le h(x_1) \} \subset \partial S_h.
  \end{equation}
  Since $h$ is upper semicontinuous, there exist an index set $J_1$ and a countable family of disjoint points $\{x_1^j\}_{j \in J_1} \subset  (-l,l) $ such that 
  \begin{equation}
      \label{eq:definitionLhj}
      L(h) = \bigcup_{j\in J_1} L_j(h),
  \end{equation}
where $L_j (h) := \{(x^j_1,x_2) \in \overline \Om:  h^+(x^j_1)<x_2 \le h(x^j_1) \}$ for 
every $j \in J_1$. In the following three
 steps, in view of the  outer regularity of Borel measures, we construct an admissible 
pair $(\widetilde{A}, \widetilde{S}) \subset \wt \B$ by modifying some portions of $\partial S_{h}$ and $\partial A$. More 
precisely, in the first step, we construct an 
admissible height $h^1$  by eliminating  a family of  ``small'' filaments of $L(h)$ so that $L(h^1)$ consists of only a finite number of filaments, we 
accordingly modify $A$ in an admissible region $A^1$ containing $L(h)\setminus L(h^1)$, and we define $K^1:=K$. % and so that $L(h^1)\cap\Gamma^{\text A}_\mathrm{ FS}(A^1, 
%S_{h^1,K^1})$ is a countable union of connected components. 
In the second step,  we construct $S^2$ by modifying $S_{h^1,K^1}$ 
and we introduce an admissible region  $A^2$ in such a way that $(\Gamma^\text{A}_\text{FS}(A^2, S^2) \setminus \Int(\ov{S^2})) \cap R$ is a countable  union of connected components, and (i) and (ii) hold true.
%step and second step we modify $\partial S_{h,K} \cap S_{h,K} ^{(0)}$ and $\partial^* S_{h,K}$, respectively. %In the third step we prove  \eqref{eqlem:finiteness1}.
In the third step, %    Finally, in the last step, 
by eliminating  some components of $\Gamma^\text{A}_\text{FS}(A^2, S^2) \setminus \Int(\ov{S^2})) \cap R$ we define an admissible pair $(\wt A,\wt S) \in \wt \B$
%(\wt A,\wt S) \in \B $
for which  %\eqref{eqlem:finiteness1}  holds true and 
$(\Gamma^{\text A}_\mathrm{ FS}(\wt A,\wt S) \setminus \Int(\ov{\wt S})) \cap R$  has at most $M$-connected components, (i) and (ii) are preserved, and \eqref{eqlem:finiteness1} holds true.

%a finite number of  connected components and we .
%Finally, in the last step we define $(\wt A, \wt S) \in \wt \B$ for which $(\Gamma^{\text A}_\mathrm{ FS}(\wt A, \widetilde{S}) \setminus \Int(\ov{\wt S})) \cap R$ has $M$-connected components and \eqref{eqlem:finiteness1}, (i) and (ii) hold true.
    	
{\itshape Step 1  (Modification of substrate filaments).} %We modify $(A,S_{h,K})$ in two substeps: first w
We modify $(A,S_{h,K})$ in $( A^0,S_{h^1,K^1}) $ to have a finite number of substrate filaments. %, then we modify $( A^0,S_{h^1,K^1}) $ in $(A^1,S_{h^1,K^1})$  so that $ (\Gamma^{\text A}_{\text{FS}} (A^1,S_{h^1,K^1})\cap L(h^1)) \cap R$ has a countable number of connected components.
%{\itshape Substep 1.1.} 
 We denote by $J_2\subset J_1$ the set of indexes $j \in J_1$ such that $\Hs^1( L_j \setminus \Int(A)) =0 $  and $L_j$ is not connected to $\partial A$, and we denote by %and by 
 $F_{J_2} (h)$ the set of the $x_{1}$-coordinates corresponding to the points in each vertical segment $L_j$ for  $j\in J_2$, i.e., $F_{J_2} (h):=\{x^j_{1}\}_{j\in J_2}$, where $x^j_1\in {[-l,l]}$ is such that $(x^j_1,h(x^j_1)) \in{L_j(h)}$ and  $h^+(x^j_1) < h(x^j_1)$ for every $j\in J_2$. 
 We define as ${h^0}$ the modification of $h$, given by 
	\begin{equation*}
	    \begin{matrix}
	            h^0: & {[-l,l]} & \to & {[0,L]}&\\
	            & x & \mapsto & h^0(x)& := \begin{cases}
	                h(x) & \text{if } x \in {[-l,l]} \setminus F_{J_2} (h), \\
	                h^+(x) & \text{if } x \in F_{J_2}(h),
	            \end{cases}
	    \end{matrix}
	\end{equation*}
	and observe that by construction $h^0 \in \text{AH} $ and 
	\begin{equation}
	    \label{eq:modvertical1}
	    \Hs^1(\partial S_{h^0}) \le \Hs^1(\partial S_{h}),
	\end{equation}
	where $S_{h^0}$ is defined as in \eqref{eq:uppergraphS}.  We notice that the triple $(A, S_{h^0, K^0} ) \in \B_\textbf{m}$, where $K^0:= K $.  
    As a consequence of the construction and of the non-negativity of $
    \phi_{\text{FS}} $, it follows that
	\begin{equation}
	    \label{eq:modvertical2}
	    {\mathcal{S}}_L (A,S_{h,K},R) \ge {\mathcal{S}}_L (A,S_{h^0,K^0},R) .
 	\end{equation}
  We notice that by \eqref{eq:definitionLhj}  we have that 
  $$
  \Hs^1 (L(h^0)
  )= \Hs^1 (L(h)
  ) - \sum_{j\in J_2} \Hs^1 (L_j(h))= \sum_{j\in J_3} \Hs^1 (L_j(h)),
  $$
where  $J_3:= J_1 \setminus J_2$ and $L(h^0)$ is the set of substrate filaments of $S_{h^0,K^0}$ defined as in \eqref{eq:definitionLh}. Therefore, for a fix $\widetilde \eta>0$ there is $j'_1 := j'_1(\widetilde \eta) \in J_3$ such that 
	\begin{equation}
	\label{eq:modvertical3}
	   \sum_{j  = j'_1 +1 , j \in J_3}^{\infty} \Hs^1 (L_j(h)) \le \widetilde \eta,
	\end{equation}
	and we define $$ A^1 := \left(A \setminus \left( \bigcup_{j= j'_1 + 1, j \in J_3}^{\infty} (L_j(h) \cap \Int(A)) \right)  \right) \cup  \bigcup_{j= j'_1 + 1, j \in J_3}^{\infty} (L_j(h) \cap (\Om \setminus \overline{\Int(A)}) ).$$
 Furthermore, we denote by $ h^1$ the modification of $h^0$, defined by
	\begin{equation*}
	    \begin{matrix}
	            h^1: & {[-l,l]} & \to & {[0,L]}&\\
	            & x & \mapsto & h^1(x)& := \begin{cases}
	                h^0(x) & \text{if } x \in {[-l,l]}\setminus F_{J_4} (h^0), \\
	                {h^0}^+(x) & \text{if } x \in F_{J_4}(h^0),
	            \end{cases}
	    \end{matrix}
	\end{equation*}
	where $J_4 := \{j \in J_3: j \ge j'_1 +1 \}$ and $F_{J_4}(h^0):=\{x^j_{1}\}_{j\in J_4}$ such that 
 $(x^j_1,h(x^j_1)) \in {L_j(h)}$ 
 for every $j\in J_4$.
 We define $K^1:= K^0$, we notice that $(h^1,K^1) \in \text{AHK}$ and $S_{h^1,K^1} \subset \overline{A^1}$, thus $(A^1, S_{h^1,K^1}) \in \B_\textbf{m}$ and since $L_j(h)$ is connected to $\partial A$ for every $j \in J_4\subset J_3$,  
    we deduce that $\partial A^1$ has at most $m_1$-connected components.     
    Finally, we observe that
    \begin{equation}
	    \label{eq:modvertical5}
	    {\mathcal{S}}_L (A,S_{h,K},R) - {\mathcal{S}}_L ( A^1,S_{h^1,K^1},R) 
     \ge -\sum_{j \in J_4} 2 \int_{L_j(h)}{\phi_{\text F}(\nu_{L_j (h)}) d \Hs^1 } 
     \ge  - 2 c_2 \widetilde{\eta}, 
	\end{equation}
    where we used the non-negativeness of $\phi$, $\phi_{\text FS}$ and \eqref{eq:H1}, and we observe that $( A^1,S_{h^1,K^1})  \in \B_\textbf{m}$ has a finite number of  substrate filaments, more precisely, we denote by $j' \in \N$ the cardinality of the  index set $J:= J_3 \setminus J_4 = \{j \in J_3: j \le j'_1\}$ and we have that $L(h^1)$ is the union of $j'$ filaments, i.e.,
 \begin{equation}\label{eqLs1}
 L(h^1) = \bigcup_{j \in J} L_j(h) ,
\end{equation}
where $L(h^1)$ is defined as in \eqref{eq:definitionLh} with respect to the substrate $S_{h^1,K^1}  \in \text{AS}(\Om)$.

{\itshape Step 2 (Modification of the substrate free boundary).} Without loss of generality in the following we assume that  $
\partial \Int(S_{h^1}) \cap \overline{L( h^1)} \subset R$. 
 Since $\Hs^1\mres\partial {S_{h^1,K^1}}$ is a finite Borel measure and 
 $$\Gamma^\text{A} := \left( \Gamma^{\text A}_\mathrm{ FS}(A^1,S_{h^1,K^1}) \setminus \Int(S_{h^1}) \right) \cap R$$ is a Borel set, by the outer regularity of measures (see \cite[Theorem 2.10]{M}), %for the same parameter $\widetilde{\eta}>0$ given in Substep 1.1.,
 there exists an open set $O=O(\widetilde{\eta})\subset R$ such that $ \Gamma^{\text A}%_\mathrm{ FS}(\wh A,\widehat{h},K\cap S_{\wh h})  
 \subset O\cap \partial {S_{h^1,K^1}}$ and
		\begin{equation}
			\label{eq:finiteness2}
			\begin{split}
			    \Hs^1 \left(\wh \Lambda \right) &= \Hs^1 \left( (O \cap \partial^* {S_{h^1,K^1}} )\setminus \Gamma^{\text A}%_\mathrm{ FS}(\wh{A},\wh{h},K\cap S_{\wh h}) 
       \right)
                \le \Hs^1 \left( (O \cap \partial {S_{h^1,K^1}}) \setminus \Gamma^{\text A}%_\mathrm{ FS}(\wh{A},\wh{h},K\cap S_{\wh h}) 
                \right) < \frac{ 2^{-5/2}}{j'+1} \widetilde{\eta},
			\end{split}  
		\end{equation}
	where $\wh \Lambda := (O \cap \partial \Int(\overline{S_{h^1,K^1}}))\setminus \Gamma^{\text A}%_\mathrm{ FS}(\wh{A},\wh{h},K\cap S_{\wh h})
 $ and $j'$ is defined in the Step 1 as the number of filaments of $S_{h^1}$. Moreover, by using the notation introduced in \eqref{eq:uppergraphS} and the fact that $h^1\in \text{AH}(\Om)$ we conclude that
    \begin{equation}
    \label{eq:pathSh+}
    	\partial \Int\left(\overline{S_{h^1,K^1}}\right)=\partial \Int(S_{h^1})=\partial S_{{h^1}^+},
    \end{equation} 
    and hence, since $\partial \mathrm{ \Int}(\overline{S_{h^1,K^1}})$ is a connected and compact set in $\mathbb{R}^2$ with finite  $\mathcal{H}^1$-measure,  by \cite[Lemma 3.12]{F} there exists a parametrization $r:[0,1]\to\mathbb{R}^2$ of $\partial \mathrm{ \Int}(\overline{S_{h^1,K^1}})$  whose support $\gamma$ joins the points $(-l,{h^1}(-l^+))$ with $(l,{h^1}(l^-))$.

Notice that by Step 1, $
\partial \Int(S_{h^1}) \cap \overline{L( h^1)} $ is the union of $j'$-points  that we can order by labeling them with $p_1,\dots,p_{j'}$. Furthermore, %let us
 we denote with $p_0:=r(t_0)$ and $p_{j'+1}:= r(t_1)$, where $t_0 := \inf\{t \in [0,1]: r(t) \in \partial R\}$ and $t_1 := \sup\{t \in [0,1]: r(t) \in \partial R\}$, and we consider the family  $\{R^i\}_{i =1}^{j' +1}$ of the strips $R^i$ defined as  the open regions of $R$ contained between the vertical lines passing though the points $p_{i-1}$ and $p_i$. 

 Since $\wh \Lambda$ is a Borel measurable set, by \eqref{eq:equivalencE_hausdorffnetmeasure} and \eqref{eq:finiteness2} % and \eqref{eq:decompOis} 
 we have that
	\begin{equation}
		\label{eq:finiteness3}
			\mathcal{N}^1(\wh\Lambda)\leq 2^{\frac52} \mathcal{H}^1(\wh\Lambda) < \frac {\widetilde{\eta}}{j'+1} ,
	\end{equation}
 where $\mathcal{N}^1$ is the net measure defined in \eqref{eq:netmeasure1}.
 Therefore,  there exists $\delta>0$ such that we can find a family of disjoint dyadic squares $\{U_n\}_{n \in \N} \subset \mathcal{Q}$  such that $\wh\Lambda \subset \bigcup_{n\in \N} U_n $, $\mathrm{ diam} (U_n) \le \delta$ for any ${n\in \N}$ and %by \eqref{eq:finiteness3}
		\begin{equation}
			\label{eq:finiteness4}
			\sum_{n\in \N}{\mathrm{ diam} (U_n) } \le \mathcal{N}^1(\wh\Lambda) + \frac {\widetilde{\eta}} {j'+1} <\frac 2{j'+1} \widetilde{\eta}. %\le (2^{5/2} + 1) \widetilde{\eta}.
		\end{equation}
Without loss of generality we assume that ($\wh \Lambda \neq \emptyset$ and that) $U_n \cap \wh \Lambda$ is non-empty for every $n \in \N$.
%Notice that the arguments used in Step 1 are based on the fact that $L(h^1)$ is the finite union of vertical segments, however, since $h^1$ is a pointwise variation function, $\partial \Int(S_{h})$ is not necessary a segment. % is not needed to be a segment. 
Let $\{U_{n}^i\} \subset \{U_n\}$ be the subfamily of dyadic squares such that  $U_{n}^i \cap R^i \cap \gamma \neq \emptyset$ for every $n \in \N$. Furthermore, we assume for simplicity that $\Int(U_{n}^i)\subset R^i$. 
We begin by modifying the pair $(A^1,S_{h^1,K^1})$ in the strip $R^1$, by denoting the modification by $(A^2_1,S^2_1)$. We characterize $A^2_1$ and $S^2_1$  as
\begin{equation}
 \label{eq:A21}
	A^2_1 : = \left(A^1 \cup \bigcup_{n\in \N}{U_{n}^1} \right) \setminus \bigcup_{n\in \N} ( \partial U_{n}^1 \setminus {U_{n}^1}) 
 \end{equation}
 and
\begin{equation}
 \label{eq:S21} S^2_1 : = \left(S_{h^1,K^1} \setminus \bigcup_{n \in \N} \Int(U_n^1)\right) \cup \bigcup_{n \in \N} (\partial U^1_n \cap (R^1 \setminus S_{h^1}) ),
 \end{equation}
respectively. 
By construction, it follows that $S^2_1 \subset \ov{A^2_1}$, $\partial A^2_1$ and  $\partial S^1_1$  have finite $\Hs^1$ measure and are $\Hs^1$-rectifiable, and $ \partial A^2_1 \cap \Int(\ov{S^2_1}) = \emptyset$ and hence,   
$(A^2_1,  S^2_1) \in \wt \B$. Furthermore, we have that
\begin{equation}
    \label{eq:finitenessfinal0bis}
    \begin{split}
       &{\mathcal{S}}_L\left(A^1,S_{h^1,K^1}, R^1  \right)  - {\mathcal{S}}_L\left(A_1^{2} , S_1^{2}, R^1  \right)\\
       & \ge - 2 \int_{\bigcup_{n \in \N} \partial U^1_n  }\phi_{\text{F}}   (\nu_{U^1_n}) + \phi  (\nu_{U^1_n})  + \phi_{\text{FS}}   (\nu_{U^1_n}) 
       \, d\Hs^1 \\
       &  \ge - \sum_{n\in\N} 2 \int_{ \partial U^1_n  }\phi_{\text{F}}   (\nu_{U^1_n}) + \phi  (\nu_{U^1_n})  + \phi_{\text{FS}}   (\nu_{U^1_n}) 
       \, d\Hs^1\\
       & \ge - \sum_{n\in\N} 24 \, c_2 \text{diam}(U^1_n) \ge -  \frac {48}{j'+1} c_2\widetilde{\eta},
    \end{split}
\end{equation}
where in the first inequality we used the non-negativeness of $\phi_{\mathrm{F}}, \phi_{\mathrm{FS}}$ and $\phi$, in the second inequality we used the subadditivity of measures, in the third inequality we used \eqref{eq:H1}, and in the last inequality we used \eqref{eq:finiteness4}.  
We notice that $\Gamma^{\text{A}}_{\text{FS}}(A^{2}_1, S^2_1) \cap \partial \Int(\ov{S^2_1}) \cap R^1 $ is a countable union of connected sets because by construction every connected component of $\Gamma^{\text{A}}_{\text{FS}}(A^{2}_1, S^2_1) \cap \partial \Int(\ov{S^2_1}) \cap R^1 $ is connected to an element in the family of sets $\{ \partial U^1_n \cap U^1_n \}_{n \in \N}$.

Now, we modify $(A_1^{2} , S_1^{2})$ in a new configuration $(A^3_1, S^3_1)$ in order to prove Assertion (ii). To this end let $\Lambda\subset\overline{\partial A\cap R}$ be a closed connected set such that $\Lambda\cap T_\ell\neq\emptyset$ for $\ell=1,2$. 
By \cite[Lemma 3.12]{F} there exists a parametrization $r_1 : [0,1] \to \R^2$ whose support $\gamma^1 \subset \Lambda$ joins $T_1$ with $T_2$. We define $\gamma^1_1: = \gamma^1 \cap \ov{R^1}$ and we observe that $(\gamma^1_1 \setminus \Int(U^1_n)) \cup \partial U^1_n$ is a connected set. Let $Z_1 \subset \N$ be the set of indexes $n$ such that  $\gamma_1 \cap (\partial U^1_n\cap U^1_n) 
\neq \emptyset$.
If $Z_1 = \emptyset$, then we define $A^3_1 := A^2_1$
and $S^3_1:= S^2_1$. 
If $Z_1\neq \emptyset$, then we modify $\gamma_1^1$ in %different portions of 
$\bigcup_{n\in Z_1} U^1_n $ by defining a new set $\Lambda_1$. More precisely, 
%in %different portions of 
%$\bigcup_{n\in Z_1} U^1_n $ by defining a new set $\Lambda_1$. In particular, 
by using the fact that  dyadic squares by definition do not intersect each other, we fix $n\in Z_1$ and we  replace with a set $\Lambda^1_n$ (see \eqref{eq:lambda1} below) the portion of $\gamma^1$ passing through $U^1_n$. 
 To this end, let us denote the closures of the left and bottom sides of $U^1_n$ by $L^1_n$ and $L^2_n$, respectively, and proceed by defining $\Lambda^1_n$ in  different way with respect to following three cases (see Figure \ref{fig:excludecantor}): 
\begin{description}
\item[\,\,\, Case 1] \textbf{$\bm{\gamma^1_1 \cap L^1_n \neq \emptyset}$ and $\bm{\gamma_1 \cap L^2_n = \emptyset}$.} Since $L^1_n$ is closed, we deduce that $\gamma^1_1 \cap L^1_n$ is closed. Therefore, there exist $  a^2_n : = \max\{a\in \R; (l_n^1, a) \in \gamma^1_1 \cap L^1_n \}$ and $b^2_n : = \min\{b\in \R; (l_n^1, b) \in \gamma^1_1 \cap L^1_n \}$, where $l^1_n$ is the element in the singleton $\pi_1(L^1_n)$.  Since $\gamma_1$ is parametrized by $r_1$, there exist $t_n^1, t_n^2 \in [0,1] $ such that  $p_{n,1}^1 : = (l^1_n,a^2_n ) = r_1(t_n^1)$ and $p_{n,1}^2 : = (l^1_n,b^2_n ) = r_1(t_n^2)$, %$p_{n,1}^k = r_1(t_n^k)$ for $k = 1,2$, 
and by the continuity of $r_1$ there exists  $q_{n,1}^1  \in \gamma^1_1 \setminus  U^1_n$ such that $\dist(p_{n,1}^1,q_{n,1}^1) \le \frac{\mathrm{diam}(U^1_n)}2 $. 
If $\pi_1(q_{n,1}^1) =  l^1_n$, then we define $\wt q_{n,1}^1:= ( l_n^1 - \ep, \pi_2(q_{n,1}^1) - \ep)$ for a $\ep>0$ small enough such that $\dist(q_{n,1}^1, \wt q_{n,1}^1)\le \frac{\mathrm{diam}(U^1_n)}2 $, % and $\wt q_{n,1}^1 \notin \partial S_{h^1}$, 
otherwise we let $\wt q_{n,1}^1: = q_{n,1}^1$. 
 We denote by $\wt L_{n,1}^1$  the segment connecting  $q_{n,1}^1$ with $\wt q_{n,1}^1$, we denote by $\wt L_{n,1}^2$ the segment  connecting $\wt q_{n,1}^1$ with $p_{n,1}^2$, and we denote by $\wt L_{n,1}^3$ the segment connecting $p_{n,1}^1$ with the vertex of $U^1_n$ in $\partial U^1_n\setminus U^1_n$. Let $\Lambda^1_n:=  \wt L_{n,1}^{1} \cup \wt L_{n,1}^{2} { \cup \wt L_{n,1}^{3}}$ and observe that by construction it follows that
\begin{equation}
      \label{eq:lastcurve1}
    \Hs^1( \Lambda^1_n     ) = \Hs^1( \wt L_{n,1}^{1} \cup \wt L_{n,1}^{2}   { \cup \wt L_{n,1}^{3}}   )\le { 2 \,} \text{diam}(U^1_n).
\end{equation}

\item[\,\,\, Case 2] \textbf{$\bm{\gamma^1_1 \cap L^1_n = \emptyset}$ and $\bm{\gamma_1 \cap L^2_n \neq \emptyset}$.} By arguing analogously to  Case 1 there exist 
$t_n^1, t_n^2 \in [0,1] $ such that  $p_{n,2}^1 : = (a^1_n, l^2_n) = r_1(t_n^1)$ and $p_{n,2}^2 : = (b^1_n, l^2_n) = r_1(t_n^2)$, where $\pi_2(L^2_n)=\{l^2_n\}$,  $a^1_n : = \max\{a\in \R; (a, l_n^2) \in \gamma^1_1 \cap L^2_n \}$, and $b^1_n : = \min\{b\in \R; (b, l_n^2) \in \gamma^1_1 \cap L^2_n \}$.  By the continuity of $r_1$ there exists $q_{n,2}^1\in \gamma^1_1 \setminus  U^1_n$ such that $\dist(p_{n,2}^1,q_{n,2}^1) \le \frac{\mathrm{diam}(U^1_n)}2 $. 
If $\pi_2(q_{n,2}^1) =  l^2_n$, then we define $\wt q_{n,2}^1:= (\pi_1(q_{n,2}^1)-\ep, l_n^2 - \ep )$ for  $\ep>0$ small enough such that $\dist(q_{n,2}^1, \wt q_{n,2}^1)\le \frac{\mathrm{diam}(U^1_n)}2 $, otherwise we let $\wt q_{n,2}^1: = q_{n,2}^1$. We denote by $\wt L_{n,2}^1$ the segment connecting $\wt q_{n,2}^1$ with $p_{n,2}^1$,  
we denote by $\wt L_{n,2}^2$ the segment connecting $\wt q_{n,2}^1$ with $p_{n,2}^2$ and we denote by $\wt L_{n,2}^3$ the segment connecting $p_{n,2}^1$ with with the vertex of $U^1_n$ in $\partial U^1_n\setminus U^1_n$. Let $\Lambda^1_n:=  \wt L_{n,2}^{1} \cup \wt L_{n,2}^{2}{ \cup \wt L_{n,2}^{3}}$ and observe that by construction it follows that
\begin{equation}
    \label{eq:lastcurve2}
    \Hs^1( \Lambda^1_n % \wt L_{n,1}^{1} \cup \wt L_{n,1}^{2} \cup  \wt L_{n,2}^{1} \cup \wt L_{n,2}^{2} \cup \wt L^3_n
    ) = \Hs^1( \wt L_{n,2}^{1} \cup \wt L_{n,2}^{2}  { \cup \wt L_{n,2}^{3}}    )\le { 2\,} \text{diam}(U^1_n).
\end{equation}

\item[\,\,\, Case 3] \textbf{$\bm{\gamma^1_1 \cap L^1_n \neq \emptyset}$ and $\bm{\gamma_1 \cap L^2_n \neq \emptyset}$.}  
We define $p^k_{n,\ell}, q^{1}_{n,\ell}, \wt q_{n,\ell}^1$, $\wt L_{n,\ell}^{\alpha}$ for $k = 1,2$, $\alpha = 1,2,3 $ as in Case 1 for $\ell = 1$ and as in Case 2 for $\ell = 2$.  Furthermore, we denote by ${ \wt L_n^ 4}$ the segment connecting $p^2_{n,1}$ with $p^2_{n,2}$. Let $\Lambda^1_n:=\wt L_{n,1}^{1} \cup \wt L_{n,1}^{2} \cup  \wt L_{n,2}^{1} \cup \wt L_{n,2}^{2} {\cup \wt L^3_{n,1} \cup  \wt L^3_{n,2} \cup \wt L_n^ 4}$ and observe that by construction it follows that
\begin{align}
    \label{eq:lastcurve3}
    \Hs^1( \Lambda^1_n     ) &\le \Hs^1(\wt L_{n,1}^{1} \cup \wt L_{n,1}^{2} { \cup  \wt L^3_{n,1}}) + \Hs^1(  \wt L_{n,2}^{1} \cup \wt L_{n,2}^{2} { \cup  \wt L^3_{n,2}}) + {  \Hs^1(\wt L^4_n) }\notag\\
    &\le { 5}\, \text{diam}(U^1_n).
\end{align}
\end{description}

\begin{figure}[ht]
\centering	\includegraphics{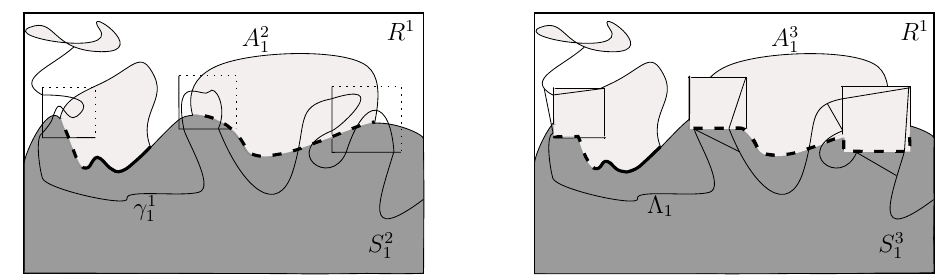}
		\caption{\small The three squares appearing in the illustration  represent dyadic squares $U^1_n$ of the first strip $R^1$ in the three different cases, namely, by moving from the left to the right, Cases 1, 2, and 3, that are considered in Step 2 of the proof of Lemma \ref{lem:finiteness}. On the left the initial pair $(A^2_1,S^2_1)$ is represented, while on the right the  pair $(A^3_1,S^3_1)$ that is obtained after the modification described in such step is depicted.} % and the  and connecting $\partial S_{h'_k}$ with ad segment in the triple junctions of $\Gamma_1$.}
		\label{fig:excludecantor}
	\end{figure}

Let $$\Gamma_1:=\ov{\left(\Gamma^{\text{A}}_{\text{FS}}(\ov{A^{2}_1}, S^2_1) \cap \partial \Int(\ov{S^2_1}) \cap R^1 \right) \setminus \ov{\bigcup_{n \in Z_1} (U^1_n)}}.$$
We now observe that  the previous construction of $\bigcup_{n\in Z_1}\Lambda^1_n$ does not divide $\Gamma_1$ in an uncountable number of connected components. More precisely, we claim that for every given a connected component  $\wh \Gamma$ of $\Gamma_1$, $\wh \Gamma \setminus \ov{\bigcup_{n\in Z_1}\Lambda^1_n} $ is a countable union of disjoint connected sets. To prove this claim, we notice that, since $\ov{\wh \Gamma} \subset \gamma$ and $\gamma$ is parameterized by $r$, also $\ov{\wh \Gamma}$ is parametrizable and hence, there exists a continuous injective map $\wh r:[0,1]\to \R^2$ whose support coincides with $\ov{\wh \Gamma}$. This in particular proves that $\wh r$ is a homeomorphism between  $[0,1]$ and $\ov{\wh \Gamma}$. The claim then follows from the fact that $\ov{\wh \Gamma}  \setminus \ov{\bigcup_{n\in Z_1}\Lambda^1_n}$ is open with respect to the relative topology of $\ov{\wh \Gamma}$ and \cite[Proposition 8 in Part 1]{R}.

We are now in the position to define $A^3_1$ and $S^3_1$ as follows
\begin{multline}
\label{eq:defA^3_1}
    A^3_1:= \ov{A^2_1} \setminus \left( \left(\gamma^1 \setminus \bigcup_{n \in Z_1} U_n^1  \right) \cup \bigcup_{n\in \N} ( \partial U_{n}^1 \setminus {U_{n}^1})   \cup \bigcup_{n \in Z_1} (\ov{A^2_1} \cap \Lambda^1_n)\right) \\
    \cup  \bigcup_{n \in Z_1} ((R^1 \setminus \Int({A^2_1})) \cap \Lambda^1_n)
    \end{multline}
and
\begin{equation}
\label{eq:defS^3_1}
S^3_1 := S^2_1 \setminus \left( \bigcup_{n \in Z_1} (\ov{A^2_1} \cap  \Lambda^1_n ) \right) .
\end{equation}
Therefore,  we have that
\begin{equation}
    \label{eq:lastcorrectionbefore}
    \begin{split}
    {\mathcal{S}}_L\left(A_1^{2} , S_1^{2}, R^1  \right) - {\mathcal{S}}_L\left(A_1^{3} , S_1^{3}, R^1  \right)&\ge - 2 \int_{\bigcup_{n \in Z_1}  \Lambda^1_n % \wt L_{n,1}^{1} \cup \wt L_{n,1}^{2} \cup  \wt L_{n,2}^{1} \cup \wt L_{n,2}^{2} \cup \wt L^3_n  
       }\phi_{\text{F}}   (\nu_{A_3^{2}}) + \phi  (\nu_{A_3^{2}})  
       \, d\Hs^1 \\
       &  \ge - \sum_{n\in Z_1} 2 \int_{  \Lambda^1_n %  \wt L_{n,1}^{1} \cup \wt L_{n,1}^{2} \cup  \wt L_{n,2}^{1} \cup \wt L_{n,2}^{2} \cup \wt L^3_n   
       }\phi_{\text{F}}   (\nu_{U^1_n}) + \phi  (\nu_{U^1_n})    \, d\Hs^1\\
       & \ge - \sum_{n\in Z_1} { 20} \, c_2 \text{diam}(U^1_n) \ge -  \frac {{ 40}}{j'+1} c_2\widetilde{\eta},
    \end{split}
\end{equation}
where in the first inequality we used the non-negativeness of $\phi_{\mathrm{F}}, \phi_{\mathrm{FS}}$ and $\phi$, in the second inequality we used the subadditivity of measures, in the third inequality we used \eqref{eq:H1} and \eqref{eq:lastcurve1}--\eqref{eq:lastcurve3}, and in the last inequality we used \eqref{eq:finiteness4}.

Moreover, by construction we have that 
\begin{equation}
    \label{eq:lambda1}    
\Lambda_1:=\left(\gamma^1 \setminus \left( \bigcup_{n \in Z_1}  U_n^1 \right) \right) \cup \bigcup_{n \in Z_1}  \Lambda^1_n \subset \partial A^3_1
\end{equation}
is closed, connected, and joins $T_1$ with $T_2$. 
%Hence, by \cite[Lemma 3.12]{F} there exists a support  $\wt \Lambda_1 \subset \Lambda_1 $ of a  curve joining $T_1$ with $T_2$.

We now modify the pair ${(A^3_1,S^3_1)}$ and $\Lambda_1$ in the strip $R^2$, first by employing the same construction of \eqref{eq:A21}  and \eqref{eq:S21} to obtain a configuration ${(A^2_2,S^2_2)}\in\wt\B$, and then by employing the same construction of  \eqref{eq:defA^3_1} and \eqref{eq:defS^3_1}  
to modify the pair ${(A^2_2,S^2_2)}$, by denoting the final modified pair with ${(A^3_2,S^3_2)}\in\wt\B$. We then define 
\begin{equation}
    \label{eq:lambda2}    
\Lambda_2:=\left(\Lambda_1 \setminus \left( \bigcup_{n \in Z_2}  U_n^2 \right) \right) \cup \bigcup_{n \in Z_2}  \Lambda^2_n \subset \partial A^3_2.
\end{equation}
 By iterating the same procedure on the strips $R^i$ for $i = 3, \ldots, j'+1$ we obtain the pair $(A^2,S^2) :=(A^3_{j'+1}, S^3_{j'+1})\in \wt \B$ and we define $\Lambda_{j'+1}$ as in \eqref{eq:lambda2} by replacing all the index 1 and 2 with $j'$ and $j'+1$, respectively. 
We observe that by \cite[Lemma 3.12]{F} there exists a support  $\wt \Lambda \subset \Lambda_{j'+1}$ of a  curve joining $T_1$ with $T_2$.

 Furthermore, as done in \eqref{eq:lastcorrectionbefore} for i=1,
\begin{equation}
    \label{eq:lastcorrection}
   {\mathcal{S}}_L\left(A_i^{2} , S_i^{2}, R^i  \right) - {\mathcal{S}}_L\left(A_i^{3} , S_i^{3}, R^i  \right) \ge -  \frac {{40}}{j'+1} c_2\widetilde{\eta}
\end{equation}
for every $i = 1, \ldots, j'+1$. 
Therefore, by iteration it follows that
\begin{equation}
    \label{eq:finitenessfinal0bis1}
    \begin{split}
       {\mathcal{S}}_L\left(A^1,S_{h^1,K^1}, R \right)  - {\mathcal{S}}_L\left(A^{2} , S^{2}, R \right) &\ge \sum_{i = 1}^{j'+1} \left( {\mathcal{S}}_L\left(A^1,S_{h^1,K^1}, R^i  \right) { - {\mathcal{S}}_L\left(A_i^{2} , S_i^{2}, R^i  \right) } \right. \\
       & \qquad  \left. { +{\mathcal{S}}_L\left(A_i^{2} , S_i^{2}, R^i  \right) - {\mathcal{S}}_L\left(A_i^{3} , S_i^{3}, R^i  \right)  }\right)\\
       &\ge   - \sum_{i = 1}^{j'+1}  \frac {{ 88}}{j'+1} c_2\widetilde{\eta}\\
       &\ge  -{ 88} c_2\widetilde{\eta},
    \end{split}
\end{equation}
where in the second inequality we used \eqref{eq:finitenessfinal0bis} and \eqref{eq:lastcorrection}.
We notice that $\Gamma^{\text{A}}_{\text{FS}}(A^{2}, S^2) \cap \partial \Int(\ov{S^2}) \cap R $ is a countable union of connected sets because by construction every connected component of $\Gamma^{\text{A}}_{\text{FS}}(A^{2}, S^2) \cap \partial \Int(\ov{S^2}) \cap R $ is connected to an element in the family of sets $\{ \partial U_n \cap U_n \}_{n \in \N}$. Therefore, $(\Gamma^{\text{A}}_{\text{FS}}(A^{2}, S^2)\setminus \Int(\ov{S^2}) )\cap R $ is equal to a countable union of connected sets.  More precisely, there exists  
    a family of connected and disjoint sets $\{\wt{\Gamma}_i\}_{i\in \N}$ such that
    \begin{equation}
        \label{eq:connected}
        (\Gamma_{\text{FS}}(A^{2}, S^2)\setminus \Int(\ov{S^2}))\cap R= \bigcup_{i \in  \N} \wt{\Gamma}_i,
    \end{equation}
and hence, 
\begin{equation}    
\label{eq:connected2} \Hs^1\left((\Gamma_{\text{FS}}(A^{2}, S^2)\setminus \Int(\ov{S^2})\cap R\right) = \sum_{i \in \N}\Hs^1\left(\wt{\Gamma}_i\right).
\end{equation}
We conclude this step by observing that Assertion (i) follows by the construction of $S^2$, while Assertion (ii) holds with the defined set  $\wt \Lambda$.

{\itshape Step 3 (From countable to a finite number of  components).} 
     Since $\Hs^1 ((\Gamma_{\text{FS}}(A^{2}, S^2)\setminus \Int(\ov{S^2}) )\cap R) \le \Hs^1(\partial S^2) < \infty$,  by \eqref{eq:connected2} 
	there exists $i_0 := i_0(\wt \eta) \in \N$ such that 
	\begin{equation}
	\label{eq:resto1}
	    \sum_{i  = i_0 +1 }^{\infty} \Hs^1 (\wt{\Gamma}_i) \le  \widetilde \eta.
	\end{equation}
Notice that 
$(\wt A,\wt S) \in  \wt \B$, where $\wt S : = S^2$ and $\wt A :=A^2 \setminus \bigcup_{i \in \wt I} \wt{\Gamma}_i $  with $\wt I:= \{ i \in  \N : i \ge i_0+1\}$. Furthermore, it follows from Steps 1 and 2 that 
\begin{equation}
        \label{eq:finitenesfinal}
        \begin{split}
            {\mathcal{S}}_L(A,S_{h,K},R) - {\mathcal{S}}_L(\wt A,\wt S,R) &= {\mathcal{S}}_L(A,S_{h,K},R) \pm {\mathcal{S}}_L(A^2,S^2,R) - {\mathcal{S}}_L(\wt A,\wt S,R)\\
            &\ge -{ 88}c_2 \wt \eta - 2\sum_{i  = i_0 +1 }^{\infty}\int_{\wt{\Gamma}_i} \phi_{\text{F}}(\nu_{\wt{\Gamma}_i}) + \phi(\nu_{\wt{\Gamma}_i})\, d\Hs^1 
            \\ 
            & \ge -{ 92} c_2\wt \eta,
        \end{split}
    \end{equation} 
where in the first inequality we used \eqref{eq:modvertical5} and \eqref{eq:finitenessfinal0bis1}, and  the definition of $\wt A$ and the non-negativeness of $\phi_{\text{FS}}$ and,  
in the second inequality we used \eqref{eq:H1} and \eqref{eq:resto1}.
We conclude this step by defining $M\in \N \cup \{0\}$ as the cardinality of $\N \setminus \wt I$, and we notice by construction that 
$(\Gamma^\text{A}_{\mathrm{ FS}} (\wt A, \wt S)\setminus \Int(\ov{ \wt S}) ) \cap R$ has at most $M$-connected components. 

Finally, we observe that Assertion (i) is a direct consequence of the construction in Steps 1 and 2, while Assertion (ii) follows from the definition of $\wt \Lambda$ in Step 2 (which is not modified in Step 3 since $\wt \Lambda\cap (\Gamma^{\text{A}}_{\text{FS}}(A^{2}, S^2)\setminus \Int(\ov{S^2}))=\emptyset$). The proof of  
this lemma is concluded 
by taking $\widetilde{\eta} := \frac{\eta}{{ 92}c_2 }$ in \eqref{eq:finitenesfinal} with  $\eta \in (0, \min\{1, { 92}c_2 \})$.
\end{proof}

We formalize below the notions   of \emph{film islands}, \emph{composite voids}, and \emph{substrate grains}  for any admissible pair $(A,S_{h,K}) \in \B$. %, and of triple junctions at their  borders. 

  \begin{definition} \normalfont
      \label{def:notationislands2}
Let $R \subset \Om$ be an open rectangle and let $(A,S ) %h,K) 
    \in \B$.  We refer to:
      \begin{itemize}

    \item any closed component $V{ \subset \overline{R}}$ of $\Om \setminus \Int(A)$ such that 
          $\partial V \cap (\Gamma_\mathrm{ S}(A, S %h,K
          ) \cup \Gamma^{\text{D}}_\mathrm{ FS}(A, S %h,K
          ) )$ is not empty and it consists in one and only one connected component as an \emph{extended}  (as we also include  ``connected delamination regions'') \emph{composite void} of the configuration $(A,S %h,K
          )$ (or sometime for simplicity of the film region or of $A$).
 \item any open  connected  component $P { \subset R}$ of ${\rm Int}(A\setminus \ov{S}) %S_{h}
 $ such that $\partial P \cap  \Gamma^{\text{A}}_\mathrm{ FS}(A, S )$  is not empty and it consists in one and only one connected component  as a \emph{film island} of the configuration $(A,S %h,K
 )$ (or sometime for simplicity of the film region or of $A$), and to a film island $P={\rm Int}(A\setminus \ov{S})\cap R$ of $(A,S)$ as the \emph{full island} of $A$. 

           \item any open connected  component $G {\subset R}$ of ${\rm Int}(S) %S_{h}
 $ such that $\partial G \cap  \Gamma^{\text{A}}_\mathrm{ FS}(A, S )$  is not empty and it consists in one and only one connected component  as a \emph{substrate grain} of the configuration $(A,S %h,K
 )$ (or sometime for simplicity of the substrate region or of $S$), and to a substrate grain $G={\rm Int}(S)\cap R$ of $(A,S)$ as the \emph{full grain} of $S$. 

      \end{itemize}
  \end{definition}

	The following results can be seen as analogous of \cite[Lemmas 4.4 and 4.5]{KP} though in our  more involved setting of three free interfaces (see Table \ref{tab:my_label}), where we have to distinguish among the blow-ups at:
 \begin{itemize}
     \item[-]  the substrate free  boundary (Lemma \ref{lema:creationOneBdr}),
     \item[-]  the film-substrate incoherent (delaminated) interface (Lemma \ref{lema:creationdelamin}), 
      \item[-]  the substrate cracks in the film-substrate incoherent interface (Lemma \ref{lema:crackssubstrate}),
     \item[-] the  filaments of both the substrate and the film %  the film cracks that are also substrate filaments 
     (Lemma \ref{lema:filamentssubstrate}), 
\item[-]  the substrate filaments on the film free boundary (Lemma  \ref{lem:delaminationofS}),  
\item[-]  the delaminated substrate filaments in the film (Lemma \ref{lema:creationcrackdelamination}).
 \end{itemize}   
 The strategy employed in these proofs  is based on reducing to the situation of a finite  number of connected components for the film-substrate coherent interfaces by Lemma \ref{lem:finiteness} and then on designing induction arguments (with respect to the index of such components) in which  we ``shrink" islands,  ``fill"  voids,  and modify ``grains'' in new voids (see Figures \ref{figure:oneisland}, \ref{figure:replacementofvoids}, and \ref{figure:replacementofgrains}, respectively) by means of the \emph{minimality of segments} (see \cite[Remark 20.3]{M}). 
 
 We begin by addressing the setting of the substrate free boundary.

	\begin{lemma}
		\label{lema:creationOneBdr} 
Let $R \subset \R^2$ be an open rectangle with a side parallel to $\textbf{e}_\textbf{1}$ and let $T\subset \R^2$ be a line such that $T \cap R \neq \emptyset$. Let $\{\rho_k\}_{k\in\N} \subset [0,1] $ be such that $\rho_k \searrow 0$ and $R\subset \sigma_{\rho_{1}}(\Omega)$.  If $\{(A_k, { S_{ h_k,K_k}})\} \subset \B_\textbf{m}(\sigma_{\rho_{1}}(\Omega))$ is a sequence such that 
$\partial {S_{h_k,K_k}}\cap { \overline{R}} \cngK { T \cap  \overline{R}}$ in $\R^2$ and $(\overline{A_k} \setminus \Int \left(S_{h_k, K_k}\right))\cap{ \overline{R}} \cngK { T \cap  \overline{R}}$ in $\R^2$  as $k \to \infty$,  then for every $\delta \in (0,1)$ small enough, there exists $k_\delta\in\N$ such that 
	\begin{equation}
	\label{eq:creationexposedpartsubstrate}
		\mathcal{S}_L(A_k,S_{ h_k,K_k}, { {R}}) \ge \int_{T \cap { \overline{R}}}{\phi({ \nu_T}) \, d\Hs^1} - \delta
	\end{equation}
   for any $k \ge k_\delta$. 
	\end{lemma}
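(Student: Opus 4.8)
The plan is to run the Fonseca--M\"uller blow-up scheme exactly as in \cite[Lemma 4.4]{KP}, reducing \eqref{eq:creationexposedpartsubstrate} to the anisotropic minimality of segments \cite[Remark 20.3]{M} by an induction over the connected components of the coherent interface provided by Lemma \ref{lem:finiteness}.

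\textbf{Normalization and localization near $T$.} Since the localized surface tensions do not depend on $x$ and $\phi$ is a Finsler norm, the right-hand side of \eqref{eq:creationexposedpartsubstrate} equals $\phi(\nu_T)\,\Hs^1(T\cap\overline R)<\infty$. Normalizing, we take $T=\{x_2=0\}$, $\nu_T=\mathbf{e}_2$, and $R=(a,b)\times_{\R^2}(c,d)$ with $c<0<d$ (the general position of $T$ is analogous, using $\nu_T$ in place of $\mathbf{e}_2$); write $S_k:=S_{h_k,K_k}$. From $S_k\subset\overline{A_k}$ and $\partial A_k\cap\Int(S_k)=\emptyset$ one gets the inclusion $\partial A_k\subset(\overline{A_k}\setminus\Int(S_k))\cup\partial S_k$, and hence, by the two Kuratowski convergences and Remark \ref{remark:1}, for every $\epsilon\in(0,\min\{-c,d\})$ there is $k_\epsilon$ such that $(\partial A_k\cup\partial S_k)\cap R$ lies in the strip $R_\epsilon:=(a,b)\times(-\epsilon,\epsilon)$ whenever $k\ge k_\epsilon$. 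Thus each of the two connected components $R^\pm_\epsilon$ of $R\setminus\overline{R_\epsilon}$ is entirely contained either in $\Int(S_k)$ or in the vapor $\R^2\setminus\overline{A_k}$; since $S_k$ is a subgraph, ``$R^-_\epsilon$ in the vapor and $R^+_\epsilon\subset\Int(S_k)$'' cannot occur, so the principal case is ``substrate below $T$, vapor above'' ($R^-_\epsilon\subset\Int(S_k)$, $R^+_\epsilon\subset\R^2\setminus\overline{A_k}$), the remaining degenerate configurations being either excluded by the graph constraint and the bound on the number of boundary components, or disposed of by a direct comparison.

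\textbf{Finiteness reduction and inductive surgery.} In the principal case, apply Lemma \ref{lem:finiteness} in $R$ to replace $(A_k,S_k)$ by $(\widetilde A_k,\widetilde S_k)\in\wt\B$ with $(\Gamma^{\mathrm A}_{\mathrm{FS}}(\widetilde A_k,\widetilde S_k)\setminus\Int(\overline{\widetilde S_k}))\cap R$ having at most $M$ connected components and $\mathcal S_L(A_k,S_k,R)\ge\mathcal S_L(\widetilde A_k,\widetilde S_k,R)-\eta$; since those modifications are supported on sets of arbitrarily small diameter, the Kuratowski limits used above persist, so it is enough to bound $\mathcal S_L(\widetilde A_k,\widetilde S_k,R)$ from below by $\phi(\nu_T)\,\Hs^1(T\cap\overline R)-\eta'$. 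One then inducts on $M$. When $M=0$ there is no coherent interface in $R$, so near $T$ the substrate bulk is separated from the vapor either across a pure substrate/vapor interface (a portion of $\partial^*S_k\cap\partial^*A_k$, weighted by $\phi$) or across a film layer bounded by a substrate/film arc of $\partial S_k$ (weighted $\phi_{\mathrm{FS}}$) and a film/vapor arc of $\partial A_k$ (weighted $\phi_{\mathrm F}$), whose contributions sum to at least $\phi_{\mathrm{FS}}+\phi_{\mathrm F}\ge\phi$; in either situation $\partial A_k\cup\partial S_k$ contains a connected piece joining the two vertical sides of $R$ inside $R_\epsilon$, and \cite[Remark 20.3]{M}, the continuity of $\phi$, and (H1) give $\mathcal S_L(A_k,S_k,R)\ge\phi(\nu_T)\,\Hs^1(T\cap\overline R)-\omega(\epsilon)$ with $\omega(\epsilon)\to0$. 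For the inductive step one picks a connected component of the coherent interface that encloses no further such component; it bounds a film island, a composite void, or a substrate grain (Definition \ref{def:notationislands2}) inside $R_\epsilon$, and one ``shrinks'' the island, ``fills'' the void, or ``modifies'' the grain into a new void as in Figures \ref{figure:oneisland}--\ref{figure:replacementofgrains}, replacing the affected arcs by straight segments. By \cite[Remark 20.3]{M} this surgery yields an admissible pair of $\wt\B$, does not increase $\mathcal S_L(\cdot,\cdot,R)$ by more than a controlled quantity, and strictly decreases the number of coherent components, so the inductive hypothesis applies.

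\textbf{Conclusion and main difficulty.} Sending $\eta\to0$ and then $\epsilon\to0$, and absorbing $\omega(\epsilon)$ and the accumulated errors into $\delta$, gives \eqref{eq:creationexposedpartsubstrate} for every $k$ past a suitable $k_\delta$. The main obstacle is the inductive surgery: one must check that each elementary operation (shrinking an island, filling a void, turning a grain into a new void) is admissible in $\wt\B$, strictly lowers the count of coherent components, and costs no more than it gains, all while tracking the anisotropy through the minimality of segments and the precise densities in \eqref{eq:definitionlocenergyS}. A secondary point requiring care is verifying that the degenerate bulk configurations isolated in the localization step either cannot arise under the hypotheses or already contribute at least $\phi(\nu_T)$ per unit length of $T\cap\overline R$.
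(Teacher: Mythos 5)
Your overall strategy is the paper's: localize to a thin strip around $T$, invoke Lemma \ref{lem:finiteness} to replace the pair by one whose coherent interface (outside $\Int(S_h)$) has finitely many connected components, then induct on that number using shrinking/filling surgeries and the anisotropic minimality of segments. However, several details diverge from or fall short of the paper's actual argument, and a couple of them are genuine gaps.

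First, the surgeries you allow are not the right menu for this lemma. The paper's Step 2 proves, by a contradiction argument exploiting $M_k<\infty$, that at least a film island or an \emph{extended composite void} exists; the basis of the induction is $M_k=1$ (island) and $M_k=2$ (extended void), and the inductive step always reduces $M_k$ by either shrinking an island (Step 3.1) or filling an extended void (Step 3.2). \emph{Substrate grains} and the ``modify a grain into a new void'' surgery do not appear in this lemma at all; they are used only in Lemma \ref{lema:creationdelamin}, where the relevant densities are different ($\phi_{\mathrm F}+\phi$ on the delamination interface) and the inequality (H2) is needed in a different direction. Importing the grain operation here would cost $\phi_{\mathrm{FS}}$ on the interface, which cannot be compared to $\phi$ as you need.

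Second, your base case $M=0$ is too loose. You assert that $\partial A_k\cup\partial S_k$ ``contains a connected piece joining the two vertical sides of $R$'' and that the relevant densities ``sum to at least $\phi_{\mathrm{FS}}+\phi_{\mathrm F}\ge\phi$,'' but neither claim follows from the Kuratowski hypotheses alone. The paper extracts the connecting curve from the \emph{graph structure} of the substrate: since $\partial\Int(\overline{S_{h_k,K_k}})=\partial S_{h_k^+}$ is the boundary of a subgraph, it is parametrized by a single curve $r_k$ (via \cite[Lemma 3.12]{F}) whose restriction to $R$ joins the two vertical sides, and the surviving integrand on that curve is exactly $\phi$ on $\partial^*S_k\cap\partial^*A_k$ and $2\phi\ge\phi$ on the degenerate part $\partial S_k\cap\partial A_k\cap A_k^{(0)}$. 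Your ``either a pure substrate/vapor interface or a film layer'' dichotomy (and the inequality $\phi_{\mathrm F}+\phi_{\mathrm{FS}}\ge\phi$, which is an \emph{equality} in the wetting regime by the definition of $\phi$ but a strict inequality otherwise) does not enter here, because when $M=0$ there is no coherent interface, so the ``film layer'' case you invoke simply cannot occur.

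Third, you sweep a real technicality under the rug with ``since those modifications are supported on sets of arbitrarily small diameter, the Kuratowski limits used above persist.'' The paper does not need (and does not claim) that $(\widetilde A_k,\widetilde S_k)$ retains the Kuratowski convergence to $T$; the estimate from Lemma \ref{lem:finiteness} is applied for fixed $k$, the induction is run at that $k$, and the error $\eta$ is absorbed at the very end. Relatedly, to close the induction cleanly the paper introduces the auxiliary energy $\mathcal{S}^1_L$ in \eqref{eq:defS^1k}, which adds the boundary integrals over $T^k_1,T^k_2$ so that the surgery estimates \eqref{eq:exposedpart10} and \eqref{eq:exposedpart13} produce no new error per step; your proposal implicitly accumulates an $O(1)$ error per surgery, which one then cannot control uniformly in $M$. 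This bookkeeping is not cosmetic: without the auxiliary energy (or some equivalent device), the inductive bound would degrade with $M$ and the argument would fail for $M$ large relative to $1/\delta$.
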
	
\begin{proof}
We prove \eqref{eq:creationexposedpartsubstrate} in three steps. In the first step, we prove \eqref{eq:creationexposedpartsubstrate} for every $k \in \N$ such that $\Gamma^\text{A}_k := (\Gamma^\text{A}_{\text{FS}}(A_k,S_{h_k,K_k}) \setminus \Int(S_{h_k,K_k})) { \cap R}$ is $\Hs^1$-negligible by following the program of \cite[Lemma 4.4]{KP}.  In the second step, we consider those $k \in \N$ such that  $\Gamma^\text{A}_k$ has $\Hs^1$-positive measure and observe that in view of Lemma \ref{lem:finiteness} we can pass, up to a small error in the energy, to a triple $(\wt A_k , \wt S_k ) \in \B_\textbf{m}$ such that $\wt \Gamma^\text{A}_k:= (\Gamma^\text{A}_{\text{FS}}(\wt A_k , \wt S_k 
  )  \setminus \Int(\ov{\wt S_k
  })) { \cap R}$ has $M_k$ connected components, and which then is shown to always admit either 
an island or a void. Finally, in the third step, we apply the anisotropic minimality of segments to prove \eqref{eq:creationexposedpartsubstrate} by means of an induction argument based on shrinking the islands and/or filling the voids of the triple $(\wt A_k, \wt S_k )$.
  
If $T$ is a vertical segment we define $c_\theta :=1$, otherwise we define $c_{\theta}:=  (1/\sin \theta) + (1/\cos \theta)$, where $\theta < \pi/2$ is the smallest angle formed by the direction of $T$ with $\textbf{e}_\textbf{1}$. Since $ \overline{A_k} \setminus \Int ({S_{h_k,K_k}}) { \cap \overline R}  \cngK { T \cap \overline{R}}$, for every $\delta' \in (0,1)$ there exits $k_{\delta'} \in \N$ such that 
		\begin{equation}
			\label{eq:exposedpart0}
			\emptyset \neq \overline{A_k} \setminus \Int (S_{h_k,K_k}) { \cap \overline R} \subset { T^{\delta'}} 
		\end{equation}
		for any $k \ge k_{\delta'}$, where $T^{\delta'}:= \{x \in R: \text{dist}(x, T) < \delta'/(2 c_\theta) \}$ is the tubular neighborhood of $T$ in $R$. 
		
{\itshape Step 1.} Assume that $\Hs^1 \left(\Gamma^{\text{A}}_k \right)=0$ for a fix $k \ge k_{\delta'}$. Since 
\begin{equation}
	\label{eq:parametrizationk}
	\partial \Int\left(\overline{S_{h_k,K_k}}\right)=\partial \Int(S_{h_k})=\partial S_{h_k^+},
\end{equation}
by \cite[Lemma 3.12]{F} there exists a parametrization $r_k:[0,1]\to\mathbb{R}^2$ of ${\partial \mathrm{ \Int}\left(\overline{S_{h_k,K_k}}\right)}$,  whose support we denote by $\gamma_k$. % joins the points $(-l,h_k^+(-l))$ and $(l,h_k^-(l))$. 
Notice that by \eqref{eq:exposedpart0}, there exists 
$p_0 := r_k(t_0)$ and $p_1 := r_k(t_1)$, where $t_0 := \inf\{t\in [0,1]: r_k(t) \in \partial R  \}$ and $t_1 := \sup\{t\in [0,1]: r_k(t) \in \partial R  \}$, and also by \eqref{eq:parametrizationk} $t_0 < t_1$.  Let $\wt T_i \subset \partial R \cap \overline{T^{\delta'}}$  for $i = 0,1$ be the closed and connected set with minimal length connecting  $p_i$ with $T \cap \overline{R}$. Therefore, by trigonometric identities we obtain that $\Hs^1(\wt T_{i}) \le \frac{\delta'}{2c_\theta} c_\theta = \frac{\delta'}2$. 
We define $\Lambda_k:= (\partial R \cap \overline{T^{\delta'}}) \cap (\sigma_{\rho_k}(\Om) \setminus \Int(S_{h_k, K_k}) )$ and we observe that
		\begin{equation}
			\label{eq:exposedpart3}
			\begin{split}
				&\mathcal{S}_L (A_k, S_{h_k,K_k}, { R}) { + \int_{\Lambda_k} \phi (\nu_{\Lambda_k}) \, d\Hs^1 }\\
    &\quad \ge  \int_{\gamma_{k}{ \cap R}  \cap \partial^* {S_{h_k,K_k}} \cap  \partial^* A_k }{\phi (\nu_{A_k} )\, d\Hs^1} + \int_{\gamma_{k} { \cap R} \cap \partial^* S_{h_k, K_k} \cap \partial A_k \cap \Ako}{ \phi ( \nu_{A_k}  )\, d\Hs^1}  \\
    & \qquad \qquad { + \int_{\Lambda_k} \phi (\nu_{\Lambda_k}) \, d\Hs^1 } \\
				&\quad  = \int_{\Gamma_k}{ \phi(\nu_{\Gamma_k})\, d\Hs^1} - \sum_{i=0}^{1}{\int_{\wt T_i}{\phi( {\mathbf e}_{\mathbf 1} ) \, d\Hs^1 }},
			\end{split}
		\end{equation}
		where $\Gamma_k := \wt T_{0} \cup { (\gamma_{k} \cap \overline R) \cup \Lambda_k} \cup \wt T_1$. By the anisotropic minimality of segments (see \cite[Remark 20.3]{M}), it yields that
		\begin{equation}
			\label{eq:exposedpart4}
			\int_{\Gamma_k}{\phi(\nu_{\Gamma_k})\, d\Hs^1} \ge \int_{{ T \cap \overline{R}}}{\phi({ \nu_T}) \, d\Hs^1}, 
		\end{equation}
	and so, thanks to the facts that $\Hs^1(\wt T_{0} \cup \wt T_1)\le \delta'$ and $\Hs^1( \Lambda_k ) \le \delta'$, 
 and by \eqref{eq:H1}, \eqref{eq:exposedpart0}-\eqref{eq:exposedpart4}, we deduce that
	\begin{equation}
		\label{eq:exposedpart5}
		\mathcal{S}_L(A_k, S_{h_k,K_k},R)  \ge  \int_{{ T \cap \overline{R}}}{\phi({ \nu_T}) \, d\Hs^1} - { 2} c_2 {\delta'}.
	\end{equation}
	
	{\itshape Step 2.} Assume that $\Hs^1( \Gamma^{\text{A}}_k)> 0$ for a fixed $k \ge k_{\delta'}$. By applying Lemma \ref{lem:finiteness}, with $\Om = \B_{\textbf{m}} ( \sigma_{\rho_k} (\Om))$, there exist 
	$M_k:= M_k(\delta', A_k, h_k,K_k)\in \N$ and an admissible triple $({ \widetilde{A}^1_k}, { \wt S_k} 
	) \in \B ( \sigma_{\rho_k} (\Om))$ such that $\wt \Gamma^\text{A}_k$ 
 has $M_k$ connected components  and
	\begin{equation}
		\label{eq:exposedpart6bis}
		\mathcal{S}_L(A_k, S_{h_k,K_k},{R})  \ge \mathcal{S}_L({ \widetilde{A}^1_k}, { \wt S_k} 		, {R}) -  c_2 \delta'.
	\end{equation} 
We consider $\wt A_k := {\widetilde{A}^1_k} \cup ( (\partial \wt A_k^1\setminus \partial \wt S_k) \cap  (\wt A^1_k)^{(1)})$. By \eqref{eq:exposedpart6bis} and by the non-negativeness of $\phi_\mathrm{F}$ we deduce that
\begin{equation}
		\label{eq:exposedpart6}
		\mathcal{S}_L(A_k, S_{h_k,K_k},{R})  \ge \mathcal{S}_L({ \widetilde{A}_k}, { \wt S_k} 		, {R}) -  c_2 \delta'.
	\end{equation}

	%Let $J$ be such that the cardinality of $J$ is $M_k$ and 
	We denote by $\{\Gamma^j\}_{j=1}^{M_k}$ the family of open connected components of $\wt \Gamma^\text{A}_k$.
	In this step we prove  that there exists at least an island or an extended void (see definition in \eqref{def:notationislands2}) in $\wt{A}_k$. More precisely, by arguing by contradiction we prove that one of the following two cases always applies: $M_k\geq1$ and there exists at least an island in $\wt{A}_k$, or $M_k\geq2$ and there exists at least an extended void in $\wt{A}_k$.

 To this end, assume that the admissible pair $(\widetilde{A}_k, {\wt S_k}  % \wt h_k, \wt K_k
 )$ does not present any island or extended void. We begin by observing that, since $\Hs^1(\wt  \Gamma^{\text{A}}_k)> 0$, there exist a  $j_0 \in \{1, \ldots, M_k\}$ and an open connected component $F_1$ of $\Int( \overline{ \widetilde{A}_k} \setminus \ov {\wt S_k})\cap R$ %($F_1$ contains internal cracks of the film and filaments of the substrate in the film) 
    such that $\Gamma^{j_0} \subset \partial F_1$. By the contradiction hypothesis, since 
    $F_1$ cannot be an island of $\wt A_k$,  there exists $j_1 \in  \{1, \ldots, M_k\} \setminus \{j_0\}$ such that $\Gamma^{j_1} \subset \partial F_1$ and $\Gamma^{j_0}\cap\Gamma^{j_1}=\emptyset$. 
 Furthermore, as by the contradiction hypothesis  there cannot be also an extended void between $\Gamma^{j_0}$ and  $\Gamma^{j_1}$, and by  using the fact that $\partial \ov {\wt S_k}$ contains $\Gamma^{j_0}$ and  $\Gamma^{j_1}$,  and  consists of only 
 one component, we conclude that there exist an open connected component  $F_2$ of $\Int( \overline{ \widetilde{A}_k} \setminus \ov {\wt S_k})\cap R$, which could coincide or not with $F_1$, and at least an extra $j_2 \in \{ 1, \ldots , M_k \} \setminus \{j_0, j_1\}$   such that $\Gamma^{j_2} \subset \partial F_2$.

 We now claim that there exist an open connected component  $F_3$ of $\Int( \overline{ \widetilde{A}_k} \setminus \ov {\wt S_k})\cap R$,  which could or not coincide with $F_2$, and at least an extra $j_3 \in \{1, \ldots, M \} \setminus \{j_0, j_1, j_2 \}$ such that $\Gamma^{j_3} \subset \partial F_3$. Indeed, if $F_1 \neq F_2$, the claim is a direct consequence of applying to $F_2$ the same argument applied to $F_1$ to find $\Gamma^{j_1}$, and we define $F_3=F_2$, 
while, if $F_2=F_1$, the claim is a direct consequence of applying to $F_2$ with the pair of components consisting, e.g., of $\Gamma^{j_0}$ and $\Gamma^{j_2}$, the same argument applied to $F_1$ with the pair of components  $(\Gamma^{j_0},\Gamma^{j_1})$ to find $\Gamma^{j_2}$, with $F_3$ being possibly, but not necessary, equal to $F_2$. 

Moreover, the same reasoning applied on $F_2$, can be implement also on $F_3$, yielding  an open connected component  $F_4$ of $\Int( \overline{ \widetilde{A}_k} \setminus \ov {\wt S_k})\cap R$ and at least an extra $j_4 \notin\{j_n\}_{n=0,\dots,3}$ such that $\Gamma^{j_4} \subset \partial F_4$. As such,  by keeping on iterating 
this reasoning we reach a contradiction with the fact that the family of connected components of $\wt \Gamma^\text{A}_k$ consists of at most $M_k<\infty$ elements.

	{\itshape Step 3.} In this step we prove \eqref{eq:creationexposedpartsubstrate} for those $k \ge k_{\delta'}$ such that  $\Hs^1( \Gamma^{\text{A}}_k)> 0$, which together with Step 1 concludes the proof of \eqref{eq:creationexposedpartsubstrate}. More precisely, we prove that 
 \begin{equation}			\label{eq:claimstep3old}
	\mathcal{S}_L(\wt A_k, \wt S_k, R 
	) \ge \int_{T \cap \overline{R}}{\phi(\nu_T) \, d\Hs^1} - 6c_2\delta',
\end{equation}
which, in view of \eqref{eq:exposedpart6},  yields Assertion (i)  by taking $\delta' := \frac{\delta}{7c_2}$ and $k_\delta:=k_{\delta'}$ for any $\delta \in (0,\min{\{ 7c_2, 1\}})$.

In order to prove \eqref{eq:claimstep3old} 
we consider an auxiliary energy $\mathcal{S}^1_L$ in $\B$  by defining
    \begin{equation}
        \label{eq:defS^1k}
        \mathcal{S}^1_L(\wt A_k, \wt S_k , R
	):= \mathcal{S}_L(\wt A_k, \wt S_k, R 
	) {  + \sum_{i = 1}^2  \int_{T^k_i} \phi_\mathrm{ F}(\nu_{\partial R}) + \phi(\nu_{\partial R}) \, d\Hs^1 } 
    \end{equation}
for every $(\wt A_k,\wt S_k 
)\in\B_\textbf{m}$, where $T^k_1$ and $T^k_2$ are the closed connected components of $\partial R \cap \overline {T^{\delta'}}$,
we recall that $\wt\Gamma^\text{A}_k$ has $M_k$ connected components, and we prove that
 \begin{equation}			
 \label{eq:claimstep3}
	\mathcal{S}^1_L(\wt A_k, \wt S_k , R
	) \ge \int_{T \cap \overline{R}}{\phi(\nu_T) \, d\Hs^1} - 2c_2\delta', 
\end{equation}
by proceeding by induction on the number $M_k\in\mathbb{N}$  % $M\in\mathbb{N}$ 
of connected components of $\wt\Gamma^\text{A}_k$  in three steps. Notice that \eqref{eq:claimstep3old} directly follows from \eqref{eq:claimstep3}, since  $$
 \Hs^1(T^k_i) \le \delta'
 $$
 by \eqref{eq:exposedpart0} and the definition of $T^k_i$ and hence,
  \begin{equation}
      \label{eq:differenceSandS1}
      \mathcal{S}_L(\wt A_k, \wt S_k , R
	) \ge \mathcal{S}^1_L(\wt A_k, \wt S_k , R
	) - 4 c_2 \delta',
  \end{equation} 
  by \eqref{eq:H1}. 
 
In Substeps 3.1 and 3.2 we prove the basis of the induction by proving it in both the two cases provided by Step 2, i.e., if $M_k=1$ and $\wt{A}_k$ presents an island, and if $M_k=2$ and $\wt{A}_k$ presents   an extended void, respectively. Finally in Substep 3.3 we prove the induction and obtain \eqref{eq:claimstep3}.

{\itshape Step 3.1}. We consider the basis of the induction in the  case in which $M_k=1$ and there is an island  $P_1{ \subset R}$ of $\widetilde A_k$ such that $\Gamma^1 \subset \partial P_1$. 
 Let $p_1, p_2 \in \partial P_1$ be two different \emph{triple junctions} of $P_1$ (see Definition \ref{def:notationislands}) such that $p_1$ and $p_2$ belong to the relative boundary of $\Gamma^1$ in $\partial { \ov{\wt S_k}}
 $ and let 
$L_1$ by the closed segment connecting $p_1$ with $p_2$. 
It follows that
 $$
    {\wh \Gamma } := (\partial  { \ov{\wt S_k}}\setminus \Gamma^1) \cup L_1  $$
 is connected and closed. We consider by $\{P^1_n\}_{n\in \N}$ the family of open connected components enclosed by $L_1$ and $\Gamma^1$. We are now in the position to characterize the modification $\wh S_k$ of $\wt S_k$ by 
 $$
 	\wh S_k := \left(\wt S_k \setminus \bigcup_{n \in \N, P^1_n \subset \ov{\wt S_k}} \ov{P^1_n} \right)   \cup  \bigcup_{n \in \N, P^1_n \subset R \setminus { \Int(\wt S_k)}} \ov{ P^1_n} .%\left( \wt S_k \setminus (P_1 \cap H_{L_1}^+) \right) \cup (P_1 \cap H_{L_1}^-),
 $$
 Furthermore, we consider by $\{P^2_n\}_{n\in \N}$ the family of open connected components enclosed by $L_1$ and $\partial P_1 \setminus \Gamma^1  $ such that for every $n \in \N$, $\partial P^2_n \cap \Gamma^1$ and  $\partial P^2_n \setminus \Gamma^1$ have one non-empty connected component. We define $$
 \wh A_k := \left( \wt A_k \setminus \bigcup_{n \in \N, P^2_n \subset \ov{\wt A_k}} \ov{P^2_n}  \right)  \cup \bigcup_{n \in \N, P^2_n \subset R \setminus \Int(\wt A_k)} \ov{P^2_n}
 $$
  (see Figure \ref{figure:oneisland}). By applying the anisotropic minimality of segments (see \cite[Remark 20.3]{M}), it yields that
\begin{equation}
	\begin{split}
	\label{eq:exposedpart10}
	& \int_{(\partial P_1 \setminus \Gamma^1) \cap \partial^* {\wt A_k} \setminus { \partial \wt S_k} }{\phi_{\mathrm{ F}}( \nu_{\wt A_k}  )\, d\Hs^1} +  \int_{(\partial P_1 \setminus \Gamma^1) \cap\left(\partial \wt A_k \setminus { \partial \wt S_k}\right)\cap \left({\wt A_k}^{(0)} \cup {\wt A_k}^{(1)}\right)}{2 \phi_{\mathrm{ F}}(\nu_{\wt A_k}  )\, d\Hs^1}  \\
	& \quad + \int_{\Gamma^1 \cap \left(\partial ^*{  \wt S_k} \setminus \partial {\wt A_k}\right)\cap {\wt A_k}^{(1)}}{ \phi_{\text{FS}} ( \nu_{{ \wt S_k}}  )\, d\Hs^1}  + \int_{\Gamma^1 \cap \left({ \partial \wt S_k} \setminus \partial {\wt A_k}\right) \cap  {  \wt S_k^{(0)}} \cap {\wt A_k}^{(1)} }{ 2 \phi_{\text{FS}} ( \nu_{{\wt S_k}}  ) \, d\Hs^1} \\
	& \quad + \int_{(\partial P_1 \setminus \Gamma^1) \cap  { \partial \wt S_k}\cap \partial ^* \wt A_k \cap { \wt S_k^{(0)}} }{ { \phi_{\mathrm{ F}} } (\nu_{\wt A_k} )  \, d\Hs^1   }   + \int_{(\partial P_1 \setminus \Gamma^1) \cap (T^k_1 \cup T^k_2) }{\phi_{\mathrm{ F}}( \nu_{\partial P_1} )\, d\Hs^1}\\ 
  & 
  \ge \int_{L_1}{\phi_{\mathrm{ F}}(\nu_{L_1}) + \phi_{\mathrm{ FS}}(\nu_{L_1}) \, \, d\Hs^1} \ge \int_{{L_1}}{ \phi(\nu_{L_1}) \, \, d\Hs^1},
	\end{split}
\end{equation}where in the last inequality we used the definition of $\phi$. 
We notice that  the last term in the left side of the previous inequality is needed to include in the analysis the situation in which $\overline{P_1}\cap\partial { R}\neq\emptyset$. 
From \eqref{eq:exposedpart10}, the inequality   \eqref{eq:claimstep3} directly follows as 
\begin{equation} 
	\label{eq:exposedpart11}
		 \mathcal{S}^1_L(\wt A_k, { \wt S_k} 
		 ,  R)  \ge \mathcal{S}^1_L( \wh A_k, {  \wh S_k}, 
		 {R}) \ge  \int_{T \cap \overline{R}}{\phi(\nu_T) \, d\Hs^1} - { 2} c_2 \delta',   
\end{equation}
where  in the last inequality we used the non-negativeness of $\phi_\mathrm{ F}$ and we proceeded by applying Step 1 to the configuration $\left(\wh A_k,\wh S_k\right)$ for  which by construction it holds that $(\Gamma^{\text{A}}_\mathrm{ FS}\left(\wh A_k,\wh S_k\right) \setminus \Int(\ov{\wh S_k}))\cap R$ is $\Hs^1$-negligible.

\begin{figure}[ht]
\centering
		\includegraphics{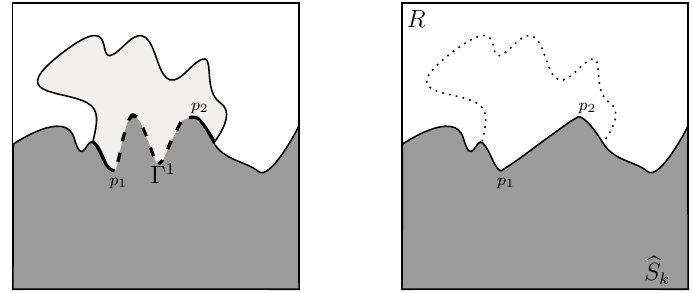}
		\caption{\small The two illustrations above represent, passing from the left to the right, the construction that consists in ``shrinking'' a film island, which is contained in Step 3.1 of the proof of  Lemma \ref{lema:creationOneBdr} for the basis of the induction in the case with $M=1$ and with $\wt A_k $  presenting an island $P_1$.} % and the  and connecting $\partial S_{h'_k}$ with ad segment in the triple junctions of $\Gamma_1$.}
		\label{figure:oneisland}
	\end{figure}

{\itshape Step 3.2}.  To conclude the basis of the induction, we consider the case with  $M=2$ and the presence of an extended void  $V_1{ \subset \overline{R}}$ of $\widetilde A_k$.  Let $p_1$ and $p_2$ be the two  triple junctions such that $p_1, p_2 \in \partial V_1$ and $p_i \in \Gamma^i$, for $i = 1,2$. By \cite[Lemma 3.12]{F}  there exists a curve with support  $\gamma^1 \subset \partial V_1 \cap \partial{ \ov{\wt S_k}}% S_{\wt h_k}
$ connecting $p_1$ with $p_2$. Furthermore, by \cite[Lemma 4.3]{KP}, since $\partial V_1$ is connected, $\Hs^1$-finite and $V_1$ is bounded,  there exists a curve with 
 support  $\gamma^2 \subset \overline{\partial V_1 \setminus (\gamma^1 \cap \partial \Int(\overline{V_1}) }$. Notice that $\gamma^1$ and $\gamma^2$ can intersect only in the delamination area, or more precisely  $\gamma^1 \cap \partial ^* { \ov{\wt S_k}}% S_{\wt h_k}
 \cap \partial ^*\wt A_k$ and $\gamma^2 \cap \partial ^*\wt A_k \setminus \partial ^*{ \ov{\wt S_k}}% S_{\wt h_k}
$ are disjoint up to a 
$\Hs^1$-negligible set.   We denote by $L_1$ the closed segment connecting $p_1$ with $p_2$, %and we write by $l_1: [0,1] \to 
%\mathbb R ^2$ the parametrization of $L_1$ such that $l_1 (0) = p_1$ and $l_1(1)= p_2$, furthermore, it follows that
notice that
$$
    {\wh \Gamma } := \overline{(\partial { \ov{\wt S_k}}% S_{\wt h_k}
  \setminus \gamma^1) }\cup L_1. % \cup (\partial S_{\wt h_k} \cap S_{\wt h_k}^{(0)}) \cup \{p_1, p_2\}
 $$
 is connected and closed. We consider by $\{V^1_n\}_{n\in \N}$ the family of open connected components enclosed by $L_1$ and $\partial V_1 \cap \partial \ov{\wt S_k}$ such that for every $n \in \N$, $\partial V^1_n \cap \partial \ov{\wt S_k}$ and  $\partial V^1_n \cap L_1$ have one non-empty connected component. %and $\partial P^1 \cap \partial P^1_n $ is non-empty. 
 We characterize the modification $\wh S_k$ of $\wt S_k$ as  
 $$
 	\wh S_k := \left(\wt S_k \setminus \bigcup_{n \in \N, V^1_n \subset \ov{\wt S_k}} { \ov{V^1_n}} \right)   \cup  \bigcup_{n \in \N, V^1_n \subset R \setminus \Int(\wt S_k)}V^1_n . % \left( \wt S_k \setminus (V_1 \cap H_{L_1}^+) \right) \cup (V_1 \cap H_{L_1}^-),
 $$
 %where $H_{L_1}^+$ and $H_{L_1}^-$ are the positive and negative half spaces with respect to $L_1$, respectively,
  Furthermore, we define $$
 \wh A_k := \wh A_k % \left(\wt A_k \setminus (\Int(\wt S_k) \setminus \ov{\wh S_k}) \right)
  \cup (\Int{(\wh S_k)} \setminus \ov{\wt S_k} ). % \left( \wt A_k \setminus \bigcup_{n \in \N, P^2_n \subset \ov{\wt A_k}}P^2_n  \right)  \cup \bigcup_{n \in \N, P^2_n \subset R \setminus \Int(\wt A_k)}P^1_n
 $$
We notice that by construction  
$(\wh A_k, { \wt S_k} % \wh  h_k, \emptyset
) \in \B%_\textbf{m}
(\Om_k)$ and $\Gamma^{\text{A}}_\mathrm{ FS}\left( \wh A_k,{ \wt S_k}  \right) \setminus \Int(\ov{\wh S_k}) = \Gamma_1 \cup L_1 \cup \Gamma_2$ is a connected set (see Figure \eqref{figure:replacementofvoids}). Moreover, by the anisotropic minimality of segments (see \cite[Remark 20.3]{M}), it follows that
\begin{equation}
	\label{eq:exposedpart13}
	\begin{split}
		& \int_{\gamma^2 \cap \partial^* {\wt A_k} \setminus {\partial { \wt S_k} } }{\phi_{\mathrm{ F}}( \nu_{\wt A_k} )\, d\Hs^1}   + \int_{\gamma^1 \cap  \partial ^*{ \wt S_k}  \cap \partial^* {\wt A_k} }{\phi ( \nu_{\wt A_k} )\, d\Hs^1}  + \int_{\gamma^2 \cap \partial ^* {\wt A_k} \cap { \wt S_k^{(0)}}}{  \phi_{\mathrm{ F}} ( \nu_{\wt A_k} ) \, d\Hs^1}  \\ 
        & \quad + \int_{(\gamma^1 \cup \gamma^2 ) \cap  \partial ^*{{ \wt S_k} } \cap \partial \wt A_k  \cap {\wt A_k}^{(1)}}{ (\phi_{\mathrm{ F}} + \phi )( \nu_{\wt A_k} ) \, d\Hs^1}  %+ 2\int_{(\gamma^1 \cup \gamma^2 ) \cap (\partial {{ \wt S_k} } \cap \partial \wt A_k) \cap { \wt S_k^{(0)}} \cap \wt A_k^{(1)}  }{ ({\phi_{\mathrm{ F}} } + {\phi } % + {\phi_{\mathrm{ FS}} }
			%)( \nu_{{ \wt S_k} }  ) \, d\Hs^1} 
			\\
		& \ge \int_{L_1}{\phi_{\mathrm{ F}}(\nu_{L_1}) + \phi(\nu_{L_1})  \,  d\Hs^1} \ge  \int_{L_1}{ \phi_{\mathrm{ FS}}(\nu_{L_1})  \,  d\Hs^1} ,
	\end{split}
\end{equation}
where in the last inequality we used \eqref{eq:H2}. We now obtain  \eqref{eq:claimstep3}   by observing that 
\begin{equation} 
	\label{eq:exposedpart2}
   \mathcal{S}^1_L(\wt A_k, { \wt S_k} %\wt h_k,\wt K_k
		 , { R})  \ge \mathcal{S}^1_L( \wh A_k, {  \wh S_k}, % \wh h_k,\emptyset,
		 { R}) \ge \int_{T \cap \overline{R}}{\phi(\nu_T) \, d\Hs^1} - { 2}c_2\delta',
\end{equation}
where in the first inequality we used 
\eqref{eq:exposedpart13} and in the second inequality we proceed by applying Step 3.1 to the configuration $\left(\wh A_k, {  \wh S_k}\right)$,  which by construction presents a island and is such that $(\Gamma^{\text{A}}_\mathrm{ FS}\left(\wh A_k, {  \wh S_k}\right) \setminus \Int(\ov{\wh S_k})){ \cap R}$ consists of one and only component.

\begin{figure}[ht]
	\centering
 \includegraphics{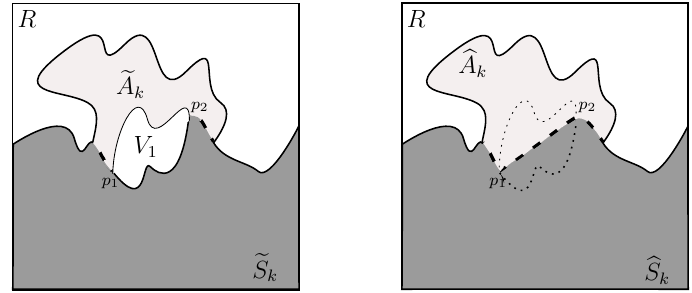}
	\caption{\small 
 The two illustrations above represent, passing from the left to the right, the construction that consists in ``filling'' a void, which is contained in Step 3.2 of the proof of the Lemma \ref{lema:creationOneBdr} for the basis of the induction in the case with $M=2$ and with $\wt A_k $  presenting a void $V_1$.}	\label{figure:replacementofvoids}
\end{figure}

%The proof of this step is completed taking $\delta' = \frac{\delta}{4c_2}$ where $\delta \in (0,\min{\{ 4c_2, 1\}})$.

{\itshape Step 3.3}.  
Now we make the inductive hypothesis that  \eqref{eq:claimstep3} holds true  if 
$\wt \Gamma^\text{A}_k$ has $M_k=j-1$ connected components, and we prove  that  \eqref{eq:claimstep3} holds also if $\wt \Gamma^\text{A}_k$ has $M_k=j$ connected components. We observe that by Step 2 we have the two cases: 
\begin{itemize}
    \item[(a)] $j\geq1$ and there exists at least an island $P{ \subset R}$ in $\wt{A}_k$; 
    \item[(b)]  $j\geq2$ and there exists at least an extended void $V{ \subset \overline{R}}$ in $\wt{A}_k$. 
\end{itemize}
In the case (a) we proceed by applying the same construction done in Step 3.1 with respect to the island $P$ instead of $P_1$ obtaining the configuration  $\left(\wh A_k, {  \wh S_k}\right) \in \B%_{\textbf{m}}
(\Om_k)$. We observe that by construction  %$\left(\wh A_k, \wh h_k, \emptyset\right) \in \B_{\textbf{m}}$ and 
$(\Gamma^{\text{A}}_\mathrm{ FS}\left(\wh A_k, {  \wh S_k}\right) \setminus \Int(\ov{\wh S_k})){ \cap R}$ presents $j-1$ connected components (since a component is canceled) and hence, we obtain that
\begin{equation} 
	\label{eq:inductionfirst}
 \mathcal{S}^1_L(\wt A_k, {  \wt S_k},{ R })
 \ge \mathcal{S}^1_L (\wh A_k, {  \wh S_k}, { R } ) \ge \int_{T \cap \overline{R}}{\phi(\nu_T) \, d\Hs^1} - { 2} c_2\delta',
\end{equation}
where  we used \eqref{eq:exposedpart11} in the first inequality  and  we applied  the induction hypothesis on $(\wh A_k, {  \wh S_k})$ in the second.

In the case (b) we proceed by applying the same construction done in Step 3.2 with respect to the extended void  $V$ instead of $V_1$ obtaining the configuration  $\left(\wh A_k, {  \wh S_k}\right) \in \B_{\textbf{m}}$. We observe that by construction  $\left(\wh A_k, {  \wh S_k}\right) \in \B%_{\textbf{m}}
(\Om_k)$ and $(\Gamma^{\text{A}}_\mathrm{ FS}\left(\wh A_k, {  \wh S_k}\right) \setminus \Int(\ov{ \wh S_k})){ \cap R}$ presents $j-1$ connected components (since two components are connected in one). Finally, we have that  
\begin{equation} 
	\label{eq:induction2}
	 \mathcal{S}^1_L(\wt A_k, {  \wt S_k}, { R})
 \ge \mathcal{S}^1_L (\wh A_k, {  \wh S_k} , { R}) 
 \ge \int_{T \cap \overline{R}}{\phi(\nu_T) \, d\Hs^1} - { 2}c_2\delta'
\end{equation}
where we used  \eqref{eq:exposedpart2} in the first inequality  and we applied  the induction hypothesis on $(\wh A_k, {  \wh S_k})$ in the second. 
	\end{proof}

We continue with the setting of the film-substrate incoherent delaminated interface.

\begin{lemma}
	\label{lema:creationdelamin} 
	Let $R \subset \R^2$ be an open rectangle with a side parallel to $\textbf{e}_\textbf{1}$ and let $T\subset \R^2$ be a line such that $T \cap R \neq \emptyset$  and let $x\in T$. Let $\{\rho_k\}_{k\in\N} \subset [0,1] $ be such that $\rho_k \searrow 0$ and  $R\subset \sigma_{\rho_{1}}(\Omega)$.
	If $\{(A_k, S_{h_k,K_k})\} \subset \B_\textbf{m}(\sigma_{\rho_{1}}(\Omega))$ is a sequence such that $ {S_{h_k,K_k}} \cap \overline{R} \cngK { H_{x, \nu_T} } \cap \overline R$ in $\R^2$ and $  \overline{R} \setminus A_k  \cngK T \cap \overline R$ in $\R^2$ as $k \to \infty$, then for every $\delta \in (0,1)$ small enough, there exists $k_\delta\in \N$ such that 
		\begin{equation}
		\label{eq:delaminationeq}
		\mathcal{S}_L(A_k,S_{h_k,K_k},R) \ge \int_{T \cap \overline R}{\phi_{\mathrm{ F}}(\nu_T) + \phi(\nu_T) \, d\Hs^1} - \delta.
	\end{equation}
	for any $k \ge k_\delta$.
The same statement remains true if we replace $H_{x, \nu_T}$ with $H_{x, -\nu_T}$. 
\end{lemma}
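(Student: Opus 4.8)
The plan is to follow closely the three-step scheme used in the proof of Lemma \ref{lema:creationOneBdr}, adapting the constants and the basis of the induction to the delaminated situation, where the blow-up limit requires a \emph{doubled} boundary contribution $\phi_{\mathrm{F}}+\phi$ instead of a single $\phi$. First I would set up the tubular neighborhood: since $\overline R\setminus A_k\cngK T\cap\overline R$ and $S_{h_k,K_k}\cap\overline R\cngK H_{x,\nu_T}\cap\overline R$, for every $\delta'\in(0,1)$ there is $k_{\delta'}$ such that for $k\ge k_{\delta'}$ both $\overline R\setminus A_k$ is nonempty and contained in the tube $T^{\delta'}:=\{z\in R:\mathrm{dist}(z,T)<\delta'/(2c_\theta)\}$, and moreover $S_{h_k,K_k}$ fills (up to the tube) the half-region $H_{x,\nu_T}\cap R$ while the complementary half is essentially void. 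The key geometric point distinguishing this lemma from Lemma \ref{lema:creationOneBdr} is that here the line $T$ appears as the boundary of the \emph{composite} region $A_k$ on which, on one side, the substrate sits right up against $\partial A_k$ (so the interface is delaminated, contributing $\varphi_{\mathrm F}+\varphi$ by the fourth line of \eqref{psi} / the corresponding term of \eqref{eq:definitionlocenergyS} with density $\phi_{\mathrm F}+\phi$), rather than being a pure substrate free boundary (density $\phi$).

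In Step 1 I would treat those $k$ for which $\Gamma^{\text{A}}_k:=(\Gamma^{\text{A}}_{\text{FS}}(A_k,S_{h_k,K_k})\setminus\Int(S_{h_k,K_k}))\cap R$ is $\Hs^1$-negligible. Here the delaminated part of $\partial A_k$ in the tube is a curve $\gamma_k$ (a parametrization of the relevant portion of $\partial A_k$, obtained as in the earlier proofs via \cite[Lemma 3.12]{F}); connecting its endpoints $p_0,p_1$ on $\partial R$ to $T\cap\overline R$ by short segments $\wt T_0,\wt T_1$ of length $\le\delta'/2$, and closing up with the portion of $\partial R\cap\overline{T^{\delta'}}$ that separates $A_k$ from the void, one gets a closed connected competitor $\Gamma_k$ enclosing $T\cap\overline R$. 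The energy along $\gamma_k$ (plus the small correction terms, which are absorbed into $c_2\delta'$ by \eqref{eq:H1}) dominates $\int_{\Gamma_k}(\phi_{\mathrm F}+\phi)(\nu_{\Gamma_k})\,d\Hs^1$ because on that portion of $\partial A_k$ the film-side has density-one composite and the substrate touches from inside, so the relevant densities in \eqref{eq:definitionlocenergyS} are $\phi_{\mathrm F}+\phi$ or $2\phi_{\mathrm F}$, both $\ge\phi_{\mathrm F}+\phi$ pointwise (using $\phi\le\phi_{\mathrm F}+\phi$ trivially, and $\phi_{\mathrm F}\le\phi_{\mathrm F}+\phi$). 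Then the anisotropic minimality of segments \cite[Remark 20.3]{M} applied to the norm $\phi_{\mathrm F}+\phi$ (a Finsler norm by (H1)) gives $\int_{\Gamma_k}(\phi_{\mathrm F}+\phi)(\nu_{\Gamma_k})\ge\int_{T\cap\overline R}(\phi_{\mathrm F}+\phi)(\nu_T)$, whence \eqref{eq:delaminationeq} with error $O(c_2\delta')$.

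In Step 2, for those $k$ with $\Hs^1(\Gamma^{\text{A}}_k)>0$, I would invoke Lemma \ref{lem:finiteness} to pass, at the cost of $c_2\delta'$ in the energy, to a pair $(\wt A_k,\wt S_k)\in\wt{\B}$ for which $\wt\Gamma^{\text{A}}_k:=(\Gamma^{\text{A}}_{\text{FS}}(\wt A_k,\wt S_k)\setminus\Int(\ov{\wt S_k}))\cap R$ has finitely many, say $M_k$, connected components; enlarging $\wt A_k$ to absorb the film-side density-one boundary as in \eqref{eq:exposedpart6} costs nothing by non-negativity of $\phi_{\mathrm F}$. The same contradiction argument as in Step 2 of Lemma \ref{lema:creationOneBdr} then shows that $\wt A_k$ always possesses either a film island or an extended composite void. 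Step 3 is the induction on $M_k$ using the minimality of segments: the island case ``shrinks'' an island as in Step 3.1 of Lemma \ref{lema:creationOneBdr}, replacing an arc of the coherent interface and an arc of $\partial A_k$ by a segment $L_1$ — one checks the replacement does not increase $\mathcal{S}^1_L$ because $\phi_{\mathrm F}+\phi_{\mathrm{FS}}\ge\phi$ and the doubled film terms dominate — and the void case ``fills'' a void as in Step 3.2, here crucially using (H2) in the form $\phi\ge|\phi_{\mathrm{FS}}-\phi_{\mathrm F}|$ to compare $\phi_{\mathrm F}+\phi$ against $\phi_{\mathrm{FS}}$ along $L_1$. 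The base cases are handled exactly as in Substeps 3.1–3.2 but with target density $\phi_{\mathrm F}+\phi$.

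The main obstacle I anticipate is \textbf{bookkeeping the correct surface-tension density on each piece of $\partial A_k$ and $\partial S_k$ in the delaminated blow-up}: unlike the substrate-free-boundary case, here the blow-up limit places the substrate with density one on one side of $T$ and composite-only (film, with the substrate absent but $A$ present) on the same side, so one must carefully distinguish, in \eqref{eq:definitionlocenergyS}, the terms $\int_{O\cap\partial^*S\cap\partial A\cap A^{(1)}}(\phi_{\mathrm F}+\phi)(\nu_A)$, $\int_{O\cap\partial A\cap A^{(1)}\cap S^{(0)}}2\phi_{\mathrm F}(\nu_A)$, and $\int_{O\cap\partial A\cap A^{(0)}}2\phi'(\nu_A)$, and verify that on the relevant portions each of these is $\ge(\phi_{\mathrm F}+\phi)(\nu)$ — which for the last one requires $2\phi'=2\min\{\phi_{\mathrm F},\phi_{\mathrm S}\}\ge\phi_{\mathrm F}+\phi$, a consequence of $\phi=\min\{\phi_{\mathrm S},\phi_{\mathrm F}+\phi_{\mathrm{FS}}\}\le\phi_{\mathrm S}$ and $\phi\le\phi_{\mathrm F}+\phi_{\mathrm{FS}}$ together with (H2). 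The statement with $H_{x,-\nu_T}$ in place of $H_{x,\nu_T}$ follows verbatim, since the argument is symmetric in the choice of which side of $T$ the substrate occupies (one simply swaps the roles of $H_{x,\nu_T}$ and its complement throughout, the density $\phi_{\mathrm F}+\phi$ being the same on both orientations of $T$). Finally I would collect the errors from Steps 1–3, which are all of the form $O(c_2\delta')$, and choose $\delta':=\delta/(Cc_2)$ for the appropriate universal constant $C$ and $k_\delta:=k_{\delta'}$, for $\delta\in(0,\min\{1,Cc_2\})$.
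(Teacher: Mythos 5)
Your three-step skeleton is right in spirit, but the execution diverges from the paper in ways that create genuine gaps.

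First, your Step~1 replaces the paper's argument with a single competitor curve to which you apply the anisotropic minimality of segments for the Finsler norm $\phi_{\mathrm F}+\phi$. This fails when $\partial A_k$ and $\partial S_{h_k,K_k}$ \emph{separate} near $T$: the substrate boundary and the composite boundary are in general two distinct curves, and each carries only one of the two densities ($\phi$ on the substrate side, $\phi_{\mathrm F}$ on the film side; they sum to $\phi_{\mathrm F}+\phi$ only on the overlap, i.e.\ the delamination). The paper handles this by first parametrizing $\partial\Int(\overline{S_{h_k,K_k}})$ to get $\gamma_k$, then invoking \cite[Lemma~4.3]{KP} to extract a second curve $\wt\gamma_k\subset\partial A_k$ disjoint from $\gamma_k$ up to the delaminated set, and applying minimality of segments \emph{twice}, once to $\gamma_k$ with density $\phi$ and once to $\wt\gamma_k$ with density $\phi_{\mathrm F}$; the two estimates then add up to the target. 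Your single-curve version has no mechanism to collect the $\phi_{\mathrm F}$ contribution when the film boundary drifts away from the substrate boundary.

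Second, you skip the strip decomposition that dominates the paper's Step~2: since only $\overline R\setminus A_k\cngK T\cap\overline R$, the composite boundary $\partial A_k$ near $T$ may consist of up to $m_1$ components whose projections $\pi_T(\Lambda_k^n)$ onto $T$ are disjoint intervals separated by gaps. The paper decomposes the tube into strips $S^j$ subordinate to these intervals (controlling the gaps via the Kuratowski convergence and the bound $m_k^1\le m_1$), applies Lemma~\ref{lem:finiteness} strip by strip, and in Step~4 sums the strip estimates with a loss of order $(20m_1+31)c_2\delta'$. Without this decomposition, Step~1's crossing curve $\wt\gamma_k$ on $\partial A_k$ need not exist as a single curve spanning $R$, so the whole argument stalls.

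Third, your induction basis uses the wrong dichotomy. In Lemma~\ref{lema:creationOneBdr} the alternative was ``film island or extended composite void'', because there the complement of $A_k$ occupies one side of $T$. Here the complement of $A_k$ collapses to $T$, so there are no voids to fill; instead the coherent interface may fragment $\Int(\wt S_k)$ into \emph{substrate grains}. The paper's dichotomy is therefore ``film island or substrate grain'', and Step~3.2 is a genuinely new construction (Figure~\ref{figure:replacementofgrains}) that ``opens'' a grain into a new delaminated region, with the key replacement inequality being $\phi+\phi_{\mathrm{FS}}\ge\phi_{\mathrm F}$, a consequence of (H2).

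Finally, a factual error worth flagging: your claim that $2\phi'=2\min\{\phi_{\mathrm F},\phi_{\mathrm S}\}\ge\phi_{\mathrm F}+\phi$ is false; a short case analysis on $\phi'$ and $\phi$ gives the \emph{reverse} inequality $2\phi'\le\phi_{\mathrm F}+\phi$ in every case. Fortunately the paper never needs the inequality you wrote, precisely because its lower bound in Step~1 selects only those portions of $\partial A_k$ and $\partial S_{h_k,K_k}$ where $\mathcal{S}_L$ already has the densities $\phi_{\mathrm F}$, $\phi$, or $\phi_{\mathrm F}+\phi$ and discards the rest by non-negativity.
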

\begin{proof} 
	Without loss of generality, we assume that $\sup_{k \in \N} { \mathcal{S}_L }(A_k,S_{h_k,K_k},R) < \infty$. 
	We prove \eqref{eq:delaminationeq} in two steps. In the first step, we prove \eqref{eq:delaminationeq} for every $k \in \N$ such that 	$\Gamma^\text{A}_k := (\Gamma^\text{A}_{\text{FS}}(A_k,S_{h_k,K_k}) \setminus \Int(S_{h_k,K_k})) { \cap R}$ is $\Hs^1$-negligible by repeating the same arguments of Step 1 in the proof of Lemma \ref{lema:creationOneBdr}.  In the second step, by arguing as in \cite[Lemma 4.4]{KP} we prove \eqref{eq:delaminationeq} for those $k \in \N$ such that  $\Hs^1(\Gamma^\text{A}_k)$ is positive. 
		
	If $T$ is a vertical segment we define $c_\theta :=1$, otherwise we define $c_{\theta}:= ( 1/\sin \theta) + (1/\cos \theta)$, where $\theta < \pi/2$ is the smallest angle formed by the direction of $T$ with $\textbf{e}_\textbf{1}$. Since $\partial {S_{h_k,K_k}} \cap \overline{R} \cngK T \cap \overline R$ and  $  \overline{R} \setminus A_k  \cngK T \cap \overline R$ in $\R^2$ as $k \to \infty$, for every $\delta' \in (0,1)$ there exits $k_{\delta'} \in \N$ such that  
	\begin{equation}
		\label{eq:del0}
		\partial S_{h_k, K_k} \cap \overline{R},\, \overline R \setminus A_k  \subset { T^{\delta'}} 	\end{equation}
	for every $k \ge k_{\delta'}$, where $T^{\delta'}:= \{x \in R: \text{dist}(x, T) < \delta'/(2 c_\theta) \}$ is a tubular neighborhood of $T$ in $R$. By arguing as in \eqref{eq:parametrizationk}
	there exists a parametrization $r_k:[0,1]\to\mathbb{R}^2$ of ${\partial \mathrm{ \Int}\left(\overline{S_{h_k,K_k}}\right)}$,  whose support we denote by $\gamma_k$. 	
	Finally, let $T^{\delta'}_1$ and $T^{\delta'}_2$ be the connected components of $\partial R \cap \overline{T^{\delta'}}$.

	{\itshape Step 1.} Assume that $\Hs^1 \left(\Gamma^{\text{A}}_k \right)=0$ for a fix $k \ge k_{\delta'}$. Notice that by \eqref{eq:del0}, there exists 
	$p_1 := r_k(t_1)$ and $p_2 := r_k(t_2)$, where $t_1 := \inf\{t\in [0,1]: r_k(t) \in \partial R  \}$ and $t_2 := \sup\{t\in [0,1]: r_k(t) \in \partial R  \}$, and without loss of generality we assume that $t_1 < t_2$. Let $\wt T^1_i \subset \partial R \cap {T^{\delta'}_i}$  for $i = 1,2$ be the closed and connected set with minimal length connecting  $p_i$ with $T \cap \overline{R}$. Therefore, by trigonometric identities we obtain that $\Hs^1(\wt T_{i}) \le \frac{\delta'}{2c_\theta} c_\theta = \frac{\delta'}2$. Since  $\Hs^1 \left(\Gamma^{\text{A}}_k \right)=0$ and by \cite[Lemma 4.3]{KP}, there exists a curve with 
	support  $\wt \gamma_k \subset \overline{\partial A_k \setminus (\gamma^1 \cap \partial \Int(\overline{A_k}) }$ such that $\gamma_k$ and $\wt \gamma_k$ can intersect only in the delamination area, more precisely,  $\gamma_k \cap \partial ^* S_{h_k} \cap \partial ^* A_k$ and $\wt \gamma_k \cap \partial ^* A_k \setminus \partial S_{h_k}$ are disjoint up to a 
	$\Hs^1$-negligible set. We define $\Lambda_k:= (\partial R \cap \overline{T^{\delta'}}) \cap (\Om_k \setminus \Int(S_{h_k, K_k}) )$ and 
	we denote by $\wt T^2_i \subset \partial R \cap {T^{\delta'}_i}$  for $i = 1,2$ be the closed and connected set with minimal length connecting  $\wt \gamma_k$ with each point of $T \cap T^{\delta'}_i$. It yields that
		\begin{equation}
			\label{eq:del1}
			\begin{split}
				&\mathcal{S}_L (A_k, S_{h_k,K_k}, { R}) { + \int_{\Lambda_k}  \phi (\nu_{\Lambda_k}) \, d\Hs^1 }\\
				&\quad  \ge \int_{\wt \gamma_k \cap \partial^* { A_k} \setminus {\partial {S_{ h_k, K_k}}}}{\phi_{\mathrm{ F}}( \nu_{A_k} )\, d\Hs^1}   + \int_{\gamma_k \cap  \partial ^*S_{h_k, K_k} \cap \partial^* {A_k} }{\phi ( \nu_{ A_k} )\, d\Hs^1} \\  & \qquad
			  %+ \int_{\gamma_k \cap \partial ^* { A_k} \cap S_{ h_k,  K_k} ^{(0)}}{  \phi ( \nu_{ A_k} ) \, d\Hs^1}   
				 + \int_{(\gamma_k \cup \wt  \gamma_k ) \cap  \partial ^*{S_{h_k, K_k}} \cap \partial A_k  \cap { A_k}^{(1)}}{ (\phi_{\mathrm{ F}} + \phi )( \nu_{ A_k} ) \, d\Hs^1}  %\\
				%&\qquad + 2\int_{(\gamma_k \cup \wt \gamma_k) \cap (\partial {S_{h_k, K_k}} \cap \partial A_k) \cap S^{(0)}_{ h_k,K_k} \cap  A_k^{(1)}  }{ ({\phi_{\mathrm{ F}} } + {\phi } %+ {\phi_{\mathrm{ FS}} }
				%	)( \nu_{S_{ h_k, K_k}}  ) \, d\Hs^1}
				%&\qquad  
				+ \int_{\Lambda_k}  \phi (\nu_{\Lambda_k}) \, d\Hs^1  \\
	&\quad  =  \int_{{\wt \Gamma_k}}{\phi_{\mathrm{ F}}(\nu_{{\wt \Gamma_k}})\, d\Hs^1} + \int_{\Gamma_k}{\phi(\nu_{\Gamma_k})\, d\Hs^1}  - \sum_{i=1}^2 {\int_{\wt T^1_i}{\phi(\nu_{\wt T^j_i}) \, d\Hs^1 }}  - \sum_{i=1}^2 {\int_{\wt T^2_i}{\phi_{\mathrm{F}}(\nu_{\wt T^j_i}) \, d\Hs^1 }} , % - \sum_{i=1}^2{\int_{\wt T^3_i}{\phi_{\mathrm{ FS}}(\nu_{\wt T^j_i}) \, d\Hs^1 }} ,
			\end{split}
		\end{equation}
where $\Gamma_k := \wt T^1_{1} \cup { (\gamma_{k} \cap \overline R) \cup \Lambda_k} \cup \wt T^1_2$ and $\wt \Gamma_k := \wt T^2_{1} \cup \wt \gamma_{k} \cup \wt T^2_2$. By the anisotropic minimality of segments (see \cite[Remark 20.3]{M}), it yields that
		\begin{equation}
			\label{eq:del2}
			\int_{{\wt \Gamma_k}}{\phi_{\mathrm{ F}}(\nu_{{\wt \Gamma_k}})\, d\Hs^1} + \int_{\Gamma_k}{\phi(\nu_{\Gamma_k})\, d\Hs^1} \ge \int_{{ T \cap \overline{R}}}{ \phi_{\text{F}}({ \nu_T}) + \phi({ \nu_T}) \, d\Hs^1}, 
		\end{equation}
		and so, thanks to the facts that $\Hs^1(\wt T^j_{0} \cup \wt T^j_1)\le \delta'$ and $\Hs^1( \Lambda_k ) \le \delta'$ for $j=1,2$,  
		and by \eqref{eq:H1}, \eqref{eq:del0}-\eqref{eq:del2}, we deduce that
		\begin{equation}
			\label{eq:del3}
			\mathcal{S}_L(A_k, S_{h_k,K_k},R)  \ge  \int_{{ T \cap \overline{R}}}{ \phi_{\text{F}}({ \nu_T}) + \phi({ \nu_T}) \, d\Hs^1} - { 3} c_2 {\delta'}.
		\end{equation}
		
		{\itshape Step 2.}
  Since $(A_k, S_{h_k, K_k})\in \B_{\textbf{m}}(\sigma_k (\Om))$ we can find an enumeration $\{\Lambda^n_k\}_{n=1, \ldots, m_k^1}$ of the connected components $\Lambda^n_k$ of $\partial {A}_k$ lying strictly inside of $R$ such that $m_k^1 \le m_1$.
  %  be an enumeration $\{\Lambda^n_k\}_{1,\ldots,m_k^1}$ of the connected components $\Lambda^n_k$ of $\partial {A}_k$ lying strictly inside of $R$, namely such that $\Lambda^n_k \subset R$. Notice that since $(A_k, S_{h_k, K_k})\in \B_{\textbf{m}}(\sigma_k (\Om))$ 
%and  $\mathcal{S}_L(A_k, S_{ h_k, K_k},R) < \infty$, 
%${ m_k^1 \le m_1}$ for every $k \in \N$. 
Moreover, thanks to the fact that ${ \mathcal{S}_L }(A_k, S_{ h_k, K_k},Q) < \infty$ for each $k \in \N$,  %, $m^\ell_k \le m_\ell$, for $\ell = 0,1$ and  for every $k \in \N$ and 
the family $\{\Lambda^{\alpha}_k\}_{\alpha\in\N}$ of connected components $\Lambda^{\alpha}_k$ that intersect $T_1^{\delta'}$ or $T_2^{\delta'}$  of  $\partial {A}_k \cap  \overline{Q_1}$, respectively, are at most countable. 
%Then, let $\{\Lambda^{\alpha}_k\}_{\alpha\in\N}$ be the family   of connected components $\Lambda^{\alpha}_k$   of $\partial {A}_k \cap  \overline{R} $ intersecting $T^{\delta'}_i$, for $i=1,2$, which are at most countable. 
Furthermore, we define $\Lambda^{m_k +i}$ for $i=1,2$ by
$$
\Lambda^{m_k +i}:=
\left(\bigcup_{\alpha\in\mathbb{N},\, \Lambda^{\alpha} \cap T^{\delta'}_i \neq \emptyset} \Lambda^{\alpha} \right) \cup T^{\delta'}_i
$$
We denote by $\pi_T:\R^2 \to \R$ the orthogonal projection of $\R^2$ onto $T$ and since for every $n = 1,\ldots, m_k+2$, $\Lambda^n$ is a connected set we have that %By connectedness of  $\Lambda^j$, for each $j = 1, \ldots, m_k +2,$ 
			$ \pi_T(\Lambda^n)$ is a homeomorphic to a closed interval in $\R$. More precisely, $\bigcup_{n =1}^{m_k+2} \pi _T(\Lambda^j)$ is equal to a finite family of closed segments.
Thanks to the facts that $\overline{R} \setminus A_k \cngK T\cap \overline{R}$ in $\R^2$ as $k \to \infty$, and  $m_k \le m_1$ for every $k \in \N$, we have that
	$$
	\lim_{k \to \infty} \Hs^1 \left( (T\cap \ov R) \setminus \bigcup_{n =1}^{m_k+2} \pi _T(\Lambda^n)   \right)=0
	$$		
	and hence, there exists $k^1_{\delta'} > k_{ \delta'}$ such that 
	\begin{equation}
	\label{1eq:continouspart11}
	\Hs^1 \left( (T\cap \ov R) \setminus \bigcup_{n =1}^{m_k+2} \pi _T(\Lambda^n) \right) < (m _1+2) \delta' 
	\end{equation}
	for every $k \ge k^1_{\delta'}$. 
	We denote by $a_n, b_n \in T \cap \overline{R}$ for every $n =1, \ldots, m_k+2$ the initial and final point of each $\pi _T(\Lambda^j)$, respectively (notice that $a_{m_k+1} \in T \cap T^{\delta'}_1$ and $b_{m_k+2} \in T \cap T^{\delta'}_2$). 
	We decompose $\bigcup_{n=1}^{m_k+2} \pi_T (\Lambda^n)$ as the finite union of disjoint open connected sets $\{C_j\}_{j\in J}$, where the endpoints of $C_j$ are denoted by $a'_j, b'_j \in \bigcup_{j =1}^{m_k+2} \{a_j, b_j\}$ for every $j \in J$, and $\bigcup_{j \in J}\{a'_j, b'_j\}= \bigcup_{n =1}^{m_k+2} \{a_n, b_n\}$. Therefore, by definition the cardinality of $J$ is bounded by $2m_k+3$, 
	$$
	\bigcup_{n=1}^{m_k+2} \pi_T (\Lambda^n) = \bigcup_{j \in J} C_j,%[a'_j, b'_j],
	$$
	and also by \eqref{1eq:continouspart11} we have that
			\begin{equation}
				\label{eq:dela1}
				\Hs^1 \left( (T\cap \ov R) \setminus \bigcup_{j \in J} C_j \right) < (m _1+2)  \delta'
			\end{equation}
	for every $k \ge k^1_{\delta'}$. 
Let $T_{a'_j}$ and $T_{b'_j}$ be the lines parallel to $\nu_T$ %, $T'_{a'_j}$ and $T'_{b'_j}$ 
	and passing through  $a'_j$ and $b'_j$, respectively.  Finally, we denote by $S^j$  the intersection of the strip  between $T_{a'_j}$ and $T_{b'_j}$ and $R \cap T^{\delta'}$ for every $j \in J$. If $j \in J$ is such that $C_j \cap T^{\delta}_1 $, we define $T'_{a'_j} := T^{\delta'}_1$ and $T'_{b'_j} =  T_{b'_j}\cap \overline{S^j}  $, analogously, if $j \in J$ is such that $C_j \cap T^{\delta}_2 $ we define $T'_{a'_j} =  T_{a'_j}\cap \overline{S^j}  $ and $T'_{b'_j} := T^{\delta'}_2$, otherwise, if $ C_j \cap \bigcup_{i=1}^2 T^{\delta}_i = \emptyset $,  $T'_{a'_j} =  T_{a'_j}\cap \overline{S^j}$ and $T'_{b'_j} =  T_{b'_j}\cap \overline{S^j}$.
	It follows that 
	\begin{equation}
		\label{eq:delaminationsplit}
		\Hs^1(T'_{a'_j} \cup T'_{b'_j}) \le 2 \delta'.
	\end{equation}
  From now on we fix $ j \in J$ and we consider a fixed $k \ge k_{\delta'}^1$ such that  $\Hs^1( \Gamma^{\text{A}}_k\cap S^j)> 0$. For simplicity in the following part of this step we denote $\Lambda^n:=\Lambda^n_k$ for every $n=1,\ldots, m_k+2$. 
By applying Lemma \ref{lem:finiteness} (with $R$, as from the notation of Lemma \ref{lem:finiteness}, coinciding with $S^j$)  there exist $M_k:= M_k(A_k,S_{h_k,K_k},\delta') \in \N \cup \{0\}$  and $(\widetilde{A}_k , \widetilde{S}_k) \in \B_\textbf{m}$ such 
that  $\wt \Gamma^{\text{A}}_k: = (\Gamma^\mathrm{{A}}_\mathrm{ FS}(\widetilde{A}_k, \widetilde{S}_k  ) \setminus \Int(\overline{\wt{S}_k}))\cap S^j$ has $M_k$ connected components and 
		\begin{equation}
			\label{eqlem:finitenessbis1}
			{\mathcal{S}}_L(A_k,S_{h_k,K_k},S^j) \ge {\mathcal{S}}_L (\widetilde{A}_k, \widetilde{S}_k,S^j ) - c_2 \delta', 
		\end{equation} 
and there exists a path $\wt\Lambda^j\subset\partial \wt{A}_k$ such that  $\wt\Lambda^j\cap T'_{c_j}\neq\emptyset$ and $\partial \widetilde S_k\cap \overline{S^j}\cap T'_{c_j}\neq\emptyset$ for $c_j\in\{a'_j,b'_j\}$.  
Let $\{\Gamma_\ell\}_{\ell =1}^{M_k}$ be the family of connected components of $\wt \Gamma^{\text{A}}_k$. Without loss of generality, we assume that $\wt\Lambda^j$  intersects all islands and voids of $\wt A_k$ that are not full ones (see Definition \ref{def:notationislands2}), because otherwise we can always reduce to this situation by repeating Steps 3.1 and 3.2 of Lemma \ref{lema:creationOneBdr}. 
If $M_k =0$, by repeating the same arguments of Step 1 we obtain that % the arguments of Step1.
\begin{equation}
    \label{cardinalityzero}
\mathcal{S}_L(\wt A_k, \wt S_k , S^j) \ge \int_{C_j}{ \phi_{\text{F}}({ \nu_T}) + \phi({ \nu_T}) \, d\Hs^1} - { 3} c_2 {\delta'}.
\end{equation}
Therefore, by \eqref{eqlem:finitenessbis1} %, \eqref{delaminationrest2} 
and \eqref{cardinalityzero}, we deduce that
\begin{equation}
    \label{cardinalityzero_bis}
\mathcal{S}_L(A_k, S_k , S^j) \ge \int_{C_j}{ \phi_{\text{F}}({ \nu_T}) + \phi({ \nu_T}) \, d\Hs^1} - 4 c_2 {\delta'}.
\end{equation}
In the remaining of this step we assume that $M_k \ge 1$ %$\wt \Gamma^\text{A}_k$ %$\mathscr{C}(\breve A_k, \breve S_k)\neq\emptyset$ 
by proving the following claim: If $M_k \ge 1$, 
%\mathscr{C}(\breve A_k, \breve S_k)\neq\emptyset$, 
then there exists 
an island of $\wt A_k$ or a substrate grain of $\wt{S}_k$ (see Definition \ref{def:notationislands2}). In order to prove this claim we proceed by contradiction. 
Therefore, let us assume that  $(\wt A_k,\wt{S}_k)$ does not contain any island and substrate grain. Since $M_k \ge 1$, %$\mathscr{C}(\breve A_k, \breve S_k)\neq\emptyset$, 
there exists $\ell_1 \in \{1, \ldots,M_k\}$ %$n_1\leq n_0$ 
and, since the endpoints of $\Gamma_{\ell_1}$ %\ov{\wt C_{n_1}}$ 
must be connected through $\wt \gamma \cup \partial \Int(\ov{\wt{A}_k})$, then there exists an 
open connected set $D_1\subset \Int(\wt A_k)$ enclosed by $\wt \Gamma^{\text{A}}_k %: = (\Gamma^\mathrm{{A}}_\mathrm{ FS}(\breve{A}_k, \breve{S}_k  ) \setminus \Int(\overline{\breve{S}_k}))\cap S^j
$ and $\wt \gamma \cup \partial 
\Int(\ov{\wt{A}_k})$ such that $\Gamma_{\ell_1} %\wt C_{n_1} 
\subset  \partial D_1 $. By assumption, since $D_1$ cannot be an island and cannot be a grain  of $(\wt A_k,\wt{S}_k)$, there must exist $\ell_2 \in \{1, \ldots,M_k\} \setminus \{\ell_1\} %n_2\leq n_0
$ such that %$n_2\neq n_1$ and 
$\Gamma_{\ell_2} \subset \ov{D_1}$.  Since the 
endpoints of $\Gamma_{\ell_2}$ must be connected through $\wt \gamma \cup \partial \Int(\ov{\wt{A}_k})$, then there exists an open connected set $D_2\subset \Int(\wt A_k)$ enclosed by $\wt \Gamma^{\text{A}}_k$  
and $\wt \gamma \cup \partial \Int(\ov{\wt{A}_k})$ such that $\Gamma_{\ell_2} \subset  \partial D_2 $, and we notice that $D_2$ cannot coincide with $D_1$ since $\partial D_1\cap (\wt \gamma \cup \partial 
\Int(\ov{\wt{A}_k}))$ is not connecting the endpoints of $\Gamma_{\ell_2}$  (besides not connecting also the endpoints of $\Gamma_{\ell_1}$). By keeping on iterating this reasoning we reach a 
contradiction with the fact that the family $M_k < \infty$, % $\mathscr{C}(\breve A_k, \breve S_k)\neq\emptyset$ consists of at most $n_0<\infty$ elements, 
and hence the claim holds true.

\begin{figure}[ht]
	\centering
 \includegraphics{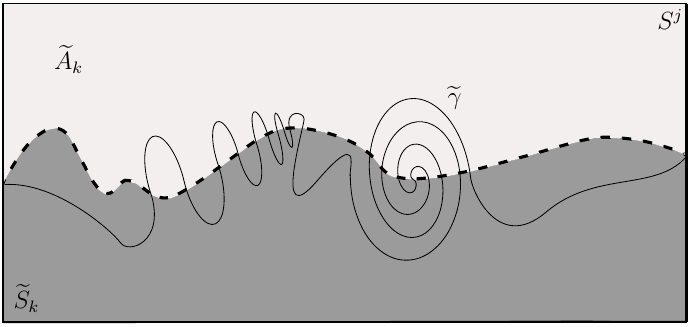}
	\caption{\small 
 The illustration represents the configuration of the admissible pair $(\wt A_k, \wt S_k)$ that is modified in Step 2 of Lemma \ref{lema:creationdelamin} to obtain the pair $(\breve A_k,\breve S_k)$ for which in the same step is proved the existence of at least a film island or a substrate grain in accordance with Definition \ref{def:notationislands}. 
 }	
 \label{figure:spiral}
\end{figure}

{\it Step 3.} %(Shrinking islands and opening voids)} 
%Assume that the curve $\wt \gamma$ defined in Step 2 intersects all islands and voids of $\wt A_k$. 
In this step we assume that $M_k \ge 1$ % $\mathscr{C}(\breve A_k, \breve S_k)\neq\emptyset$ 
(for the case $M_k =0$ % $\mathscr{C}(\breve A_k, \breve S_k)=\emptyset$ 
see Step 2) and we
prove that 
\begin{equation}
    \label{eq:claimstep3last}
    \mathcal{S}_L(\wt A_{k},\wt S_{k} , S^j) \ge  \int_{C_j}{ \phi_{\text{F}}({ \nu_T}) + \phi({ \nu_T}) \, d\Hs^1}  -8
    c_2 \delta', 
\end{equation}
from which it easily follows from \eqref{eq:H1}, \eqref{eq:delaminationsplit} and \eqref{eqlem:finitenessbis1}, 
that
 \begin{equation}
     \label{missing1}
 \mathcal{S}_L(A_{k},S_{h_k,K_k} , S^j) \ge  \int_{C_j}{ \phi_{\text{F}}({ \nu_T}) + \phi({ \nu_T}) \, d\Hs^1}  -9c_2 \delta'. 
 \end{equation}
In order to prove \eqref{eq:claimstep3last} we consider an auxiliary energy $\mathcal{S}^1_L$ given by $\mathcal{S}_L$ with an extra term, namely defined by 
    \begin{equation}
        \label{eq:defS^1klemma5.8}
        \mathcal{S}^1_L(\wt A_k, \wt S_k , R
	):= \mathcal{S}_L(\wt A_k, \wt S_k, R 
	)  \,+ \sum_{c_j \in \{a'_j, b'_j\}} 2  \int_{T'_{c_j}}  (\phi_\mathrm{ F} + \phi% + \phi_\mathrm{ FS} )(\nu_{T'_{c_j}}
 )  \, d\Hs^1      \end{equation}
for every $(\wt A_k,\wt S_k 
)\in\B$, and we prove that 
\begin{equation}
    \label{eq:claimstep3prelast}
    \mathcal{S}^1_L(\wt A_{k},\wt S_{k} , S^j) \ge  \int_{C_j}{ \phi_{\text{F}}({ \nu_T}) + \phi({ \nu_T}) \, d\Hs^1},
\end{equation}
since \eqref{eq:claimstep3last} directly follows from \eqref{eq:claimstep3prelast} by \eqref{eq:H1} and \eqref{eq:delaminationsplit}. To prove \eqref{eq:claimstep3prelast} we proceed by induction on the number $M_k\in\mathbb{N}$  
of connected components of $\wt\Gamma^\text{A}_k$ in three steps. 
In Steps 3.1 and 3.2 we show the basis of the induction for $M_k=1$, by considering the two cases provided by Step 2, i.e., the case in which $\wt{A}_k$ presents an island in Step 3.1 and the case in which $\wt{S}_k$ presents a substrate grain in Step 3.2. We conclude then the induction in Step 3.3.

{\itshape Step 3.1} 
Assume that $M_k=1$  
and that there exists an island $P_1 \subset \ov{\wt A_k} \setminus \Int(\wt S_k)$ of $\wt A_k$ such that $P_1$ is enclosed by $\Gamma_1 
\cup \wt \gamma \cup \partial \Int(\ov{\wt A_k})) $ with $\Gamma_1
\subset \partial P_1$. Let $p_1$ and $p_2$ be the endpoints of  $\Gamma_1 
$, and let $L_1$ be the segment connecting $p_1 $ with $p_2$. We denote by $P_1^1$ the open set enclosed by $L_1$ and $\partial P_1 \cap \Gamma_1 
$ and we denote by $P_1^2$ the open set enclosed by $L_1$ and $\partial P_1 \setminus \Gamma_1
$. 

We define a modification of $\wt S_k$, denoted by $ \wh S_{k}$,  and a modification of $\wt A_k$, denoted by $\wh A_{k}$, as
$$
	\wh S_{k} := (\wt S_k \setminus (P^1_1 \cap \Int( \wt S_k )) )\cup (P^1_1 \cap (S^j \setminus \ov{\wt S_k}) ),
$$
and
$$
	\wh A_{k}: = \wt A_k \setminus \left(P_1^2 \cap (\Int(\wt A_k) \setminus \ov{\wt S_k}) \right) \cup \left(P_1^2 \cap (S^j \setminus \ov{\wt A_k}) \right),
$$
respectively. Notice that $(\wh A_k, \wh S_k) \in \B(\sigma_k(\Om))$. 
By the anisotropic minimality of segments, it follows that
\begin{equation}
\label{minimalitycase1.1bis}
\begin{split}
	\int_{\partial P_1 \setminus \wt C_1 } \phi_{\text{F}}(\nu_{\partial P_1}) \, d\Hs^1 +\int_{\partial P_1 \cap \wt C_1} \phi_{\text{FS}}(\nu_{\partial P_1}) \, d\Hs^1 & \ge \int_{L_1}  \phi_{\text{F}}(\nu_{L_1}) + \phi_{\text{FS}}(\nu_{L_1}) \, d\Hs^1 \\
 &\ge \int_{L_1} \phi(\nu_{L_n}) \, d\Hs^1,
	\end{split}
\end{equation}
where in the second inequality we used the definition of $\phi$, and hence, by  \eqref{minimalitycase1.1bis} we obtain that 
\begin{equation}
\label{eq:delamination4}
	\mathcal{S}^1_L(\wt A_k, \wt S_k, S^j) \ge \mathcal{S}^1_L(\wh A_k, \wh S_k, S^j). 
\end{equation}
Moreover,  we observe that by construction $\wh \gamma := (\wt \gamma \setminus ( \wt \gamma \cap \partial \Int(\ov{\wt A_k}) )) \cup L_1$ is path connected and it joins $ T \cap T'_{a'_j} $ with $ T \cap T'_{b'_j} $,   and  $ (\Gamma^\text{A}_{\text{FS}}(\wh A_k,\wh S_k) \setminus \Int(\ov{ \wh S_k})) { \cap S^j}$ is $\Hs^1$-negligible.  Thus, by repeating the same arguments of Step 1, we deduce that 
\begin{equation}
	\label{eq:inductionbis1}
	\mathcal{S}^1_L(\wh A_k, \wh S_k, S^j) \ge  \int_{C_j}{ \phi_{\text{F}}({ \nu_T}) + \phi({ \nu_T}) \, d\Hs^1}. % - 3c_2 \delta'.
\end{equation}
By \eqref{eq:delaminationsplit}, \eqref{eqlem:finitenessbis1}, %\eqref{delaminationrest2}, 
\eqref{eq:defS^1klemma5.8}, \eqref{eq:delamination4}, and \eqref{eq:inductionbis1} we obtain \eqref{eq:claimstep3last}. % \eqref{eq:claimstep3bis1}.

{\itshape Step 3.2}
Assume that $M_k=1$ %$z_0=1$, $\mathscr{C}:=\{\wt C_1\}$, 
and that there exists %
%Assume that $z_0=1$, $\mathscr{C}:=\{C_1\}$, and that there exists 
a substrate grain   $G_1 \subset \ov{\wt S_k}$ of $\wt S_k$ such that  $G_1$ is enclosed by $\Gamma_1\cup \wt \gamma \cup \partial \Int(\ov{\wt A_k})) $ with $\Gamma_1\subset \partial G_1$. Let $p_1$ and $p_2$ be the endpoints of  $\Gamma_1$, and let $L_1$ be the segment connecting $p_1 $ with $p_2$. We denote by $G_1^1$ the open set enclosed by $L_1$ and $\partial G_1 \cap \Gamma_1$ and we denote by $G_1^2$ the open set enclosed by $L_1$ and $\partial G_1 \setminus \Gamma_1$. 

We define a modification of $\wt S_k$, denoted by $ \wh S_{k}$,  and a modification of $\wt A_k$, denoted by $\wh A_{k}$, as
$$
	\wh S_{k} := \wt S_k \setminus ((G^2_1 \cap \Int( \wt S_k ) )\cup (G^1_1 \cap \Int( \wt S_k )  )),
$$
and
$$
	\wh A_{k}: = \wt A_k \setminus \left(G^2_1 \cap \Int( \wt A_k ) \right) \cup \left(G_1^1 \cap \Int( \wt A_k ) \right),
$$
respectively (see Figure \ref{figure:replacementofgrains}). Notice that $(\wh A_k, \wh S_k) \in \B(\sigma_k(\Om))$.

By the anisotropic minimality of segments, it follows that 
\begin{equation}
\label{minimalitycase1.2bis}
\begin{split}
	\int_{\partial G_1 \setminus \Gamma_1  } \phi(\nu_{\partial G_1}) \, d\Hs^1 +\int_{\partial G_1 \cap  \Gamma_1  } \phi_{\text{FS}}(\nu_{\partial G_1}) \, d\Hs^1 & \ge \int_{L_1}  \phi(\nu_{L_1}) +\phi_{\text{FS}}(\nu_{L_1}) \, d\Hs^1 \\
 & \ge \int_{L_1}\phi_{\text{F}}(\nu_{L_1}) \, d\Hs^1,
	\end{split}
\end{equation}
where in the second inequality we used \eqref{eq:H2} and hence,  by \eqref{minimalitycase1.2bis} we obtain that 
\begin{equation}
\label{eq:delamination5}
	\mathcal{S}^1_L(\wt A_k, \wt S_k, S^j) \ge \mathcal{S}^1_L(\wh A_k, \wh S_k, S^j).
\end{equation}

\begin{figure}[ht]
	\centering
 \includegraphics{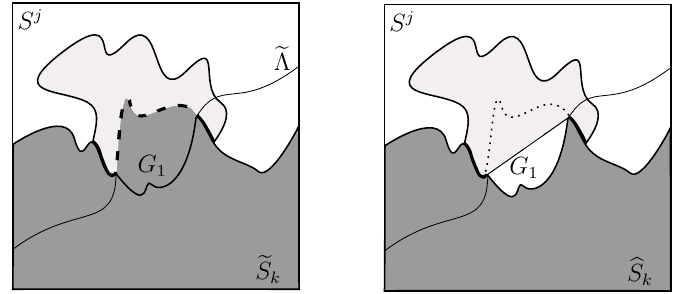}
	\caption{\small 
 The two illustrations above represent, passing from the left to the right, the construction that consists in ``modifying grains in new voids'', which is contained in Step 3.2 of the proof of the Lemma \ref{lema:creationdelamin} for the modification of the grain in a new void.}	\label{figure:replacementofgrains}
\end{figure}

Moreover, we observe that by construction $\wh \gamma := (\wt \gamma \setminus ( \wt \gamma \cap \partial \Int(\ov{\breve A_k}) )) \cup L_1$ is path connected and it joins $ T \cap T'_{a'_j} $ with $ T \cap T'_{b'_j} $,   and  $ (\Gamma^\text{A}_{\text{FS}}(\wh A_k,\wh S_k) \setminus \Int(\ov{ \wh S_k})) { \cap S^j}$ is $\Hs^1$-negligible. Thus, by repeating the same arguments of Step 1, we deduce that 
\begin{equation}
	\label{eq:inductionbis12}
	\mathcal{S}^1_L(\wh A_k, \wh S_k, S^j) \ge  \int_{C_j}{ \phi_{\text{F}}({ \nu_T}) + \phi({ \nu_T}) \, d\Hs^1}.% - 3c_2 \delta'.
\end{equation}
By \eqref{eq:H1}, \eqref{eq:delaminationsplit}, \eqref{eqlem:finitenessbis1}, %\eqref{delaminationrest2}, 
\eqref{eq:defS^1klemma5.8}, \eqref{eq:delamination5}, and \eqref{eq:inductionbis12} we obtain \eqref{eq:claimstep3last}. % \eqref{eq:claimstep3bis1}.

{\itshape Step 3.3}.  Assume that %$M_k = i$ %$z_0>1$ and that 
\eqref{eq:claimstep3last} 
%\eqref{eq:claimstep3bis1} 
holds true if $M_k = i-1$. %the cardinality of $Z$ is equal to $i -1$. 
We need to show that \eqref{eq:claimstep3last} holds also if $M_k =i$. %the cardinality of $Z$ is equal to $i$.
By Step 2 we have two cases: 
\begin{itemize}
    \item[(a)] $i\geq1$ and there exists at least an island $P \subset S^j$ of $\wt A_k$; 
    \item[(b)]  $i\geq1$ and there exists at least a grain substrate $G\subset S^j$ of $\wt{S}_k$. 
\end{itemize}

In the case (a) we proceed by applying the same construction done in Step 3.1 with respect to the island $P$ instead of $P_1$ obtaining the configuration  $\left(\wh A_k, {  \wh S_k}\right) \in \B
(\Om_k)$. We observe that by construction  %$\left(\wh A_k, \wh h_k, \emptyset\right) \in \B_{\textbf{m}}$ and 
the set %$\{ n \in \N: n \le n_0 \text{ and } \wt C_n \subset \wh \Gamma_k^\text{A}\}$ where 
$\wh \Gamma_k^\text{A}: = (\Gamma^\mathrm{{A}}_\mathrm{ FS}(\wh{A}_k, \wh{S}_k  ) \setminus \Int(\overline{\wh{S}_k}))\cap S^j$ has cardinality $i-1$ (since the island $P$ of $\breve A_k$ is shrunk in $\wh A_k$) and hence, we obtain that
\begin{equation} 
	\label{eq:inductionfirstbis2}
 \mathcal{S}^1_L(\wt A_k, {  \wt S_k},{ R })
 \ge \mathcal{S}^1_L (\wh A_k, {  \wh S_k}, { R } ) \ge \int_{C_j}{ \phi_{\text{F}}({ \nu_T}) + \phi({ \nu_T}) \, d\Hs^1}  %- { 2} c_2\delta',
\end{equation}
where we used the inductive hypothesis in the last inequality. 

In the case (b) we proceed by applying the same construction done in Step 3.2 with respect to the substrate grain  $G$ instead of $G_1$ obtaining the configuration  $\left(\wh A_k, {  \wh S_k}\right) \in \B$. We observe that by construction   the set %$\{ n \in \N: n \le n_0 \text{ and } \wt C_n \subset \wh \Gamma_k^\text{A}\}$ where 
$\wh \Gamma_k^\text{A}: = (\Gamma^\mathrm{{A}}_\mathrm{ FS}(\wh{A}_k, \wh{S}_k  ) \setminus \Int(\overline{\wh{S}_k}))\cap S^j$ has cardinality $i-1$ (since the grain $G$ of $\breve S_k$ is opened in a void) and hence, we obtain that
\begin{equation} 
	\label{eq:induction2bis}
 \mathcal{S}^1_L(\wt A_k, {  \wt S_k},{ R })
 \ge \mathcal{S}^1_L (\wh A_k, {  \wh S_k}, { R } ) \ge \int_{C_j}{ \phi_{\text{F}}({ \nu_T}) + \phi({ \nu_T}) \, d\Hs^1}  %- { 2} c_2\delta',
\end{equation}
where we used the inductive hypothesis in the last inequality.

	{\itshape Step 4.} Let  $k \ge k^1_{\delta'}$, which was defined in Step 2.  
 If  $\Hs^1( \Gamma^{\text{A}}_k)>0$, we consider $j \in J$. 
  By repeating the arguments of Step 1 in $S^j$ if $\Hs^1( \Gamma^{\text{A}}_k  \cap S^j)= 0$ (see \eqref{eq:del3}), and by Steps 2 and 3 (see \eqref{cardinalityzero_bis} and \eqref{eq:claimstep3last}) we obtain that 
		\begin{equation}
			\label{finaldela1}
			\begin{split}
				\mathcal{S}_L &( A_k,  S_{h_k,K_k},S^j) 
				 \ge \int_{C_j }{\phi_\mathrm{ F}(\nu_T) + \phi(\nu_T)\, d \Hs^1} -9 c_2 \delta'.
 			\end{split}
		\end{equation}
		
Therefore, in view of the fact that  the cardinality of $J$ is bounded by $2m_k+3$, by  \eqref{eq:dela1} and \eqref{finaldela1} it follows that
		\begin{equation}
			\label{eq:finaldelamination2}
			\begin{split}
				\mathcal{S}_L ( A_k,  S_{h_k,K_k},R) & \ge \sum_{j\in J} \mathcal{S}_L ( A_k,  S_{h_k,K_k},S^j) \\
    & \ge   \int_{\bigcup_{j\in J}C_j} \phi_{\mathrm{ F}} (\nu_T) +  \phi (\nu_T) \, d\Hs^1 - 9(2m_k+3) c_2 \delta '\\
				& \ge  \int_{T\cap \overline{R}} { \phi_{\mathrm{ F}} (\nu_T) +  \phi (\nu_T)\, d\Hs^1} - 2 (m_1 + 2) c_2 \delta' - 9(2m_k+3)c_2 \delta '\\
				& =  \int_{T\cap \overline{R}} { \phi_{\mathrm{ F}} (\nu_T) +  \phi (\nu_T) \, d\Hs^1} - (20 m_1 +31)c_2 \delta',
			\end{split}
		\end{equation}
		where in the last inequality we used that $m_k \le m_1$.
By recalling once again \eqref{eq:del3} of Step 1, we observe that by \eqref{eq:finaldelamination2} we obtain that 
		\begin{equation}
			\label{eq:finalfinal}
			\begin{split}
				\mathcal{S}_L ( A_k,  S_{h_k,K_k},R) \ge   \int_{T\cap \overline{R}} { \phi_{\mathrm{ F}} (\nu_T) +  \phi (\nu_T) \, d\Hs^1} - (20 m_1 +31)c_2 \delta',
			\end{split}
		\end{equation}
  for both the case with $\Hs^1( \Gamma^{\text{A}}_k)> 0$ and the case with $\Hs^1( \Gamma^{\text{A}}_k)= 0$.  Finally, follows \eqref{eq:delaminationeq}  from choosing $k_\delta := k^1_{\delta'}$ and $\delta' = \frac{\delta}{(20 m_1 +31)c_2 }$ for  $\delta \in (0, \min \{(20 m_1 +31)c_2 , 1\})$  in \eqref{eq:finalfinal}. This completes the proof.
% and thanks to the fact that $m_1 \ge 1$.
	\end{proof}

 We continue with  the situation of the substrate cracks  in the film-substrate incoherent interface. 

\begin{lemma}
\label{lema:crackssubstrate} 
%Let $\nu\in\mathbb{S}^1$ and 
Let $T_0  $ be the $x_2$-axis. Let $\{\rho_k\}_{k\in\N} \subset [0,1] $ be such that $\rho_k \searrow 0$ and $Q_1 \subset \sigma_{\rho_1} (\Om) $. 
If $\{(A_k, S_{h_k,K_k})\} \subset \B_\textbf{m}(\sigma_{\rho_1} (\Om))$ is a sequence such that $\overline{Q_1} \setminus A_k \cngK T_0 \cap \ov{Q_1}  $ and $\overline{Q_1} \setminus S_{h_k,K_k} \cngK T_0 \cap \ov{Q_1} $ in $\R^2$ as $k \to \infty$,  then for every $\delta \in (0,1)$ small enough, there exists $k_\delta \in \N$ such that 
	\begin{equation}
	\label{eq:cracksubstrate}
		\mathcal{S}_L(A_k,S_{h_k,K_k}, Q_1) \ge 2\int_{T_0\cap \ov{Q_1}}{\phi(\bm{e_1}) \, d\Hs^1} - \delta
	\end{equation}
   for any $k \ge k_\delta$. 
	\end{lemma}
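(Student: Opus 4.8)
The plan is to mimic, almost verbatim, the proof of Lemma \ref{lema:creationdelamin}, with the target density $\phi_{\mathrm F}+\phi$ there replaced by the doubled density $2\phi$. As usual one may assume $\sup_{k\in\N}\mathcal{S}_L(A_k,S_{h_k,K_k},Q_1)<\infty$, and for a fixed $\delta'\in(0,1)$ one picks $k_{\delta'}\in\N$ so that for $k\ge k_{\delta'}$ both $\overline{Q_1}\setminus A_k$ and $\overline{Q_1}\setminus S_{h_k,K_k}$ are nonempty and contained in the tubular neighborhood $T_0^{\delta'}:=\{x\in Q_1:\dist(x,T_0)<\delta'/2\}$ of $T_0$. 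The underlying picture is that $A_k$ and $S_{h_k,K_k}$ both fill $Q_1$ up to a defect contained in $T_0^{\delta'}$ and shrinking to $T_0$ (indeed $\mathcal L^2(Q_1\setminus A_k)\to0$ and $\mathcal L^2(Q_1\setminus S_{h_k,K_k})\to0$), so that in the limit $\partial A\cap Q_1$ and $\partial S\cap Q_1$ both collapse onto the crack $T_0\cap Q_1\subset\partial A\cap\partial S\cap S^{(1)}$, whose surface density is the doubled one $2\phi$.

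\emph{Step 1 (negligible coherent interface).} Fix $k\ge k_{\delta'}$ with $\Hs^1(\Gamma^{\mathrm A}_k)=0$, where $\Gamma^{\mathrm A}_k:=(\Gamma^{\mathrm A}_{\mathrm{FS}}(A_k,S_{h_k,K_k})\setminus\Int(S_{h_k,K_k}))\cap Q_1$. The key bookkeeping point is that, by the admissibility constraint $\partial A_k\cap\Int(S_{h_k,K_k})=\emptyset$ and the subgraph-with-cracks structure $S_{h_k,K_k}=(S_{h_k}\setminus K_k)\cap\Om$, the part of $\partial A_k\cap Q_1$ lying in $K_k$ is contained in $\partial S_{h_k,K_k}\cap\partial A_k\cap S_{h_k,K_k}^{(1)}$, hence weighted by $2\phi(\nu_{A_k})$ in $\mathcal{S}_L$, whereas the only part of $\partial S_{h_k}$ that may enter $Q_1$ is a portion of the substrate graph which, by the Kuratowski convergences and $h_k\ge0$, is forced to traverse $T_0^{\delta'}$ ``going down and then up'', i.e.\ as two near-vertical curves each weighted $\phi$: in either case the effective density along $T_0$ is $2\phi$. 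Using that $\partial A_k$ and $\partial S_{h_k,K_k}$ together have at most $m_0+m_1$ connected components, one projects them onto $T_0$ exactly as in Step 2 of the proof of Lemma \ref{lema:creationdelamin}, obtaining a finite family of closed subsegments $\{C_j\}$ of $T_0\cap\overline{Q_1}$ whose union misses at most an $\Hs^1$-amount $\le C(m_0+m_1)\delta'$, and on each of which the relevant components of $\partial A_k\cup\partial S_{h_k,K_k}$ supply either one crack curve of weight $2\phi$ or two curves of weight $\phi$, each spanning the corresponding strip $S^j$; closing these curves with segments of length $\le C\delta'$ inside $\partial Q_1\cap\overline{T_0^{\delta'}}$ and invoking the anisotropic minimality of segments \cite[Remark 20.3]{M} gives $\mathcal{S}_L(A_k,S_{h_k,K_k},S^j)\ge 2\int_{C_j}\phi(\mathbf e_1)\,d\Hs^1-C\delta'$, whence \eqref{eq:cracksubstrate} with a constant multiple of $\delta'$ in place of $\delta$.

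\emph{Steps 2--4 (finitely many coherent components and induction).} For $k$ with $\Hs^1(\Gamma^{\mathrm A}_k)>0$ I would argue exactly as in Steps 2--4 of the proof of Lemma \ref{lema:creationdelamin}: split $Q_1$ into the $\le C(m_0+m_1)$ vertical strips $\{S^j\}_{j\in J}$ determined by projecting onto $T_0$ the connected components of $\partial A_k$ and of $\partial S_{h_k,K_k}$ (including those meeting $\partial Q_1$), so that the gaps of $\bigcup_jC_j$ in $T_0\cap\overline{Q_1}$ have total length $<C(m_0+m_1)\delta'$; on each strip apply Lemma \ref{lem:finiteness} to pass, up to an energy error $c_2\delta'$, to a pair $(\wt A_k,\wt S_k)\in\wt\B$ whose coherent interface in $S^j$ has finitely many components $M_k$ and which still carries a spanning path of $\partial\wt A_k$; then the contradiction argument based on $M_k<\infty$ shows that if $M_k\ge1$ the pair admits either a film island of $\wt A_k$ or a substrate grain of $\wt S_k$; and finally an induction on $M_k$ --- base case $M_k=0$ being Step 1, inductive step performed by the ``shrink an island'' and ``turn a grain into a void'' constructions of Steps 3.1--3.2 of Lemma \ref{lema:creationdelamin}, with the minimality of segments together with \eqref{eq:H2} and the definition of $\phi$ handling the surface-tension bookkeeping --- yields $\mathcal{S}_L(A_k,S_{h_k,K_k},S^j)\ge 2\int_{C_j}\phi(\mathbf e_1)\,d\Hs^1-C\delta'$ for every $j\in J$. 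Summing over $J$, using the smallness of the gaps and the bounds $m_k^1\le m_1$, $m_k^0\le m_0$ on the numbers of boundary components, gives $\mathcal{S}_L(A_k,S_{h_k,K_k},Q_1)\ge 2\int_{T_0\cap\overline{Q_1}}\phi(\mathbf e_1)\,d\Hs^1-C(m_0+m_1)\delta'$ for all $k\ge k_{\delta'}$; choosing $\delta':=\delta/(C(m_0+m_1))$ and $k_\delta:=k_{\delta'}$ then yields \eqref{eq:cracksubstrate}.

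The main obstacle, besides transplanting the already delicate induction machinery of Lemma \ref{lema:creationdelamin}, is the verification in Step 1 (and at the base of the induction) that the boundary near $T_0$ is genuinely accounted for with the doubled weight $2\phi$ in every admissible configuration: it may appear as a single crack curve (weighted $2\phi$), or as a substrate graph dipping into $Q_1$ and back (two curves weighted $\phi$), or be partly weighted by the smaller densities $\phi_{\mathrm{FS}}$ or $\phi'$, which must then be shown to be dominated using \eqref{eq:H1}, \eqref{eq:H2} and $\phi=\min\{\phi_{\mathrm S},\phi_{\mathrm F}+\phi_{\mathrm{FS}}\}$; keeping the strip decomposition and the minimality-of-segments estimate compatible with all of these --- including the possibility, peculiar to the subgraph-with-cracks structure of the admissible substrates, that the substrate graph itself enters $Q_1$ --- is exactly what makes the constant $2$ both correct and sharp.
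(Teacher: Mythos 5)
Your proposal follows the same route as the paper: split $Q_1$ into strips by projecting the (finitely many) connected components of both $\partial A_k$ and $\partial S_{h_k,K_k}$ onto $T_0$, handle the negligible-coherent-interface case via the two-curve construction (as in Step 1 of Lemma \ref{lema:creationdelamin}) and the anisotropic minimality of segments, and for the general case reduce to finitely many coherent components via Lemma \ref{lem:finiteness} and run the island/grain induction from Lemma \ref{lema:creationdelamin}. The paper's proof has exactly this structure, the only differences being bookkeeping details you have only sketched (the precise construction of the two spanning curves via \cite[Lemma 4.3]{KP}, the exact modified boundary term replacing \eqref{eq:defS^1klemma5.8}, and the explicit constant, which the paper bounds crudely by $(2m_0^0+3)(2m_1^1+3)$ rather than your $O(m_0+m_1)$).
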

 \begin{proof}
 Without loss of generality we assume that $\sup_{k \in \N} \mathcal{S}_L(A_k,S_{h_k,K_k}, Q_1) < \infty$.  
 Since $\overline{Q_1} \setminus A_k \cngK T_0 \cap \ov{Q_1}  $ and $\overline{Q_1} \setminus S_{h_k,K_k} \cngK T_0 \cap \ov{Q_1} $ in $\R^2$ as $k \to \infty$, for every $\delta' \in (0,1)$ there exits $k_{\delta'} \in \N$ such that  
	\begin{equation}
		\label{eq:crack0}
		\overline{Q_1} \setminus A_k ,\, \overline{Q_1} \setminus S_{h_k,K_k}  \subset { T^{\delta'}} 
	\end{equation}
	for every $k \ge k_{\delta'}$, where $T^{\delta'}:= \{x \in Q_1: \text{dist}(x, T_0) < \delta'/2\}$ is the tubular neighborhood with thickness $\delta'$ of $T_0$ in $Q_1$. Let $T^{\delta'}_1$ and $T^{\delta'}_2$ be the connected components of $\partial Q_1 \cap \overline{T^{\delta'}}$. Since $(A_k, S_{h_k, K_k})\in \B_{\textbf{m}}(\sigma_k (\Om))$ we can find an enumeration 
 $\{\Gamma^n_k\}_{1, \ldots,m_k^0}$
 of connected components $\Gamma^n_k$ of $\partial S_{h_k,K_k}$ lying strictly inside of $Q_1$, and an enumeration $\{\Lambda^n_k\}_{1,\ldots,m_k^1}$ of the connected components $\Lambda^n_k$ of $\partial {A}_k$ lying strictly inside of $Q_1$,   such that $m_k^\ell \le m_\ell$ for $\ell=0,1$. Moreover, 
thanks to the fact that ${ \mathcal{S}_L }(A_k, S_{ h_k, K_k},Q) < \infty$ for each $k \in \N$,  
the families $\{\Gamma^\alpha_k\}_{\alpha\in\N}$ and   $\{\Lambda^{\alpha}_k\}_{\alpha\in\N}$ of connected components $\Gamma^\alpha_k$ and $\Lambda^{\alpha}_k$ that intersect $T_1^{\delta'}$ or $T_2^{\delta'}$  of $\partial S_{h_k,K_k} \cap  \overline{Q_1} $ and of $\partial {A}_k \cap  \overline{Q_1}$, respectively, are at most countable. Furthermore, we define $\Gamma^{m_k +i}$ and $\Lambda^{m_k +i}$ for $i=1,2$ by
$$
\Gamma^{m_k +i}:=
\left(\bigcup_{\alpha\in\mathbb{N},\, \Gamma^{\alpha} \cap T^{\delta'}_i \neq \emptyset} \Gamma^{\alpha} \right) \cup T^{\delta'}_i \quad \text{and} \quad
 \Lambda^{m_k +i}:=
\left(\bigcup_{\alpha\in\mathbb{N},\, \Lambda^{\alpha} \cap T^{\delta'}_i \neq \emptyset} \Lambda^{\alpha} \right) \cup T^{\delta'}_i .
 $$

Thanks to the Kuratowski convergences of $\overline{Q_1} \setminus A_k$ and $\overline{Q_1} \setminus S_{h_k,K_k}$ to $T_0\cap \overline{Q_1}$ in $\R^2$ as $k \to \infty$, the fact that $m^\ell_k \le m_\ell$ for $\ell = 0,1$ and for every $k \in \N$, we have that 
$$
	\lim_{k \to \infty} \Hs^1 \left( (T_0\cap \ov{Q_1}) \setminus \bigcup_{n =1}^{m^0_k+2} \pi _2(\Gamma^n)   \right)=0 \quad \text{and} \quad 
	\lim_{k \to \infty} \Hs^1 \left( (T_0\cap \ov{Q_1}) \setminus \bigcup_{n =1}^{m^1_k+2} \pi _2(\Lambda^n)   \right)=0
	$$	
Hence, there exists $k^1_{\delta'} \geq k_{ \delta'}$ such that 
	\begin{equation}
	\label{eq:crackbis1}
	\Hs^1 \left( (T_0\cap \ov{Q_1}) \setminus \bigcup_{n =1}^{m^0_k+2} \pi _2(\Gamma^n) \right) < (m _0+2)  \delta' 
	\end{equation}
	for every $k \ge k^1_{\delta'}$, and there exists $k^2_{\delta'} \geq k_{ \delta'}$ such that
\begin{equation}
	\label{eq:crackbis2}
	\Hs^1 \left( (T_0\cap \ov{Q_1}) \setminus \bigcup_{n =1}^{m^1_k+2} \pi _2(\Lambda^n) \right) < (m _1+2)  \delta' 
	\end{equation}	
for every $k \ge k^2_{\delta'}$. Let $k^3_{\delta'} := \max\{k^1_{\delta'},k^2_{\delta'}\}$. Similarly to Step 2 of Lemma \ref{lema:creationdelamin}, we can decompose $\bigcup_{n=1}^{m_k+2} \pi_2 (\Gamma^n)$ and $\bigcup_{n=1}^{m_k+2} \pi_2 (\Lambda^n)$ as the finite union of disjoint open connected sets $\mathscr{C}^0:=\{C^0_j\}_{j\in J_0}$ and $\mathscr{C}^1:=\{C^1_j\}_{j\in J_1}$, respectively. Notice that the cardinality of $J_\ell$ is bounded by $2m_k^\ell +3$ for $\ell =0,1$. Therefore
	$$
	\bigcup_{n=1}^{m^0_k+2} \pi_2 (\Gamma^n) = \bigcup_{j \in J_0} C^0_j, \quad \text{and} \quad 	\bigcup_{n=1}^{m^1_k+2} \pi_2 (\Lambda^n) = \bigcup_{j \in J_1} C^1_j,%[a'_j, b'_j],
	$$
and also by \eqref{eq:crackbis1} and \eqref{eq:crackbis2} we have that
\begin{equation}
	\label{eq:crackbis3}
	\Hs^1 \left( (T_0\cap \ov{Q_1}) \setminus \bigcup_{j \in J_\ell} C^\ell_j \right) < (m _\ell+2)  \delta'
\end{equation}
for every $k \ge k^3_{\delta'}$ and $\ell = 0,1$. We observe that 
$$\mathscr{C}:=\{C: \text{$\emptyset\neq C=C^0\cap C^1$ with $C^\ell\in\mathscr{C}^\ell$ for $\ell=0,1$}\}$$
is a family of pairwise disjoint sets and has cardinality $m^k$ bounded by $(2m_k^0 +3) (2m_k^1 +3)$, i.e., $\mathscr{C}:=\{C_j\}_{j\in J}$
for an index set $J$ with cardinality $m^k\leq (2m_k^0 +3) (2m_k^1 +3)$. We observe that 
\begin{equation}
	\label{eq:crackbis4}
\Hs^1 \left( (T_0\cap \ov{Q_1}) \setminus \bigcup_{j \in J} C_j \right)\leq \sum_{\ell\in\{0,1\}}	\Hs^1 \left( (T_0\cap \ov{Q_1}) \setminus \bigcup_{j \in J_\ell} C^\ell_j \right) < (m _0 + m_1+4)  \delta',
\end{equation}
where we used \eqref{eq:crackbis3} in the last inequality. Finally, let $S^j:= ([-1,1] \times_{\R^2} C_j) \cap T^{\delta'}$ and let $T^j_1$ and $T^j_2$ the portions of the boundary of $ S^j$ parallels to the $x_1$-axis for every $j \in J$.

We now prove \eqref{eq:cracksubstrate} in three steps. In the first step, we prove \eqref{eq:cracksubstrate} for  $k \ge k^3_{\delta'}$ such that 
	$\Gamma^\text{A}_k := \Gamma^\text{A}_{\text{FS}}(A_k,S_{h_k,K_k})	{ \cap S^j}$ is $\Hs^1$-negligible by repeating the same arguments of Step 1 in the proof of Lemma \ref{lema:creationdelamin}.  In the second step, by arguing as in Steps 2 and 3 in the proof of \ref{lema:creationdelamin} we prove \eqref{eq:delaminationeq} for those $k \ge k^3_{\delta'}$ such that  $\Hs^1(\Gamma^\text{A}_k)$ is positive. In the last step, by arguing as in Step 4 in the proof of Lemma \ref{lema:creationdelamin} we obtain \eqref{eq:cracksubstrate}.

{\itshape Step 1.} Assume that $\Hs^1 \left(\Gamma^{\text{A}}_k \right)=0$ for a fix $k \ge k^3_{\delta'}$. By construction and by  \cite[Lemma 3.12]{F} there exists  a curve with  support  $\gamma_1 \subset \partial S_{h_k,K_k}\cap (S^j \setminus \Int(S_{h_k,K_k}))$ connecting $T^{j}_1$ with $T^{j}_2$, and hence, 
by \cite[Lemma 4.3]{KP}, there exists also a curve with support  $\gamma_2 \subset \overline{\partial S_{h_k,K_k} \setminus (\gamma_1 \cap \partial \Int(\overline{S_{h_k,K_k}}) } $ such that $\gamma_1 \cap \partial ^* S_{h_k,K_k}$, $\gamma_2\cap \partial ^* S_{h_k,K_k}$ are disjoint up to an $\Hs^1$-negligible set and $\gamma_2$ joins $T^{j}_1$ with $T^{j}_2$. Since $\Hs^1 \left(\Gamma^{\text{A}}_k \right)=0$, it yields that
\begin{equation}
			\label{eq:crack1}
			\begin{split}
				\mathcal{S}_L (A_k, S_{h_k,K_k}, S^j) &+ \sum_{i = 1}^2 2\int_{T^j_i} \phi(\nu_{T^j_i}) \, d\Hs^1 \\
				  &\ge  \int_{(\gamma_1 \cup \gamma_2) \cap  \partial ^*S_{h_k, K_k} \cap \partial^* {A_k} }{\phi ( \nu_{ A_k} )\, d\Hs^1} \\
				& \quad  + \int_{(\gamma_1 \cup \gamma_2) \cap ( \partial S_{h_k, K_k} \cap \partial{A_k}) \cap (S^{(1)}_{h_k,K_k}  \cup A^{(0)}_k )  }{2\phi ( \nu_{ A_k} )\, d\Hs^1} \\
				%& \quad  + \int_{(\gamma_1 \cup \gamma_2) \cap \partial ^* { A_k} \cap S_{ h_k,  K_k} ^{(0)}}{  \phi ( \nu_{ A_k} ) \, d\Hs^1} \\   
				 & \quad + \int_{(\gamma_1 \cup \gamma_2) \cap  \partial ^*{S_{h_k, K_k}} \cap \partial A_k  \cap { A_k}^{(1)}}{ (\phi_{\mathrm{ F}} + \phi )( \nu_{ A_k} ) \, d\Hs^1} \\
				%&\quad + \int_{(\gamma_1 \cup \gamma_2) \cap (\partial {S_{h_k, K_k}} \cap \partial A_k) \cap S^{(0)}_{ h_k,K_k} \cap  A_k^{(1)}  }{  {\phi } ( \nu_{S_{ h_k, K_k}}  ) \, d\Hs^1}  \\
				& \quad + \sum_{i = 1}^2 2\int_{T^j_i} \phi(\nu_{T^j_i}) \, d\Hs^1 \\
	& =  \int_{{\Gamma_1}}{\phi (\nu_{{ \Gamma_1}})\, d\Hs^1} + \int_{\Gamma_2}{\phi(\nu_{\Gamma_2})\, d\Hs^1},  %- \sum_{j = 1}^2 \sum_{i=1}^2 {\int_{\wt T^j_i}{\phi(\nu_{\wt T^j_i}) \, d\Hs^1 }} %  - \sum_{i=1}^2 {\int_{\wt T^2_i}{\phi_{\mathrm{F}}(\nu_{\wt T^j_i}) \, d\Hs^1 }} , % - \sum_{i=1}^2{\int_{\wt T^3_i}{\phi_{\mathrm{ FS}}(\nu_{\wt T^j_i}) \, d\Hs^1 }} ,
			\end{split}
		\end{equation}
where $\Gamma_1 := T^j_{1} \cup { \gamma_1  } \cup  T^j_2$ and $\Gamma_2:= T^j_{1} \cup { \gamma_2  } \cup  T^j_2$. From the anisotropic minimality of segments (see \cite[Remark 20.3]{M}) it follows that
\begin{equation}
			\label{eq:crack2}
			\int_{{\Gamma_1}}{\phi (\nu_{{ \Gamma_1}})\, d\Hs^1} + \int_{\Gamma_2}{\phi(\nu_{\Gamma_2})\, d\Hs^1} \ge 2\int_{C_j}{  { \phi}(\textbf{e}_\textbf{1}) \, d\Hs^1}, 
		\end{equation}
and so, thanks to the facts that $\Hs^1( T^j_{1} \cup  T^j_2)\le 2 \delta'$,  and by \eqref{eq:H1}, \eqref{eq:crack0}, \eqref{eq:crack1} and \eqref{eq:crack2}, we deduce that
\begin{equation}
			\label{eq:crack3}
			\mathcal{S}_L(A_k, S_{h_k,K_k}, S^j)  \ge 2 \int_{C_j}{  { \phi}(\textbf{e}_\textbf{1}) \, d\Hs^1} - {4} c_2 {\delta'}.
		\end{equation}

{\itshape Step 2.} Assume that $\Hs^1 \left(\Gamma^{\text{A}}_k \right)>0$ for a fix $k \ge k_{\delta'}$. By construction and by \cite[Lemma 3.12]{F}, there exists a curve with support $ \wt \gamma_1 \subset \partial A_k$ connecting $T^{j_1}_1$ with $T^{j_1}_2$. If $ \Hs^1 (\wt \gamma_1 \cap  (\partial A_k \setminus \partial S_{h_k})) =0$, by repeating the same arguments of Step 1, we can deduce that 
\begin{equation}
			\label{eq:crack4}
			\mathcal{S}_L(A_k, S_{h_k,K_k}, S^j)  \ge 2 \int_{C_j}{  { \phi}(\textbf{e}_\textbf{1}) \, d\Hs^1} - {4} c_2 {\delta'}.
\end{equation}
If $ \Hs^1 ( \wt \gamma_1 \cap (\partial A_k \setminus \partial S_{h_k}) ) > 0$, then by reasoning as it was done in the proof of Lemma \ref{lema:creationdelamin} to reach \eqref{missing1} we obtain 
\begin{equation}
     \label{eq:crack6}
 \mathcal{S}_L(A_{k},S_{h_k,K_k} , S^j) \ge  2\int_{C_j}{  \phi(\textbf{e}_\textbf{1}) \, d\Hs^1}  -9c_2 \delta'. 
 \end{equation}
 More precisely, we reach \eqref{eq:crack6} by noticing that $\nu_{T_0}=\textbf{e}_\textbf{1}$ and by following the proof of Step 2 (from after equation \eqref{eq:delaminationsplit}) and of Step 3 in Lemma \ref{lema:creationdelamin} with the only difference that we replace the reference to Step 1 of Lemma \ref{lema:creationdelamin} with the reference to Step 1 of the current lemma and the extra term appearing in \eqref{eq:defS^1klemma5.8} with 
 $$
 \sum_{i = 1,2}   \int_{T^j_i}  (\phi_\mathrm{ F} + 3\phi)(\bm{e_2})  \, d\Hs^1. 
$$.

{\itshape Step 3.} Let  $k \ge k^3_{\delta'}$ and let $j \in J$. By applying  \eqref{eq:crack3} if $\Hs^1( \Gamma^{\text{A}}_k  \cap S^j)= 0$ and  \eqref{eq:crack6} if $\Hs^1( \Gamma^{\text{A}}_k  \cap S^j)> 0$  we obtain that
\begin{equation}
\label{genralcrackineq}
			\mathcal{S}_L(A_k, S_{h_k,K_k},\wt S^j)  \ge  2 \int_{C_j}{  { \phi}(\textbf{e}_\textbf{1}) \, d\Hs^1} - 9 c_2 {\delta'}.
\end{equation}
Therefore, since  the cardinality of $J$ is bounded by $(2m_k^0 +3)\cdot (2m_k^1 +3)$, \eqref{genralcrackineq} yields 
\begin{equation}
			\label{eq:finalcrackbis1}
			\begin{split}
				\mathcal{S}_L &( A_k,  S_{h_k,K_k},Q_1)  \ge \sum_{j\in J} \mathcal{S}_L ( A_k,  S_{h_k,K_k},S^j) \\
    &\ge   2\int_{\bigcup_{j\in J}C_j} \phi (\textbf{e}_\textbf{1})\, d\Hs^1 - 9(2m_k^0 +3)\cdot (2m_k^1 +3) c_2 \delta '\\
				& \ge 2 \int_{T_0\cap \overline{Q_1}} { \phi (\textbf{e}_\textbf{1})\, d\Hs^1} -(m _0 + m_1+4) c_2  \delta' - 9(2m_k^0 +3)\cdot (2m_k^1 +3) c_2 \delta '\\
				& =  2\int_{T\cap \overline{R}} { \phi (\textbf{e}_\textbf{1}) \, d\Hs^1} - \alpha \delta',
			\end{split}
		\end{equation}
		where $ \alpha := (m _0 + m_1+4) c_2 + 9(2m_0 +3)\cdot (2m_1 +3) $ and in the last inequality we used that $m^\ell_k \le m_\ell$ for $\ell = 0,1$.
Finally, \eqref{eq:cracksubstrate}  follows  from choosing $k_\delta := k^3_{\delta'}$ and $\delta' = \frac{\delta}{\alpha}$ for  $\delta \in (0, \min \{\alpha, 1\})$  in \eqref{eq:finalcrackbis1}. This completes the proof.
\end{proof}

We continue by treating   the situation related to the blow up at a point in the filaments of both the substrate and the film.

 \begin{lemma}
		\label{lema:filamentssubstrate} 
%Let $\nu\in\mathbb{S}^1$ and
Let $T_0  $ be the $x_2$-axis. Let $\{\rho_k\}_{k\in\N} \subset [0,1] $ be such that $\rho_k \searrow 0$ and $Q_1 \subset \sigma_{\rho_1} (\Om) $. 
If $\{(A_k, S_{h_k,K_k})\} \subset \B_\textbf{m}(\sigma_{\rho_1} (\Om))$ is a sequence such that  
$Q_1 \cap A_k \cngK  T_0 \cap \ov{Q_1} $ in $\R^2$ and $Q_1 \cap S_{h_k,K_k} \cngK  T_0 \cap  \ov{Q_1}$ in $\R^2$ as $k \to \infty$,  then for every $\delta \in (0,1)$ small enough, there exists $k_\delta\in \N$ such that 
	\begin{equation}
	\label{eq:filamentsubstrate}
		\mathcal{S}_L(A_k,h_k,K_k,Q_1) \ge 2\int_{T_0}{\phi'(\bm{e_1}) \, d\Hs^1} - \delta
	\end{equation}
   for any $k \ge k_\delta$. 
	\end{lemma}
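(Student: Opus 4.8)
The plan is to follow the same three-step scheme used in the proof of Lemma \ref{lema:crackssubstrate}, since the present statement is its ``filament'' analogue: now both $Q_1\cap A_k$ and $Q_1\cap S_{h_k,K_k}$ (rather than their complements) Kuratowski-converge to the segment $T_0\cap\overline{Q_1}$, so in the blow-up limit the composite region collapses to a one-dimensional filament along the $x_2$-axis. The target constant is $2\phi'(\bm{e_1})$ with $\phi'=\min\{\phi_{\mathrm F},\phi_{\mathrm S}\}$, reflecting that a filament of the composite (which is film, or film-over-substrate) is counted with double weight and the cheapest of the two single free-boundary tensions. As usual we may assume $\sup_k \mathcal{S}_L(A_k,S_{h_k,K_k},Q_1)<\infty$, fix $\delta'\in(0,1)$, and use the Kuratowski convergences to find $k_{\delta'}$ with $Q_1\cap A_k,\ Q_1\cap S_{h_k,K_k}\subset T^{\delta'}$ for $k\ge k_{\delta'}$, where $T^{\delta'}:=\{x\in Q_1:\dist(x,T_0)<\delta'/2\}$, with $T^{\delta'}_1,T^{\delta'}_2$ the two horizontal components of $\partial Q_1\cap\overline{T^{\delta'}}$.

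First I would set up the bounded-connectivity bookkeeping exactly as in Lemma \ref{lema:crackssubstrate}: enumerate the connected components $\Gamma^n_k$ of $\partial S_{h_k,K_k}$ and $\Lambda^n_k$ of $\partial A_k$ lying strictly inside $Q_1$ (at most $m_0$ and $m_1$ of them), absorb the countably many components meeting $T^{\delta'}_1$ or $T^{\delta'}_2$ into two sets $\Gamma^{m_k+i}$, $\Lambda^{m_k+i}$, project onto the $x_2$-axis, and decompose the union of projections into finitely many disjoint open intervals $\mathscr{C}^0=\{C^0_j\}$, $\mathscr{C}^1=\{C^1_j\}$ and then into the common refinement $\mathscr{C}=\{C_j\}_{j\in J}$ with $|J|\le(2m_0+3)(2m_1+3)$. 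Using $Q_1\cap A_k\cngK T_0\cap\overline{Q_1}$ and $Q_1\cap S_{h_k,K_k}\cngK T_0\cap\overline{Q_1}$ together with $m^\ell_k\le m_\ell$, one gets $k^3_{\delta'}\ge k_{\delta'}$ so that $\Hs^1\big((T_0\cap\overline{Q_1})\setminus\bigcup_{j\in J}C_j\big)<(m_0+m_1+4)\delta'$ for $k\ge k^3_{\delta'}$. Put $S^j:=([-1,1]\times_{\R^2}C_j)\cap T^{\delta'}$ with horizontal boundary pieces $T^j_1,T^j_2$, so $\Hs^1(T^j_1\cup T^j_2)\le 2\delta'$.

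Second, the per-strip estimate. On each $S^j$ write $\Gamma^{\mathrm A}_k:=\Gamma^{\mathrm A}_{\mathrm{FS}}(A_k,S_{h_k,K_k})\cap S^j$. If $\Hs^1(\Gamma^{\mathrm A}_k)=0$, then inside $S^j$ the set $\overline{A_k}$ is a thin sliver and $\partial A_k$ contains (by \cite[Lemma 3.12]{F} and \cite[Lemma 4.3]{KP}) two curves $\wt\gamma_1,\wt\gamma_2\subset\partial A_k$ joining $T^j_1$ to $T^j_2$ whose reduced-boundary parts are essentially disjoint, and each of them lies in $\partial A\cap A^{(0)}$ or in the ``2$\varphi'$'' region of \eqref{eq:definitionlocenergyS}, or it is part of $\partial A\cap\partial S$ so that the composite there is a filament of both: in every case the local surface density is $\ge\phi'(\nu)$ by (H1)--(H2) and the definition of $\phi'=\min\{\phi_{\mathrm F},\phi_{\mathrm S}\}$ (note $\phi\ge\phi_{\mathrm S}$ would be false, so one must be careful and use that a co-filament of $A$ and $S$ contributes at least $2\phi'$ per sheet, using that on $\partial S\setminus\partial A$ one has $\phi_{\mathrm{FS}}$ and on the overlap $\phi_{\mathrm F}+\phi$, both $\ge 2\phi'$ after pairing the two sheets). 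Adding the horizontal connectors $T^j_1,T^j_2$ and applying the anisotropic minimality of segments \cite[Remark 20.3]{M} to $\Gamma_1:=T^j_1\cup\wt\gamma_1\cup T^j_2$ and $\Gamma_2:=T^j_1\cup\wt\gamma_2\cup T^j_2$ gives
\begin{equation*}
\mathcal{S}_L(A_k,S_{h_k,K_k},S^j)\ \ge\ 2\int_{C_j}\phi'(\bm{e_1})\,d\Hs^1 - 4c_2\delta'.
\end{equation*}
If $\Hs^1(\Gamma^{\mathrm A}_k)>0$, I would first invoke Lemma \ref{lem:finiteness} (with ``$R$'' $=S^j$) to pass, up to an error $c_2\delta'$, to $(\wt A_k,\wt S_k)\in\B_\mathbf{m}$ with $\wt\Gamma^{\mathrm A}_k$ having finitely many, say $M_k$, components and with connecting paths joining the two horizontal sides; then I would run the same island/void/grain induction as in Step 3 of Lemma \ref{lema:creationdelamin} and Step 2 of Lemma \ref{lema:crackssubstrate} (shrinking islands, filling voids, opening grains by \cite[Remark 20.3]{M}), with the auxiliary energy $\mathcal{S}^1_L$ carrying an extra boundary term $\sum_{i=1,2}\int_{T^j_i}(\phi_{\mathrm F}+3\phi')(\bm{e_2})\,d\Hs^1$, to reduce to the $\Hs^1(\Gamma^{\mathrm A}_k)=0$ case and reach $\mathcal{S}_L(A_k,S_{h_k,K_k},S^j)\ge 2\int_{C_j}\phi'(\bm{e_1})\,d\Hs^1-9c_2\delta'$.

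Third, summing over $j\in J$ and using $|J|\le(2m_0+3)(2m_1+3)$, $\Hs^1\big((T_0\cap\overline{Q_1})\setminus\bigcup_j C_j\big)<(m_0+m_1+4)\delta'$, and (H1), gives
\begin{equation*}
\mathcal{S}_L(A_k,S_{h_k,K_k},Q_1)\ \ge\ 2\int_{T_0\cap\overline{Q_1}}\phi'(\bm{e_1})\,d\Hs^1 - \alpha\delta'
\end{equation*}
with $\alpha:=(m_0+m_1+4)c_2+9(2m_0+3)(2m_1+3)c_2$, and choosing $\delta'=\delta/\alpha$, $k_\delta:=k^3_{\delta'}$ for $\delta\in(0,\min\{\alpha,1\})$ yields \eqref{eq:filamentsubstrate}. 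The main obstacle I anticipate is the bookkeeping of \emph{which} of the eight densities in \eqref{eq:definitionlocenergyS} actually occurs along the two sheets $\wt\gamma_1,\wt\gamma_2$ of the collapsing composite: one must verify that in each admissible subregion the pointwise density is $\ge\phi'(\nu)$ per sheet so that the two sheets together give the claimed $2\phi'$, and this requires a careful case split using $S_k\subset\overline{A_k}$, the density-set decomposition \eqref{eq:decompositionE}, and hypothesis (H2) (to handle the $\partial S\setminus\partial A$ pieces where only $\phi_{\mathrm{FS}}$ appears, comparing $2\phi_{\mathrm{FS}}$ or $\phi_{\mathrm F}+\phi$ with $2\phi'$). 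The induction machinery itself is by now routine, being a verbatim transplant of Lemmas \ref{lema:creationdelamin}--\ref{lema:crackssubstrate}.
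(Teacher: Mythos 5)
Your proposal is correct and takes essentially the same approach as the paper: the paper disposes of this lemma in a single sentence by invoking symmetry with Lemma~\ref{lema:crackssubstrate} (complements collapsing to $T_0$ becoming the sets themselves collapsing) together with $\phi'\le\phi$, and your write-up is a faithful unrolling of that reduction. One small caution on a side remark: your parenthetical inference that $2\phi_{\mathrm{FS}}\ge 2\phi'$ ``after pairing the two sheets'' is not in general true (one can have $\phi_{\mathrm{FS}}<\phi'=\min\{\phi_{\mathrm F},\phi_{\mathrm S}\}$ consistently with (H1)--(H2)); this does not damage your argument, however, because in your Step~1 the hypothesis $\Hs^1(\Gamma^{\mathrm A}_k)=0$ already removes the $\partial S\setminus\partial A$ contributions carrying $\phi_{\mathrm{FS}}$, and in Step~2 they are handled by Lemma~\ref{lem:finiteness} and the island/grain induction exactly as you set up.
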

 \begin{proof}
 The situation is symmetric to the situation of  Lemma \ref{lema:crackssubstrate} and the proof is the same since $\phi' \le \phi$. \end{proof}

We continue with  the situation in the blow up of    the substrate filaments on the film free boundary.

	\begin{lemma}
		\label{lem:delaminationofS}
Let $T_0  $ be the $x_2$-axis. Let $\{\rho_k\}_{k\in\N} \subset [0,1] $ be such that $\rho_k \searrow 0$ and $Q \subset \sigma_{\rho_1} (\Om) \cap H_{0, - \bm{e_2}}$ be an open square whose sides are either parallel or perpendicular to $\bm{e_1}$ and $T_0 \cap Q \neq \emptyset$. 
If $\{(A_k, S_{h_k,K_k})\} \subset \B_\textbf{m}(\sigma_{\rho_1} (\Om))$ is a sequence such that  $\overline{Q} \cap  S_{h_k,K_k}  \cngK T_0 \cap \overline{Q} $ and $\overline {Q} \cap  A_\kn \cngK H_{0, \textbf{e}_\textbf{1}} \cap  \overline {Q}  $, then for every $\delta \in (0,1)$, there exists $k_\delta\in \N$ such that for any $k \ge k_\delta$,
\begin{equation}
	\label{eq:delaminationS}
	\mathcal{S}_L(A_k, S_{h_k,K_k},Q) \ge  \int_{T_0 \cap \overline{Q} }{ \phi_{\mathrm{ F}}({\bm e}_{{\bm 1}}) \, d\Hs^1} - \delta.
		\end{equation}
	\end{lemma}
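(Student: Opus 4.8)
The plan is to mirror the structure of the previous blow-up lemmas (Lemmas \ref{lema:creationOneBdr} and \ref{lema:creationdelamin}), with the key simplification that in this configuration the limit $T_0\cap\overline{Q}$ lies on the boundary of the half-space $H_{0,\bm{e_1}}$ that is the Kuratowski limit of $A_k\cap\overline{Q}$, while $S_{h_k,K_k}\cap\overline Q$ collapses onto $T_0\cap\overline Q$. Geometrically, this means that near $T_0$ the substrate is degenerating into a filament sitting on the free boundary of the film $A_k$, so that the relevant surface tension is $\phi_{\mathrm F}$ (the film/vapor tension on $\partial^*A_k\setminus\partial^* S$). First I would fix $\delta'\in(0,1)$ and use the two Kuratowski convergences to produce $k_{\delta'}\in\N$ and a tubular neighborhood $T^{\delta'}:=\{x\in Q:\dist(x,T_0)<\delta'/2\}$ such that $\overline Q\cap S_{h_k,K_k}\subset T^{\delta'}$ and $\partial A_k\cap\overline Q$ is $\delta'$-close to $T_0$ inside $\overline Q$, for all $k\ge k_{\delta'}$; I would also introduce the two connected components $T^{\delta'}_1,T^{\delta'}_2$ of $\partial Q\cap\overline{T^{\delta'}}$.

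Second, I would split the argument according to whether $\Gamma^{\text A}_k:=(\Gamma^{\text A}_\mathrm{FS}(A_k,S_{h_k,K_k})\setminus\Int(S_{h_k,K_k}))\cap Q$ is $\Hs^1$-negligible or not, exactly as in the previous lemmas. In the negligible case, by \eqref{eq:parametrizationk}-type reasoning and \cite[Lemma 3.12]{F} there is a parametrization $r_k$ of $\partial\Int(\overline{S_{h_k,K_k}})$ whose support $\gamma_k$ joins $T^{\delta'}_1$ to $T^{\delta'}_2$; since $\partial S_{h_k,K_k}\subset T^{\delta'}$ the relevant portion of $\mathcal S_L$ over $\gamma_k$ involves only $\phi_{\mathrm F}$ (via the film free-boundary term $\Om\cap(\partial^* A_k\setminus\partial^* S_k)$ and the term $\Om\cap\partial A_k\cap A_k^{(1)}\cap S_k^{(0)}$, the latter weighted $2\phi_{\mathrm F}$), because $S_k$ collapses and hence the substrate/vapor or substrate/film interfaces contribute negligible length near $T_0$ after we discard the short vertical connectors $\widetilde T_i\subset\partial Q\cap\overline{T^{\delta'}}$ connecting the endpoints of $\gamma_k$ to $T_0\cap\overline Q$, which have $\Hs^1$-measure $\le\delta'/2$. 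Closing up $\gamma_k$ with $\widetilde T_1,\widetilde T_2$ into a curve connecting the two ends of $T_0\cap\overline Q$ and applying the anisotropic minimality of segments \cite[Remark 20.3]{M} gives
\begin{equation*}
\mathcal S_L(A_k,S_{h_k,K_k},Q)\ge\int_{T_0\cap\overline Q}\phi_{\mathrm F}(\bm e_1)\,d\Hs^1-2c_2\delta'.
\end{equation*}

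Third, for those $k$ with $\Hs^1(\Gamma^{\text A}_k)>0$ I would invoke Lemma \ref{lem:finiteness} to pass, up to an error $c_2\delta'$ in the energy and up to replacing $(A_k,S_{h_k,K_k})$ by some $(\widetilde A_k,\widetilde S_k)\in\wt\B$, to the situation in which $\widetilde\Gamma^{\text A}_k$ has finitely many, say $M_k$, connected components, preserving the boundary-connectivity properties (i) and (ii) of that lemma. Then I would run the same island/void/grain induction on $M_k$ used in Lemma \ref{lema:creationOneBdr} (Steps 2–3): shrinking film islands, filling extended composite voids, and modifying substrate grains into new voids by the anisotropic minimality of segments, each step reducing $M_k$ by one while losing only $O(\delta')$ of energy, until one reaches the $\Hs^1$-negligible case already handled. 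Since the substrate is degenerating, the grain-modification step uses only hypothesis (H1) and the non-negativity of $\phi,\phi_{\mathrm{FS}}$ to bound below the substrate contributions that are discarded, and the final comparison term is again $\phi_{\mathrm F}$ on $T_0$. Finally I would take $\delta':=\delta/(Cc_2)$ for the appropriate absolute constant $C$ coming from the induction, set $k_\delta:=k_{\delta'}$, and combine the two cases to obtain \eqref{eq:delaminationS}.

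\textbf{Main obstacle.} The hard part will be organizing the induction bookkeeping: one must verify that in this degenerate geometry the curves produced by \cite[Lemma 3.12]{F} along $\partial\Int(\overline{\widetilde S_k})$ and along $\partial\widetilde A_k$ genuinely cross from $T^{\delta'}_1$ to $T^{\delta'}_2$ (so that assertion (ii) of Lemma \ref{lem:finiteness} is applicable and provides the connecting path $\widetilde\Lambda$), and that shrinking islands and modifying grains never produces a configuration leaving $\wt\B$ or destroying this crossing property. A secondary delicate point is making sure that in the $\Hs^1$-negligible base case the energy contributions of the substrate boundary $\partial S_{h_k,K_k}$ near $T_0$ are genuinely controlled by $\phi_{\mathrm F}$ rather than by the larger combinations appearing in \eqref{eq:definitionlocenergyS}; this is where one uses that $A_k\cap\overline Q\cngK H_{0,\bm e_1}\cap\overline Q$ forces the points of $\partial S_{h_k,K_k}$ near $T_0$ to lie in $A_k^{(1)}$ (so the applicable densities select the $2\phi_{\mathrm F}$ filament term or the $\phi_{\mathrm F}$ free-boundary term), together with the fact that the discarded connectors have length $O(\delta')$.
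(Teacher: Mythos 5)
Your plan captures the broad strategy correctly — tubular neighborhood, split on $\Hs^1(\Gamma^{\text A}_k)$, pass to finitely many components via Lemma \ref{lem:finiteness}, induct by shrinking islands/modifying grains, and close by anisotropic minimality of segments. However, there are two genuine gaps, one in the choice of curve and one in the missing localization.

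\textbf{Wrong parametrized curve in the base case.} You take $\gamma_k$ to be the support of a parametrization of $\partial\Int(\overline{S_{h_k,K_k}})$, i.e.\ a portion of the \emph{substrate} boundary, and claim the relevant integrand there is $\phi_{\mathrm F}$-controlled. But on $\partial S_{h_k,K_k}$ the integrand of $\mathcal{S}_L$ can be $\phi_{\mathrm{FS}}$, $\phi$, $2\phi_{\mathrm{FS}}$, or $2\phi$, none of which dominates $\phi_{\mathrm F}$ in general under (H1)--(H2) (indeed $\phi:=\min\{\phi_{\mathrm S},\phi_{\mathrm F}+\phi_{\mathrm{FS}}\}$ and $\phi_{\mathrm{FS}}$ are incomparable to $\phi_{\mathrm F}$). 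The paper instead parametrizes a curve $\gamma_k\subset\partial(A_k\setminus\Int(S_{h_k}))$, on which — once the coherent interface $\Gamma^{\text A}_k$ is negligible — the only nonnegligible contributions come from $\partial^*A_k\setminus\partial S_{h_k,K_k}$, from $(\partial A_k\setminus\partial S)\cap(A_k^{(0)}\cup A_k^{(1)})$, from $\partial^*S\cap\partial A_k\cap A_k^{(1)}$, and from $\partial S\cap\partial^*A_k\cap S^{(0)}$, every one of which carries weight at least $\phi_{\mathrm F}$. This is precisely why the lower bound in \eqref{eq:delaminationS} is $\phi_{\mathrm F}$, and the estimate does not close if you integrate over the substrate boundary.

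\textbf{Missing strip decomposition.} You model the induction on Lemma \ref{lema:creationOneBdr}, but the geometry here matches Lemma \ref{lema:creationdelamin}: the substrate collapses, whereas $A_k$ converges to a \emph{half-space}, not to a thin set. Consequently $\partial A_k\cap T^{\delta'}$ is not one curve spanning the tube but a union of up to $m_1$ connected components, each spanning only a sub-interval of $T_0\cap\overline Q$. Before one can extract a crossing curve in $\partial(A_k\setminus\Int(S_{h_k}))$ and invoke minimality of segments, the tube must be cut into strips $S^j$ determined by the projections $\pi_2(\Lambda^n_k)$ of those components, exactly as in Step 2 of Lemma \ref{lema:creationdelamin}. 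The component bound $m_k^1\le m_1$ then controls both the number of strips and the total projection error via \eqref{eq:filamentnfree6}. Only inside each strip does a crossing curve exist and does assertion (ii) of Lemma \ref{lem:finiteness} give the persistent connecting path $\widetilde\Lambda$. Running a global island/void/grain induction as in Lemma \ref{lema:creationOneBdr} skips this outer loop, and the base-case crossing property you need may simply fail.

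Finally, a minor point: the induction used here involves shrinking islands and modifying substrate grains into voids (Steps 3.1--3.2 of Lemma \ref{lema:creationdelamin}), not the ``filling composite voids'' operation of Lemma \ref{lema:creationOneBdr}; listing all three conflates the two lemmas.
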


	\begin{proof}
	Without loss of generality we assume that $\sup_{k \in \N} \mathcal{S}_L(A_k,S_{h_k,K_k}, Q) < \infty$. 
%The proof of \eqref{eq:delaminationS} is similar to the proof of based on the proof of Lemma \ref{lema:creationOneBdr}. 
%We prove \eqref{eq:delaminationS} in two steps. 
Since $\overline{Q} \cap  S_{h_k,K_k}  \cngK T_0 \cap \overline{Q_1}$ and $\overline {Q} \cap  A_\kn \cngK H_{0, \textbf{e}_\textbf{1}} \cap  \overline {Q}$ in $\R^2$ as $k \to \infty$ for every $\delta' \in (0,1)$ there exists $k_{\delta'} \in \N$ such that 
\begin{equation}
    \label{eq:filamentsdelamin1}
    \overline{Q}\cap   S_{h_k,K_k} , \, Q\cap \partial A_k \subset T^{\delta'},
\end{equation}
where $T^{\delta'} := \{x \in Q : \dist(x , T_0) < \frac{\delta'}2\}$.
Let $T_1$ be the upper side of $Q$ and let $T_1^{\delta'} : = \{x \in Q: \text{dist}(x, T_1) < \delta'/2\} $. By the Kuratowski convergence of $ \overline{Q}\cap   S_{h_k,K_k}$, there exists $k^1_{\delta'} \ge k_{\delta'}$ such that $S_{h_k} \cap \ov{T_1^{\delta'} } \neq \emptyset$ for every $k \ge k^1_{\delta'}$. Let $R := (T^{\delta'} \setminus \ov{T_1^{\delta'}})$ and  %By construction and by \cite[Lemma 3.12]{F}, there exists a support of a curve $\gamma \subset \partial S_{h_k}$ connecting the two sides of $ \partial R$ parallels to the $x_1$-axis.
%We prove \eqref{eq:delaminationS} in two steps, in the first step we prove \eqref{eq:delaminationS} for those $k \ge k^1_{\delta'}$ such that $\Hs^1((\Gamma_{\mathrm{ FS}}^\text{A} (A_k,S_{h_k,K_k}) \setminus \Int(S_{h_k,K_k})) \cap R)=0$, while in the second step we prove \eqref{eq:delaminationS} for those $k\ge k^1_{\delta'}$ such that $\Hs^1((\Gamma_{\mathrm{ FS}}^\text{A} (A_k,S_{h_k,K_k}) \setminus \Int(S_{h_k,K_k})) \cap R)>0$. 
let $T'_1, T'_2 \subset \partial R$ be the upper and lower  side of the rectangle $R$, respectively. % and $T'_{\ell,\delta'}:=T'_\ell\cap \ov{T^{\delta'}}$ for $\ell=1,2$. 

Since $(A_k, S_{h_k, K_k})\in \B_{\textbf{m}}(\sigma_k (\Om))$ we can find an enumeration $\{\Lambda^n_k\}_{n=1,\ldots,m_k^1}$ of the connected components $\Lambda^n_k$ of $\partial {A}_k$ lying strictly inside of $R$,   such that $m_k^1 \le m_1$. Moreover, thanks to the fact that ${ \mathcal{S}_L }(A_k, S_{ h_k, K_k},Q) < \infty$ for each $k \in \N$, the family $\{\Lambda^{\alpha}_k\}_{\alpha\in\N}$ of connected components $\Lambda^{\alpha}_k$ of $\partial {A}_k \cap  \overline{R}$ that intersect $T_1'$ or $T_2'$, respectively, are at most countable. Furthermore, we define $\Lambda^{m_k +i}$ for $i=1,2$ by
$$
 \Lambda^{m_k +i}_k:=
\left(\bigcup_{\alpha\in\mathbb{N},\, \Lambda^{\alpha}_k \cap T^{\delta'}_i \neq \emptyset} \Lambda^{\alpha}_k \right) \cup T'_i .
 $$
Thanks to the Kuratowski convergences of $\overline{R} \cap \partial  A_k$ to $T_0\cap \overline{R}$ in $\R^2$ as $k \to \infty$, the fact that $m^1_k \le m_1$ for every $k \in \N$, we have that 
$$
	\lim_{k \to \infty} \Hs^1 \left( (T_0\cap \ov{R}) \setminus \bigcup_{n =1}^{m^1_k+2} \pi _2(\Lambda^n_k)   \right)=0
$$
Hence, there exists $k^2_{\delta'} \geq k^1_{\delta'}$ such that 
\begin{equation}
	\label{eq:filamentnfree6}
	\Hs^1 \left( (T_0\cap \ov{R}) \setminus \bigcup_{n =1}^{m^1_k+2} \pi _2(\Lambda^n_k) \right) < (m _1+2)  \delta' 
\end{equation}
for every $k \ge k^2_{\delta'}$. Similarly to Step 2 of Lemma \ref{lema:creationdelamin}, we can decompose $\bigcup_{n=1}^{m_k+2} \pi_2 (\Lambda^n_k)$ as the finite union of disjoint open connected sets $\mathscr{C}:=\{C_j\}_{j\in J}$. Notice that the cardinality of $J$ is bounded by $2m_k^1 +3$. Therefore 	
$$
\bigcup_{n=1}^{m^1_k+2} \pi_2 (\Lambda^n_k) = \bigcup_{j \in J} C_j,%[a'_j, b'_j],
$$
and also by \eqref{eq:filamentnside6} we have that
\begin{equation}
	\label{eq:filamentnsfree7}
	\Hs^1 \left( (T_0\cap \ov{R}) \setminus \bigcup_{j \in J} C_j \right) < (m _1+2) c_2  \delta'
\end{equation}
for every $k \ge k^2_{\delta'}$. Finally, let $S^j:= (\pi_1(R) \times_{\R^2} C_j) \cap T^{\delta'}$ and let $T^j_1$ and $T^j_2$ be the upper and lower sides of the boundary of each rectangle $ S^j$, respectively.
	
    \textit{Step 1.} Let $k \ge k^2_{\delta'}$ and $ j \in J$ be such that $\Hs^1(\Gamma_{\mathrm{ FS}}^\text{A} (A_k,S_{h_k,K_k}) \cap S^j)=0$. 
    In view of the  construction of $S^j$, by \cite[Lemma 3.12]{F} 
% Since $\overline{Q} \cap S_{h_k,K_k}  \cngK T_0 \cap \ov{Q}$ and $ \overline{Q}  \setminus A_k  \cngK T_0 \cap \ov{Q}$ in $\R^2$ as $k \to \infty$, %From the description above it follows that 
    there exists  a curve with support $\gamma_k \subset \partial (A_k \setminus \Int(S_{h_k}))% (\partial A_k \setminus \partial S_{h_k,K_k}) \cup ((\partial A_k \setminus \partial S_{h_k,K_k}) \cap \Ako)
    $ joining $T^j_1$ with $T^j_2$.  %If $\Hs^1(\gamma_k^1 \setminus ((\partial {S_{h_k,K_k}} \cap \partial A_k) \cap S_{h_k,K_k}^{(0)} \cap A_k^{(1)}) ) = 0$, i
    It follows that
    \begin{equation}
    \label{eq:delaminatedfilament1}
    \begin{split}
    \mathcal{S}_L(A_k, S_{h_k,K_k},R)  &+ \sum_{i=1}^2  \int_{T^j_i} \phi_{\mathrm{F}}(\textbf{e}_\textbf{2})  \, d\Hs^1  \\
    &\ge  \int_{ \gamma_k \cap \partial^* A_k \setminus \partial {S_{h_k,K_k}}}{\phi_{\mathrm{ F}}(\nu_{A_k})\, d\Hs^1} %\\ %+ \int_{(\gamma_k^1 \cup \gamma_k^2) \cap   \partial ^*S_{h_k,K_k} \cap \partial^* A_k }{\phi (\nu_{A_k} )\, d\Hs^1}  \\
    %&\quad 
    \\
    & \quad + \int_{\gamma_k \cap (\partial A_k \setminus \partial S_{h_k,K_k})\cap (A_k^{(0)} \cup A_k^{(1)})}{2 \phi_{\mathrm{ F}}(\nu_{A_k} )\, d\Hs^1} \\
%& \quad + \int_{(\gamma_k^1 \cup \gamma_k^2)\cap (\partial S_{h_k,K_k} \cap \partial A_k) \cap  A_k^{(0)}}{2 \phi ( \nu_{A_k}   )\, d\Hs^1}  \\
&\quad + \int_{\gamma_k \cap  \partial ^*{S_{h_k,K_k}} \cap \partial A_k  \cap A_k^{(1)}}{ (\phi_{\mathrm{ F}} + \phi )( \nu_{A_k}   ) \, d\Hs^1} \\
& \quad   + \int_{\gamma_k \cap \partial{S_{h_k,K_k}} \cap \partial^* A_k \cap S_{h_k,K_k}^{(0)}}{\phi_{\mathrm{F}} ( \nu_{A_k}   )\, d\Hs^1} + \sum_{i=1}^2  \int_{T^j_i} \phi_{\mathrm{F}}(\textbf{e}_\textbf{2})  \, d\Hs^1  \\
& \ge \int_{\wh \gamma_k} \phi_{\mathrm{F}} (\nu_{\wt \gamma_k}) \, d\Hs^1, % + \sum_{i = 1}^2 \left( \int_{\wt \gamma^i_k} \phi (\nu_{\wh \gamma^i_k}) \, d\Hs^1 \right) ,
%& \quad - \sum_{i = -1,1} \int_{\left(-\frac{\delta'}{2}, \frac{\delta'}{2}\right) \times_{\R^2} \{i\}} \phi_{\mathrm{F}}(\textbf{e}_\textbf{2}) + 2\phi(\textbf{e}_\textbf{2}) \, d\Hs^1,
\end{split}
    \end{equation}
    where $\wh \gamma_k:=T^j_1\cup\gamma_k \cup T^j_2$.
%where $\wt \Lambda_k$ is a path connecting $ T_0 \cap T_1  $ with $T_0 \cap T_1$ such that $\wt \Lambda_k \cap R = \Lambda_k$, and for $i = -1,1$, $\wt \gamma^i_k$ is a path connecting $ T_0 \cap T_1  $ with $T_0 \cap T_1$ such that $\wt \gamma^i_k \cap R = \gamma^i_k$. 
By the anisotropic minimality of segments, we deduce that
\begin{equation}
    \label{eq:delaminatedfilament2}    
    \begin{split}
    	\int_{\wh \gamma_k} \phi_{\mathrm{F}} (\nu_{\wt \gamma_k}) \, d\Hs^1 %+ \sum_{i = 1}^2 \left( \int_{\wt \gamma^i_k} \phi (\nu_{\wt \gamma^i_k}) \, d\Hs^1 \right) 
	& \ge \int_{C_j} \phi_{\mathrm{F}} (\textbf{e}_\textbf{1})	\, d\Hs^1. %\\
	%& \ge \int_{T_0 \cap \ov{R}}    \phi(\textbf{e}_\textbf{1}) + \phi_{\mathrm{FS}} (\textbf{e}_\textbf{1}) \, d\Hs^1,
    \end{split}
    \end{equation}
%where in the last inequality we used \eqref{eq:H2}. 
By \eqref{eq:H1}, \eqref{eq:delaminatedfilament1} and \eqref{eq:delaminatedfilament2}, we conclude that 
\begin{equation}
\label{eq:delaminatedfilamentbis3} 
 \mathcal{S}_L(A_k,S_{ h_k,K_k},S^j) \ge \int_{C_j} \phi_{\mathrm{F}} (\textbf{e}_\textbf{1})	\, d\Hs^1 - 2 c_2 \delta'. 
\end{equation}

{\itshape Step 2.} Let $k \ge k^2_{\delta'}$ and $ j \in J$ be such that $\Hs^1(\Gamma_{\mathrm{ FS}}^\text{A} (A_k,S_{h_k,K_k}) \cap S^j)>0$. % for a fix $k \ge k^1_{\delta'}$. 
By reasoning as  in the proof of Lemma \ref{lema:creationdelamin} to reach \eqref{missing1} we obtain 
\begin{equation}
     \label{eq:filamentnsfree8}
 \mathcal{S}_L(A_{k},S_{h_k,K_k} , S^j) \ge  \int_{C_j}{  \phi_{\mathrm{F}}(\textbf{e}_\textbf{1}) \, d\Hs^1}  -7c_2 \delta'. 
 \end{equation}
 More precisely, we reach \eqref{eq:filamentnsfree8} by noticing that $\nu_{T_0}=\textbf{e}_\textbf{1}$ and by following the proof of Step 2 (from after equation \eqref{eq:delaminationsplit}) and of Step 3 of Lemma \ref{lema:creationdelamin} with the only difference that we replace the reference to Step 1 of Lemma \ref{lema:creationdelamin} with the reference to Step 1 of the current lemma, and the extra term appearing in \eqref{eq:defS^1klemma5.8} with 
 \begin{equation}
     \label{eq:error511}
 \sum_{i = 1,2}   \int_{T^j_i}  (2\phi_\mathrm{ F} + \phi)(\bm{e_2})  \, d\Hs^1. 
 \end{equation}
Notice that the difference of two units more in the error term of \eqref{eq:filamentnsfree8} with respect to \eqref{missing1} is due to the fact that the extra term \eqref{eq:error511} contributes to the final error with exactly two units more.

 {\itshape Step 3.} Fix $k \ge k^2_{\delta'}$ and let $j \in J$. By \eqref{eq:delaminatedfilamentbis3} if $\Hs^1( \Gamma^{\text{A}}_k  \cap S^j)= 0$ and by \eqref{eq:filamentnsfree8} if $\Hs^1( \Gamma^{\text{A}}_k  \cap S^j)> 0$  we obtain that
 \begin{equation}
	\label{eq:filamentinredbdbis2}
	 \mathcal{S}_L( A_k,S_k,S^j) \ge \int_{C_j}    \phi_{\mathrm{F}} (\textbf{e}_\textbf{1}) \, d\Hs^1 - 7 c_2 \delta'.
\end{equation}
Therefore, since  the cardinality of $J$ is bounded by $2(m_1 +3)$, \eqref{eq:filamentinredbdbis2} yields 
\begin{equation}
			\label{eq:finaldelaminatiofilamentbis1}
			\begin{split}
				\mathcal{S}_L ( A_k,  S_{h_k,K_k},R) & \ge \sum_{j\in J} \mathcal{S}_L ( A_k,  S_{h_k,K_k},S^j) \ge   \int_{\bigcup_{j\in J}C_j} \phi_{\mathrm{F}} (\textbf{e}_\textbf{1}) \, d\Hs^1 - 7 (2m_1 +3) c_2 \delta '\\
				& \ge  \int_{T_0\cap \overline{R}} {  \phi_{\mathrm{F}}(\textbf{e}_\textbf{1})\, d\Hs^1} -(m _1+2) c_2  \delta' - 7 (2m_1 +3) c_2 \delta '\\
				& =  \int_{T_0\cap \overline{R}} { \phi_{\mathrm{F}}(\textbf{e}_\textbf{1}) \, d\Hs^1} - (15m_1 +24) c_2 \delta'.
			\end{split}
		\end{equation}

 By the non-negativeness of $\phi_\mathrm{F}, \phi$ and $\phi_\mathrm{FS}$ we have that
\begin{equation}
\label{eq:filamentnsfree11} 
\begin{split}
	\mathcal{S}_L(A_k,S_{ h_k,K_k},Q) &\ge \mathcal{S}_L(A_k,S_{ h_k,K_k},R) \ge \int_{T_0\cap \overline{R}} { \phi_{\mathrm{F}} (\textbf{e}_\textbf{1}) \, d\Hs^1} - (15m_1 +24) c_2 \delta' \\
	& \ge  \int_{T_0\cap \overline{Q}} { \phi_{\mathrm{F}} (\textbf{e}_\textbf{1}) \, d\Hs^1} - (15m_1 +25) c_2 \delta',
\end{split}
\end{equation}
where in the second inequality we used \eqref{eq:filamentnsfree11} and in the last inequality we added and subtracted $\int_{T_0 \cap \ov{T_{1}^{ \delta'}}}  \phi_{\mathrm{F}}(\bm{e_1})\, d\Hs^1$, and we used \eqref{eq:H1} and the fact that $\Hs^1(T_0 \cap \ov{T_{1}^{ \delta'}}) \le \delta'/2$. Finally, \eqref{eq:delaminationS} follows  from choosing $k_\delta := k^2_{\delta'}$ and $\delta' = \frac{\delta}{(15m_1 +25)  c_2}$ for  $\delta \in (0, \min \{(15m_1 +25)  c_2, 1\})$  in \eqref{eq:filamentnside11}. This completes the proof.	
\end{proof}

 We conclude these list of estimates by addressing the setting of the delaminated substrate filaments in the film.

	\begin{lemma}
 \label{lema:creationcrackdelamination}
Let $T_0  $ be the $x_2$-axis. Let $\{\rho_k\}_{k\in\N} \subset [0,1] $ be such that $\rho_k \searrow 0$ and $Q \subset \sigma_{\rho_1} (\Om) \cap H_{0, - \bm{e_2}}$ be an open square whose sides are either parallel or perpendicular to $\bm{e_1}$ and $T_0 \cap Q \neq \emptyset$.
If $\{(A_k, S_{h_k,K_k})\} \subset \B_\textbf{m}(\sigma_{\rho_1} (\Om))$ is a sequence such that $\overline{Q} \cap S_{h_k,K_k}  \cngK T_0 \cap \ov{Q}$ and $ \overline{Q}  \setminus A_k  \cngK T_0 \cap \ov{Q}$, then for every $\delta \in (0,1)$, there exists $k_\delta\in \N$ such that for any $k \ge k_\delta$,
\begin{equation}
	\label{eq:fildelamination0}
	\mathcal{S}_L(A_k,S_{ h_k,K_k},Q) \ge  \int_{T_0 \cap \ov{Q}}{ 2 \phi_{\mathrm{ F}}({\bm e}_{{\bm 1}}) \, d\Hs^1} - \delta.
		\end{equation}
	\end{lemma}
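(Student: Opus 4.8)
The statement concerns a blow-up at a point where the substrate filaments lie in the film interior and have become delaminated, so that in the limit we see the $x_2$-axis $T_0$ counted with weight $2\varphi_{\mathrm F}$. The plan is to repeat, with the appropriate bookkeeping, the three-step scheme of Lemma~\ref{lema:creationdelamin} and Lemma~\ref{lem:delaminationofS}. As always we may assume $\sup_k\mathcal S_L(A_k,S_{h_k,K_k},Q)<\infty$. By the two Kuratowski convergences $\overline Q\cap S_{h_k,K_k}\cngK T_0\cap\overline Q$ and $\overline Q\setminus A_k\cngK T_0\cap\overline Q$, for every $\delta'\in(0,1)$ there is $k_{\delta'}$ with $\overline Q\cap S_{h_k,K_k},\ \overline Q\setminus A_k\subset T^{\delta'}:=\{x\in Q:\mathrm{dist}(x,T_0)<\delta'/2\}$ for $k\ge k_{\delta'}$; in particular $\partial A_k\cap Q\subset T^{\delta'}$ as well, since $A_k$ covers $Q\setminus T^{\delta'}$. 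First I would perform the same splitting into strips as in Lemmas~\ref{lema:creationdelamin}, \ref{lem:delaminationofS}: enumerate the connected components of $\partial A_k$ strictly inside $Q$ (at most $m_1$ of them) together with the two families touching the side tubular neighborhoods, project onto $T_0$, decompose into finitely many disjoint open connected pieces $\{C_j\}_{j\in J}$ with $\#J\le 2m_1+3$, lose only $(m_1+2)c_2\delta'$ of the $\mathcal H^1$-mass of $T_0\cap\overline Q$ outside $\bigcup_j C_j$, and set $S^j:=(\pi_1(Q)\times_{\R^2}C_j)\cap T^{\delta'}$ with horizontal sides $T^j_1,T^j_2$.

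\textbf{Step 1 (no coherent interface in $S^j$).} Fix $k\ge k_{\delta'}$ and $j\in J$ with $\mathcal H^1(\Gamma^{\text A}_{\text{FS}}(A_k,S_{h_k,K_k})\cap S^j)=0$. Here the substrate and the film ``agree'' only on a $\mathcal H^1$-null set and both sit inside the thin tube, while $Q\setminus A_k$ is near $T_0$: this forces $\partial A_k$ to cross the strip. Using \cite[Lemma 3.12]{F} one gets a curve with support $\gamma_k\subset\partial A_k$ joining $T^j_1$ and $T^j_2$, and then by \cite[Lemma 4.3]{KP} (since $\partial A_k$ is connected, $\mathcal H^1$-finite and bounded on the relevant component) a second curve $\widetilde\gamma_k\subset\overline{\partial A_k\setminus(\gamma_k\cap\partial\Int(\overline{A_k}))}$ joining $T^j_1$ and $T^j_2$, with $\gamma_k$ and $\widetilde\gamma_k$ $\mathcal H^1$-a.e.\ disjoint on their reduced-boundary portions. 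Each of these two curves, being part of $\partial A_k$ and lying in the interior of the composite (the substrate is inside, the film is inside), contributes at least $\varphi_{\mathrm F}$ on its reduced boundary — indeed a portion $\partial A_k\cap A^{(1)}\cap S^{(0)}$ or $\partial A_k\cap A^{(1)}\cap\partial S$ etc.\ always carries at least $\phi_{\mathrm F}$ by \eqref{eq:definitionlocenergyS} and the non-negativeness of the other densities. Adding $\sum_{i=1}^2\int_{T^j_i}\phi_{\mathrm F}(\mathbf e_{\mathbf 2})\,d\mathcal H^1$ to close up both curves and applying the anisotropic minimality of segments \cite[Remark 20.3]{M} twice yields
\begin{equation*}
\mathcal S_L(A_k,S_{h_k,K_k},S^j)\ge 2\int_{C_j}\phi_{\mathrm F}(\mathbf e_{\mathbf 1})\,d\mathcal H^1-4c_2\delta'.
\end{equation*}

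\textbf{Steps 2--3 (positive coherent interface in $S^j$), and Step 4 (summation).} For $k\ge k_{\delta'}$ and $j\in J$ with $\mathcal H^1(\Gamma^{\text A}_{\text{FS}}(A_k,S_{h_k,K_k})\cap S^j)>0$ I would argue exactly as in Step~2 (from after \eqref{eq:delaminationsplit}) and Step~3 of Lemma~\ref{lema:creationdelamin}: apply Lemma~\ref{lem:finiteness} in $S^j$ to reduce the coherent interface $\widetilde\Gamma^{\text A}_k$ to finitely many connected components (at a cost $c_2\delta'$), show by the contradiction-and-iteration argument that there is always either a film island or a substrate grain, and run the induction on the number of components, in each step shrinking an island or opening a grain into a void via \cite[Remark 20.3]{M}, until only the $\mathcal H^1$-negligible case remains and Step~1 applies. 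The only modifications with respect to Lemma~\ref{lema:creationdelamin} are that the target density is now $2\phi_{\mathrm F}$ rather than $\phi_{\mathrm F}+\phi$, that one uses $\nu_{T_0}=\mathbf e_{\mathbf 1}$, and that the auxiliary energy $\mathcal S^1_L$ needs the extra side term $\sum_{i=1,2}\int_{T^j_i}(2\phi_{\mathrm F}+\phi)(\mathbf e_{\mathbf 2})\,d\mathcal H^1$ (as in \eqref{eq:error511}), giving an error $9c_2\delta'$ per strip. Summing over $j\in J$, using $\#J\le 2m_1+3$ and \eqref{eq:crackbis4}-type estimates for the lost mass, then closing up with the side tube as in \eqref{eq:filamentnsfree11}, gives $\mathcal S_L(A_k,S_{h_k,K_k},Q)\ge 2\int_{T_0\cap\overline Q}\phi_{\mathrm F}(\mathbf e_{\mathbf 1})\,d\mathcal H^1-\alpha\delta'$ for a constant $\alpha=\alpha(m_1,c_2)$, and choosing $\delta'=\delta/\alpha$ and $k_\delta$ accordingly yields \eqref{eq:fildelamination0}.

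\textbf{Main obstacle.} The delicate point is Step~1, i.e.\ justifying that two \emph{distinct} (up to $\mathcal H^1$-null overlap) curves in $\partial A_k$ really cross the strip and that each genuinely contributes $\phi_{\mathrm F}$ on its reduced-boundary portion; this is where the hypothesis $\overline Q\setminus A_k\cngK T_0\cap\overline Q$ (which forces a non-trivial topological boundary separating the void from the bulk on both sides of $T_0$) together with \cite[Lemma 4.3]{KP} must be used carefully, exactly as in the corresponding step of Lemma~\ref{lema:creationdelamin}. Once the ``two-curve'' lower bound is in place, Steps 2--4 are a routine transcription of the earlier lemmas.
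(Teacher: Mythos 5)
Your plan follows the paper's own three-step scheme (strip decomposition, Step~1 for $\mathcal H^1$-negligible coherent interface, Steps~2--3 for positive coherent interface via Lemma~\ref{lem:finiteness} and the island/grain induction, Step~4 summation), and the Step~2--4 outline matches closely. Two points where your proposal deviates from the paper's proof are worth flagging. First, in Step~1 you take the curves $\gamma_k,\widetilde\gamma_k$ inside $\partial A_k$ and invoke \cite[Lemma~4.3]{KP}; the paper instead works with the boundary of the film region $\partial\bigl(A_k\setminus\Int(S_{h_k})\bigr)$, observing that in the null-$\Gamma^{\text A}_{\text{FS}}$ case this set coincides $\mathcal H^1$-a.e.\ with $\partial A_k$ (so the integrals are unchanged) but has the right \emph{topological} structure to produce the second curve: it is the boundary of the film around the combined ``hole'' (void plus substrate), and the paper's claim that it has only one connected component in $S^j$ is what makes \cite[Lemma~4.3]{KP} applicable. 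With $\partial A_k$ alone, the substrate may be enclosed in $\Int(A_k)$ and $\partial A_k$ need not double back across the strip; you correctly flag this as the delicate point but do not resolve it, and switching to $\partial\bigl(A_k\setminus\Int(S_{h_k})\bigr)$ is how the paper resolves it. Second, a bookkeeping slip: since the target density here is $2\phi_{\mathrm F}$ (two film curves) rather than the $\phi_{\mathrm F}+\phi$ of Lemma~\ref{lema:creationdelamin} or the $\phi_{\mathrm F}$ of Lemma~\ref{lem:delaminationofS}, the extra side term in the auxiliary energy must be $\sum_{i=1,2}\int_{T^j_i}(3\phi_{\mathrm F}+\phi)(\mathbf e_{\mathbf 2})\,d\mathcal H^1$, one $\phi_{\mathrm F}$ more than the $(2\phi_{\mathrm F}+\phi)$ of \eqref{eq:error511} that you cite. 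This only changes constants (and the final choice of $\delta'$), but the discrepancy signals that the side-closure term must be tracked against the target weight, not copied from the previous lemma.
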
 
\begin{proof}
Without loss of generality we assume that $\sup_{k \in \N} \mathcal{S}_L(A_k,S_{h_k,K_k}, Q) < \infty$. Since $\overline{Q} \cap S_{h_k,K_k}  \cngK T_0 \cap \ov{Q}$ and $ \overline{Q}  \setminus A_k  \cngK T_0 \cap \ov{Q}$ in $\R^2$ as $k \to \infty$ for every $\delta' \in (0,1)$ there exists $k_{\delta'} \in \N$ such that
\begin{equation}
    \label{eq:filamentinside1}
   \overline{Q} \cap S_{h_k,K_k} , \, \overline{Q}  \setminus A_k \subset T^{\delta'},
\end{equation}
where $T^{\delta'} := \{x \in Q : \dist(x , T_0) < \frac{\delta'}2\}$. Let $T_1$ be the upper side of $Q$ and let $T_1^{\delta'} : = \{x \in Q: \text{dist}(x, T_1) < \delta'/2\} $. By the Kuratowski convergence of $ \overline{Q}\cap   S_{h_k,K_k}$, there exists $k^1_{\delta'} \ge k_{\delta'}$ such that $S_{h_k} \cap \ov{T_1^{\delta'} } \neq \emptyset$ for every $k \ge k^1_{\delta'}$. Let $R := (T^{\delta'} \setminus \ov{T_1^{\delta'}})$ and let $T'_1, T'_2 \subset \partial R$ be the upper and lower side of the rectangle $R$, respectively.  

Since $(A_k, S_{h_k, K_k})\in \B_{\textbf{m}}(\sigma_k (\Om))$ we can find an enumeration $\{\Lambda^n_k\}_{n=1,\ldots,m_k^1}$ of the connected components $\Lambda^n_k$ of $\partial {A}_k$ lying strictly inside of $R$,   such that $m_k^1 \le m_1$. Moreover, thanks to the fact that ${ \mathcal{S}_L }(A_k, S_{ h_k, K_k},Q) < \infty$ for each $k \in \N$, the family $\{\Lambda^{\alpha}_k\}_{\alpha\in\N}$ of connected components $\Lambda^{\alpha}_k$ of $\partial {A}_k \cap  \overline{R}$ that intersect $T_1'$ or $T_2'$, respectively, are at most countable. Furthermore, we define $\Lambda^{m_k +i}$ for $i=1,2$ by
$$
 \Lambda^{m_k +i}_k:=
\left(\bigcup_{\alpha\in\mathbb{N},\, \Lambda^{\alpha}_k \cap T^{\delta'}_i \neq \emptyset} \Lambda^{\alpha}_k \right) \cup T'_i .
 $$
Thanks to the Kuratowski convergences of $\overline{Q} \setminus A_k$ to $T_0\cap \overline{R}$ in $\R^2$ as $k \to \infty$, the fact that $m^1_k \le m_1$ for every $k \in \N$, we have that 
$$
	\lim_{k \to \infty} \Hs^1 \left( (T_0\cap \ov{R}) \setminus \bigcup_{n =1}^{m^1_k+2} \pi _2(\Lambda^n_k)   \right)=0
$$
Hence, there exists $k^2_{\delta'} \geq k^1_{\delta'}$ such that 
\begin{equation}
	\label{eq:filamentnside6}
	\Hs^1 \left( (T_0\cap \ov{R}) \setminus \bigcup_{n =1}^{m^1_k+2} \pi _2(\Lambda^n_k) \right) < (m _1+2)  \delta' 
\end{equation}
for every $k \ge k^2_{\delta'}$. Similarly to Step 2 of Lemma \ref{lema:creationdelamin}, we can decompose $\bigcup_{n=1}^{m_k+2} \pi_2 (\Lambda^n_k)$ as the finite union of disjoint open connected sets $\mathscr{C}:=\{C_j\}_{j\in J}$. Notice that the cardinality of $J$ is bounded by $2m_k^1 +3$. Therefore 	
$$
\bigcup_{n=1}^{m^1_k+2} \pi_2 (\Lambda^n_k) = \bigcup_{j \in J} C_j,%[a'_j, b'_j],
$$
and also by \eqref{eq:filamentnside6} we have that
\begin{equation}
	\label{eq:filamentnside7}
	\Hs^1 \left( (T_0\cap \ov{R}) \setminus \bigcup_{j \in J} C_j \right) < (m _1+2) c_2  \delta'
\end{equation}
for every $k \ge k^2_{\delta'}$. Finally, let $S^j:= (\pi_1(R) \times_{\R^2} C_j) \cap T^{\delta'}$ and let $T^j_1$ and $T^j_2$ be the upper and lower sides of the boundary of each rectangle $ S^j$, respectively.

%¨ We assume that $\Hs^1 \left(\Gamma^{\text{A}}_k \cap S^j \right)>0$.

\textit{Step 1.} Let $k \ge k^2_{\delta'}$ and $ j \in J$ be such that $\Hs^1(\Gamma_{\mathrm{ FS}}^\text{A} (A_k,S_{h_k,K_k}) \cap S^j)=0$. % for a fixed $k \ge k^1_{\delta'}$. 
 In view of the  construction of $S^j$, by \cite[Lemma 3.12]{F} 
% Since $\overline{Q} \cap S_{h_k,K_k}  \cngK T_0 \cap \ov{Q}$ and $ \overline{Q}  \setminus A_k  \cngK T_0 \cap \ov{Q}$ in $\R^2$ as $k \to \infty$, %From the description above it follows that 
    we can find  a curve with support $\gamma_k^1 \subset \partial (A_k \setminus \Int(S_{h_k}))% (\partial A_k \setminus \partial S_{h_k,K_k}) \cup ((\partial A_k \setminus \partial S_{h_k,K_k}) \cap \Ako)
    $ joining $T^j_1$ with $T^j_2$, and there exists only one connected component of 
    $\partial (A_k \setminus \Int(S_{h_k}))$ in $S^j$. Therefore, by applying \cite[Lemma 4.3]{KP} there exists a curve connecting $T^j_1$ and $T^j_2$ with support $\gamma_k^2 \subset S^j\cap\ov{\partial (A_k \setminus \Int(S_{h_k})) \setminus (\gamma_k^1 \cap \partial \Int(\ov{A_k \setminus \Int(S_{h_k})}))}$. %By assumption, 
    %. Let $\wh \gamma_k:=T'_{1,\delta'}\cup\gamma_k^1 \cup T'_{2,\delta'}$. %If $\Hs^1(\gamma_k^1 \setminus ((\partial {S_{h_k,K_k}} \cap \partial A_k) \cap S_{h_k,K_k}^{(0)} \cap A_k^{(1)}) ) = 0$, i
    It follows that
    \begin{equation}
    \label{eq:filamentnside2}
    \begin{split}
    \mathcal{S}_L(A_k, S_{h_k,K_k},S^j)  &+ 2\sum_{i=1}^2  \int_{T_i'} \phi_{\mathrm{F}}(\textbf{e}_\textbf{2})  \, d\Hs^1  \\
    &\ge  \int_{ (\gamma^1_k \cup \gamma^2_k ) \cap (\partial^* A_k \setminus \partial {S_{h_k,K_k}})}{\phi_{\mathrm{ F}}(\nu_{A_k})\, d\Hs^1} 
    \\
    & \quad + \int_{(\gamma^1_k \cup \gamma^2_k ) \cap (\partial A_k \setminus \partial S_{h_k,K_k})\cap A_k^{(0)} }{2 \phi_{\mathrm{ F}}(\nu_{A_k} )\, d\Hs^1} \\
&\quad + \int_{(\gamma^1_k \cup \gamma^2_k ) \cap  \partial ^*{S_{h_k,K_k}} \cap \partial A_k  \cap A_k^{(1)}}{ \phi_{\mathrm{ F}}  ( \nu_{A_k}   ) \, d\Hs^1} \\
& \quad   + \int_{(\gamma^1_k \cup \gamma^2_k ) \cap \partial{S_{h_k,K_k}} \cap \partial^* A_k \cap S_{h_k,K_k}^{(0)}}{\phi_{\mathrm{F}} ( \nu_{A_k}   )\, d\Hs^1} + 2\sum_{i=1}^2  \int_{T_i'} \phi_{\mathrm{F}}(\textbf{e}_\textbf{2})  \, d\Hs^1   \\
& \ge \sum_{i= 1}^2 \int_{\wt \gamma^i_k} \phi_{\mathrm{F}} (\nu_{\wt \gamma^i_k}) \, d\Hs^1, % + \sum_{i = 1}^2 \left( \int_{\wt \gamma^i_k} \phi (\nu_{\wh \gamma^i_k}) \, d\Hs^1 \right) ,
%& \quad - \sum_{i = -1,1} \int_{\left(-\frac{\delta'}{2}, \frac{\delta'}{2}\right) \times_{\R^2} \{i\}} \phi_{\mathrm{F}}(\textbf{e}_\textbf{2}) + 2\phi(\textbf{e}_\textbf{2}) \, d\Hs^1,
\end{split}
    \end{equation}
where $\wt \gamma^i_k : = T^j_1 \cup \gamma_k^i \cup T^j_2$ for $i = 1,2$. % is a path connecting $ T_0 \cap T_1  $ with $T_0 \cap T_1$ such that $\wt \Lambda_k \cap R = \Lambda_k$, and for $i = -1,1$, $\wt \gamma^i_k$ is a path connecting $ T_0 \cap T_1  $ with $T_0 \cap T_1$ such that $\wt \gamma^i_k \cap R = \gamma^i_k$. 
By the anisotropic minimality of segments, we deduce that
\begin{equation}
    \label{eq:filamentnside3}   
    \begin{split}
    	\sum_{i= 1}^2 \int_{\wt \gamma^i_k} \phi_{\mathrm{F}} (\nu_{\wt \gamma^i_k}) \, d\Hs^1
	& \ge 2 \int_{C_j}  \phi_{\mathrm{F}} (\textbf{e}_\textbf{1})	\, d\Hs^1. %\\
	%& \ge \int_{T_0 \cap \ov{R}}    \phi(\textbf{e}_\textbf{1}) + \phi_{\mathrm{FS}} (\textbf{e}_\textbf{1}) \, d\Hs^1,
    \end{split}
    \end{equation}
%where in the last inequality we used \eqref{eq:H2}. 
By \eqref{eq:H1}, \eqref{eq:filamentnside2} and \eqref{eq:filamentnside3}, we conclude that 
\begin{equation}
\label{eq:filamentnside4}  
 \mathcal{S}_L(A_k,S_{ h_k,K_k},S^j) \ge 2\int_{C_j} \phi_{\mathrm{F}} (\textbf{e}_\textbf{1})	\, d\Hs^1 - 4 c_2 \delta'. 
\end{equation}
%Finally, by non-negativeness of $\phi_\mathrm{F}, \phi$ and $\phi_\mathrm{FS}$ we have that
%\begin{equation}
%\label{eq:filamentnside5} 
%\begin{split}
%	\mathcal{S}_L(A_k,S_{ h_k,K_k},Q) &\ge \mathcal{S}_L(A_k,S_{ h_k,K_k},R) \ge 2\int_{T_0 \cap \ov{R}} \phi_{\mathrm{F}} (\textbf{e}_\textbf{1})	\, d\Hs^1 -4 c_2 \delta' \\
	%& \ge 2 \int_{T_0 \cap \ov{Q}}   \phi_{\mathrm{F}} (\textbf{e}_\textbf{1}) \, d\Hs^1 -5 c_2 \delta',
%\end{split}
%\end{equation}
%where in the second inequality we used \eqref{eq:filamentnside4} and in the last inequality we added and subtracted $2\int_{T_0 \cap \ov{T_{1, \delta'}}}  \phi_{\mathrm{F}} \, d\Hs^1$, and we used \eqref{eq:H1} and the fact that $\Hs^1(T_0 \cap \ov{T_{1, \delta'}}) \le \delta'/2$.

{\itshape Step 2.} Let $k \ge k^2_{\delta'}$ and $ j \in J$ be such that $\Hs^1(\Gamma_{\mathrm{ FS}}^\text{A} (A_k,S_{h_k,K_k}) \cap S^j)>0$. % for a fix $k \ge k^1_{\delta'}$. 
By reasoning as  in the proof of Lemma \ref{lema:creationdelamin} to reach \eqref{missing1} we obtain 
\begin{equation}
     \label{eq:filamentnside8}
 \mathcal{S}_L(A_{k},S_{h_k,K_k} , S^j) \ge  2\int_{C_j}{  \phi_{\mathrm{F}}(\textbf{e}_\textbf{1}) \, d\Hs^1}  -9c_2 \delta'. 
 \end{equation}
 More precisely, we reach \eqref{eq:filamentnside8} by noticing that $\nu_{T_0}=\textbf{e}_\textbf{1}$ and by following the proof of Step 2 (from after equation \eqref{eq:delaminationsplit}) and of Step 3 of Lemma \ref{lema:creationdelamin} with the only difference that we replace the reference to Step 1 of Lemma \ref{lema:creationdelamin} with the reference to Step 1 of the current lemma, and the extra term appearing in \eqref{eq:defS^1klemma5.8} with 
 $$
 \sum_{i = 1,2}   \int_{T^j_i}  (3\phi_\mathrm{ F} + \phi)(\bm{e_2})  \, d\Hs^1. 
$$

 {\itshape Step 3.} Fix $k \ge k^2_{\delta'}$ and let $j \in J$. By \eqref{eq:filamentnside4} if $\Hs^1( \Gamma^{\text{A}}_k  \cap S^j)= 0$ and by \eqref{eq:filamentnside8} if $\Hs^1( \Gamma^{\text{A}}_k  \cap S^j)> 0$  we obtain that
\begin{equation}
\label{eq:filamentnside9}
			\mathcal{S}_L(A_k, S_{h_k,K_k}, S^j)  \ge  2 \int_{C_j}{  { \phi_{\mathrm{F}}}(\textbf{e}_\textbf{1}) \, d\Hs^1} - 9 c_2 {\delta'}.
\end{equation}
Therefore, the same reasoning of Step 3 of Lemma \ref{lem:delaminationofS} yields that  
\begin{equation}
\label{eq:filamentnside11} 
\begin{split}
	\mathcal{S}_L(A_k,S_{ h_k,K_k},Q) &\ge \mathcal{S}_L(A_k,S_{ h_k,K_k},R) \ge 2\int_{T\cap \overline{R}} { \phi_{\mathrm{F}} (\textbf{e}_\textbf{1}) \, d\Hs^1} - (35m_1 + 56) c_2 \delta' \\
	& \ge  2\int_{T\cap \overline{Q}} { \phi_{\mathrm{F}} (\textbf{e}_\textbf{1}) \, d\Hs^1} - (35m_1 + 57) c_2 \delta',
\end{split}
\end{equation}
where as a difference with Step 3 of Lemma \ref{lem:delaminationofS}  we added and subtracted $2\int_{T_0 \cap \ov{T_{1}^{ \delta'}}}  \phi_{\mathrm{F}}(\bm{e_1})\, d\Hs^1$ in the last inequality. Finally, \eqref{eq:fildelamination0} follows  from choosing $k_\delta := k^2_{\delta'}$ and $\delta' = \frac{\delta}{(35m_1 + 57) c_2}$ for  $\delta \in (0, \min \{(35m_1 + 57) c_2, 1\})$  in \eqref{eq:filamentnside11}. This completes the proof.	 	 
\end{proof}
	
	We are now in the position to prove that the surface energy $\mathcal{S}$ is lower semicontinuous in $\B_\textbf{m}$ with respect to the $\tau_\B$-convergence.
	
 \begin{theorem}[Lower semicontinuity of $\mathcal{S}$]
		\label{thm:lowersemicontinuityS}
		Let $(A_k,S_{ h_k,K_k})_{k \in \N} \subset \mathcal{B}_\mathbf{m}  $ and $(A,S_{h,K}) \in \mathcal{B}_\mathbf{m} $ such that $(A_k, S_{h_k,K_k}) \xrightarrow{\tau_\mathcal{B}}(A,S_{h,K})$ as $k \rightarrow \infty$. Then, 
		\begin{equation}
			\label{eq:lowersemicontinuity0}
			\mathcal{S}(A,S_{h,K}) \le \liminf_{k \rightarrow \infty}{\mathcal{S}(A_k, S_{h_k,K_k})} .
		\end{equation}
	\end{theorem}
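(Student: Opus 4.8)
The plan is to prove \eqref{eq:lowersemicontinuity0} by the Fonseca--M\"uller blow-up method, following the blueprint of \cite[Theorem 4.1]{KP} but now organized around the twelve local profiles listed in Table \ref{tab:my_label}. First I would introduce, for each $k$, the positive Radon measure $\mu_k := \mathcal{H}^1 \res (\Om \cap (\partial A_k \cup \partial S_k))$ weighted by the appropriate density from \eqref{eq:definitionenergyS}, so that $\mu_k(\R^2) = \mathcal{S}(A_k,S_{h_k,K_k})$, and the limit measure $\mu$ built the same way from $(A,S_{h,K})$. Since $\sup_k \mu_k(\R^2) < \infty$ (otherwise there is nothing to prove), up to a subsequence $\mu_k \overset{*}{\rightharpoonup} \mu_0$ for some positive Radon measure $\mu_0$; it then suffices to show $\mu_0 \geq \mu$, and for that, since $\mu$ is absolutely continuous with respect to $\lambda := \mathcal{H}^1 \res (\Om \cap (\partial A \cup \partial S))$ (which in turn is $\sigma$-finite and, by $\tau_\B$-convergence together with $\tau_\B$-closedness of $\B_\mathbf{m}$, has a rectifiable carrier), I would invoke the Besicovitch differentiation theorem: it is enough to prove that the Radon--Nikodym derivative $\frac{\d\mu_0}{\d\lambda}(x) \geq \frac{\d\mu}{\d\lambda}(x)$ for $\lambda$-a.e.\ $x$.

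The core of the argument is the pointwise estimate on the derivative. I would fix a $\lambda$-a.e.\ point $x$ at which: the approximate tangent line $T_x$ to $\partial A \cup \partial S$ exists, the relevant densities of $A$, $S$, their reduced boundaries, etc.\ take their canonical values, and $\frac{\d\mu_0}{\d\lambda}(x) = \lim_{\rho\to 0} \frac{\mu_0(Q_{\rho,\nu}(x))}{2\rho}$ exists and is finite. Using $\mu_k \overset{*}{\rightharpoonup}\mu_0$ one passes from $\mu_0$ of small cubes to $\liminf_k \mu_k$ of slightly smaller cubes, and then one rescales by the blow-up map $\sigma_{\rho,x}$. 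The signed-distance convergence of $\partial A_k$ and $\partial S_k$, combined with Proposition \ref{prop:aux1} and Lemma \ref{lem:A2}, guarantees that after the blow-up the rescaled sets Kuratowski-converge to one of the finitely many flat/half-plane profiles determined by which of the twelve regions of $\Om\cap(\partial A\cup\partial S)$ the point $x$ belongs to (the classification of the density pattern of $x$ with respect to $A$ and $S$, and whether $x$ lies on $\partial^*A$, $\partial^*S$, cracks, filaments, delamination, etc.). In each of these twelve cases, the corresponding lower bound is exactly the content of one of Lemmas \ref{lema:creationOneBdr}--\ref{lema:creationcrackdelamination} (or, for the two ``coherent'' profiles, \cite[Theorem 20.1]{M}): applied on the rescaled rectangle these give $\liminf_k \mathcal{S}_L(\sigma_{\rho,x}(A_k),\sigma_{\rho,x}(S_k),Q) \geq \psi(x,\nu_x)\,\mathcal{H}^1(T_x\cap \overline Q) - \delta$, and dividing by the side length of $Q$ and letting $\delta\to0$, $\rho\to0$ yields $\frac{\d\mu_0}{\d\lambda}(x)\geq \psi(x,\nu_x) = \frac{\d\mu}{\d\lambda}(x)$. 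One technical point to handle carefully here is replacing the $x$-dependent, merely continuous surface tensions $\varphi_{\mathrm F},\varphi,\varphi_{\mathrm{FS}}$ by their frozen values $\phi_\alpha := \varphi_\alpha(x,\cdot)$ at the blow-up point, paying a vanishing error from the modulus of continuity over the shrinking cube, so that the constant-coefficient lemmas apply.

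Once $\frac{\d\mu_0}{\d\lambda} \geq \frac{\d\mu}{\d\lambda}$ is established $\lambda$-a.e., the conclusion follows from the standard chain $\liminf_k \mathcal{S}(A_k,S_{h_k,K_k}) = \liminf_k \mu_k(\R^2) \geq \mu_0(\R^2) \geq \int \frac{\d\mu}{\d\lambda}\,\d\lambda = \mu(\R^2) = \mathcal{S}(A,S_{h,K})$, where the first inequality is lower semicontinuity of mass under weak-$*$ convergence. The main obstacle is not this packaging but the case analysis inside the blow-up: one must verify that the density/boundary configuration at a $\lambda$-a.e.\ point forces exactly one of the twelve limiting profiles (with the correct orientation $\pm\nu_x$), check that the $\tau_\B$-hypothesis indeed transfers—after rescaling and intersecting with a cube—into the Kuratowski convergences required as input by the relevant lemma among \ref{lema:creationOneBdr}--\ref{lema:creationcrackdelamination}, and keep track of which lemma covers which of the eight integrands of $\mathcal{S}$ together with their ``doubled'' crack/filament counterparts. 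This bookkeeping, together with the frozen-coefficient error control, is the delicate part; the rest is routine measure-theoretic machinery already available from the cited results.
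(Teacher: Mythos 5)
Your proposal is essentially the paper's own proof: it sets up the same Radon measures $\mu_k$, $\mu$, extracts a weak-$*$ limit $\mu_0$, reduces \eqref{eq:lowersemicontinuity0} to Radon--Nikodym derivative estimates against $\Hs^1\mres(\Om\cap(\partial A\cup\partial S_{h,K}))$, and discharges these through the twelve blow-up cases of Table~\ref{tab:my_label}, invoking the constant-coefficient Lemmas~\ref{lema:creationOneBdr}--\ref{lema:creationcrackdelamination} (plus \cite[Theorem~20.1]{M} and \cite[Lemmas~4.4, 4.5]{KP}) together with the uniform-continuity freeze of the surface tensions. The ``bookkeeping'' you flag---classifying the $\lambda$-a.e.\ point into its density/boundary type, upgrading the $\tau_\B$-hypothesis to the Kuratowski blow-up convergences each lemma requires via Proposition~\ref{prop:aux1} and Lemma~\ref{lem:A2}, and controlling the frozen-coefficient error---is exactly the content of the twelve sub-proofs of \eqref{eq:blowupa}--\eqref{eq:blowupi} in the paper.
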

	\begin{proof}
	Without loss of generality, we assume that the \emph{liminf} in the right side of \eqref{eq:lowersemicontinuity0} is reached and finite in $\R$. For every $k \in \N$ we denote $S_k := S_{h_k,K_k} \in \AS$ for simplicity and we define $\mu_k$  as the Radon measure  associated to ${\mathcal{S}}(A_k, S_k
 )$, i.e., the measure $\mu_k$ given by
$$		
\mu_k (B) := \int_{B \cap \Om \cap (\partial A_k \cup \partial S_k)}{\psi_k(x, \nu_k \xp)\, \, d\Hs^1}
$$
  for every Borel set $B \subset \R^2$, 
   where $\nu_k:=\nu_{\partial A_k \cup \partial S_k}$ and the surface tension $\psi_k$ is defined by 
  \begin{equation}
      \label{eq:psik}
      \psi_k(x, \nu_k(x)) :=  \begin{cases} \varphi_{\mathrm{ F}}(x, \nu_{A_k} (x))  &\text{if } x \in \partial^* A_k \setminus \partial {S_k}  \\ \varphi (x, \nu_{A_k}(x)) &\text{if } x \in \partial^* A_k \cap \partial ^*{S_k}, 
			\\ 
   { 2 \varphi_{\mathrm{ F}}(x, \nu_{A_k} (x) ) }& \text{if }x \in (\partial A_k \setminus \partial {S_k})\cap A^{(1)},
			\\ 
   { 2 \varphi'(x, \nu_{A_k} (x) )}& \text{if }x \in (\partial A_k \setminus \partial {S_k})\cap  A^{(0)},
			\\ 
   \varphi_{\mathrm{ FS}} (x, \nu_{S_k} (x) ) &\text{if } x \in (\partial ^*{S_k} \setminus \partial A_k)\cap A_k^{(1)},
			\\ 
   {2 \varphi (x, \nu_{A_k} (x) ) }& \text{if } x \in (\partial {S_k} \cap \partial A_k) \cap S^{(1)}_{k} , 
			\\ 
   { 2 \varphi' (x, \nu_{A_k} (x) )} & \text{if } x \in (\partial {S_k} \cap \partial A_k) \cap  A^{(0)}, 
			\\ 
   { \phi 
   (x, \nu_{A_k} (x) )}  &\text{if } x \in \partial {S_k} \cap \partial ^* A_k \cap S ^{(0)}_{k}, 
			\\ 2 \varphi_{\mathrm{ FS}} (x, \nu_{S_k} (x)) &\text{if } x \in (\partial {S_k} \setminus \partial A_k) \cap (S^{(1)}_{k} \cup S^{(0)}_{k}) \cap A_k^{(1)},
			\\ (\varphi_{\mathrm{ F}} + \varphi)(x, \nu_{A_k} (x) )  &\text{if } x \in \partial A_k \cap \partial ^*{S_k}  \cap A^{(1)}, 
			\\ { 2 \varphi_{\mathrm{ F}} 
   (x, \nu_{S_k} (x) )} &\text{if } x \in (\partial {S_k} \cap \partial A_k) \cap S^{(0)}_{k} \cap A_k
			^{(1)}.\end{cases} 
  \end{equation}
 Furthermore, we denote by $\mu$ the Radon measure  associated to ${\mathcal{S}}(A,h,K)$, i.e., the measure $\mu$ given by
$$		
\mu (B) := \int_{B \cap \Om \cap (\partial A \cup \partial S_{h,K})}{\psi(x, \nu_{\partial A \cup \partial S_{h,K}} \xp)\, \, d\Hs^1}
$$
   for every Borel set $B \subset \R^2$, where $\psi$ is defined analogously to $\psi_k$ in \eqref{eq:psik}, but with the sets $A_k$ and $S_k$ replaced with $A$ and $S_{h,K}$, respectively.  %\eqref{eq:definitionmuk}
  
  We observe that  by (H1) there exists $c:= c(c_2) >0$ such that
		$$ \mu_k (\R^2) = \mathcal{S}(A_k, S_k %{h_k,K_k}
  ) \le c \left( \Hs^1(\partial \Ak) + \Hs^1(\partial \Sk)  \right) 
  $$
and since $(A_k, S_k%{h_k,K_k}
) \xrightarrow{\tau_\B} (A,S_{h,K}) $, we obtain that  $\sup_k {\mu_k (\R^2)} < +\infty$. It follows that $\{ \mu_k \} $ is a sequence of bounded Radon measures and hence, owing to the weak* compactness of Radon measures (see \cite[Theorem 4.33]{M}), there exist a not relabeled subsequence $\{ \mu_k \}$ and a Radon measure $\mu_0$ such that $\mu_{k} \stackrel{\ast}{\rightharpoonup} \mu_0$ as $k \rightarrow \infty$. 
		The purpose of this proof is to show the following inequality in the sense of measures
		\begin{equation}
			\label{eq:lowersemicontinuity1}
			\mu_0 \ge \mu.
		\end{equation}		
		Since $\mu_0$ and $\mu$ are non-negative measures and $\mu << \Hs^1 \mres (\partial A \cup \partial {S_{h,K}} )$, to obtain \eqref{eq:lowersemicontinuity1}, 
   it is enough to prove that the surface tension $\psi$ of $\mu$ on each subset of $\partial A \cup \partial {S_{h,K}}$ on which it is uniquely defined, is bounded from above by the Radon-Nikodym  
   derivative of $\mu_0$ with respect to  the $\Hs^1$-measure of the corresponding subset, namely the following 12 inequalities:
\begin{alignat}{3}
& \frac{d \mu_0}{\, d\Hs^1 \mres ( \partial^* {A} \setminus { \partial {S_{h,K}}})} \xp  \ge \varphi_{\mathrm{ F}}(x , \nu_A \xp ) & & \textrm{ for $\Hs^1$-a.e. $x \in  \partial^* {A} \setminus { \partial {S_{h,K}}}$}, \label{eq:blowupa} \\
&  \frac{d \mu_0}{\, d\Hs^1 \mres (\partial ^* S_{h,K} \cap\partial ^* A )} \xp \ge \varphi (x, \nu_{A} \xp) & & \textrm{ for $\Hs^1$-a.e. $x \in \partial ^* S_{h,K} \cap\partial ^* A$, } \label{eq:blowupb}
\end{alignat}
 \begin{alignat}{3}
& \frac{d \mu_0}{\, d\Hs^1 \mres ((\partial A \setminus \partial {S_{h,K}})\cap  A^{(1)}  )} \xp \ge 2 \varphi_{\mathrm{ F}}(x, \nu_A \xp ) & \quad & \textrm{ for $\Hs^1$-a.e. $x \in (\partial A\setminus \partial {S_{h,K}}) \cap  A^{(1)}   , $}	\label{eq:blowupc1} \\
& \frac{d \mu_0}{\, d\Hs^1 \mres ((\partial A \setminus \partial {S_{h,K}})\cap   A^{(0)} )} \xp \ge { 2 \varphi' (x, \nu_A \xp ) 
}&  & \textrm{ for $\Hs^1$-a.e. $x \in (\partial A\setminus \partial {S_{h,K}}) \cap   A^{(0)}   ,  $}	\label{eq:blowupc_2} 
\end{alignat}
 \begin{multline}
  \label{eq:blowupd}
\frac{d \mu_0}{\, d\Hs^1 \mres ((\partial ^*{S_{h,K}} \setminus \partial A)\cap A^{(1)}) } \xp \ge  \varphi_{\mathrm{ FS}} (x, \nu_{S_{h,K}} \xp )\\   \textrm{ for $\Hs^1$- a.e. $x \in (\partial ^*{S_{h,K}} \setminus \partial A)\cap A^{(1)},$ }  
		\end{multline}
 \begin{alignat}{3}
& \frac{d \mu_0}{\, d\Hs^1 \mres ( (\partial {S_{h,K}} \cap \partial A) \cap S^{(1)}_{h,K})} \xp \ge  2 \varphi (x, \nu_A \xp ) & & \textrm{ for $\Hs^1$-a.e. $x \in (\partial {S_{h,K}} \cap \partial A) \cap S^{(1)}_{h,K}, $} \label{eq:blowupe}  \\
& \frac{d \mu_0}{\, d\Hs^1 \mres ( (\partial {S_{h,K}} \cap \partial A) \cap A^{(0)})} \xp \ge  { 2 \varphi'(x, \nu_A \xp ) }& & \textrm{ for $\Hs^1$-a.e. $x \in (\partial {S_{h,K}} \cap \partial A) \cap A^{(0)}, $} \label{eq:blowupea1}
 \end{alignat}
\begin{multline}
   \label{eq:blowupf} 
    \frac{d \mu_0}{\, d\Hs^1 \mres  (\partial {S_{h,K}} \cap \partial^* A \cap  \Sz) } \xp \ge  {  \varphi_{\mathrm{ F}} } (x, \nu_A \xp ) \\  \textrm{for $\Hs^1$-a.e. $x \in \partial {S_{h,K}} \cap \partial^* A \cap  \Sz, $} 
\end{multline}
\begin{multline}
	\label{eq:blowupg} 		
	\frac{d \mu_0}{\, d\Hs^1 \mres ((\partial {S_{h,K}} \setminus \partial A) \cap S^{(1)}_{h,K} \cap A^{(1)})} \xp \ge 2 \varphi_{\mathrm{ FS}} (x, \nu_{S_{h,K}} \xp) \\ \textrm{for $\Hs^1$-a.e. }
	x \in (\partial {S_{h,K}} \setminus \partial A)  \cap S^{(1)}_{h,K} \cap A^{(1)},	 
 \end{multline}
\begin{multline}
	\label{eq:blowupga}
	\frac{d \mu_0}{\, d\Hs^1 \mres ((\partial {S_{h,K}} \setminus \partial A) \cap S^{(0)}_{h,K} \cap A^{(1)})} \xp \ge 2 \varphi_{\mathrm{ FS}} (x, \nu_{S_{h,K}} \xp)\, \\
	\textrm{for $\Hs^1$-a.e. $  x \in (\partial {S_{h,K}} \setminus \partial A)  \cap S^{(0)}_{h,K} \cap A^{(1)},	 $}
\end{multline} 
	\begin{multline}
 \label{eq:blowuph}
	 \frac{d  \mu_0}{\, d\Hs^1 \mres (\partial ^*{S_{h,K}} \cap \partial A   \cap A^{(1)})} \xp \ge (\varphi_{\mathrm{ F}} +  \varphi)(x, \nu_A \xp ) \\ \textrm{for $ \Hs^1$-a.e. $x \in \partial ^*{S_{h,K}} \cap \partial A   \cap A^{(1)}$} 
		\end{multline}
		and
  \begin{multline}
  \label{eq:blowupi}
\frac{d \mu_0}{\, d\Hs^1 \mres ((\partial {S_{h,K}} \cap \partial A)\cap S^{(0)}_{h,K} \cap A^{(1)})} \xp \ge    { {  2 \varphi_{\mathrm{ F}}}  } (x, \nu_{S_{h,K}} \xp )  \\
\textrm{for $\Hs^1$-a.e. $x \in (\partial {S_{h,K}} \cap \partial A)  \cap S^{(0)}_{h,K} \cap A^{(1)}	
			.$ } 
		\end{multline}

The rest of the proof is devoted to establish the previous 12 lower-bound estimates. In order to do that we fix $\ep> 0$ small enough and  we  recall that from  the uniform continuity of $\varphi_{\mathrm{ F}}, \varphi_{\mathrm{ S}}, \varphi_{\mathrm{ FS}}$ it follows that   there exists a $\delta_\ep>0$ such that 
		\begin{equation}
			\label{eq:continuitynorms}
			\varphi_{\mathrm{ F}}(y, \xi) \ge \varphi_{\mathrm{ F}}(x_0, \xi) - \ep, \quad \varphi_{\mathrm{ S}}(y, \xi) \ge \varphi_{\mathrm{ S}}(x_0, \xi) - \ep, \text{ and } \varphi_{\mathrm{ FS}}(y, \xi) \ge \varphi_{\mathrm{ FS}}(x_0, \xi) - \ep,
		\end{equation}
for every $y \in Q_{\delta_\ep}(x_0)\subset\Omega$,  $x_0\in\Omega$ and $\abs{\xi}=1$. The proofs of (\ref{eq:blowupa})  and \eqref{eq:blowupd} are based on \cite[Theorem 20.1]{M}, %while 
the proofs of \eqref{eq:blowupb}, \eqref{eq:blowupe}, \eqref{eq:blowupea1}, 
\eqref{eq:blowupf}, \eqref{eq:blowuph} and \eqref{eq:blowupi} are based on Lemmas \ref{lema:creationOneBdr}, \ref{lema:creationdelamin},
\ref{lema:crackssubstrate}, \ref{lema:filamentssubstrate}, 
\ref{lem:delaminationofS} and \ref{lema:creationcrackdelamination}, respectively. Finally, the proofs of \eqref{eq:blowupc_2} and \eqref{eq:blowupga} are based on \cite[Lemma 4.4]{KP}, and the proofs of \eqref{eq:blowupc1} and \eqref{eq:blowupg} are based on \cite[Lemma 4.5]{KP} (See Table \ref{tab:my_label}).

%\FloatBarrier\clearpage
 \begin{table}[h!]%[!htbp]
    \centering
    \begin{tabular}{p{4.5cm}|p{5.9cm}|p{1.4cm}|p{2.9cm}}
 \textbf{Sets}& \textbf{Conditions} & \textbf{Surf. t.} & \textbf{Assertions}\\
 \hline\hline
$\partial^* {A} \setminus { \partial {S_{h,K}}}$  & $\nu_{A_k } \Hs^1 (\partial ^* A_k) \stackrel{\ast}{\rightharpoonup} \nu_{A} \Hs^1 (\partial ^* A )$& $\varphi_{\text{F}}$ & \small \cite[Theorem 20.1]{M}  \\ \hline
$\partial ^* S_{h,K} \cap \partial^* {A}$ & $\overline{R_{\nu_A}} \cap (\overline{  A_{k_n}} \setminus \Int(  S_{k_n} ) ) \cngK 
    \overline {R_{\nu_A}} \cap T_{0,\nu_A}$ &$\varphi$ &\small Lemma \ref{lema:creationOneBdr} \\
    \hline
$(\partial A\setminus \partial {S_{h,K}}) \cap  A^{(1)}$  & $\overline { Q_1} \setminus A_\kn \cngK 
    \overline {Q_1} \cap T_{0,\textbf{e}_\textbf{1}}$ &$2\varphi_{\text{F}}$ & \small\cite[Lemma 4.5]{KP} \\
    \hline
$(\partial A\setminus \partial {S_{h,K}}) \cap  A^{(0)}$  & $\overline { Q_1} \cap A_\kn \cngK 
    \overline {Q_1} \cap T_{0,\textbf{e}_\textbf{1}}$ &${ 2\varphi'}
    $ & \small\cite[Lemma 4.4]{KP} \\ 
    \hline
$\partial^* {S_{h,K}} \setminus { \partial A}$  & $\nu_{S_k } \Hs^1 (\partial ^* S_k) \stackrel{\ast}{\rightharpoonup} \nu_{S_{h,K}} \Hs^1 (\partial ^* S_{h,K} )$ & $\varphi_{\text{FS}}$ & \small\cite[Theorem 20.1]{M}  \\
\hline
$(\partial {S_{h,K}} \cap \partial A) \cap S^{(1)}_{h,K}$  & $\begin{matrix}
    \overline {Q_1} \setminus  A_\kn \cngK 
    \overline {Q_1} \cap T_{0,\textbf{e}_\textbf{1}}, \\ \overline {Q_1} \setminus  S_\kn \cngK 
    \overline {Q_1} \cap T_{0,\textbf{e}_\textbf{1}}
\end{matrix}$ & $2\varphi$& \small Lemma \ref{lema:crackssubstrate} \\ 
    \hline
$(\partial {S_{h,K}} \cap \partial A) \cap A^{(0)}$ & $
\begin{matrix}
    \overline {Q_1} \cap  A_\kn\cngK 
    \overline {Q_1} \cap T_{0,\textbf{e}_\textbf{1}} , \\ \overline {Q_1} \cap  S_\kn \cngK 
    \overline {Q_1} \cap T_{0,\textbf{e}_\textbf{1}}
\end{matrix}$ & ${ 2\varphi'}$&\small Lemma \ref{lema:filamentssubstrate} \\ \hline
$\partial {S_{h,K}} \cap \partial^* A \cap  \Sz$ & $
\begin{matrix}
    \overline {Q_1} \cap  A_\kn \cngK \overline {Q_1} \cap H_{0, \textbf{e}_\textbf{1}}, \\
    \overline {Q_1} \cap  S_\kn \cngK 
    \overline {Q_1} \cap T_{0,\textbf{e}_\textbf{1}}
\end{matrix}$ & ${ \varphi_{\mathrm{F}}} 
$ &\small Lemma \ref{lem:delaminationofS} \\ \hline
$(\partial {S_{h,K}} \setminus \partial A)  \cap S^{(1)}_{h,K} \cap A^{(1)}$&  $\overline {Q_1} \setminus  S_\kn \cngK 
    \overline {Q_1} \cap T_{0,\textbf{e}_\textbf{1}}$ &  $2 \varphi_{\text{FS}}$ &\small\cite[Lemma 4.5]{KP} \\ \hline
$(\partial {S_{h,K}} \setminus \partial A)  \cap S^{(0)}_{h,K} \cap A^{(1)}$ &  $\overline {Q_1} \cap  S_\kn \cngK 
    \overline {Q_1} \cap T_{0,\textbf{e}_\textbf{1}}$ & $2 \varphi_{\text{FS}}$ &\small\cite[Lemma 4.4]{KP} \\ \hline
$\partial ^*{S_{h,K}} \cap \partial A   \cap A^{(1)}$ &  
$ \begin{matrix}
\overline {R_{\nu_{S_{h,K}}}} \cap  S_\kn \cngK \overline {R_{\nu_{S_{h,K}}}} \cap H_{0,\nu_{S_{h,K}}}, \\ \overline {R_{\nu_{S_{h,K}}}} \setminus  A_\kn \cngK 
    \overline {R_{\nu_{S_{h,K}}}} \cap T_{0,\nu_{S_{h,K}}}    
\end{matrix} $ &  $ \varphi_{\text{F}} + \varphi$ &\small Lemma \ref{lema:creationdelamin} \\ \hline
$(\partial {S_{h,K}} \cap \partial A)  \cap S^{(0)}_{h,K} \cap A^{(1)}$ &  $
\begin{matrix}
\overline {Q_1} \cap  S_\kn \cngK \overline {Q_1} \cap T_{0,\textbf{e}_\textbf{1}}, \\ 
\overline {Q_1} \setminus  A_\kn \cngK \overline {Q_1} \cap T_{0,\textbf{e}_\textbf{1}}
\end{matrix}$ & $ 
{ 2\varphi_{\text{F}} }
$  &\small Lemma \ref{lema:creationcrackdelamination} \\
	\hline \hline
\end{tabular}
    \caption{\small Sketch of the proof of \eqref{eq:blowupa}--\eqref{eq:blowupi} for Theorem \ref{thm:lowersemicontinuityS}: in the blow-ups centered at a point of the sets listed in the first column, the corresponding conditions listed in the second column are proven to hold and  the lower bounds of the localized  surface energy are  reached with surface tensions given in the third column by means of the assertions listed in the fourth column. Note that $R_{\nu_U}$ for $U = A, S_{h,K}$  is defined as $R_{\nu_U}:= Q_1$ if $\nu_U  = \textbf{e}_i$ for $i =\textbf{1},\textbf{2}$,  or, otherwise,  $R_{\nu_U}:=(-\cos \theta_{\nu_U }, \cos \theta_{\nu_U } ) \times_{\R^2} (-\sin \theta_{\nu_U }, \sin \theta_{\nu_U } )$, where $\theta_{\nu_U}$ is the angle formed between  $T_{0,\nu_U}$ and the $x_1$-axis.}
    \label{tab:my_label}
\end{table}

		\begin{proof}[Proof of (\ref{eq:blowupa})]
	We begin by observing that by the definition of $\partial ^* A$, the continuity of $\varphi$, the Borel regularity of $y \in \partial ^* A \mapsto \varphi_{\mathrm{ F}}(y, \nu_A (y))$ and the Besicovitch Derivation Theorem  (see \cite[Theorem 1.153]{FL}), the set of points in $\partial^* {A} \setminus { \partial {S_{h,K}}}$ not satisfying the following 3 conditions:
   \begin{itemize}
	\item[(a1)] $ \nu_A \xp $ exists,
	\item[(a2)] $x$ is a Lebesgue point of $y \in  \partial^* {A} \setminus { \partial {S_{h,K}}} \mapsto \varphi_{\mathrm{ F}}(y, \nu_A (y)),$ i.e.,
	$$ \lim_{r \rightarrow 0}{\frac{1}{2r}{\int_{Q_r \cap  \partial^* {A} \setminus { \partial {S_{h,K}}}}{\abs{\varphi_{\mathrm{ F}}(y, \nu_A (y)) - \varphi_{\mathrm{ F}}(x, \nu_A \xp)}\, \, d\Hs^1(y)}}}=0, $$
	\item[(a3)] $ \frac{d \mu_0}{\, d\Hs^1 \res ( \partial^* {A} \setminus  \partial {S_{h,K}})} \xp $ exists and it is finite,
	\end{itemize}
   is $\Hs^1$-negligible. Therefore, we prove \eqref{eq:blowupa} for a fixed $x \in \partial^* {A} \setminus { \partial {S_{h,K}}}$ satisfying (a1)-(a3). Without loss of generality, we consider $x=0$ and  $\nu_A(0) = \mathbf{e}_\textbf{1}$, where we used (a1). 
   
   By \cite[Lemma 3.2-(b)]{KP} and $\tau_\B$-convergence we have that $A_k  \rightarrow A$ and $S_k  \rightarrow S$  in $L^1 (\R^2)$ and hence,  $D \mathbbm{1}_{A_k}  \stackrel{\ast}{\rightharpoonup} D \mathbbm{1}_{A}$ and $D \mathbbm{1}_{ S_k}  \stackrel{\ast}{\rightharpoonup} D \mathbbm{1}_{S}$, hence by the Structure Theorem for sets of finite perimeter  (see \cite[Theorem 5.15]{EG}), it holds that
	\begin{equation}
	    \label{eq:reshetnyakcond}
        \nu_{A_k } \Hs^1 (\partial ^* A_k) \stackrel{\ast}{\rightharpoonup} \nu_{A} \Hs^1 (\partial ^* A ) .
	\end{equation}
Furthermore, by Remark \eqref{remark:1}-(i) and again the  the $\tau_\B$-convergence we obtain  that  $\partial S_k \cngK \partial S_{h,K}$, from which it follows that for any $\eta >0$  there exists $k_\eta \in \N$ such that $\partial S_k \subset S^\eta$ for every $k \ge k_\eta$, where $S^\eta: = \{ x \in \overline{\Om}: \, \dist(x, \partial S_{h,K}) \le \eta \}.$

 We observe that from the properties of Radon measures there exists a sequence $\rho_n \searrow 0$ such that $Q_{\rho_n} \subset \subset \Om$, $\mu_0 (\partial  Q_{\rho_n} )=0$, 
 \begin{equation}
	\label{eq:beforebesicovitch89}
 \mu_0 (Q_{\rho_n}) = \lim_{k \rightarrow +\infty}{\mu_k (\overline{Q_{\rho_n}})} 
 	\end{equation}
 and
\begin{equation}
\label{eq:besicovitch89}
\frac{d \mu_0}{\, d\Hs^1 \res ( \partial^* {A} \setminus  \partial {S_{h,K}})}(0) = \lim_{n \rightarrow \infty}{\frac{\mu_0 (Q_{\rho_n})}{2 \rho_n}},
	\end{equation}
where we also used (a3) and Besicovitch   Derivation Theorem (see, e.g., \cite[Theorem 1.153]{FL}). Therefore, by \eqref{eq:beforebesicovitch89} we deduce that
\begin{equation}
\label{eq:blowupa2}
    \begin{split}
\mu_0 (Q_{\rho_n})& = \lim_{k \rightarrow +\infty}{\mu_k (\overline{Q_{\rho_n}})} \ge \liminf_{k \rightarrow \infty}{\int_{Q_{\rho_n} \cap  \partial^* {A_k} \setminus { \partial {S_k}}}{\varphi_{\mathrm{ F}}(y, \nu_{A_k})\, d\Hs^1}} \\ 
        & \ge \liminf_{k \rightarrow \infty}{\int_{Q_{\rho_n} \cap  \partial^* {A_k} \setminus S^\eta}{\varphi_{\mathrm{ F}}(y, \nu_{A_k})\, d\Hs^1}} \ge {\int_{Q_{\rho_n} \cap  \partial^* {A} \setminus S^\eta}{\varphi_{\mathrm{ F}}(y, \nu_{A})\, d\Hs^1}},
    \end{split}
\end{equation}
where in the first inequality we used the non-negativeness of $\psi_k$, in the second inequality we used the fact that $\partial S_k  \subset S^\eta$ for every $k \ge k_\eta$ and in the last inequality, by using the fact that $A_k \to A$ in $L^1(\R^2)$ and \eqref{eq:reshetnyakcond}, we apply (see, e.g.,  \cite[Theorem 20.1]{M}). Moreover, taking $\eta \to 0$ in \eqref{eq:blowupa2}  we obtain that
\begin{equation}
\label{eq:blowupa2bis}
    \begin{split}
	\mu_0 (Q_{\rho_n}) & \ge \lim_{\eta \to 0} {\int_{Q_{\rho_n} \cap  \partial^* {A} \setminus S^\eta}{\varphi_{\mathrm{ F}}(y, \nu_{A})\, d\Hs^1}} =  {\int_{Q_{\rho_n} \cap  \partial^* {A} \setminus \partial S_{h,K}}{\varphi_{\mathrm{ F}}(y, \nu_{A})\, d\Hs^1}}
    \end{split}
\end{equation}
by Lebesgue monotone convergence theorem \cite[Theorem 1.79]{FL}.
Finally, by \eqref{eq:besicovitch89} and (a2) we conclude that 
			\begin{align*}
				\frac{d \mu_0}{\, d\Hs^1 \mres ( \partial^* {A} \setminus { \partial {S_{h,K}}})}(0) &= \lim_{n \rightarrow \infty}{\frac{\mu_0(Q_{\rho_n})}{2\rho_n}} \ge \liminf_{n \rightarrow \infty}{\frac{1}{2\rho_n} \int_{Q_{\rho_n} \cap  \partial^* {A} \setminus { \partial {S_{h,K}}}}{\varphi_{\mathrm{ F}}(y, \nu_{A})\, d\Hs^1} }\\
    &= \varphi_{\mathrm{ F}}(0, \mathbf{e}_\textbf{1}).
			\end{align*}
		\end{proof}
		\begin{proof}[Proof of (\ref{eq:blowupb})]
By the definition of $\partial^* A$ and $\partial^* {S_{h,K}}$, by \cite[Proposition A.4]{KP} (applied with $K$ taken as first $\partial A$ and then $\partial {S_{h,K}}$), and by the Besicovitch Derivation Theorem the set of points $x \in \partial ^* A \cap \partial^*{S_{h,K}}$ not satisfying the following 3 conditions:
	\begin{enumerate}
	\item[(b1)] $\nu_A \xp$, $\nu_{S_{h,K}} \xp$ exist and, either $\nu_A \xp = \nu_{S_{h,K}} \xp$ or $\nu_A \xp = - \nu_{S_{h,K}} \xp$,
	\item[(b2)]  
for every open rectangle $R$ containing $x$ with sides parallel or perpendicular to $\textbf{e}_\textbf{1}$ we have that  $\overline{R} \cap  \partial {\sigma}_{\rho, x} ( A) \cngK \overline{R} \cap T_{x,\nu_A(x)}$ and $ \overline{R} \cap  \partial {\sigma}_{\rho, x} ( {S_{h,K}}) \cngK  \overline{R} \cap T_{x,\nu_A(x)}$ as $\rho \to 0$, where $T_{x,\nu_A(x)}$ is the approximate tangent line at $x$ of $\partial A$ (or of $\partial {S_{h,K}}$), 
	\item[(b3)] $ \frac{d \mu_0}{\, d\Hs^1 \mres (\partial ^* S_{h,K} \cap \partial ^* A )} \xp$ exists and it is finite,
	\end{enumerate}   
   is $\Hs^1$-negligible. Therefore, we prove \eqref{eq:blowupb} for any fixed $x \in  \partial ^* A \cap \partial^*{S_{h,K}}$ satisfying (b1)-(b3). Without loss of generality we assume that $x=0$ and we denote $T_0 = T_{0,\nu_A(0)}$. Furthermore, by using (b1) we choose in (b2) the rectangle $R_{\nu_A}:= Q_1$ if $\nu_A(0)  = \textbf{e}_i$ for $i =\textbf{1},\textbf{2}$,  or $R_{\nu_A} :=(-\cos \theta_{\nu_A }, \cos \theta_{\nu_A } ) \times_{\R^2} (-\sin \theta_{\nu_A }, \sin \theta_{\nu_A } )$, where $\theta_{\nu_A }$ is the angle formed between the tangent line $T_0$ and the $x_1$-axis, otherwise. For any $\rho >0$, we write $R_\rho := \rho R_{\nu_A}$.
      
In view of the definition of $R_{\nu_A}$ and again by using also the Besicovitch Derivation Theorem (see \cite[Theorem 1.153]{FL}) there exists a subsequence $\rho_n \searrow 0$ such that 
\begin{equation}
	\label{eq:blowupb1}
	\mu_0 (\partial  R_{\rho_n}  %Q_{\rho_n}
    )=0, \qquad  \lim_{k \rightarrow +\infty}{\mu_k (\overline{ R_{\rho_n}})} = \mu_0 ( R_{\rho_n})
\end{equation} 
and
\begin{equation}
\label{eq:blowupb2}
	\frac{d \mu_0}{\, d\Hs^1 \mres (\partial ^* S_{h,K} \cap \partial ^* A )}(0) = \lim_{n \rightarrow \infty}{\frac{\mu_0 ( R_{\rho_n})}{2 \rho_n}}.
\end{equation}
We now claim that 
\begin{equation}
\label{eq:diag1}
\sdist (\cdot, \partial \sigma_{\rho_n} ( A )) \rightarrow \sdist (\cdot, \partial H_0)\quad\text{and}\quad\sdist (\cdot,\partial \sigma_{\rho_n} ( {S_{h,K}})) \rightarrow \sdist (\cdot, \partial H_0)
\end{equation}
uniformly in $\overline{R_{\nu_A}}$ as $n\to\infty$, where $H_0$ is the half space centered in $0$ with respect to the vector $\nu_A$. To prove the claim we can for example observe that by \cite[Proposition A.4]{KP} we have (not only (b2), but also) that $\overline{Q_r} \cap  \partial {\sigma}_{\rho, x} ( A) \cngK \overline{Q_r} \cap T_x \quad \text{and} \quad  \overline{Q_r} \cap  \partial {\sigma}_{\rho, x} ( {S_{h,K}}) \cngK  \overline{Q_r} \cap T_x \quad \text{as }\rho \to 0$ for any square $Q_{r}$ such that $R_{\nu_A} \subset Q_r$
and hence, by Proposition \ref{prop:aux1}-(c) applied to $Q_{r}$, $\sdist (\cdot, \partial \sigma_{\rho_n} ( A )) \rightarrow \sdist (\cdot, \partial H_0)$ and $\sdist (\cdot,\partial \sigma_{\rho_n} ( {S_{h,K}})) \rightarrow \sdist (\cdot, \partial H_0)$ uniformly in ${ \overline{Q_{r}}}\supset R_{\nu_A}$.  

Furthermore, from the $\tau_\B$-convergence it follows that 
\begin{equation}
\label{eq:diag2}
\sdist (\cdot, \partial A_k ) \rightarrow \sdist(\cdot, \partial A )\quad\text{and}\quad\sdist (\cdot, \partial {S_k} ) \rightarrow \sdist(\cdot, \partial {S_{h,K}} )
\end{equation}
uniformly in $\overline{R_{\nu_A}}$ as $k \rightarrow \infty$ and hence, by \eqref{eq:diag1} and \eqref{eq:diag2}, a standard diagonalization argument yields that there exists a subsequence $\left\{ (A_{k_n}, h_{k_n},K_{k_n} ) \right\}$ such that
			\begin{equation}
				\label{eq:blowupb3}
				\sdist (\cdot , \partial \sigma_{\rho_n} ( A_{k_n})) \rightarrow \sdist (\cdot, \partial H_0), \quad \sdist (\cdot , \partial \sigma_{\rho_n} ( S_{k_n} )) \rightarrow \sdist (\cdot, \partial H_0)
			\end{equation}
uniformly in $\overline{R_{\nu_A}}$ as $n \to \infty$ and by \eqref{eq:blowupb1} such that 
\begin{equation}
\label{eq:blowupb4}
	\mu_\kn (\overline{R_{\rho_n}}) \le \mu_0 (R_{\rho_n}) + \rho_n^2,
\end{equation}
for every $n \in \N$. We also observe that 
	\begin{equation}
	\label{eq:blowupb6}
		\begin{split}
		\frac{d \mu_0}{\, d\Hs^1 \mres (\partial ^* S_{h,K} \cap  \partial ^* A )}(0)  & \ge  \limsup_{n \to \infty}{\frac{\mu_\kn (\overline{R_{\rho_n}} )}{2 \rho_n}  } \\
  & \ge c_1 \limsup_{n \to \infty}{ \frac{\Hs^1 (\overline{R_{\rho_n}} \cap \partial A_{k_n}) + \Hs^1 (\overline{R_{\rho_n}} \cap \partial S_{k_n} \setminus \partial A_{k_n}) }{2 \rho_n}  }, % & \le c_1^{-1} \limsup_{n \to \infty}{\frac{\mu_\kn (\overline{R_{\rho_n}} )}{2 \rho_n}  } \\
		%& \le  c_1^{-1}  \frac{d \mu_0}{\, d\Hs^1 \mres (\partial ^* S_{h,K} \cap  \partial ^* A )}(0) ,
		\end{split}
		\end{equation}
where in the first inequality we used \eqref{cmpct1}-\eqref{eq:compactintersectionlast}, as $A_{k_n}$ and $S_\kn$ are sets of finite perimeter, and \eqref{eq:H1} while in the second inequality we used \eqref{eq:blowupb2} and \eqref{eq:blowupb4}.

%Moreover, from the fact that   for every $n \in \N$ it yields that

Now, we claim that
\begin{equation}
	\label{eq:blowupb5}
	\overline {R_{\nu_A}} \cap (\overline{ \sigma_{\rho_n} ( A_{k_n})} \setminus \Int( \sigma_{\rho_n} ( S_{k_n}) ) ) \cngK %I_1 \times \{0\}= \overline{Q_1} \cap  T_0
    \overline {R_{\nu_A}} \cap T_0.
\end{equation}
We proceed by contradiction, let $x_n \in \overline {R_{\nu_A}} \cap \overline{ \sigma_{\rho_n} ( A_{k_n})} \setminus \Int( \sigma_{\rho_n} ( S_{k_n}) )$ such that $x_n \to x$ and assume that $x \in \Int(\overline{R_{\nu_A}} \cap H_0 )$ or $x \in R_{\nu_A} \setminus  H_0 $. In the first case, there exists $\ep >0$ such that $\sdist(x, \partial H_0) = - \ep$. By \eqref{eq:blowupb3} 
we observe that $\sdist (x,  \partial \sigma_{\rho_n} ( S_{k_n} )) \to -\ep$ uniformly in $R_{\nu_A}$ as $n \to \infty$. Furthermore, for $n$ large enough, $x_n \in B_{{\ep}/{2}}(x)$ and it follows that $\sdist(x_n,  \partial \sigma_{\rho_n} ( S_{k_n} ))$ is negative. Therefore, for $n$ large enough, $x_n \in \Int( \sigma_{\rho_n} ( S_{k_n} )))$ 
which is an absurd. Analogously if $x \in R_{\nu_A} \setminus  H_0 $ we have that $\sdist(x, \partial H_0) =  \ep$ and by \eqref{eq:blowupb3}, $\sdist (x,  \partial \sigma_{\rho_n} ( A_{k_n} )) \to \ep$ uniformly in $R_{\nu_A}$ as $n \to \infty$, similarly as before, we can conclude, for $n$ large enough, that $x_n \in R_{\nu_A} \setminus \overline{ \sigma_{\rho_n} ( A_{k_n})}$, which is an absurd. 
			
Now, let $x \in \overline{ R_{\nu_A}} \cap T_0$. By Kuratowski convergence there exists $\{x_n \} \subset R_{\nu_A} \cap  \partial \sigma_{\rho_n} (  A_{k_n})$ such that $x_n \to x$. We see that for all $n \in \N$, $x_n \in R_{\nu_A} \cap \overline{ \sigma_{\rho_n} ( A_{k_n})} $ and $x_n \notin   R_{\nu_A} \setminus  \Int( \sigma_{\rho_n} ( S_{k_n}) )$, if not, there exists $n' \in \N$ such that $x_{n'} \in R_{\nu_A}\cap  \Int( \sigma_{\rho_{n'}} ( S_{k_{n'}}) ) \subset \subset R_{\nu_A} \cap \Int(\sigma_{\rho_{n'}} ( A_{k_{n'}})) $, which is an absurd.
			
Since $\left\{ (A_\kn, S_{h_\kn,K_\kn}) \right\} \subset \B_\mathbf{m}$, we know that %the connected components of  $\partial \sigma_{\rho_n} (A_\kn)$ lying inside of $Q_1$ does not exceed from $m_1$. %, by \eqref{eq:blowupb5}, 
%And hence,  
$(\sigma_{\rho_n}({A_{k_n}}), S_{(1/{\rho_n})h_{k_n} (\rho_n\cdot), \sigma_{\rho_n}({K_{k_n}})}) \in \B_{\textbf{m}}(\sigma_{\rho_n}(\Omega))$. %, where $T_\kn$ is the union of vertical segments such that $T_\kn$ connects all the points of $ \{x_1  = i\}  \cap \partial \sigma_{\rho_n}({A_{k_n}})  $ with $\partial \sigma_{\rho_n}({S_{k_n}})$, for every $i = -1,1$. 
In view of \eqref{eq:blowupb3} and  \eqref{eq:blowupb5}  by applying Lemma \ref{lema:creationOneBdr} to $(\sigma_{\rho_n}({A_{k_n}}), S_{(1/{\rho_n})h_{k_n}(\rho_n\cdot), \sigma_{\rho_n}({K_{k_n}})})$ and $R_{\nu_A}$, 
with $\phi_{\alpha} (\cdot) = \varphi_\alpha (0, \cdot) $ for $\alpha = \text{F,S,FS}$, and by fixing $\ep \in (0,1)$, there exists $n^1_\ep\in \N$ such that for every $n \ge n^1_\ep$,
\begin{equation}
	\label{eq:blowupb7}
	\begin{split}
	 {\mathcal{S}}_L (\sigma_{\rho_n}({A_{k_n}}),S_{ (1/{\rho_n})h_{k_n}(\rho_n\cdot), \sigma_{\rho_n}({K_{k_n}})}, R_{\nu_A}) \ge %:=
   \int_{ T_0 \cap \overline{R_{\nu_A}}}{\varphi (0,\nu_{T_0}) \, \, d\Hs^1} - \ep \ge 2 \varphi (0,\nu_{T_0}) - \ep.
\end{split}
\end{equation}

Moreover, by the uniform continuity of the Finsler norm $\varphi_\alpha  $ for $\alpha = \text{F,S,FS}$  %and arguing as in \cite[Eq. 4.46]{KP} 
there exists $n^2_\ep\ge n^1_\epsilon$ such that %for every $n\ge n^1_\epsilon$, it yields that
\begin{equation}
	\label{eq:inequalitymu}
	\begin{split}
		\mu_\kn  (\overline{R_{\rho_n}}) 
		& \ge \mu_\kn  (R_{\rho_n}) \\
  &\ge \int_{R_{\rho_n} \cap \partial^* A_{k_n} \setminus \partial S_{k_n}}{\varphi_{\mathrm{ F}}(0, \nu_{A_{k_n}}(0))\, \, d\Hs^1} \\
  & \quad +   \int_{R_{\rho_n} \cap (\partial A_{k_n} \setminus \partial S_{k_n})\cap \left(A_{k_n}^{(0)} \cup A_{k_n}^{(1)}\right)}{2 \varphi_{\mathrm{ F}}(0, \nu_{A_{k_n}} (0)  )\, d\Hs^1} \\
		&\quad +  \int_{R_{\rho_n}  \cap \partial ^*{S_{k_n}}\cap \partial^* {A_{k_n}}}{\varphi (0, \nu_{A_{k_n}(0)} ) \, d\Hs^1} \\
  & \quad + \int_{R_{\rho_n} \cap (\partial^* {S_{k_n}} \setminus \partial {A_{k_n}})\cap A_{k_n}^{(1)}}{ \varphi_{\mathrm{ FS}} (0, \nu_{S_{k_n}}(0) )\, d\Hs^1} \\
		& \quad + \int_{R_{\rho_n} \cap (\partial {S_{k_n}} \cap \partial {A_{k_n}}) \cap \left(S_{k_n}^{(1)} \cup A_{k_n}^{(0)}\right)}{2 \varphi (0, \nu_{A_{k_n}}(0) )\, d\Hs^1} \\  
        & \quad + \int_{R_{\rho_n} \cap \partial {S_{k_n}} \cap \partial ^* {A_{k_n}} \cap S_{k_n}^{(0)}}{  (\varphi + \varphi_{\mathrm{ FS}}) (0, \nu_{A_{k_n}} (0) ) \, d\Hs^1}  \\
		& \quad + \int_{R_{\rho_n} \cap (\partial {S_{k_n}} \setminus \partial {A_{k_n}}) \cap \left(S_{k_n}^{(1)} \cup S_{k_n}^{(0)}\right) \cap A_{k_n}^{(1)} }{ 2 \varphi_{\mathrm{ FS}} (0, \nu_{S_{k_n}}(0) ) \, d\Hs^1}  \\
		& \quad + \int_{R_{\rho_n}\cap  \partial^* {S_{k_n}} \cap \partial {A_{k_n}} \cap A_{k_n}^{(1)}}{(\varphi_{\mathrm{ F}} +\varphi) (0, \nu_{A_{k_n}}(0) ) \, d\Hs^1} \\
		& \quad + \int_{R_{\rho_n} \cap (\partial {S_{k_n}} \cap \partial {A_{k_n}} ) \cap S_{k_n}^{(0)} \cap A_{k_n}^{(1)} }{  (\varphi_{\mathrm{ F}} +   \varphi +   \varphi_{\mathrm{ FS}}) (0, \nu_{S_{k_n}} (0) ) \, d\Hs^1} \\
		& \quad  - \ep \left( \Hs^1 (\overline{R_{\rho_n}} \cap \partial A_{k_n}) + \Hs^1 (\overline{R_{\rho_n}} \cap \partial S_{k_n} \setminus \partial A_{k_n})  \right) ,\\
  & =: {\mathcal{S}}_L ({A_{k_n}}, S_{h_{k_n}, {K_{k_n}}}, R_{\rho_n})  - \ep \left( \Hs^1 (\overline{R_{\rho_n}} \cap \partial A_{k_n}) + \Hs^1 (\overline{R_{\rho_n}} \cap \partial S_{k_n} \setminus \partial A_{k_n})  \right) \\
	\end{split}	
\end{equation}
for every $n \ge n^2_\ep$, where in the first inequality we used the definition of $\mu_\kn$ and in the second inequality \eqref{eq:continuitynorms}, and hence, %Moreover, it yields that % we see that %By properties of the blow-up mapping, it follows that 
	\begin{equation}
 \label{eq:blowupb71}
	\begin{split}
	    \mu_\kn (\overline{R_{\rho_n}})  &\ge \rho_n \left( {\mathcal{S}}_L (\sigma_{\rho_n} ({A_{k_n}}), S_{(1/{\rho_n})h_{k_n}(\rho_n\cdot), \sigma_{\rho_n}({K_{k_n}})}, R_{\nu_A})  \right) \\
     & \qquad - \ep \left( \Hs^1 (\overline{R_{\rho_n}} \cap \partial A_{k_n}) + \Hs^1 (\overline{R_{\rho_n}} \cap \partial S_{k_n} \setminus \partial A_{k_n})  \right)   \\
     & \ge  2 \rho_n \varphi (0,\nu_{T_0}) - \ep \rho_n - \ep  \left( \Hs^1 (\overline{R_{\rho_n}} \cap \partial A_{k_n}) + \Hs^1 (\overline{R_{\rho_n}} \cap \partial S_{k_n} \setminus \partial A_{k_n})  \right),
	\end{split}
    \end{equation}
    where in the first inequality we used \eqref{eq:inequalitymu} and the properties of the blow up map, % and the equality of \eqref{eq:blowupb7} 
    and in the last inequality we used \eqref{eq:blowupb7}. 
    Finally, we conclude that
	\begin{equation}
    \label{eq:blowupbfinal}
	    \begin{split}
	        \frac{d \mu_0}{\, d\Hs^1 \mres ( \partial ^* S_{h,K} \cap \partial ^* A)}(0) &\ge \liminf_{n \to \infty}{\frac{\mu_\kn (\overline{R_{\rho_n}})}{2 \rho_n}} \\
	       & \ge \varphi (0,\nu_{T_0}) - \frac{ \ep}{2} \\
        & \qquad \qquad - \ep  \limsup_{n \to \infty}{\frac{\Hs^1 (\overline{R_{\rho_n}} \cap \partial A_{k_n}) + \Hs^1 (\overline{R_{\rho_n}} \cap \partial S_{k_n} \setminus \partial A_{k_n}) }{2 \rho_n}  } \\
	       & \ge \varphi (0,\nu_{T_0}) - \frac{ \ep}{2} - \frac{\ep}{c_1}  \frac{d \mu_0}{\, d\Hs^1 \mres ( \partial ^* S_{h,K} \cap \partial ^* A)}(0),
	    \end{split}
	\end{equation}
   where in the first inequality we used \eqref{eq:blowupb4}, in the second inequality we used  \eqref{eq:blowupb71} and in the last inequality we used \eqref{eq:blowupb6}. By (b3) and taking $\ep \to 0 ^+$, in the inequality above, we deduce \eqref{eq:blowupb}.
		\end{proof}
		
  \begin{proof}[Proof of \eqref{eq:blowupc1}] 
By the $\Hs^1$-rectifiability of $\partial A$, by \cite[Proposition A.4]{KP} (applied with $K$ taken as $ \partial  A$),  and by the Besicovitch Derivation theorem, the set of points $x \in (\partial A \setminus \partial S_{h,K}) \cap \Ao$ not satisfying the following 4 conditions:
	\begin{enumerate}
	\item[(c1)] $\theta^*(\partial A, x) = \theta_* (\partial A, x) = 1$,
	\item[(c2)] $\nu_A(x)$ exists,
	\item[(c3)] $\overline{Q_{1,\nu_{A}(x)}} \cap  \partial {\sigma}_{\rho, x} ( A) \cngK \overline{Q_{1,\nu_{A}(x)}} \cap T_{x, \nu_A(x)}$ as $\rho \to 0$, %where $T_x^A$ is the approximate tangent line to $\partial A$,
	\item[(c4)] $ \frac{d \mu_0}{\, d\Hs^1 \mres ((\partial A \setminus \partial {S_{h,K}}) \cap  A^{(1)})} \xp$ exists and it is finite,
			\end{enumerate}
is $\Hs^1$-negligible. Therefore, we prove \eqref{eq:blowupc1} for any fixed $x \in (\partial A \setminus \partial S_{h,K}) \cap \Ao$ satisfying (c1)-(c4). Without loss of generality we assume that $x=0$ and $\nu_A (0)  = \mathbf{e}_\mathbf{1}$, and we use the notation $T_0:=T_{0, \nu_A(0)}$. Again by the Besicovitch Derivation Theorem there exists a subsequence $\rho_n \searrow 0$ such that
	\begin{equation}
	\label{eq:blowupc2}
		\mu_0 (\partial Q_{\rho_n} )=0, \quad \lim_{k \rightarrow +\infty}{\mu_k (\overline{Q_{\rho_n}})} = \mu_0 (Q_{\rho_n})
	\end{equation}
and
	\begin{equation}
	\label{eq:blowupc3}
		\frac{d \mu_0}{\, d\Hs^1 \mres ((\partial A \setminus \partial {S_{h,K}}) \cap  A^{(1)})}(0) = \lim_{n \rightarrow \infty}{\frac{\mu_0 (Q_{\rho_n})}{2 \rho_n}}.
	\end{equation}
 By (c3) and applying Proposition \ref{prop:aux1}-(a) to $A$ we have that 
 \begin{equation}
 \label{eq:diagc1}
     \sdist (\cdot, \partial \sigma_{\rho_n} ( A )) \rightarrow -\dist (\cdot,  T_0)
 \end{equation}
 uniformly in $\overline{Q_1}$ as $n \to \infty$. Furthermore, from the $\tau_\B$-convergence it follows that 
 \begin{equation}
 \label{eq:diagc2}
     \sdist (\cdot, \partial A_k ) \rightarrow \sdist(\cdot, \partial A )
 \end{equation}
 uniformly in $\overline{Q_{1}}$ as $k \to \infty$ and hence, by \eqref{eq:diagc1} and \eqref{eq:diagc2}, a standard diagonalization argument yields that there exists a subsequence $\left\{ (
	A_{k_n},S_{ h_{k_n}, K_\kn} ) \right\}$ such that
	\begin{equation}
		\label{eq:blowupc4}
		\sdist (\cdot , \partial \sigma_{\rho_n} ( A_{k_n})) \rightarrow -\dist (\cdot, T_0).
	\end{equation}
uniformly in $\overline{Q_1}$ as $n \to \infty$ and, by also using \eqref{eq:blowupc2}, such that
	\begin{equation}
	\label{eq:blowupc5}
		\mu_\kn (\overline{Q_{\rho_n}}) \le \mu_0 (Q_{\rho_n}) + \rho_n^2,
	\end{equation}
	for every $n\in \N$. By arguing as in \eqref{eq:blowupb6}, we infer that
	\begin{equation}
	\label{eq:blowup6}
	\limsup_{n \to \infty}{\frac{\Hs^1 (\overline{Q_{\rho_n}} \cap \partial A_{k_n}) + \Hs^1 (\overline{Q_{\rho_n}} \cap \partial S_{k_n} \setminus \partial A_{k_n}) }{2 \rho_n}  } \le  c_1^{-1}  \frac{d \mu_0}{\, d\Hs^1 \mres (((\partial A \setminus \partial {S_{h,K}}) \cap  A^{(1)})}(0) .
	\end{equation}			
By \eqref{eq:blowupc4} and by applying Lemma \ref{lem:A2}, we have that $ \overline{Q_1} \setminus \sigma_{\rho_n}{(A_{k_n})} \cngK  \overline{Q_1} \cap T_0$ as $n \to \infty$. Since the number of connected components of $\partial \sigma_{\rho_n} (  A_\kn)$ lying inside of $Q_1$ does not exceed $m_1$, by \cite[Lemma 4.5]{KP} (applied by taking $m_0,\delta,\phi$ in the notation of  \cite[Lemma 4.5]{KP} as $m_1 $, $\ep$, and $\varphi_{\mathrm{ F}}(0, \cdot)$, respectively) implies that there exists $n^1_\ep \ge n_\ep$ such that for every $n \ge n^1_\ep$,
\begin{equation}
	\label{eq:blowupc7}
	\begin{split}
	 &\int_{Q_1 \cap \brbak}{\hspace{-0.45cm}\varphi_{\mathrm{ F}}(0, \nubak(0) )\, \, d\Hs^1} \\
	  & \qquad + \int_{Q_1 \cap \bbak \cap (\bzda \cup \boda)}{\hspace{-0.45cm}2 \varphi_{\mathrm{ F}}(0, \nubak(0) )\, d\Hs^1} \\
& \ge 2 \int_{\overline{Q_1} \cap T_0}{\varphi_{\mathrm{ F}}(0,\mathbf{e}_\textbf{1}) \, \, d\Hs^1} - \ep = 4 \varphi_{\mathrm{ F}}(0,\mathbf{e}_\textbf{1}) - \ep.
	\end{split}
\end{equation}
In view of Remark \ref{remark:1}-(iii) and by $\tau_\B$-convergence, there exists a ball $B_{r(0)}(0)$ and $n^2_\ep \ge n^1_\ep$ such that 
$Q_{\rho_n} \cap \partial S_{k_n}\subset B_{r(0)}(0) \cap \partial S_{k_n} = \emptyset$ for any $n \ge  n^2_\ep$, and thus,
	\begin{equation}
	\label{eq:blowupc8}
		\emptyset = Q_{\rho_n} \cap \partial S_{k_n} = \rho_n (Q_1 \cap \partial \sigma_{\rho_n} (S_{k_n}) ).
	\end{equation}
Therefore, by \eqref{eq:blowupc7} and \eqref{eq:blowupc8}, we obtain that
\begin{equation}
	\label{eq:blowupc9}
    \begin{split}
	& \int_{Q_1 \cap (\brbak \setminus \partial \sigma_{\rho_n}( S_{k_n}))}{\varphi_{\mathrm{ F}}(0, \nubak )\, \, d\Hs^1} \\ 
    & \quad + \int_{Q_1 \cap (\bbak \setminus \bbsk) \cap (\bzda \cup \boda)}{2 \varphi_{\mathrm{ F}}(0, \nubak (0))\, d\Hs^1} \\
    & \ge 
    4 \varphi_{\mathrm{ F}}(0,\mathbf{e}_2) - \ep.
    \end{split}
	\end{equation}		
Furthermore, by \eqref{eq:continuitynorms} it follows that there exists $n^3_\ep \ge n^2_\ep$ such that %    From this, we obtain that for every $n \ge n^2_\ep$ 
\begin{align}
	\mu_\kn  (\overline{Q_{\rho_n}}) &\ge  \int_{Q_{\rho_n} \cap  \partial^* {A_{k_n}} \setminus { \partial {S_{k_n}}}}{\varphi_{\mathrm{ F}}(0, \nu_{A_{k_n}} )\, d\Hs^1} \nonumber \\
 & \quad +   \int_{Q_{\rho_n} \cap (\partial A_{k_n} \setminus \partial S_{k_n})\cap (A_{k_n}^{(0)} \cup A_{k_n}^{(1)})}{2 \varphi_{\mathrm{ F}}(0, \nu_{A_{k_n}}  )\, d\Hs^1} \nonumber \\
	& \quad  - \ep \left( \Hs^1 (\overline{Q_{\rho_n}} \cap \partial A_{k_n}) + \Hs^1 (\overline{Q_{\rho_n}} \cap \partial S_{k_n} \setminus \partial A_{k_n})  \right) \nonumber \\
	& = \rho_n \left( \int_{Q_1 \cap (\brbak \setminus \partial \sigma_{\rho_n}( S_{k_n}))}{\varphi_{\mathrm{ F}}(0, \nubak )\, d\Hs^1} \right. \nonumber \\ 
	& \quad +\left. \int_{Q_1 \cap (\bbak \setminus \bbsk)\cap (\bzda \cup \boda)}{2 \varphi_{\mathrm{ F}}(0, \nubak )\, d\Hs^1} \right) \nonumber \\ 
	& \quad - \ep \left( \Hs^1 (\overline{Q_{\rho_n}} \cap \partial A_{k_n}) + \Hs^1 (\overline{Q_{\rho_n}} \cap \partial S_{k_n} \setminus \partial A_{k_n})  \right) \nonumber \\
	& \ge  4 \rho_n \varphi_{\mathrm{ F}}(0,\mathbf{e}_\textbf{1}) - \ep \rho_n - \ep \left( \Hs^1 (\overline{Q_{\rho_n}} \cap \partial A_{k_n}) + \Hs^1 (\overline{Q_{\rho_n}} \cap \partial S_{k_n} \setminus \partial A_{k_n})  \right), \label{eq:blowupc10}
	\end{align}
for every $n \ge n^3_\ep$, where in the first inequality we argued as in \eqref{eq:inequalitymu} (with $Q_{\rho_n}$ instead of $R_{\rho_n}$) and we used the non-negativeness of $\psi_\kn$, in the equality we used properties of the blow up map, and in the second inequality we used \eqref{eq:blowupc9}. Finally, by \eqref{eq:blowupc3}, \eqref{eq:blowupc5} and \eqref{eq:blowupc10} and by repeating the same arguments of \eqref{eq:blowupbfinal}, we deduce that
			\begin{align*}
				\frac{d \mu_0}{\, d\Hs^1 \mres ((\partial A \setminus \partial {S_{h,K}}) \cap  A^{(1)})}(0) 
				& \ge2\varphi_{\mathrm{ F}}(0, \mathbf{e}_\mathbf{1}) - \frac{ \ep}{2} - \frac \ep {c_1}  \frac{d \mu_0}{\, d\Hs^1 \mres ((\partial A \setminus \partial {S_{h,K}}) \cap  A^{(1)})}(0) .
			\end{align*}
By (c4) and taking $\ep \to 0 ^+$, in the inequality above, we deduce \eqref{eq:blowupc1}.
		\end{proof}
		
  \begin{proof}[Proof of \eqref{eq:blowupc_2}] Since $\varphi' \le \varphi_{\mathrm{F}}$, we repeat the same arguments of the proof of \eqref{eq:blowupc1} by using Proposition \ref{prop:aux1}-(b) and \cite[Lemma 4.4]{KP} instead of Proposition \ref{prop:aux1}-(a) and \cite[Lemma 4.5]{KP}, respectively.
		\end{proof}
		
		\begin{proof}[Proof of \eqref{eq:blowupd}] 
We observe that \eqref{eq:blowupd} follows from the same arguments used in  \eqref{eq:blowupa}, which are based on \cite[Theorem 20.1]{M}, by ``interchanging the roles'' of $A_k,A$ with $S_k, S_{h,K}$.
		\end{proof}
  
	\begin{proof}[Proof of \eqref{eq:blowupe}] By the $\Hs^1$-rectifiability of $\partial A$ and $\partial {S_{h,K}}$, by \cite[Proposition A.4]{KP} (applied with $K$ taken as first $\partial A$ and then $\partial S_{h,K}$), and  by the Besicovitch Derivation Theorem the set of points  $x \in  (\partial {S_{h,K}} \cap \partial A) \cap S^{(1)}_{h,K}$ not satisfying the following 4 conditions: 
	\begin{enumerate}
	\item[(f1)] $\theta^* (\partial A,x) = \theta_* (\partial A , x) = \theta^* (\partial {S_{h,K}},x) = \theta_* (\partial {S_{h,K}} , x)= 1 $,
	\item[(f2)] $\nu_A \xp$, $\nu_{S_{h,K}} \xp$ exist and either $\nu_A \xp = \nu_{S_{h,K}} \xp$ nor $\nu_A \xp = -\nu_{S_{h,K}} \xp$,
	\item[(f3)] $\overline{Q_{1, \nu_A\xp} (x)} \cap  \partial {\sigma}_{\rho, x} ( A) \cngK \overline{Q_{1, \nu_A\xp} (x)} \cap T_{x, \nu_A\xp}$ and $\overline{Q_{1, \nu_A\xp} (x)} \cap  \partial {\sigma}_{\rho, x} ( S_{h,K}) \cngK \overline{Q_{1, \nu_A\xp} (x)} \cap T_{x, \nu_A\xp}$, 
	\item[(f4)] $ \frac{d \mu_0}{\, d\Hs^1 \mres ( (\partial {S_{h,K}} \cap \partial A) \cap S^{(1)}_{h,K})} \xp$ exists and it is finite.
	\end{enumerate}
	is $\Hs^1$-negligible. Therefore we prove \eqref{eq:blowupe} for any fixed $x \in  (\partial {S_{h,K}} \cap \partial A) \cap S^{(1)}_{h,K}$ satisfying 
   (f1)-(f4). By (f2) and without loss of generality we assume that $x=0$ and $\nu_A (0) = \textbf{e}_\textbf{1}$, and we denote $T_0 := T_{0, \nu_A(0)}$. 
   Again by the Besicovitch Derivation Theorem there exists a subsequence $\rho_n \searrow 0$ such that
	\begin{equation}
	\label{eq:blowupf1}
	\mu_0 (\partial Q_{\rho_n} )=0, \quad  \lim_{k \rightarrow +\infty}{\mu_k (\overline{Q_{\rho_n}})} = \mu_0 (Q_{\rho_n})
	\end{equation}
	and
	\begin{equation}
	\label{eq:blowupf2}
	\frac{d \mu_0}{\, d\Hs^1 \mres ((\partial {S_{h,K}} \cap \partial A) \cap S^{(1)}_{h,K})}(0) = \lim_{n \rightarrow \infty}{\frac{\mu_0 (Q_{\rho_n})}{2 \rho_n}}.
	\end{equation}
By (f3) and applying Proposition \ref{prop:aux1}-(a) to $A$ and $S_{h,K}$ we have that 
\begin{equation}
\label{eq:diagf1}
\sdist (\cdot, \partial \sigma_{\rho_n} ( A )) \rightarrow - \dist (\cdot, T_0)\quad\text{and}\quad\sdist (\cdot,\partial \sigma_{\rho_n} ( {S_{h,K}})) \rightarrow -\sdist (\cdot, T_0)
\end{equation}
uniformly in $\overline{Q_1}$ as $n \to \infty$. 
Furthermore, from the $\tau_\B$-convergence it follows that
\begin{equation}
\label{eq:diagf2}
\sdist (\cdot, \partial A_k ) \rightarrow \sdist(\cdot, \partial A )\quad\text{and}\quad\sdist (\cdot, \partial {S_k} ) \rightarrow \sdist(\cdot, \partial {S_{h,K}} )
\end{equation}
uniformly in $\overline{Q_1}$ as $k \to \infty$ and hence, by \eqref{eq:diagf1} and \eqref{eq:diagf2}, a standard diagonalization argument yields that there exists a subsequence  
$\left\{ (A_{k_n}, h_{k_n}, K_{k_n}  ) \right\}$ such that
\begin{equation}
\label{eq:blowupf3}
	\sdist (\cdot , \partial \sigma_{\rho_n} ( A_{k_n})) \rightarrow -\dist (\cdot, T_0) \quad \textrm{and} \quad \sdist (\cdot , \partial \sigma_{\rho_n} ( S_{k_n} )) \rightarrow -\dist (\cdot, T_0)
\end{equation}
uniformly in $\overline{Q_1}$ as $n \to \infty$ and by \eqref{eq:blowupf1} such that
\begin{equation}
	\label{eq:blowupf4}
	\mu_\kn (\overline{Q_{\rho_n}}) \le \mu_0 (Q_{\rho_n}) + \rho_n^2,
\end{equation}
for every $n \in \N$. By arguing as in \eqref{eq:blowupb6} we deduce that
\begin{equation}
	\label{eeq:blowupf5}
	\begin{split}
	\limsup_{n \to \infty}{\frac{\Hs^1 (\overline{Q_{\rho_n}} \cap \partial A_{k_n}) + \Hs^1 (\overline{Q_{\rho_n}} \cap \partial S_{k_n} \setminus \partial A_{k_n}) }{2 \rho_n}  }  \le  c_1^{-1}  \frac{d \mu_0}{\, d\Hs^1 \mres ((\partial {S_{h,K}} \cap \partial A) \cap S^{(1)}_{h,K})}(0) .
				\end{split}
			\end{equation}

By \eqref{eq:blowupf3} and by applying Lemma \ref{lem:A2} we deduce that 
\begin{equation}
    \label{eq:eq:blowupf4bis}
    {\overline{Q_1}} \setminus  \sigma_{\rho_n} (A_{k_n}) \cngK  \Uco \cap T_0 \quad \text{and} \quad{\overline{Q_1}} \setminus  \sigma_{\rho_n} (S_{k_n}) \cngK \Uco \cap T_0.
\end{equation} 
Since $\left\{ (A_\kn,S_{ h_\kn,K_\kn}) \right\} \subset \B_\mathbf{m}$, we know that $(\sigma_{\rho_n}({A_{k_n}}), S_{(1/{\rho_n})h_{k_n} (\rho_n\cdot), \sigma_{\rho_n}({K_{k_n}})}) \in \B_{\textbf{m}}(\sigma_{\rho_n}(\Omega))$. 
By \eqref{eq:eq:blowupf4bis} and by applying Lemma \ref{lema:crackssubstrate} to $(\sigma_{\rho_n}({A_{k_n}}), S_{(1/{\rho_n})h_{k_n}(\rho_n\cdot), \sigma_{\rho_n}({K_{k_n}})})$ and $Q_1$, with $\phi_{\alpha} (\cdot) = \varphi_\alpha (0, \cdot) $ for $\alpha = \text{F,S,FS}$, there exists $n^1_\ep \in \N$ such that for every $n \ge n^1_\ep$,
	\begin{equation}
	\label{eq:blowupf6bis1}
	\begin{split}
	{\mathcal{S}}_L (\sigma_{\rho_n}({A_{k_n}}), S_{(1/{\rho_n})h_{k_n}(\rho_n\cdot), \sigma_{\rho_n}({K_{k_n}})}, Q_1)& 
	 \ge 2\int_{ \overline{Q_1} \cap T_0}{\varphi (0,\textbf{e}_\textbf{1}) \, \, d\Hs^1} - \ep \ge 4 \varphi (0,\textbf{e}_\textbf{1}) - \ep.
				\end{split}
			\end{equation}
By \eqref{eq:blowupf6bis1}, by the uniform continuity in \eqref{eq:continuitynorms} and by repeating the same arguments of \eqref{eq:blowupb71} we obtain that there exists $n^2_\ep \ge n^1_\ep$ such that 
	\begin{equation}
	\label{eq:blowupf7}
	\mu_\kn (\overline{Q_{\rho_n}}) \ge 4 \varphi (0,\mathbf{e}_\textbf{1}) - \ep \rho_n - \ep \left( \Hs^1 (\overline{Q_{\rho_n}} \cap \partial A_{k_n}) + \Hs^1 (\overline{Q_{\rho_n}} \cap \partial S_{k_n} \setminus \partial A_{k_n})  \right),
	\end{equation}
 for every $n \ge n^2_\ep$. 
By \eqref{eq:blowupf2}, \eqref{eq:blowupf4}, \eqref{eq:blowupf7} and by arguing as in \eqref{eq:blowupbfinal} we have that
\begin{align*}
	\frac{d \mu_0}{\, d\Hs^1 \mres ((\partial {S_{h,K}} \cap \partial A) \cap S^{(1)}_{h,K})}(0)
	& \ge 2 \varphi (0, \mathbf{e}_\mathbf{1}) - \frac{ \ep}{2} - \frac \ep {c_1}  \frac{d \mu_0}{\, d\Hs^1 \mres ((\partial {S_{h,K}} \cap \partial A) \cap S^{(1)}_{h,K})}(0).		\end{align*}
Finally, by (f4) and taking $\ep \to 0^+$, in the inequality above, we reach  \eqref{eq:blowupe}.
		\end{proof}
  
		\begin{proof}[Proof of \eqref{eq:blowupea1}]
Since $\varphi' \le \varphi$, we repeat the same arguments of the proof of \eqref{eq:blowupe} in view of the fact that 
	$ \partial S_{h,K} \cap \partial A \cap S^{(0)}_{h,K} \cap A^{(0)} = \partial S_{h,K} \cap \partial A \cap  A^{(0)} $ by 
	employing Proposition \ref{prop:aux1}-(b) in place of Proposition \ref{prop:aux1}-(a) and Lemma \ref{lema:filamentssubstrate} in place of Lemma \ref{lema:crackssubstrate}.
		\end{proof}
		
	\begin{proof}[Proof of \eqref{eq:blowupf}]
In order to obtain \eqref{eq:blowupf} we  combine the arguments of the proof of \eqref{eq:blowupb} and the proof of \eqref{eq:blowupe}, by using the argumentations of the former with Proposition \ref{prop:aux1}-(c) with $\nu_A=\textbf{e}_\textbf{1}$ for the sets $A$ and $A_k$ and their convergence, and the argumentations of the latter with Proposition \ref{prop:aux1}-(b) for  the sets $S_{h,K}$ and $S_k$ and their convergence, but employing 
Lemma \ref{lem:delaminationofS} in place of Lemmas \ref{lema:creationOneBdr} and \ref{lema:crackssubstrate}, which were used in such previous proofs.
\end{proof}

		\begin{proof}[Proof of \eqref{eq:blowupg}] 
  We repeat the same arguments of the proof of \eqref{eq:blowupc1}
  which are based on \cite[Lemma 4.5]{KP}, %Reshetnyak lower semicontinuity theorem (see \cite[Theorem 2.38]{AFP}) 
by ``interchanging the roles'' of $A_k,A$ with $S_k, S_{h,K}$.
		\end{proof}

		\begin{proof}[Proof of \eqref{eq:blowupga}]  We repeat the same arguments of the proof of \eqref{eq:blowupc_2}
  which are based on \cite[Lemma 4.4]{KP}, %Reshetnyak lower semicontinuity theorem (see \cite[Theorem 2.38]{AFP}) 
by ``interchanging the roles'' of $A_k,A$ with $S_k, S_{h,K}$.
		\end{proof}

		\begin{proof}[Proof of \eqref{eq:blowuph}]
  By the definition of $\partial ^* S_{h,K}$, by the $\Hs^1$-rectifiability of $\partial A$, by \cite[Proposition A.4]{KP} (applied with $K$ taken as first $\partial A$ and then $\partial S_{h,K}$), and the Besicovitch Derivation Theorem the set of points $x \in \partial ^*{S_{h,K}} \cap \partial A   \cap A^{(1)}$ not satisfying the following 4 conditions:
	\begin{enumerate}
	\item[(h1)] $\theta^* (\partial A,x) = \theta_* (\partial A , x) = 1 $,
	\item[(h2)] $\nu_A \xp$, $\nu_{S_{h,K}} \xp$ exist and, either $\nu_A \xp = \nu_{S_{h,K}} \xp$ or $\nu_A \xp = - \nu_{S_{h,K}} \xp$,
	\item[(h3)] for every open rectangle $R$ containing $x$ with sides parallel or perpendicular to $\textbf{e}_\textbf{1}$ we have that  $\overline{R} \cap  \partial {\sigma}_{\rho, x} ( A) \cngK \overline{R} \cap T_{x,\nu_A\xp}$ and $ \overline{R} \cap  \partial {\sigma}_{\rho, x} ( {S_{h,K}}) \cngK  \overline{R} \cap T_{x,\nu_A\xp}$ as $\rho \to 0$, where $T_{x,\nu_A\xp}$ is the approximate tangent line at $x$ of $\partial A$ (or of $\partial {S_{h,K}}$),
	\item[(h4)] $ \frac{d \mu_0}{d \Hs^1 \mres ( \partial ^*{S_{h,K}} \cap \partial A   \cap A^{(1)})} \xp$ exists and it is finite,
			\end{enumerate}
   is $\Hs^1$-negligible. Therefore, we prove \eqref{eq:blowuph} for any fixed $x \in  \partial ^*{S_{h,K}} \cap \partial A   \cap A^{(1)}$ satisfying (h1)-(h4). Without loss of generality we assume that $x=0$ and we denote $T_0 = T_{0,\nu_A(0)}$. Furthermore, by using (h2) we choose in (h3) the rectangle $R_{\nu_A}:= Q_1$ if $\nu_A(0)  = \textbf{e}_i$ for $i =\textbf{1},\textbf{2}$,  or $R_{\nu_A} :=(-\cos \theta_{\nu_A }, \cos \theta_{\nu_A } ) \times_{\R^2} (-\sin \theta_{\nu_A }, \sin \theta_{\nu_A } )$, where $\theta_{\nu_A }$ is the angle formed between the tangent line $T_0$ and the $x_1$-axis, otherwise. For any $\rho >0$, we write $R_\rho := \rho R_{\nu_A}$.

   In view of the definition of $R_{\nu_A}$ and again by using also the Besicovitch Derivation Theorem (see \cite[Theorem 1.153]{FL}) there exists a subsequence $\rho_n \searrow 0$ such that
\begin{equation}
	\label{eq:blowupg1}
	\mu_0 (\partial R_{\rho_n} )=0 , \quad  \lim_{k \rightarrow +\infty}{\mu_k (\overline{R_{\rho_n}})} = \mu_0 (R_{\rho_n})
\end{equation}
and
\begin{equation}
	\label{eq:blowupg2}
	\frac{d \mu_0}{d \Hs^1 \mres (\partial ^*{S_{h,K}} \cap \partial A   \cap A^{(1)})}(0) = \lim_{n \rightarrow \infty}{\frac{\mu_0 (R_{\rho_n})}{2 \rho_n}}.
\end{equation}
We now claim that 
\begin{equation}
    \label{eq:diagg1}
\sdist (\cdot, \partial \sigma_{\rho_n} ( A )) \rightarrow -\dist (\cdot, T_0)\quad\text{and}\quad\sdist (\cdot,\partial \sigma_{\rho_n} ( {S_{h,K}})) \rightarrow \sdist (\cdot, \partial H_0)
\end{equation}
uniformly in $\overline{R_{\nu_A}}$ as $n \to \infty$. To prove the claim we can for example observe that by \cite[Proposition A.4]{KP} we have (not only (h3), but also) that $\overline{Q_r} \cap  \partial {\sigma}_{\rho, x} ( A) \cngK \overline{Q_r} \cap T_x \quad \text{and} \quad  \overline{Q_r} \cap  \partial {\sigma}_{\rho, x} ( {S_{h,K}}) \cngK  \overline{Q_r} \cap T_x \quad \text{as }\rho \to 0$ for any square $Q_{r}$ such that $R_{\nu_A} \subset Q_r$
and hence, by Proposition \ref{prop:aux1} Items (a) and (c), applied to $Q_r$, $\sdist (\cdot, \partial \sigma_{\rho_n} ( A )) \rightarrow -\dist (\cdot, T_0)\quad\text{and}\quad\sdist (\cdot,\partial \sigma_{\rho_n} ( {S_{h,K}})) \rightarrow \sdist (\cdot, \partial H_0)$ uniformly in $\overline{Q_r} \supset R_{\nu_A}$.

Furthermore, from the $\tau_\B$-convergence it follows that
\begin{equation}
\label{eq:diagg2}
\sdist (\cdot, \partial A_k ) \rightarrow \sdist(\cdot, \partial A )\quad\text{and}\quad\sdist (\cdot, \partial {S_k} ) \rightarrow \sdist(\cdot, \partial {S_{h,K}} )
\end{equation}
uniformly in $\overline{R_{\nu_A}}$ as $k \rightarrow \infty$ and hence,
by \eqref{eq:diagg1} and \eqref{eq:diagg2}, a standard diagonalization argument yields that there exists a subsequence $\left\{ (A_{k_n}, h_{k_n},K_{k_n} ) \right\}$ such that
\begin{equation}
    \label{eq:diagg3}
\sdist (\cdot, \partial \sigma_{\rho_n} ( A_\kn )) \rightarrow -\dist (\cdot, T_0)\quad\text{and}\quad\sdist (\cdot,\partial \sigma_{\rho_n} ( {S_{\kn}})) \rightarrow \sdist (\cdot, \partial H_0)
\end{equation}
uniformly in $\overline{R_{\nu_A}}$ as $n \to \infty$ and by \eqref{eq:blowupg1} such that 
	\begin{equation}
		\label{eq:blowupg4}
		\mu_\kn (\overline{R_{\rho_n}}) \le \mu_0 (R_{\rho_n}) + \rho_n^2,
	\end{equation}
	for any $n \in \N$. By \eqref{eq:blowupg1}, \eqref{eq:blowupg4} and arguing as in \eqref{eq:blowupb5} we infer that
			\begin{equation}
				\label{eq:blowupg5}
				\begin{split}
					\limsup_{n \to \infty}{\frac{\Hs^1 (\overline{R_{\rho_n}} \cap \partial A_{k_n}) + \Hs^1 (\overline{R_{\rho_n}} \cap \partial S_{k_n} \setminus \partial A_{k_n}) }{2 \rho_n}  }  \le  c_1^{-1}  \frac{d \mu_0}{d \Hs^1 \mres ( \partial ^*{S_{h,K}} \cap \partial A   \cap A^{(1)})}(0) .
				\end{split}
			\end{equation}
By applying Lemma \ref{lem:A2}, we have that $\overline{R_{\nu_A}} \setminus \sigma_{\rho_n} ( A_{k_n}) \cngK T_0$ and in view of Remark \ref{remark:1}-(i) we have that $\overline{R_{\nu_A}} \cap  \partial \sigma_{\rho_n} ( S_{k_n} ) \cngK T_0$. Since $\left\{ (A_\kn, S_{h_\kn,K_\kn}) \right\} \subset \B_\mathbf{m}$, we know that $(\sigma_{\rho_n}({A_{k_n}}),S_{ (1/{\rho_n})h_{k_n} (\rho_n\cdot), \sigma_{\rho_n}({K_{k_n}})}) \in \B_{\textbf{m}}(\sigma_{\rho_n}(\Omega))$. By applying Lemma \ref{lema:creationdelamin} with $\phi_{\alpha} (\cdot) = \varphi_\alpha (0, \cdot) $ for $\alpha = \text{F,S,FS}$ and 
$\delta = \ep$, there exists $n^1_\ep\in \N$ such that for every $n \ge n^1_\ep$,
	\begin{equation}
		\label{eq:blowupg6}
		\begin{split}
		\mathcal{S}_L (\sigma_{\rho_n}({A_{k_n}}),S_{ \sigma_{\rho_n}({S_{k_n}}), \sigma_{\rho_n}({K_{k_n}})},R_{\nu_A} )&  \ge \int_{\overline{R_{\nu_A}}\cap T_0 }{ 		\varphi_{\mathrm{ F}} (0,\nu_{T_0}))  + \varphi (0,\nu_{T_0})  
  \, d\Hs^1} - \ep \\
  &= 
  2 ( \varphi_{\mathrm{ F}}(0,\nu_{T_0}) + \varphi (0,\nu_{T_0})  ) 
  - \ep.
		\end{split}
	\end{equation}
	By definition of $\mu_k$, the non-negativeness of $\varphi_{\mathrm{ F}}, \varphi$ and $\varphi_{\mathrm{ FS}}$, \eqref{eq:continuitynorms}, \eqref{eq:blowupg6} and arguing as in \eqref{eq:blowupb71} we deduce that  
	\begin{equation}
	    \label{eq:blowupg7}
	\mu_\kn (\overline{Q_{\rho_n}}) \ge   2 ( \varphi_{\mathrm{ F}}(0,\nu_{T_0}) + \varphi (0,\nu_{T_0})  )  
	 - \ep \rho_n - \ep \left( \Hs^1 (\overline{Q_{\rho_n}} \cap \partial A_{k_n}) + \Hs^1 (\overline{Q_{\rho_n}} \cap \partial S_{k_n} \setminus \partial A_{k_n})  \right),
	\end{equation}		
for every $n > n'_\ep$. By \eqref{eq:blowupg2}, \eqref{eq:blowupg4}, \eqref{eq:blowupg7} and by arguing as in 
\eqref{eq:blowupbfinal} we have that
	\begin{equation*}
 \begin{split}
		\frac{d \mu_0}{\, d\Hs^1 \mres ( \partial A \cap \partial ^* S \cap A^{(1)} )}(0) &\ge %&\ge \liminf_{n \to \infty}{\frac{\mu_\kn (\overline{Q_{\rho_n}})}{2 \rho_n}} \\
				%& \ge  \varphi_{\mathrm{ F}}(0,\mathbf{e}_2) + \varphi (0,\mathbf{e}_2)  - \frac{ \ep}{2} \\
				%& \quad   - \ep  \limsup_{n \to \infty}{\frac{\Hs^1 (\overline{Q_{\rho_n}} \cap \partial A_{k_n}) + \Hs^1 (\overline{Q_{\rho_n}} \cap \partial S_{k_n} \setminus \partial A_{k_n}) }{2 \rho_n}  }  \\
				%& \ge 
\varphi_{\mathrm{ F}}(0,\nu_{T_0}) + \varphi (0,\nu_{T_0})  % \varphi_{\mathrm{ F}}(0,\nu_{T_0}) + \varphi (0,\nu_{T_0})  ) + \varphi_{\mathrm{ FS}} (0,\nu_{T_0})
 - \frac{ \ep}{2} \\
 & \qquad \qquad- \ep c_1^{-1}  \frac{d \mu_0}{\, d\Hs^1 \mres ((\partial {S_{h,K}} \cap \partial A) \cap S^{(1)}_{h,K})}(0).
 \end{split}
	\end{equation*}
	Finally, by (h4) and by taking $\ep \to 0^+$ in the inequality above, we deduce \eqref{eq:blowupg}.
		\end{proof}
	\begin{proof}[Proof of \eqref{eq:blowupi}]
			We repeat the arguments of \eqref{eq:blowupe} by using Proposition \ref{prop:aux1}-(a) 
   for the sets $A$ and $A_k$,   and Proposition \ref{prop:aux1}-(b) for   the sets $S_{h,K}$ and $S_k$, and by replacing Lemma \ref{lema:creationOneBdr} with Lemma \ref{lema:creationcrackdelamination}.
		\end{proof}
	
	\end{proof}

We are finally in position to prove Theorem \ref{thm:lowersemicontinuity}.

\begin{proof}[Proof of Theorem \ref{thm:lowersemicontinuity}]
Without loss of generality, we assume that the \emph{liminf} in the right side of \eqref{eq:lowersemicontinuityF} is reached and finite in $\R$. From Theorem \ref{thm:lowersemicontinuityS} it follows that
		\begin{equation}
			\label{lweqF1}
			\mathcal{S} (A,S_{h,K}) \le \liminf_{k \rightarrow \infty}{ \mathcal{S} (A_k,S_{ h_k,K_k})}.
		\end{equation}
		In view of the definition  of $\mathcal{F}$, in order to reach the assertion, it suffices  to establish the lower semicontinuity of $\mathcal{W}$, to which the rest of the proof is devoted. 
  
  Let $D \subset \subset \textrm{Int}(A)$, by the fact that $\Int{(A_k)} \xrightarrow{\mathcal{K}} \Int{(A)}$, we deduce that $D \subset \subset \Int (A_k)$  for $k$ large enough. As $u_k \rightarrow u$ a.e. in $\Int (A)$, then, $u_k \rightarrow u$ a.e. in $D$. Furthermore, since $e(u_k)$ are bounded in the $L^2(D)$ norm, we have that $e(u_k) \rightharpoonup e(u)$ in $L^2 (D)$. By convexity of $\mathcal{W}(D, \cdot)$ we obtain that
		\begin{align*}
			\mathcal{W}(D, u) \le \liminf_{k \rightarrow +\infty}{\mathcal{W}(D, u_k)} \le \liminf_{k \rightarrow +\infty}{\mathcal{W}(A_k, u_k)}.
		\end{align*}
		To conclude it is now enough to let $D \nearrow \Int (A)$.
	\end{proof}

	%%%%%%%%%%%%%%%%%%%%%%%%%%%%%%%%%%%%%%%%%%%%%%%%%%%%%%%%%%%%%%%%%%%%%%%%%%%%%%%%%%%%%%%%%%%%%%%%%%%%%%%%%%%%%%%%%%%%%%%%%%%%%%%%%%%%%%%%%%%%%%%%%%%%
	
	\section{Existence}
\label{sec:existence}
 
In view of Theorems \ref{thm:compactness} and \ref{thm:lowersemicontinuity} we are in  position to prove  Theorem \ref{thm:existence} by  employing the \emph{direct method of the calculus of variations}.
	
	\begin{proof}[Proof of Theorem \ref{thm:existence}]
		 Fix $m \in \N$ and let $\{(A_k, S_{h_k,K_k},u_k)\} \subset \mathcal{C}_\mathbf{m}$ be a minimizing sequence of $\mathcal{F}$ such that $\mathcal{L}^2({A_k})= \mathbbm{v}_1$, $\mathcal{L}^2({S_{h_k,K_k}})= \mathbbm{v}_0$, and
		$$\sup_{k\in\mathbb{N}} {\mathcal{F}(A_k,S_{h_k,K_k},u_k)}< \infty.$$
		By Theorem \ref{thm:compactness} there exist a subsequence $\{(A_{k_l},S_{h_{k_l},K_{k_l}}, u_{k_l})\}$, a sequence $\{(\widetilde{A}_l ,S_{h_{k_l}, \widetilde{K}_{k_l}}, v_l)\} \subset \mathcal{C}_\mathbf{m}$, and $(A,S_{h,K},u) \in \mathcal{C}_\mathbf{m}$ such that $(\widetilde{A}_l ,S_{h_{k_l}, \widetilde{K}_{k_l}}, v_l) \xrightarrow{\tau_{\mathcal{C}}} (A,S_{h,K},u)$ as $l \to \infty$ and
		\begin{equation}
			\label{eq:existence1}
			\liminf_{l\to \infty}{\mathcal{F}(\widetilde{A}_l ,S_{h_{k_l}, \widetilde{K}_{k_l}}, v_l)} = \liminf_{l \to \infty}{\mathcal{F} (A_{k_l},S_{h_{k_l},K_{k_l}}, u_{k_l})}.
		\end{equation} 
		By Theorem \ref{thm:lowersemicontinuity}, we have that
		\begin{equation}
			\label{eq:existance2}
			\mathcal{F}(A,S_{h,K},u) \le \liminf_{l \to \infty}{\mathcal{F}(\widetilde{A}_l ,S_{h_{k_l}, \widetilde{K}_{k_l}}, v_l)}.
		\end{equation}
		We claim that $\{(\widetilde{A}_l ,S_{h_{k_l}, \widetilde{K}_{k_l}}, v_l)\}$ and $(A,S_{h,K},u)$ satisfy the volume constraints of \eqref{eq:const}. Indeed, by Theorem \ref{thm:compactness}, for any $l  \ge 1$, $\mathbbm{v}_1 = \mathcal{L}^2\left({A_{k_l}} \right) = \mathcal{L}^2({\widetilde{A}_l})$ and $\mathbbm{v}_0 = \mathcal{L}^2({S_{h_{k_l}, {K}_{k_l}}}) = \mathcal{L}^2\left({S_{h_l, \widetilde K_l}}\right)$, where ${S_{h_l, \widetilde K_l}} \in \AS$. Thanks to the fact that $(\widetilde{A}_l, S_{h_{k_l}, K_{k_l}}) \xrightarrow{\tau_{\mathcal{\B}}} (A, S_{h,K} )$ as $l \to \infty$, by applying \cite[Lemma 3.2]{KP} we infer that $\widetilde{A}_l \to A$ in $L^1(\R^2)$ as $l \to \infty$, and thus $\mathcal{L}^2({A}) = \mathbbm{v}_1$, and, similarly, we deduce that $\mathcal{L}^2({S_{h,K}})= \mathbbm{v}_0$. Finally, from \eqref{eq:existence1} and \eqref{eq:existance2} we deduce that
	\begin{equation*}
			\begin{split}
				\inf_{(A,S_{h,K},u) \in \mathcal{C}_\mathbf{m},\, \mathcal{L}^2({A})= \mathbbm{v}_1,\, \mathcal{L}^2({S_{h,K}})= \mathbbm{v}_0} \mathcal{F}(A,S_{h,K},u) &= \lim_{k \to \infty}{\mathcal{F} (\widetilde{A}_l ,S_{h_{k_l}, \widetilde{K}_{k_l}}, v_l)}\\
				& \ge \liminf_{l\to \infty}{\mathcal{F}\left(\widetilde{A}_l ,S_{h_{k_l}, \widetilde{K}_{k_l}}, v_l\right)}  \\
    &\ge \mathcal{F}(A,S_{h,K},u) 
			\end{split}
		\end{equation*}
		and hence, $(A,S_{h,K},u)$ is a solution of the minimum problem \eqref{eq:const}. By observing that \eqref{eq:existence1} and \eqref{eq:existance2} hold true also by replacing $\mathcal{F}$ with $\mathcal{F}^{\lambda}$ for ${\lambda} := (\lbd_0, \lbd_1),$ with $\lbd_0,\lbd_1>0$, we deduce that also the unconstrained minimum problem \eqref{eq:uncost} can be solved by employing the same method. This concludes the proof. 
	\end{proof}

 \section*{Acknowledgement} 
 The authors thank Shokhrukh Kholmatov for the various discussions on the topic. 
 The authors acknowledge the support received from the Austrian Science Fund (FWF) projects P 29681 and TAI 293, from the Vienna Science and Technology Fund (WWTF) together with the City of Vienna and Berndorf Privatstiftung through Project MA16-005, and from BMBWF through the OeAD-WTZ project HR 08/2020.  R. Llerena thanks for the support obtained by a French public grant as part of the ``Investissement d'avenir'' project ANR-11-LABX-0056-LMH, LabEx LMH. P. Piovano is member of the Italian ``Gruppo Nazionale per l'Analisi Matematica, la Probabilit\`a e le loro Applicazioni'' (GNAMPA) and has received funding from the GNAMPA-INdAM 2022 project CUP: E55F22000270001 and 2023 Project CUP: E53C22001930001. P. Piovano also acknowledges the support obtained by the Italian Ministry of University and Research  (MUR) through  the PRIN Project ``Partial differential equations and related geometric-functional inequalities''.  Finally,  P. Piovano is also grateful for the support received as \emph{Visiting Professor and Excellence Chair} and from the \emph{Visitor in the Theoretical Visiting Sciences Program} (TSVP) at the Okinawa Institute of Science and Technology (OIST), Japan.

	%%%%%%%%%%%%%%%%%%%%%%%%%%%%%%%%%%%%%%%%%%%%%%%%%%%%%%%%%%%%%%%%%%%%%%%%%%%%%%%%%%%%%%%%%%%%%%%%%%%%%%%%%%%%%%%%%%%%%%%%%%%%%%%%%%%%%%%%%%%%%%%%%%%%%%%%%%%%%%%%

	\nocite{*}
	\bibliographystyle{amsplain}
	\bibliography{reference4.bib}
\end{document}